\theoremstyle{plain}
\newtheorem*{theorem*}{Theorem}   
\newtheorem{theorem}{Theorem}[subsection]                    
\newtheorem{proposition}[theorem]{Proposition}            
\newtheorem{corollary}[theorem]{Corollary}                
\newtheorem{lemma}[theorem]{Lemma}
\theoremstyle{definition}
\newtheorem{remark}[theorem]{Remark} 
\newtheorem{example}[theorem]{Example}
\newtheorem{definition}[theorem]{Definition}
\newtheorem{notation}[theorem]{Notation}
\newcommand{\act}{\curvearrowright}
\newcommand{\hoplus}{\mathbin{\widehat{\oplus}}}
\newcommand{\toplus}{\mathbin{\widetilde{\oplus}}}
\newcommand{\botimes}{\mathbin{\bar{\otimes}}}
\newcommand{\bigtoplus}{\mathop{\widetilde{\bigoplus}}}
\newcommand{\op}{{\mathord{\rm op}}}  
\newcommand{\re}{{\mathord{\rm r}}}
\newcommand{\nuc}{{\mathord{\rm nuc}}}
\newcommand{\nor}{{\mathord{\rm nor}}}
\newcommand{\univ}{{\mathord{\rm univ}}}
\DeclareMathOperator{\id}{id}
\DeclareMathOperator{\Tr}{Tr}
\DeclareMathOperator{\Aut}{Aut}
\DeclareMathOperator{\lspan}{span} 
\DeclareMathOperator{\ospan}{\overline{\lspan}}
\DeclareMathOperator{\Imag}{Im} 
\DeclareMathOperator{\supp}{supp}
\DeclareMathOperator{\Cor}{Cor}
\DeclareMathOperator{\Corr}{Corr}
\DeclareMathOperator{\Rep}{Rep}
\DeclareMathOperator{\CP}{CP}
\DeclareMathOperator{\CCP}{CCP}
\DeclareMathOperator{\UCP}{UCP}
 \DeclareMathOperator{\Ball}{Ball}
\def\us#1_#2{\underset{#2}{#1}}
\def\os#1^#2{\overset{#2}{#1}}
\def\i<#1>{\langle #1 \rangle}
\def\l<#1>{\left\langle #1 \right\rangle} 
\edef\csname f\@Alph\@tempcnta\endcsname{\noexpand\mathfrak{\@Alph\@tempcnta}}
\edef\csname g\@alph\@tempcnta\endcsname{\noexpand\mathfrak{\@alph\@tempcnta}}
\edef\csname l\@Alph\@tempcnta\endcsname{\noexpand\mathbb{\@Alph\@tempcnta}}
\edef\csname c\@Alph\@tempcnta\endcsname{\noexpand\mathcal{\@Alph\@tempcnta}}
\edef\csname r\@Alph\@tempcnta\endcsname{\noexpand\mathrm{\@Alph\@tempcnta}}
\newcommand{\unit}{{(0)}}
\newcommand{\comp}{{(2)}}
\newcommand{\cpt}{{\mathord{\rm c}}} 
\DeclareMathOperator{\ev}{ev}
\begin{document}
\title[Relative nuclearity for C$^*$-algebras]{Relative nuclearity for C$^*$-algebras and $KK$-equivalences of amalgamated free products}
\author{Kei Hasegawa}
\address{Graduate~School~of~Mathematics, Kyushu~University, Fukuoka~819-0395, Japan}
\email{ma213034@math.kyushu-u.ac.jp}
\begin{abstract}
We prove a relative analogue of equivalence between nuclearity and CPAP. In its proof, the notion of weak containment for C$^*$-correspondences plays an important role. As an application we prove $KK$-equivalence between full and reduced amalgamated free products of C$^*$-algebras under a strengthened variant of `relative nuclearity'.
\end{abstract}
\maketitle

\section{Introduction} \renewcommand{\thetheorem}{\Alph{theorem}}
A C$^*$-algebra $A$ is said to be {\it nuclear} if for any C$^*$-algebra $B$ there is a unique C$^*$-cross norm on the algebraic tensor product $A \odot B$.
The notion of nuclearity was introduced by Takesaki \cite{Takesaki} in the 1960's.
On the other hand,
by remarkable works of Lance \cite{Lance_JFA}, Choi--Effros \cite{Choi-Effros}, and Kirchberg \cite{Kirchberg}, the nuclearity of a given C$^*$-algebra is known to be equivalent to the {\it completely positive approximation property} ({\it CPAP\/}), that is, the identity map can be approximated by a net of completely contractive positive maps that factor through matrix algebras.
This characterization is useful and plays an important role in various situations.
In this paper, we study `relative counterparts' of nuclearity and CPAP for inclusions of C$^*$-algebras.

It seems natural to formulate the `relative CPAP' of a given inclusion $B \subset A$ of C$^*$-algebras by the following asymptotically commuting diagram:
\[
\xymatrix  @C=0.5cm @R=1.2cm
{
A
\ar[rr]^{\id} \ar[rd]_{\varphi_i} &&  A \\
& \lM_{n(i)} (B) \ar[ru]_{\psi_i}&
}
\]
Namely, $\varphi_i$ and $\psi_i$ are completely positive maps satisfying $a=\lim_i \psi_i \circ \varphi_i (a)$ for $a \in A$.
In this case, the CPAP of $B$ implies the one of $A$.
Moreover, there are many examples known to have this `relative CPAP'; for example,
crossed products by amenable discrete groups, group C$^*$-algebras of relative amenable discrete groups, tensor products with nuclear C$^*$-algebras, continuous fields of nuclear C$^*$-algebras, etc.
From the viewpoint of equivalence between nuclearity and CPAP, those inclusions should be `relatively nuclear', but the explicit formulation of `relative nuclearity' has never been established so far.
The aim of this paper is to formulate relative nuclearity for a given inclusion $B \subset A$ in such a way that it includes original nuclearity as a particular case when $B= \lC 1_A$, and is characterized by a kind of relative CPAP. 

\medskip
It is widely known that nuclearity is to C$^*$-algebras what amenability is to von Neumann algebras.
Thus, motivated by Popa's formulation \cite{Popa} of relative amenability for von Neumann algebras, we will develop our theory of `relative nuclearity' by the use of C$^*$-correspondences, which are C$^*$-counterparts of bimodules over von Neumann algebras.
For C$^*$-algebras $A$ and $B$, an $A$-$B$ C$^*$-correspondence is given by a pair $(X, \pi_X)$, where $X$ is a Hilbert $B$-module and $\pi_X$ is a $*$-homomorphism from $A$ into the C$^*$-algebra of adjointable (right $B$-linear) operators on $X$.
$A$-$A$ C$^*$-correspondences are also called C$^*$-correspondences over $A$.
To define `relative nuclearity', let us introduce the notion of {\it universal factorization property} ({\it UFP\/}).
We say that a C$^*$-correspondence $(X, \pi_X)$ over $A$ has the UFP if for any C$^*$-algebra $B$ and any $*$-representation $\sigma :A \otimes_{\max} B \to \lB (H)$,
$\sigma$ factors through the image of the natural representation $\phi_X^H :A \otimes_{\max} B \to \lB (X \otimes_B H)$ (see \S\S \ref{ss-rel-UFP} for its precise definition) as follows.
\[
\xymatrix @C=2cm
{
A \us\otimes_{\rm max} B
\ar[d]_{\phi_X^H} \ar[r]^{ \sigma} &   \lB( H) \\
\Imag \phi_X^H    \ar@{-->}[ru]
}
\]
In our theory, the notion of UFP plays a role of the original definition of nuclearity.
Indeed, the nuclearity of $A$ is naturally equivalent to the UFP of $(H \otimes A, \pi_H \otimes 1)$ for some faithful $*$-representation $\pi_H :A \to \lB (H)$.

For a given unital inclusion $B \subset A$ with conditional expectation $E$,
we denote by $(L^2(A, E), \pi_E)$ the $A$-$B$ C$^*$-correspondence associated with $E$ given by the GNS-construction.
For simplicity, let us assume that $E$ is nondegenerate (i.e., $\pi_E$ is faithful) in the rest of this section.
We say that $(A, B, E)$ is {\it nuclear} if $(L^2(A, E) \otimes_B A, \pi_E \otimes 1)$ has the UFP.
In the case when $B = \lC 1_A$, $L^2(A, E) $ is a Hilbert space, and hence the nuclearity of $(A, \lC 1_A, E)$ is equivalent to the nuclearity of $A$.
Moreover, we prove the following theorem, which is a relative analogue of `nuclearity $\Leftrightarrow$ CPAP'.

\begin{theorem}\label{thm-A}
Let $B \subset A$ be a unital inclusion of $\rC^*$-algebras with conditional expectation $E$.
Then, $(A, B, E)$ is nuclear if and only if for any finite subset $\fF \subset A$ and $\varepsilon >0$, there exist $n,m \in \lN$ and completely positive maps $\varphi_k : A \to \lM_n (B)$ and $\psi_k : \lM_n (B) \to A$, $1 \leq k \leq m$ such that $\| a - \sum_{k=1}^m \psi_k \circ \varphi_k (a) \| < \varepsilon$ for $a \in \fF$, and each $\varphi_k$ and $\psi_k$ are of the form
\[
\varphi_k : a \mapsto \left[ \rule{0pt}{10pt} E (x_i^* a x_j ) \right]_{i,j=1}^n
\quad
\psi_k : \left[ \rule{0pt}{0pt}\, b_{ij}\, \right]_{i,j=1}^n \mapsto \sum_{i,j=1}^n y_i^* b_{ij} y_j 
\]
for some $x_i, y_i \in A$, $1 \leq i \leq n$.
\end{theorem}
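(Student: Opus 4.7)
For the $(\Leftarrow)$ direction, the key translation is that each approximating pair $(\varphi_k,\psi_k)$ in the statement corresponds to a vector $v_k := \sum_i x_i^{(k)} \otimes y_i^{(k)} \in X := L^2(A,E) \otimes_B A$, and the $A$-valued inner product computes as
\[
\langle v_k, (\pi_E(a) \otimes 1) v_k \rangle_A = \sum_{i,j} y_i^{(k)*} E(x_i^{(k)*} a x_j^{(k)}) y_j^{(k)} = \psi_k \circ \varphi_k(a).
\]
For any auxiliary C*-algebra $C$ and any $*$-representation $\sigma : A \otimes_{\max} C \to \lB(H)$, define $V_k : H \to X \otimes_A H$ by $V_k\xi := v_k \otimes \xi$; a direct computation using the commutation of the $A$- and $C$-actions on $H$ yields $V_k^* \phi_X^H(a \otimes c) V_k = \sigma(\psi_k \varphi_k(a) \otimes c)$. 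Summing in $k$ and applying the hypothesis to $\fF \cup \{1\}$ (so that $\bigl\|\sum_k V_k^* V_k\bigr\| \le 1 + \varepsilon$), one obtains $\|\sigma(x)\| \le \|\phi_X^H(x)\|$ for $x \in A \odot C$; hence $\sigma$ factors through $\Imag \phi_X^H$ and $(A,B,E)$ is nuclear.

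For the $(\Rightarrow)$ direction, I would apply the UFP to GNS data and dualize. Given a state $\phi$ on $A$ with GNS triple $(\pi_\phi, H_\phi, \xi_\phi)$, set $C := \pi_\phi(A)'$ and take $\sigma_\phi : A \otimes_{\max} C \to \lB(H_\phi)$ to be the product representation. The UFP produces a $*$-homomorphism $\widetilde\sigma_\phi$ on $\Imag\phi_X^{H_\phi}$ with $\sigma_\phi = \widetilde\sigma_\phi \circ \phi_X^{H_\phi}$, and hence a state $\omega_{\xi_\phi} \circ \widetilde\sigma_\phi$ on that image. By Hahn--Banach this state extends to a state on $\lB(L^2(A,E) \otimes_B H_\phi)$, which is a weak-$*$ limit of convex combinations of vector states $\omega_\eta$ for $\eta \in L^2(A,E) \otimes_B H_\phi$. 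Writing $\eta = \sum_i \alpha_i \otimes \zeta_i$ with $\alpha_i \in A$ and using cyclicity of $\xi_\phi$ to replace each $\zeta_i$ by $\pi_\phi(y_i)\xi_\phi$ up to arbitrary precision gives
\[
\omega_\eta(\pi_E(a) \otimes 1) \approx \sum_{i,j} \phi\bigl(y_i^* E(\alpha_i^* a \alpha_j) y_j\bigr) = \phi\bigl(\psi \circ \varphi(a)\bigr)
\]
for $(\varphi,\psi)$ of the prescribed form. Thus, for every state $\phi$, $a \mapsto \phi(a)$ is a weak-$*$ limit in $A^*$ of functionals $a \mapsto \phi(\sum_k \psi_k\varphi_k(a))$; equivalently, $\id_A$ lies in the point-weak closure of the convex set $\mathcal{N}$ of all such sums (convexity holds because rescaling the $x_i^{(k)}, y_i^{(k)}$ by $t^{1/4}$ rescales $\psi_k\varphi_k$ by $t$). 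Applying Mazur's theorem to the image of $\mathcal{N}$ under $\Phi \mapsto (\Phi(a))_{a\in\fF} \in A^{|\fF|}$ upgrades this to the required simultaneous norm approximation.

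The principal obstacle is the $(\Rightarrow)$ direction: one must engineer the vector-state approximation on $\lB(L^2(A,E) \otimes_B H_\phi)$ so that the resulting functionals on $A$ acquire the precise form $\phi \circ (\psi_k \circ \varphi_k)$ demanded by the statement. This forces the use of the GNS construction (rather than an arbitrary faithful representation) to secure cyclicity, careful bookkeeping of the Hahn--Banach extension and passage through convex combinations of vector states, and a final Mazur-type convexity argument to combine state-wise approximations into a single map working simultaneously on $\fF$.
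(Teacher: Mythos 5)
Your $(\Leftarrow)$ direction is correct and is essentially the paper's own argument run directly: the identity $\Omega_{v_k}=\psi_k\circ\varphi_k$ for $v_k=\sum_i x_i^{(k)}\otimes y_i^{(k)}\in L^2(A,E)\otimes_B A$ is Remark \ref{rem-factor}, and your estimate $\|\sigma(x)\|\le\|\phi_X^H(x)\|$ via the maps $V_k$ is the implication $(3)\Rightarrow(1)$ of Theorem \ref{thm-weak} combined with Proposition \ref{prop-UFP}.

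The $(\Rightarrow)$ direction has a genuine gap at the final step. What your GNS/Hahn--Banach/vector-state argument actually yields is: for each \emph{single} state $\phi$ on $A$, each finite $\fF\subset A$ and $\varepsilon>0$, there is $\Phi\in\cN$ with $|\phi(a-\Phi(a))|<\varepsilon$ for all $a\in\fF$. To conclude that $\id_A$ lies in the point-weak closure of the convex set $\cN$ (so that Hahn--Banach/Mazur upgrades to point-norm), you must rule out separation of $(a)_{a\in\fF}$ from $\{(\Phi(a))_{a\in\fF}\mid\Phi\in\cN\}$ inside $A^{|\fF|}$ by an \emph{arbitrary} tuple $(f_a)_{a\in\fF}\in (A^*)^{|\fF|}$; decomposing each $f_a$ into states, this requires a single $\Phi$ that approximates simultaneously against finitely many \emph{different} states $\phi_1,\dots,\phi_N$. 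Your construction produces a different $\Phi_l$ for each $\phi_l$, and convexity of $\cN$ does not let you merge them: a convex combination of the $\Phi_l$ is not controlled against any individual $\phi_l$. (Convexity only converts weak-closure membership into norm-closure membership; it does not reduce the separation criterion to one state at a time.) This multi-state step is precisely where the paper needs the full strength of the UFP over the maximal tensor product with a commutant: in the proof of Theorem \ref{thm-weak} $(1)\Rightarrow(2)$ one takes the average $f$ of the finitely many states, forms its GNS representation $(K,\pi_f,\xi_f)$, writes each individual state as $\langle\xi_f,\pi_f(\cdot)x_g\xi_f\rangle$ with $x_g\in\pi_f(\cdot)'$ by the Radon--Nikodym theorem, and applies Fell's characterization of weak containment to the representations of the maximal tensor product evaluated at the elements $a\otimes x_g$ --- i.e.\ the vector-state approximation must be tested against the operators $\phi_X^{K}(a\otimes x_g)$, not only against $\pi_E(a)\otimes 1$. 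You introduce $C=\pi_\phi(A)'$ but never use it; inserting the Radon--Nikodym elements $x_g$ at that point repairs your argument and recovers the paper's proof.
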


This theorem will be proved based on the following two observations concerning weak containment for C$^*$-correspondences:
The first one is that for given $A$-$B$ C$^*$-correspondences $(X, \pi_X)$ and $(Y, \pi_Y)$ with $A$ unital, the following are equivalent (see \S\S \ref{ss-weak-def} for the definition of weak containment):
\begin{itemize}
\item $(X, \pi_X)$ is weakly contained in $(Y, \pi_Y)$ with respect to the universal representation, written $(X, \pi_X) \prec_\univ (Y,\pi_Y)$.
\item For any $\xi \in X$, finite subset $\fF \subset A$ and $\varepsilon >0$, there exist $m \in \lN$ and $\eta_1, \dots, \eta_m \in Y$ such that $\| \i< \xi, \pi_X(a) \xi >  - \sum_{k=1}^m \i< \eta_k, \pi_Y(a) \eta_k> \| < \varepsilon$ for $a\in \fF$.
\end{itemize}
The second one is that any C$^*$-correspondence $(X, \pi_X)$ over $A$ has the UFP if and only if $(A, \lambda_A) \prec_\univ (X, \pi_X)$ holds, where $(A, \lambda_A)$ is the identity C$^*$-correspondence over $A$ (see \S\S \ref{ss-rel-UFP}).
These observations also say that the nuclearity of $(A, B, E)$ is characterized by the condition that $(A, \lambda_A) \prec_\univ (L^2(A, E) \otimes_B A,  \pi_E \otimes 1_A)$. 
We point out that this is parallel to Popa's formulation \cite{Popa} of relative amenability for von Neumann algebras: an inclusion of von Neumann algebra $N \subset M$ is amenable if and only if ${}_M L^2(M)_M \prec {}_M L^2(M) \otimes_N L^2(M)_M$ holds.

\medskip
We also introduce the notion of {\it strong relative nuclearity}.
Roughly speaking, the strong nuclearity of a given triple $(A, B, E)$ is the property that each $\psi_k \circ \varphi_k$ in Theorem \ref{thm-A} can be chosen to be $B$-bimodule maps (see \S\S \ref{ss-rel-rel}).
This stronger notion seems technical, but almost all the examples of nuclear triples investigated in this paper are in fact strongly nuclear.
For example, a triple $(A, B, E)$ is strongly nuclear in any of the following cases:
\begin{itemize}
\item $A$ is nuclear, $B$ is finite dimensional, and the embedding $B \hookrightarrow A$ is full (i.e., $\ospan A b A =A$ for $b \in B \setminus \{ 0 \}$);
\item $A = B \otimes C$ with $C$ nuclear;
\item $B =C(X) \subset A' \cap A$ and $A$ is a continuous field of nuclear C$^*$-algebras over $X$;
\item $E$ is of Watatani index finite type;
\item $A=B \rtimes_\alpha \Gamma$, where $\Gamma$ is a discrete amenable group;
\item $B=C(X)$ and $A =C(X) \rtimes_\alpha \Gamma$, where $\alpha : \Gamma \act C(X)$ is amenable (\cite{Delaroche}); 
\item $A=\rC^*_\re (\Gamma)$ and $B=\rC^*_\re (\Lambda)$, where $\Lambda \triangleleft \Gamma$ is co-amenable (i.e., $\Gamma / \Lambda$ is amenable);
\item $A=\rC^*_\re (\cG)$ and $B = C (\cG^\unit)$, where $\cG$ is a locally compact Hausdorff amenable \'{e}tale groupoid whose unit space $\cG^\unit$ is compact. 
\end{itemize}
We also show that a triple $(A, B, E)$ is nuclear in any of the following cases:
\begin{itemize}
\item $A$ is nuclear and the embedding $B  \hookrightarrow  A$ is full;
\item $A=\rC^*_\re (\Gamma)$ and $B=\rC^*_\re (\Lambda)$, where $\Lambda < \Gamma$ is co-amenable;
\item $E$ is of probabilistic index finite type;
\item $A=B \rtimes_\alpha \Gamma$, where $\alpha : \Gamma \act B$ is amenable.
\end{itemize}
Clearly, strong relative nuclearity implies relative nuclearity, but we do not know whether or not these two notions are actually different.

\medskip
As a (kind of) byproduct of our investigation of `relative nuclearity', we also obtain
Weyl--von Neumann--Voiculescu type results that partially generalize the ones due to Kasparov \cite{Kasparov} and Skandalis \cite{Skandalis}.
In particular, we prove the next characterization for weak containment (see Theorem \ref{thm-absorbing}).
\begin{theorem}\label{thm-B}
Let $A$ and $B$ be $\rC^*$-algebras with $A$ unital separable and $B$ $\sigma$-unital,
and $(X,\pi_X)$, $(Y,\pi_Y )$ be $A$-$B$ $\rC^*$-correspondences with $X$ countably generated and $\pi_X$ and $\pi_Y$ unital.
Then, $(X, \pi_X)\prec_\univ (Y,\pi_Y)$ if and only if $(X \oplus Y^\infty, \pi_X \oplus \pi_Y^\infty)$ and $(Y^\infty, \pi_Y^\infty)$ are approximately unitarily equivalent,
where $(Y^\infty, \pi_Y^\infty)$ is the countable infinite direct sum of $(Y, \pi_Y)$.
\end{theorem}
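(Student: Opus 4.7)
\medskip
\noindent\textbf{Proof plan.}
The backward direction is routine. If there exist unitaries $u_n : X \oplus Y^\infty \to Y^\infty$ with $\| u_n (\pi_X \oplus \pi_Y^\infty)(a) u_n^* - \pi_Y^\infty(a) \| \to 0$ for every $a \in A$, then for $\xi \in X$ the vectors $\eta_n := u_n(\xi \oplus 0) \in Y^\infty$ satisfy $\i<\eta_n, \pi_Y^\infty(a) \eta_n> \to \i<\xi, \pi_X(a) \xi>$ uniformly in $a$ on any prescribed finite $\fF \subset A$. Writing $\eta_n = (\eta_n^{(k)})_{k \geq 1}$ with $\eta_n^{(k)} \in Y$ and truncating to finitely many summands yields sums $\sum_{k=1}^{N_n} \i<\eta_n^{(k)}, \pi_Y(a) \eta_n^{(k)}>$ approximating $\i<\xi, \pi_X(a)\xi>$, which is precisely the inner-product characterization of $(X, \pi_X) \prec_\univ (Y, \pi_Y)$ recorded in the first observation of the introduction.

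For the forward direction, the plan is to run a Voiculescu--Kasparov absorption argument adapted to Hilbert $B$-modules, in the spirit of \cite{Kasparov,Skandalis}. Separability of $A$, countable generation of $X$, and $\sigma$-unitality of $B$ provide dense countable sequences $\{a_k\} \subset A$ and $\{\xi_k\} \subset X \oplus Y^\infty$. The key step is an absorption lemma: given finite $\fF \subset A$, finite $\fG \subset X \oplus Y^\infty$, $\varepsilon > 0$, and a finitely generated Hilbert submodule $Z \subset Y^\infty$, there exists an adjointable isometry $V$ from the closed submodule spanned by $\fG$ into $Y^\infty$ with $V V^* \perp Z$ and $\| V (\pi_X \oplus \pi_Y^\infty)(a)\xi - \pi_Y^\infty(a) V \xi \| < \varepsilon$ for $a \in \fF,\, \xi \in \fG$. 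This is the point at which the weak-containment hypothesis is used: for a vector in the $X$-component, the inner-product characterization produces $\eta_1, \dots, \eta_m \in Y$ with $\| \i<\xi, \pi_X(a)\xi> - \sum_i \i<\eta_i, \pi_Y(a) \eta_i> \| < \varepsilon$ on $\fF$; placing these $\eta_i$ in fresh copies of $Y$ inside $Y^\infty$ disjoint from $Z$ defines an approximate isometric intertwiner, while components of $\fG$ already in the $Y^\infty$-summand are simply shifted into unused copies. A standard polar-decomposition perturbation (and Kasparov stabilization) then turns this approximate intertwiner into an honest isometry with range orthogonal to $Z$.

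The unitaries $u_n : X \oplus Y^\infty \to Y^\infty$ realising the approximate unitary equivalence are then built by a back-and-forth induction. At step $n$, the absorption lemma extends the partial isometry constructed at step $n-1$ so as to approximately intertwine on $\{a_1, \dots, a_n\}$ against the first $n$ elements of $\{\xi_k\}$ with error at most $2^{-n}$, and symmetrically fills in prescribed vectors of $Y^\infty$ on the target side, using that $(Y^\infty, \pi_Y^\infty) \prec_\univ (Y, \pi_Y)$ holds tautologically. The resulting partial isometries have complementary ranges and are assembled into unitaries by Kasparov stabilization together with $\sigma$-unitality of $B$. The principal obstacle is the absorption lemma itself: passing from a pointwise inner-product approximation to a genuine adjointable isometry with prescribed orthogonality relative to a previously chosen submodule requires the Hilbert-module counterpart of Voiculescu's small-perturbation technique. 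Once that lemma is in place, the back-and-forth bookkeeping is essentially formal.
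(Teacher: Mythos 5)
Your backward implication is fine and coincides with the paper's: an approximate unitary equivalence transports $\xi\oplus 0$ to vectors of $Y^\infty$ whose truncations realize the coefficient approximation, so $X\prec_\univ X\oplus Y^\infty\prec_\univ Y^\infty\prec_\univ Y$.

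The forward direction as you have planned it has a genuine gap, and it sits exactly where the real work of the theorem is. The approximate unitary equivalence you must produce is in the \emph{operator norm}: $\|U_n(\pi_X\oplus\pi_Y^\infty)(a)U_n^*-\pi_Y^\infty(a)\|\to 0$ for every $a$, with the defect moreover lying in $\lK_B$ since $A$ is separable. Your back-and-forth controls the intertwining defect only against finitely many vectors $\xi\in\fG$ at each stage, i.e.\ $\|V(\pi_X\oplus\pi_Y^\infty)(a)\xi-\pi_Y^\infty(a)V\xi\|<\varepsilon$. Passing to the limit over a dense sequence of vectors yields convergence in the point-strict topology only, not in norm, and gives no handle on compactness of the defect. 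This is precisely why the paper does not run a back-and-forth: it first builds (Lemma 5.1, following Arveson and Kasparov) a quasicentral approximate unit $\{e_n\}$ in $\lK_B(H_B)$ with $e_n=\sum_i p_i\otimes b_i$ and $\sum_n e_n^2=1$ strictly, so that $\pi_X(a)-\sum_n e_n\pi_X(a)e_n$ is compact and small in norm; each compressed block $e_n\pi_X(\cdot)e_n$ is then matched, via the coefficient characterization of $\prec_\univ$ (Corollary 3.2.8) together with the normalization $\sum_k\langle\eta_k,\eta_k\rangle\le\Omega_\xi(1_A)$, by a finite block map $W_n$ into $Y^{d(n)}$ with error $2^{-n}\varepsilon$; summing the blocks produces a single globally defined isometry $V=\bigoplus_n W_n\colon X^\infty\to Y^\infty$ with $V\pi_X^\infty(a)-\pi_Y^\infty(a)V$ compact and small in norm. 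The summability $\sum_n 2^{-n}\varepsilon<\infty$ is what delivers both the norm estimate and compactness; nothing in your scheme plays this role.

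A second, related obstruction: your absorption lemma produces partial isometries whose domains are the submodules generated by finitely many vectors. In a Hilbert $B$-module such submodules need not be orthogonally complemented, and even when they are, there is no reason the orthogonal complements of the accumulated domain in $X\oplus Y^\infty$ and of the accumulated range in $Y^\infty$ should be isomorphic, so ``assembled into unitaries by Kasparov stabilization'' does not go through as stated. The paper never has to extend a densely defined partial isometry: because $V$ is globally defined, $U_n:=V_n\oplus(1-V_nV_n^*)\colon X^\infty\oplus P_nY^\infty\to Y^\infty$ is automatically a unitary, and the theorem then follows from the purely formal absorption chain
\[
\pi_X\oplus\pi_Y^\infty\;\sim\;\pi_X\oplus\pi_X^\infty\oplus P_n\pi_Y^\infty(\cdot)P_n\;\sim\;\pi_X^\infty\oplus P_n\pi_Y^\infty(\cdot)P_n\;\sim\;\pi_Y^\infty,
\]
using $X\oplus X^\infty\cong X^\infty$ and applying the displayed equivalence $\pi_Y^\infty\sim\pi_X^\infty\oplus P_n\pi_Y^\infty(\cdot)P_n$ twice. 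If you want to salvage your outline, you should replace the back-and-forth by this quasicentral-approximate-unit construction; the $Z$-orthogonality bookkeeping then becomes unnecessary.
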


As applications, we obtain several results in $KK$-theory.
Firstly, we show that our strong relative nuclearity implies Germain's relative $K$-nuclearity \cite{Germain-fields}, which is a relative counterpart of Skandalis's $K$-nuclearity \cite{Skandalis}.
In \cite{Skandalis}, Skandalis proved that nuclearity implies $K$-nuclearity by using Kasparov's generalized Voiculescu theorem.
Similarly, we prove relative $K$-nuclearity by establishing a certain Weyl--von Neumann--Voiculescu type assertion under strong relative nuclearity (see \S\S \ref{ss-rel-K-K}). Then, we prove the following theorems:
\begin{theorem}\label{thm-C}
Let $\{(A_i, B, E_i) \}_{i \in \cI}$ be an at most countable family of unital inclusions of separable $\rC^*$-algebras $B \subset A_i$ with conditional expectations $E_i : A_i \to B$.
If each triple $(A_i, B, E_i)$ is strongly nuclear,
then the canonical surjection from
the full amalgamated free product $\bigstar_{B, i \in \cI} A_i$ onto the reduced one $\bigstar_{B, i \in \cI} (A_i, E_i)$ is a $KK$-equivalence.
\end{theorem}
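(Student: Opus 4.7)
The plan is to reduce Theorem \ref{thm-C} to Germain's $KK$-theoretic machinery for amalgamated free products \cite{Germain-fields}, via the passage from strong relative nuclearity to Germain's relative $K$-nuclearity carried out earlier in \S\S \ref{ss-rel-K-K}.

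First, I would invoke the implication, established in the previous subsection, that each strongly nuclear triple $(A_i,B,E_i)$ is relatively $K$-nuclear in the sense of \cite{Germain-fields}. Concretely, using the factorization form of strong nuclearity provided by Theorem \ref{thm-A} (whose composites $\psi_k\circ\varphi_k$ can be chosen $B$-bimodular), one encodes the approximating pairs $(\varphi_k,\psi_k)$ as finite-dimensional Kasparov $A_i$-$B$-bimodules. Theorem \ref{thm-B} is then applied to the associated $\rC^*$-correspondences to yield an approximate unitary equivalence between the identity $A_i$-$B$-correspondence and a countable infinite direct sum of correspondences built from matrix algebras over $B$. This absorption is precisely the statement needed to witness relative $K$-nuclearity at the level of $KK(A_i,B)$.

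Second, I would feed this relative $K$-nuclearity into Germain's construction. The reduced amalgamated free product acts naturally on the Fock $B$-module assembled from the GNS-correspondences $L^2(A_i,E_i)$, and Germain, generalizing Cuntz's argument for plain free products, builds an explicit Kasparov bimodule out of Toeplitz-type creation operators on this Fock module whose $KK$-class implements an inverse to the canonical surjection $\bigstar_{B,i\in\cI} A_i \to \bigstar_{B,i\in\cI}(A_i,E_i)$, provided each factor inclusion is relatively $K$-nuclear. The countability hypothesis on $\cI$ ensures that all of the Hilbert $B$-modules involved are countably generated, so that the separability assumptions in Kasparov's and Germain's frameworks are met. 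Combining the two steps proves Theorem \ref{thm-C}.

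The main obstacle is the first step: converting the analytic statement of strong relative nuclearity, which lives at the level of completely positive maps, into a $KK$-theoretic absorption statement in Germain's sense. Theorem \ref{thm-B} does the bulk of this work, promoting the weak-containment relation underlying nuclearity of $(A_i,B,E_i)$ into an honest approximate unitary equivalence after stabilizing; this mirrors Kasparov's use of Voiculescu's theorem to derive $K$-nuclearity from nuclearity in the absolute case, as exploited in \cite{Skandalis}. Verifying that the resulting stabilized equivalence, which a priori only involves matrix algebras over $B$, can be assembled into $K$-nuclear $KK$-cycles that are compatible with the tensor operations appearing in Germain's Fock-module construction is the technical heart of the argument, and is why the $B$-bimodularity strengthening in the definition of strong nuclearity (rather than bare nuclearity of the triple) is needed.
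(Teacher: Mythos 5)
Your outline follows the same two-step strategy as the paper: first upgrade strong relative nuclearity to Germain's relative $K$-nuclearity, and then run a Germain-type Fock-module argument on the amalgamated free product (the paper packages this as Theorem \ref{thm-KK}, of which Theorem \ref{thm-C} is the case $Z_i=B$). However, there are two concrete gaps in how you propose to carry out each step.

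First, Theorem \ref{thm-absorbing} (Theorem B) is the wrong tool for the first step. It produces unitaries implementing an approximate absorption $(X\oplus Y^\infty)\sim Y^\infty$ with no control whatsoever over the left action of $B$, whereas conditions (ii) and (iv) of Germain's definition of relative $K$-nuclearity require that the two representations agree \emph{exactly} on $B$ along the entire homotopy and that the unitary carry $1_A$ onto a distinguished vector of the form $\xi\otimes 1_A$. An approximate intertwining of the $B$-action cannot be bootstrapped to an exact one. This is precisely why the paper does not invoke Theorem \ref{thm-absorbing} at this point but instead proves a separate, $B$-equivariant Weyl--von Neumann--Voiculescu theorem (Theorem \ref{thm-strongvoic} and Corollary \ref{cor-cvoic}): the isometries there are built directly from the $B$-central vectors furnished by the $B$-CCPAP, so they lie in ${}_B\lL_A$ on the nose, and the rotation homotopy in Theorem \ref{thm-K-nuclear} stays inside $\lC 1 + {}_B\lK_A(X^\infty)$. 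Your closing remark correctly identifies $B$-bimodularity as the reason strong (rather than bare) relative nuclearity is assumed, but the mechanism you propose (Theorem \ref{thm-A} together with Theorem \ref{thm-absorbing}) does not deliver it.

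Second, the second step cannot be discharged by citing \cite{Germain-fields}: that result treats two factors, while Theorem \ref{thm-C} allows a countably infinite family, and --- as you yourself note --- one must verify that the homotopies $\psi_t$ assembled from the individual unitaries $U^{(i)}$ are compatible with the free product structure, i.e.\ that $\psi_{\widetilde{\pi}_\re}$ factors through $A_\re$ for every $t$. This is exactly what Lemmas \ref{lem-throughAr}, \ref{lem-throughAr2} and \ref{lem-unitary} establish, and it uses the precise vector condition (4) of Theorem \ref{thm-K-nuclear} (the explicit path $t\mapsto\cos(\pi t)1_A\oplus\sin(\pi t)\xi^{(1)}$), not merely the abstract statement of relative $K$-nuclearity. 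Labeling this verification ``the technical heart'' without supplying it leaves the proof incomplete.
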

\begin{theorem}\label{thm-D}
Let $\{(A_i, B, E_i) \}_{i \in \cI}$ be an at most countable family of unital inclusions of separable $\rC^*$-algebras $B \subset A_i$ with nondegenerate conditional expectations $E_i :A_i \to B$.
If each $A_i$ is nuclear and $B$ is finite dimensional, then the canonical surjection from $\bigstar_{B, i \in \cI} A_i$ onto $\bigstar_{B, i \in \cI} (A_i, E_i)$ is a $KK$-equivalence.
\end{theorem}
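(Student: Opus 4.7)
The plan is to deduce Theorem~\ref{thm-D} from Theorem~\ref{thm-C} by verifying that each triple $(A_i, B, E_i)$ is strongly nuclear under the given hypotheses. Since the listed sufficient conditions for strong nuclearity require fullness of the embedding $B \hookrightarrow A_i$---which Theorem~\ref{thm-D} does not assume---the essential new content is to remove the fullness hypothesis using the finite-dimensionality of $B$ together with nondegeneracy of $E_i$.

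First, decompose $B = \bigoplus_l p_l B p_l$ via the minimal central projections $p_1, \ldots, p_k$ of $B$, so each $p_l B p_l \cong \lM_{d_l}$ is a simple matrix algebra. Each corner $p_l A_i p_l$ is nuclear (as a corner of a nuclear C$^*$-algebra) and contains $p_l B p_l$ unitally and automatically fully, since $p_l B p_l$ is simple. The cited sufficient condition therefore yields strong nuclearity of each corner triple $(p_l A_i p_l, p_l B p_l, E_i|_{p_l A_i p_l})$, providing $B$-bimodular approximations of $\mathrm{id}_{p_l A_i p_l}$ in the special form of Theorem~\ref{thm-A}.

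Choosing matrix units $\{e_{ij}^{(l)}\}$ for $B$ and applying the classical Morita isomorphism $p_l A_i p_l \cong \lM_{d_l}(C_l)$ with $C_l = e_{11}^{(l)} A_i e_{11}^{(l)}$, the $B$-bimodule structure of $A_i$ can be described as $A_i \cong \bigoplus_{l,m} \lM_{d_l, d_m}(D_{lm})$, where $D_{lm} = e_{11}^{(l)} A_i e_{11}^{(m)}$. Under nondegeneracy of $E_i$, each off-diagonal block $D_{lm}$ carries a Hilbert bimodule structure over $(C_l, C_m)$ via the $E_i$-inner product, and the nuclearity of $A_i$ provides finite-rank approximations of the identity on each such Hilbert bimodule. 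Transporting these approximations back to $A_i$ through the matrix units $e_{ij}^{(l)}$ gives tuples $(x_i^{(k)}, y_j^{(k)}) \subset A_i$ (involving products of matrix units with elements of the corners $C_l$) that yield $B$-bimodular compositions $\psi_k \circ \varphi_k$ in the special form approximating the identity on each off-diagonal block. Assembling the diagonal and off-diagonal contributions then produces a strong approximation for $(A_i, B, E_i)$.

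The main obstacle is the off-diagonal construction. The straightforward sufficient condition for $B$-bimodularity of $\psi_k \circ \varphi_k$---namely that the tuples $(x_i^{(k)}, y_j^{(k)})$ lie in the relative commutant $B' \cap A_i$---forces the composition to annihilate off-diagonal blocks entirely (since for $a \in p_l A_i p_m$ with $l\neq m$ and tuples block-diagonal in the central decomposition, one verifies that $E_i(x_i^{*} a x_j) \in p_l B p_m = 0$), so more delicate tuples are needed: they involve an interplay between the matrix units of $B$ and corner-valued coefficients, and the finite-dimensionality of $B$ is essential for the resulting matrix-unit combinatorics, while nondegeneracy of $E_i$ is essential for the Hilbert-bimodule structure on each $D_{lm}$. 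Once strong nuclearity of each $(A_i, B, E_i)$ is established, Theorem~\ref{thm-C} applies directly to yield the stated $KK$-equivalence.
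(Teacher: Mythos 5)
There is a genuine gap, and it sits exactly where you flag it. Your plan is to deduce Theorem \ref{thm-D} from Theorem \ref{thm-C} by showing each $(A_i,B,E_i)$ is strongly nuclear in the plain sense (via the identity correspondence $(B,\lambda_B)$) without assuming fullness of $B\hookrightarrow A_i$. That is precisely what the paper does \emph{not} establish and explicitly treats as unknown: the sufficient condition you cite requires fullness, and the paper's way around this is to prove only strong nuclearity \emph{via} an auxiliary correspondence $(Z,\pi_Z)$ over $B$ (Proposition \ref{prop-fin-dim}, which embeds $B=\bigoplus_r \lM_{n(r)}$ fully into $\lM_n$, invokes Example \ref{ex-nuc} for that full embedding, and takes $Z=L^2(\lM_n,\Phi)$ with its $B$-central normal vector $\xi_\Phi$), and then to apply not Theorem \ref{thm-C} itself but its generalization Theorem \ref{thm-KK}, whose hypothesis is exactly ``strongly nuclear via some countably generated $Z$ with a $B$-central normal vector.'' Theorem \ref{thm-D} is obtained as Theorem \ref{thm-KK} plus Proposition \ref{prop-fin-dim}; no claim of plain strong nuclearity is ever made in the non-full case.

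Concretely, the unresolved step in your argument is the off-diagonal one. Your corner decomposition does handle the diagonal: each $p_lBp_l$ is simple, hence fully embedded in $p_lA_ip_l$, and the cited condition applies there. But, as you yourself observe, tuples adapted to the central decomposition give compositions $\psi_k\circ\varphi_k$ that kill the blocks $p_lA_ip_m$ with $l\neq m$ (since $E_i(x^*ay)\in p_lBp_m=0$ for such tuples), and the ``more delicate tuples'' that are supposed to give $B$-bimodular approximations of the identity on these blocks in the special form of Theorem \ref{thm-A} are never constructed. Producing them would amount to proving plain strong nuclearity of $(A_i,B,E_i)$ without fullness, a statement the paper leaves open even for finite-dimensional $B$; nothing in your sketch (Morita identifications $p_lA_ip_l\cong\lM_{d_l}(C_l)$, Hilbert bimodule structures on $D_{lm}$) actually yields the required coefficients $E_i(x_i^*ax_j)$ landing correctly in $B$ while the composition remains $B$-bimodular and close to the identity on $p_lA_ip_m$. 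As written, the proposal therefore does not prove Theorem \ref{thm-D}. The repair is to abandon the reduction to Theorem \ref{thm-C} and instead prove the ``via $Z$'' statement (Proposition \ref{prop-fin-dim}) and feed it into the more general Theorem \ref{thm-KK}.
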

These two theorems follow from a somewhat more general and technical result (see \S\S \ref{ss-KK-CD}).
We note that Germain's result on free products of unital separable nuclear C$^*$-algebras \cite{Germain-duke} is a particular case of these theorems with $B=\lC$.
Finally, combing our result with Thomsen's result \cite{Thomsen} on $K$-theory of full amalgamated free products we obtain six term exact sequences in $K$-theory of reduced ones.

\medskip
This paper is organized as follows.
In \S \ref{sec-pre}, we recall basic facts on C$^*$-correspondences.
The definition of weak containment for C$^*$-correspondences is given in \S \ref{sec-weak}.
In that section, we also characterize weak containment by a certain approximation property for coefficients, mentioned above.
In \S \ref{sec-rel} we introduce the notion of UFP.
We then define relative nuclearity and strong one, and prove Theorem \ref{thm-A}. 
We also see that our relative nuclearity is related to relative WEP recently introduced by Jian and Sepideh \cite{Jian-Sepideh}, as well as relative amenability for von Neumann algebras (\cite{Popa}\cite{Delaroche2}\cite{Ozawa-Popa}).
In \S \ref{sec-exam}, we see that the examples listed above are actually (strongly) nuclear.
\S \ref{sec-voic} is devoted to proving Weyl--von Neumann--Voiculescu type results.
Applications in $KK$-theory are given in the last three sections.
In \S \ref{sec-rel-K} we prove that strong relative nuclearity implies relative $K$-nuclearity.
The proof of Theorem \ref{thm-C} and Theorem \ref{thm-D} are given in \S \ref{sec-KK}.
In the final section, we establish six term exact sequences in $KK$-theory of reduced amalgamated free products and a $KK$-equivalence result for HNN-extensions.

\subsection*{Notation}
We basically follow the notation of Brown and Ozawa's book \cite{Brown-Ozawa}.
For a C$^*$-algebra $A$ we denote by $1_A$ the unit of the multiplier algebra $\cM (A)$ of $A$.
The symbols $\lB (H)$ and $\lK (H)$ stand for the set of bounded operators and the set of compact ones on a Hilbert space $H$, respectively.
We use the symbol $\odot$ to denote the algebraic tensor product over $\lC$.
For C$^*$-algebras $A$ and $B$ we denote by $A\otimes B$ and $A\otimes_{\rm max} B$ the minimal and the maximal tensor products, respectively.
For a von Neumann algebra $M$, we denote by $M_*$ the (unique) predual of $M$.
The von Neumann algebraic tensor product of $M$ and another von Neumann algebra $N$ is denoted by $M \botimes N$.
We denote by $\CP (A,B), \CCP(A, B)$ and $\UCP(A, B)$ the c.p.\ (completely positive) maps, the c.c.p.\ (completely contractive positive) maps, and the u.c.p.\ (unital completely positive) maps from $A$ into $B$, respectively.
For a linear map $\varphi :  A \to B$ we denote by $\varphi^{(n)}$ the linear map $\varphi \otimes \id_{\lM_n } :\lM_n (A) \to \lM_n (B)$ with identification $\lM_n (A) = A \otimes \lM_n$, etc.
For these terminologies we refer the reader to \cite[Chapter 1-3]{Brown-Ozawa}.
For elements $x$ and $y$ in a normed space $X$ and $\varepsilon >0$ we write $x \approx_\varepsilon y$ when $\| x - y \| < \varepsilon$ holds.
The closed unit ball of $X$ is denoted by $\Ball (X)$.

\section*{Acknowledgment} The author wishes to express his gratitude to Professor Yoshimichi Ueda, who is his supervisor, for his continuous guidance and constant encouragement.
He is also grateful to Yuki Arano for many stimulating conversations.

\setcounter{theorem}{0}
\renewcommand{\thetheorem}{\arabic{section}.\arabic{theorem}}
\section{Preliminaries on C$^*$-correspondences}\label{sec-pre}
In this section, we fix notations and terminologies and recall basic facts on C$^*$-correspondences. 
We refer the reader to Lance's book \cite{Lance} for Hilbert C$^*$-module theory.
\begin{definition}
An {\it inner product} $A$-module is a linear space $X$ with a right $A$-action which is compatible with scalar multiplication, i.e., $\lambda (\xi  a)= (\lambda \xi ) a = \xi  (\lambda a)$ for $\lambda \in \lC, \xi \in X, a \in A$ and an $A$-valued inner product $\i< \cdot, \cdot > : X \times X \to A$ satisfying the following conditions:
\begin{itemize}
\item[(1)] $\i< \xi, \lambda \eta + \mu \zeta >= \lambda \i< \xi , \eta > + \mu \i< \xi, \zeta >$ for $\xi, \eta, \zeta \in X$ and $\lambda, \mu \in \lC$,
\item[(2)] $\i< \xi, \eta  a> = \i< \xi, \eta >a$ for $\xi, \eta \in X$ and $a\in A$,
\item[(3)] $\i< \xi, \eta >^*= \i< \eta, \xi >$ for $\xi, \eta \in X$,
\item[(4)] $\i< \xi, \xi >\geq 0$ for $\xi \in X$,
\item[(5)] $\xi =0$ if and only if $\i< \xi, \xi >=0$ for $\xi \in X$.
\end{itemize}
When $X$ is complete with respect to the norm $\| \xi \| = \| \i< \xi, \xi > \|^{1/2}$,
we call $X$ a {\it Hilbert $A$-module} or {\it Hilbert $\rC^*$-module over $A$}.
\end{definition}

Let $X$ be a Hilbert C$^*$-module over a C$^*$-algebra $A$.
When $A=\lC$, $X$ is a usual Hilbert space.
When we would like to emphasize the C$^*$-algebra $A$ of coefficients, we will write $\i< \cdot, \cdot >_A$.
A Hilbert $A$-module $X$ is said to be {\it countably generated} if there exists a countable subset $\{ \xi_n \}_{n=1}^\infty \subset X$ such that $\ospan \{ \xi_n a \mid a \in A, n \in \lN \} =X$.

Let $X$ and $Y$ be Hilbert $A$-modules.
A linear map $x : X \to Y$ is said to be {\it adjointable} if there exists a linear map $x^* :Y \to X$ which enjoys $\i< \eta, x \xi>= \i< x^* \eta, \xi>$ for all $\xi \in X, \eta \in Y$.
Note that adjointability implies $A$-linearity.
We denote by $\lL_A (X,Y)$ the set of adjointable maps from $X$ into $Y$ and set $\lL_A (X):=\lL_A (X,X)$.
Every adjointable map is automatically bounded and $\lL_A (X)$ forms a unital C$^*$-algebra with respect to the operator norm and the involution $x \mapsto x^*$.

For given vectors $\xi, \eta \in X$ we define the `rank one' operator $\theta_{\xi, \eta} \in \lL_A (X)$ by $\theta_{\xi, \eta} (\zeta ) =\xi  \i< \eta, \zeta>$.
We denote by $\lK_A (X)$ the C$^*$-subalgebra of $\lL_A (X)$ generated by $\{ \theta_{\xi, \eta} \mid \xi, \eta \in X \}$.
Operators in $\lK_A (X)$ are called compact operators on $X$.
It is known that $\lK_A (X)$ is a C$^*$-ideal of $\lL_A (X)$ and $\lL_A(X)$ is isomorphic to the multiplier algebra of $\lK_A(X)$. We denote by $1_X$ the identity operator on $X$. 

\begin{definition}
Let $A$ and $B$ be C$^*$-algebras.
An $A$-$B$ C$^*$-{\it correspondence} is a pair $(X, \pi_X)$ consisting of a Hilbert $B$-module $X$ and a $*$-homomorphism $\pi_X :A \to \lL_A (X)$, called the left action.
$A$-$A$ C$^*$-correspondences are also called {\it $\rC^*$-correspondences over} $A$.
We denote by $\Corr (A,B)$ the set of $A$-$B$ C$^*$-correspondences and set $\Corr (A):=\Corr(A,A)$.
\end{definition} 
A $\rC^*$-correspondence $(X, \pi_X) \in \Corr (A, B)$ is said to be {\it unital} if $A$ is unital and $\pi_X$ is also a unital map, and {\it countably generated} if $X$ is countably generated as a Hilbert $B$-module.
We denote by $\Rep (A)$ the set of nondegenerate $*$-representations of $A$.
$A$-$B$ C$^*$-correspondences $(X,\pi_X)$ and $(Y,\pi_Y)$ are said to be {\it unitarily equivalent}, denoted by $(X, \pi_X) \cong (Y, \pi_Y)$, if there exists a unitary $U \in \lL_B (X, Y)$ such that $\pi_X(a)= U^* \pi_Y (a) U$ for $a\in A$.
When no confusion may arise, we will write $X$ for short instead of $(X, \pi_X)$.
\begin{definition}\label{def-coeff}
For $X \in \Corr (A, B)$ and $\xi, \eta \in X$ the mapping $A \ni a \mapsto \i< \xi, \pi_X(a) \eta>$ is called a {\it coefficient} of $X$.
We define the c.p.\ map $\Omega_\xi : A \to B$ by $\Omega_\xi (a)=\i< \xi, \pi_X(a ) \xi >$.
For a subset $S \subset X$ we denote by $\cF_S$ the convex hull of $\{ \Omega_\xi \mid \xi \in S \}$ in $\CP (A,B)$.
\end{definition}

\begin{definition}\label{def-int-tensor}
Let $X$ and $Y$ be Hilbert C$^*$-modules over $A$ and $B$, respectively, and $\varphi : A \to \lL_B (Y)$ be a c.p.\ map.
Then we can construct the Hilbert $B$-module $X\otimes_\varphi Y$ by separation and completion of  $X\odot Y$ with respect to the $B$-valued semi-inner product (i.e., it satisfies the axiom of $B$-valued inner products except (5))
$
\i< \xi \otimes \eta, \xi' \otimes \eta' >:= \i< \eta, \varphi(\i< \xi, \xi'> ) \eta' >$ for  $\xi,\xi'\in X$ and $\eta,\eta' \in Y.$ 
There are two $*$-homomorphisms:
\begin{align*}
\lL_A (X) \to \lL_B (X\otimes_\varphi Y); \quad x \mapsto x \otimes 1_Y\\
\varphi(A)'\cap \lL_B (Y) \to \lL_B (X\otimes_\varphi Y); \quad y \mapsto 1_X \otimes y
\end{align*}
satisfying that $(x\otimes 1_Y) (\xi \otimes \eta) =( x\xi )\otimes \eta$ and $(1_X\otimes y)( \xi \otimes \eta) =\xi \otimes (y \eta),$
for $\xi \in X$ and $\eta \in Y$.
Since these $*$-homomorphisms have mutually commuting ranges,
we will write $x\otimes y: =(x\otimes 1_Y) (1_X\otimes y)=(1_X \otimes y)(x\otimes 1_Y)$.
When $\varphi$ is a $*$-homomorphism,
the module $X\otimes_{\varphi} Y$ is called the {\it interior tensor product} of $X$ and $(Y, \varphi)$.
When no confusion may arise,
we may write $X\otimes_B Y=X\otimes_{\varphi} Y$.
Further assume that $Y=B$ and $\varphi :A \to B$ is surjective.
In this case, $X \otimes_\varphi B$ is called the {\it pushout} of $X$ by $\varphi$ and denoted by $X_\varphi$.
We also write $x_\varphi:=x \otimes 1_B $ for $x \in \lL_A (X)$.
\end{definition}

\begin{definition}
Let $X$ and $Y$ be Hilbert C$^*$-modules over $C$ and $D$, respectively.
The {\it exterior tensor product} of $X$ and $Y$ is the Hilbert $C\otimes D$-module given by separation and completion of $X \odot Y$ with respect to the $C \otimes D$-valued semi-inner product $\i< \xi \otimes \eta , \xi' \otimes \eta'> =\i< \xi, \xi'> \otimes \i< \eta, \eta '> \in C \otimes D$ for $\xi, \xi ' \in X$ and $\eta, \eta' \in Y$.
It is known that there exists a $*$-homomorphism $\iota :\lL_C (X) \otimes \lL_D ( Y) \to \lL_{C \otimes D} (X\otimes Y)$ such that $\iota (x \otimes y) (\xi \otimes \eta)= x\xi \otimes y \eta$ for $x \in \lL_C (X)$, $y \in \lL_D (Y)$, $\xi \in X$, $\eta \in Y$.
We will write $\iota (x\otimes y) = x\otimes y$ for short. 
We note that when $(X, \pi_X) \in \Corr (A,C)$ and $(Y, \pi_Y) \in \Corr (B,D)$, we have $(X \otimes Y, \pi_X \otimes \pi_Y) \in \Corr (A \otimes B, C\otimes D)$. 
\end{definition}

\begin{remark}
Let $X \in \Corr (A, B)$ and $Y \in \Corr (B, C)$ be given.
To simplify the notation, we use the same symbol $\pi_X \otimes 1_Y$ for the $*$-homomorphisms from $A$ into $\lL_C (X \otimes_B Y)$ and $\lL_{B \otimes C} (X \otimes Y)$.
\end{remark}

\begin{example}\label{ex-id}
Every C$^*$-algebra $A$ forms a Hilbert $A$-module with respect to the inner product $\i< a, b> =a^*b$.
It is not hard to see that $A\cong \lK_A (A)$.
Let $\lambda_A :A \to \lL_A (A)$ the canonical $*$-homomorphism given by the left multiplication.
The $(A, \lambda_A) \in \Corr (A)$ is called the {\it identity $\rC^*$-correspondence over} $A$.
\end{example}
\begin{example}
Let $\pi_H :A \to \lB (H)$ be a $*$-representation.
The C$^*$-correspondence $(H \otimes B, \pi_H \otimes 1_B)$ is called a {\it scalar representation}.
\end{example}

\begin{example}\label{ex-cond}
Let $B \subset A$ be an inclusion of C$^*$-algebras with a conditional expectation $E$ from $A$ onto $B$.
Then, $A$ naturally forms a right $B$-module by right multiplication.
We denote by $L^2 (A, E)$ the Hilbert $B$-module obtained from $A$ by separation and completion with respect to the $B$-valued semi-inner product $\i< a, b > := E(a^* b)$ for $a, b \in A$.
The left action $\pi_E : A \to \lL_B (L^2 (A,E))$ is given by the left multiplication.
When $A$ is unital, we denote by $\xi_E$ be the vector in $L^2(A, E)$ corresponding to $1_A$.
The triple $(L^2(A,E), \pi_E, \xi_E)$ is called {\it the GNS representation associated with} $E$.
The conditional expectation $E$ is said to be {\it nondegenerate} when $\pi_E$ is injective (or, equivalently, $a=0$ if and only if $E(xay)=0$ for all $x,y \in A$).
The conditional expectation $E$ is also said to be {\it faithful} when for any $a\in A$, $a=0$ if and only if $E(a^*a)=0$.
\end{example}
Let $\{\delta_i \}_{i=1}^n$ be the standard basis of $\lC^n$ and $\{ e_{ij} \}_{i,j=1}^n$ be the corresponding system of matrix units in $\lM_n$.
We denote by $\lC_n$ the Hilbert $\lM_n$-module $\lC^n$ equipped with the right action $\lC^n \times \lM_n \ni ( \xi, x) \mapsto {}^t x \xi \in \lC^n$, where ${}^t x$ is the transposed matrix of $x$, and the $\lM_n$-valued inner product defined by $\i<\delta_i, \delta_j>=e_{ij}$ for $1 \leq i,j \leq n$.
\begin{example}\label{ex-tensor}
We call $H_A:=\ell^2(\lN) \otimes A$ {\it the standard Hilbert module over} $A$.
Clearly, $H_A$ is isomorphic to the infinite direct sum $\bigoplus_{n=1}^\infty A$ of $A$ as Hilbert $A$-module.
For $(X, \pi_X) \in \Corr (A,B)$ and $n\in \lN$ we set
$(X^\infty, \pi_X^\infty) := (\ell^2(\lN) \otimes X, 1_{\ell^2(\lN)} \otimes \pi_X)$ and $(X^n, \pi_X^n):=(\lC^n \otimes X, 1_{\lC^n} \otimes \pi_X)$.
We also define $(X_n, \pi_{X_n}):= (X \otimes \lC_n, \pi_X \otimes 1_{\lC_n} ) \in \Corr (A, \lM_n (B) )$.
\end{example}
The next observation is standard, but important for us since it illustrates how c.p.\ maps that factors through matrix algebras (over C$^*$-algebras) appear.
\begin{remark}\label{rem-factor}
Let $(X, \pi_X) \in \Corr (A, B)$ and $(Y, \pi_Y) \in \Corr (B, C)$ be given.
For a given vector $\zeta \in X \otimes_B Y$ of the form $\sum_{i=1}^n \xi_i \otimes \eta_i$,
the coefficient $\Omega_\zeta : A \to C$ is equal to the composition of the c.p.\ maps $\varphi : A \to \lM_n (B)$ and $\psi : \lM_n (B) \to C$ defined by
\[
\varphi : a \mapsto \left[ \rule{0pt}{10pt} \i< \xi_i, \pi_X( a) \xi_j >_B \right]_{i,j=1}^n, \quad 
\psi : \left[ \rule{0pt}{0pt}\, b_{ij}\, \right]_{i,j=1}^n \mapsto \sum_{i,j=1}^n \i< \eta_i, \pi_Y ( b_{ij} ) \eta_j >_C.
\]
\end{remark}

\renewcommand{\thetheorem}{\arabic{section}.\arabic{subsection}.\arabic{theorem}}

%
%
\section{Weak containment for C$^*$-correspondences}\label{sec-weak}
In this section we develop some general theory of weak containment for C$^*$-correspondences.
\subsection{Weak containment with respect to representations}\label{ss-weak-def}
\begin{definition}
Let $A$ be a C$^*$-algebra and $(H, \pi_H)$ and $(K, \pi_K)$ be $*$-representations of $A$.
We say that $(H,\pi_H)$ is {\it weakly contained in} $(K, \pi_K)$, written $(H, \pi_H) \prec (K,\pi_K)$ if $\ker \pi_K \subset \ker \pi_H$.
When no confusion may arise, we may write $H \prec K$ or $\pi_H \prec \pi_K$ for short.
\end{definition}
\begin{definition}\label{def-weak}
Let $A$ and $B$ be C$^*$-algebras.
For $(X,\pi_X) \in \Corr(A,B)$ and $(H, \pi_H) \in \Rep (B)$
we define the $*$-representation $\theta_X^H : A \otimes_{\rm max} \pi_H(B)' \to \lB (X\otimes_B H)$ by
\[
\theta_X^H (a\otimes x):= \pi_X(a)\otimes x, \quad a\in A, x\in \pi_H(B)'.
\]
We say that $(X, \pi_X )$ is {\it weakly contained in} $(Y,\pi_Y)  \in \Corr(A,B)$ {\it with respect to} $(H,\pi_H)$, written $(X, \pi_X) \prec_{(H,\pi_H)} (Y,\pi_Y)$, if $(X\otimes_B H, \theta^H_X)$ is weakly contained in $(Y \otimes_B H, \theta^H_Y)$.
When no confusion may arise, we will write $X\prec_H Y$ for short.
In the case that $(H,\pi_H)$ is the universal representation of $B$, we write $X \prec_\univ Y$.
\end{definition}

\begin{remark}
The reader may think our definition of weak containment rather technical.
Hence, we will briefly explain why we formulated it as above.
Let $A$ and $B$ be C$^*$-algebras.
Fix $H \in \Rep (B)$ arbitrarily and set $M:=\pi_H(B)''$.
Then, weak containment for $A$-$B$ C$^*$-correspondences with respect to $H$ can be characterized by the one for corresponding $A$-$M$ bimodules in the following way.
Let $X, Y \in \Corr (A, B)$ be arbitrary.
Thanks to Lemma \ref{lem-normalrepn} below,
we can assume that $M \subset \lB (H)$ is of standard form (see, e.g., \cite{Haagerup}).
Then, the commutant $\pi_H(B)'$ is canonically isomorphic to the opposite algebra $M^\op$, and $\theta_X^H$ and $\theta_Y^H$ factor through the (right) normal tensor product $A \otimes_\nor M^\op$ (see \cite{Effros-Lance}).
Let $\rho_X$ and $\rho_Y$ be the representations of $A \otimes_\nor M^\op$ corresponding to $\theta_X^H$ and $\theta_Y^H$, respectively.
Then, it is clear that $\theta_X^H \prec \theta_Y^H$ if and only if $\rho_X \prec \rho_Y$.
We also note that this observation says that, in the case when $B=M$ and $X$ and $Y$ are selfdual,
our definition agrees with the one defined by Anantharaman-Delaroche and Havet \cite[Definition 1.7]{Delaroche-Havet}.
\end{remark}

\begin{remark}\label{rem-faithful}
Let $X, Y \in \Corr (A, B)$ and $(H, \pi) \in \Rep (B)$ be arbitrary.
Recall that the pushout of $X$ by $\pi$ is the Hilbert $\pi (B)$-module  $X_\pi$.
Since $X \otimes_B H \cong X_\pi \otimes_{\pi(B)} H$, it follows that $X \prec_H Y$ if and only if $X_\pi \prec_H Y_\pi$ as $A$-$\pi(B)$ C$^*$-correspondences.
Thanks to this observation, we can reduce to the case when $(H, \pi_H)$ is faithful in some cases. 
\end{remark}


\begin{lemma}[cf.\ {\cite[Lemma 3.8.4]{Brown-Ozawa}}]\label{lem-normalrepn}
Let $A$ and $B$ be $\rC^*$-algebras, and $X,Y \in \Corr(A,B)$ and $(H, \pi_H) \in \Rep(B)$ be given. Set $M=\pi_H(B)''$.
If $X \prec_H Y$ and $(K,\pi_K)$ is a normal representation of $M$,
then it follows that $X \prec_{(K,\pi_K\circ \pi_H)} Y$.
\end{lemma}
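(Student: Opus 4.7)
The plan is to realize the normal representation $\pi_K$ as the compression of an amplification of $\pi_H$, transfer the weak containment along the resulting intertwiner, and finally reduce the problem to showing that weak containment is preserved under amplification.

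First, since $\pi_K : M \to \lB(K)$ is a normal $*$-homomorphism of the von Neumann algebra $M$, a standard amplification--compression argument produces a Hilbert space $L$ and an isometry $V : K \to H \otimes L$ such that $p := VV^* \in (M \otimes 1_L)'$ and $(m \otimes 1_L) V = V \pi_K (m)$ for $m \in M$. Normality also forces $(\pi_K \circ \pi_H)(B)'' = \pi_K(M)$, so $(\pi_K \circ \pi_H)(B)' = \pi_K (M)'$. For $x \in \pi_K (M)'$ set $\hat x := VxV^*$; the intertwining property of $V$ puts $\hat x$ in $(M \otimes 1_L)' \subseteq (\pi_H \otimes 1_L )(B)'$, and $V^*V = 1_K$ makes $x \mapsto \hat x$ a $*$-homomorphism. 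By universality of $\otimes_{\rm max}$, this extends to a $*$-homomorphism
\[
\Phi : A \otimes_{\rm max} (\pi_K \circ \pi_H)(B)' \to A \otimes_{\rm max} (\pi_H \otimes 1_L)(B)', \quad a \otimes x \mapsto a \otimes \hat x.
\]

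Next, for $Z \in \{X,Y\}$ I would introduce $W_Z : Z \otimes_B K \to Z \otimes_B (H \otimes L)$ by $\zeta \otimes k \mapsto \zeta \otimes Vk$; this is an isometry because $(\pi_H(b)\otimes 1_L)V = V\pi_K (\pi_H(b))$ respects the $B$-balance. A direct computation shows $W_Z\, \theta_Z^K(z) = \theta_Z^{H \otimes L}(\Phi(z))\, W_Z$, while the range of each $\hat x = VxV^*$ lies inside $p(H\otimes L)$, so $\theta_Z^{H \otimes L}(\Phi(z))$ vanishes on the orthogonal complement of the range of $W_Z$. Consequently $\| \theta_Z^K(z) \| = \| \theta_Z^{H \otimes L}(\Phi (z)) \|$ for both $Z = X$ and $Z = Y$.

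These norm equalities reduce the lemma to the auxiliary statement that $X \prec_H Y$ implies $X \prec_{(H \otimes L,\ \pi_H \otimes 1_L)} Y$: combining the two gives $\| \theta_X^K(z) \| = \| \theta_X^{H \otimes L}(\Phi(z)) \| \leq \| \theta_Y^{H \otimes L}(\Phi(z)) \| = \| \theta_Y^K(z) \|$. This amplification step is the main obstacle. On the spatial subalgebra $A \otimes_{\rm max} (\pi_H(B)' \otimes \lB(L))$ the representation $\theta_Z^{H \otimes L}$ factors as $\theta_Z^H \otimes \id_{\lB(L)}$, so functoriality of $\otimes_{\rm max}$ delivers the kernel inclusion there at once; the difficulty is that the genuine commutant $(\pi_H \otimes 1_L)(B)' = \pi_H(B)' \botimes \lB(L)$ is ultraweakly larger than the spatial tensor product and $\otimes_{\rm max}$ interacts awkwardly with ultraweak limits on the second factor. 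My plan for handling this is to extend each $\theta_Z^{H \otimes L}$ to a normal representation of an appropriate von Neumann envelope (exploiting that the commutant factor acts by a normal $*$-homomorphism) and then invoke Kaplansky density to propagate the kernel inclusion from the spatial subalgebra to all of $A \otimes_{\rm max}(\pi_H \otimes 1_L)(B)'$, in the spirit of \cite[Lemma 3.8.4]{Brown-Ozawa}.
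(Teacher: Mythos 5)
Your reduction of the general normal representation to an amplification is correct and is essentially the route the paper takes, just in the opposite order: the paper proves the amplification case first and then disposes of general $(K,\pi_K)$ with the one-line remark that every normal representation of $M$ is a cut-down of $\pi_H\otimes 1_G$ by a projection in $\pi_{H\otimes G}(M)'$, whereas you carry out that cut-down explicitly via the isometry $V$ and the intertwiners $W_Z$. That part of your argument is fine, and in fact more detailed than the paper's.

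The gap is in the amplification step, which you yourself flag as the main obstacle and only sketch. Approximating an element $x\in \pi_H(B)'\botimes\lB(L)$ by a bounded net $x_i$ from the spatial subalgebra in the $\sigma$-strong topology (Kaplansky) gives, for $w=\sum_k a_k\otimes x_k$, the inequality $\|\theta_X^{H\otimes L}(w)\|\le\liminf_i\|\theta_X^{H\otimes L}(w_i)\|$ by lower semicontinuity of the operator norm, and on the spatial subalgebra one does have $\|\theta_X^{H\otimes L}(w_i)\|\le\|\theta_Y^{H\otimes L}(w_i)\|$; but the chain cannot be closed because nothing controls $\|\theta_Y^{H\otimes L}(w_i)\|$ from above by $\|\theta_Y^{H\otimes L}(w)\|$ --- the operator norm is only \emph{lower} semicontinuous along $\sigma$-strongly convergent bounded nets, and a net can converge strongly to something of strictly smaller (even zero) norm while each term has norm one. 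This is exactly the direction you need on the $Y$ side, since $w$ is chosen close to an element of $\ker\theta_Y^{H\otimes L}$. The paper's proof avoids this by taking the approximants to be the two-sided compressions $(1_H\otimes P)x_k(1_H\otimes P)$ by finite-rank projections $P\in\lB(L)$: these land in $M'\otimes\lM_n$, so the spatial-subalgebra inequality applies via $\theta_X^H\otimes\id_{\lM_n}\prec\theta_Y^H\otimes\id_{\lM_n}$, and compression automatically gives $\|(1\otimes 1\otimes P)\theta_Y^{H\otimes L}(w)(1\otimes 1\otimes P)\|\le\|\theta_Y^{H\otimes L}(w)\|$; lower semicontinuity as $P\nearrow 1_L$ then handles the $X$ side. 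You should replace the Kaplansky step with this compression argument (or an equivalent device that forces the $Y$-side norms of the approximants to be dominated by the norm of the limit).
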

\begin{proof}
First we deal with the case that $K= H \otimes G$ and
$\pi_K : M \to \lB (H \otimes G); x\mapsto x\otimes 1_G$ for a Hilbert space $G$.
Fix $\sum_{k=1}^m a_k \otimes x_k \in A\odot \pi_K(M)'$ arbitrarily.
Let $P \in \lB (G)$ be an orthogonal projection of rank $n$.
Note that $\pi_K(M)' = M'\bar{\otimes}\lB (G)$ and $(1 \otimes P) \pi_K (M)' (1 \otimes P) \cong M' \otimes \lM_n$.
Let $\{ e_{ij} \}_{i,j=1}^n$ be a system of matrix units $\lM_n$ and $\sum_{i,j=1}^n x^{(k)}_{ij}\otimes e_{ij} \in M' \otimes \lM_n$ be the matrix representation of $(1\otimes P)x_k(1\otimes P)$ via the above isomorphism.
Since $\lB (X \otimes_B (H \otimes PG) ) \cong \lB (( X \otimes_B H) \otimes \lC^n ) \cong \lB (X \otimes_B H) \otimes\lM_n$, we have
\begin{align*}
& \left\| ( 1_X \otimes (1_H \otimes P) ) \sum_{k=1}^m \theta_X^{K}(a_k \otimes x_k)  ( 1_X \otimes (1_H \otimes P) )  \right\|
_{\lB (X\otimes_B K)} \\
&\qquad =
\left\|  \sum_{k=1}^m \theta_X^{K}(a_k \otimes ( (1_H \otimes P ) x_k ( 1_H \otimes P )  ) )  \right\|
_{\lB (X\otimes_B (H \otimes PG) )} \\
&\qquad =
\left\| \sum_{k=1}^m \sum_{i,j=1}^n \theta_X^H \left( a_k \otimes x_{ij}^{(k)} \right)  \otimes e_{ij} \right\|
_{\lB (X\otimes_B H) \otimes \lM_n } \\
&\qquad =
\left\| (\theta_X^{H} \otimes \id_{\lM_n} ) \left( \sum_{k=1}^m \sum_{i,j=1}^n ( a_k \otimes x^{(k)}_{ij}) \otimes e_{ij} \right) \right\|
_{\lB (X\otimes_B H) \otimes \lM_n } \\
&\qquad \leq
\left\| (\theta_Y^{H } \otimes \id_{\lM_N} )  \left( \sum_{k=1}^m \sum_{i,j=1}^n ( a_k \otimes x^{(k)}_{ij}) \otimes e_{ij} \right) \right\|
_{\lB (Y \otimes_B H) \otimes \lM_n } \\
&\qquad \leq
\left\| \sum_{k=1}^m \theta_Y^{K}(a_k \otimes x_k)  \right\|
_{\lB (Y \otimes_B K)}.
\end{align*}
Let $\{P_i \}_i \subset \lB (G)$ be a net of finite rank projections converges to $1_{G}$ strongly.
Then $\{1_X \otimes (1_H \otimes P_i) \}_i $ also converges to $1_{X\otimes K}$ strongly.
By the lower semi-continuity of operator norm we have
$\left\| \sum_{k=1}^m \theta_X^{K}(a_k \otimes x_k)  \right\|
\leq
\left\| \sum_{k=1}^m \theta_Y^{K}(a_k \otimes x_k)  \right\|$,
and hence we have $\theta_X^H \prec \theta_Y^H$.

Since every normal representation of $M$ is the cut-down of $\pi_K$ above by some projection in $\pi_K (M)'$, we are done.
\end{proof}

As we mentioned above, the following definition includes \cite[Definition 1.7]{Delaroche-Havet}.
\begin{definition}
Let $A$ be a C$^*$-algebra and $M$ be a von Neumann algebra.
For any two C$^*$-correspondences $X,Y \in \Corr (A,M)$ we say that $X$ is {\it weakly contained in} $Y$, written $X \prec Y$, if $X$ is weakly contained in $Y$ with respect to any (or some) faithful normal representations of $M$.
\end{definition}

\subsection{Characterization in terms of coefficients}\label{ss-weak-coe}
In this subsection, we prove Theorem \ref{thm-weak} below, which contains \cite[Proposition 2.3]{Delaroche-Havet} as a particular case that $B=M$ and $X$ is selfdual.
The proof below is based on the same idea as in Kirchberg's proof \cite{Kirchberg} for showing that C$^*$-nuclearity implies CPAP (see also \cite[Theorem 3.8.5]{Brown-Ozawa}). 

\begin{theorem}\label{thm-weak}
Let $A$ and $B$ be $\rC^*$-algebras with $A$ unital.
Let $(X,\pi_X), (Y, \pi_Y) \in \Corr (A,B)$ and $(H, \pi_H ) \in \Rep (B)$ be given and
set $M:=\pi_H(B)''$.
Then, the following are equivalent:
\begin{itemize}
\item[$(1)$] $(X,\pi_X) \prec_{(H,\pi_H)} (Y,\pi_Y)$.
\item[$(2)$] For any $\xi \in X$ there exists a net of c.p.maps $\{\psi_i \}_i$ in $\cF_Y$ (see Definition \ref{def-coeff}) such that $\pi_H \circ \psi_i$ converges to $\pi_H \circ \Omega_\xi$ in the point $\sigma$-weak topology.
\item[$(3)$] For any $\xi \in X$ there exists a net of c.p.maps $\{\psi_i \}_i$ in $\cF_Y$ such that $\psi(1_A) \leq \Omega_\xi (1_A)$ and $\pi_H \circ \psi_i$ converges to $\pi_H \circ \Omega_\xi$ in the point $\sigma$-weak topology.
\item[$(4)$] $X\otimes_B M \prec Y \otimes_B M$.
\end{itemize}
\end{theorem}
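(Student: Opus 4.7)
Plan: The proof hinges on the identity
\[
\langle \xi \otimes h,\, \theta_X^H(a \otimes x)(\xi \otimes h')\rangle_{X \otimes_B H} = \langle h,\, \pi_H(\Omega_\xi(a))\, x\, h'\rangle_H
\]
valid for $\xi \in X$, $h, h' \in H$, $a \in A$, $x \in \pi_H(B)'$, which identifies matrix coefficients of $\theta_X^H$ at simple tensors with matrix elements of $\pi_H \circ \Omega_\xi$ twisted by $\pi_H(B)'$. I will establish (3) $\Rightarrow$ (2) (trivial), (2) $\Rightarrow$ (1) $\Leftrightarrow$ (4) $\Rightarrow$ (2), and finally (2) $\Rightarrow$ (3), following the strategy of Kirchberg's proof of nuclearity $\Rightarrow$ CPAP.

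For (2) $\Rightarrow$ (1), write each $\psi_i = \sum_k \lambda_k^{(i)}\, \Omega_{\eta_k^{(i)}}$ and use the identity to translate the hypothesized point-$\sigma$-weak convergence $\pi_H \circ \psi_i \to \pi_H \circ \Omega_\xi$ into weak-$*$ convergence of the vector matrix coefficients of $\theta_X^H$ at simple tensors $\xi \otimes h$ by those of $\theta_Y^H$ at the amplified vectors $\bigoplus_k \sqrt{\lambda_k^{(i)}}\, \eta_k^{(i)} \otimes h \in (Y \otimes_B H)^{\oplus \infty}$. Since linear combinations of simple tensors are dense in $X \otimes_B H$ and vector states determine the kernel of a representation, this yields $\theta_X^H \prec \theta_Y^H$.

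For (1) $\Leftrightarrow$ (4) and (4) $\Rightarrow$ (2): choose $K$ on which $M$ is realized in standard form, so that $K$ is a cardinal amplification of $H$. By Lemma \ref{lem-normalrepn}, $X \prec_H Y$ upgrades to $X \prec_K Y$; conversely, the embedding $\lB(X \otimes_B H) \hookrightarrow \lB(X \otimes_B K)$ is isometric on the image of $\theta_X^H$, so $X \prec_K Y$ implies $X \prec_H Y$. Since every normal functional on $M$ is a vector functional on $K$, the identity then gives the equivalence of $X \prec_K Y$ with $X \otimes_B M \prec Y \otimes_B M$, i.e.\ (4). For (4) $\Rightarrow$ (2), given $\omega \in M_*$ expressed as a vector functional on $K$, the state $\omega \circ \pi_H \circ \Omega_\xi$ is a vector state of $\theta_X^K$, hence a weak-$*$ limit of convex combinations of vector states of $\theta_Y^K$; unraveling via the identity delivers the point-$\sigma$-weak approximation claimed in (2).

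The main obstacle is (2) $\Rightarrow$ (3): the bound $\psi_i(1_A) \leq \Omega_\xi(1_A)$ must be imposed on the approximants without destroying the approximation. Given $\psi = \sum_k \Omega_{\eta_k} \in \cF_Y$ with $\pi_H \circ \psi$ close to $\pi_H \circ \Omega_\xi$ at the chosen test inputs (so in particular $\pi_H(\psi(1_A))$ is $\sigma$-weakly close to $\pi_H(\Omega_\xi(1_A))$), one compresses by replacing each $\eta_k$ with $\eta_k c$ for a carefully chosen positive contraction $c \in B$ so that $\sum_k \Omega_{\eta_k c}(\cdot) = c^*\,\psi(\cdot)\,c$ satisfies $c^*\psi(1_A)c \leq \Omega_\xi(1_A)$ while the approximation at the other test elements of $\fF$ is preserved up to controlled error. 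Combining this compression step with a Hahn--Banach / bipolar separation argument inside the convex subset of $\cF_Y$ cut out by $\psi(1_A) \leq \Omega_\xi(1_A)$ completes the proof.
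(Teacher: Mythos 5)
Your architecture is essentially the paper's --- the coefficient identity relating $\langle \xi\otimes h,\theta_X^H(a\otimes x)(\xi\otimes h')\rangle$ to $\pi_H\circ\Omega_\xi$, Fell's characterization of weak containment, Lemma \ref{lem-normalrepn} to move between normal representations of $M$, and a compression step to impose the normalization in (3) --- but the two load-bearing technical steps are exactly where your sketch is silent, and as written one of them fails. Your direct implication (2) $\Rightarrow$ (1) does not go through as stated: condition (2) only gives convergence of the positive functionals $\sum_k\langle\eta^{(i)}_k\otimes h,\theta_Y^H(\cdot)\,\eta^{(i)}_k\otimes h\rangle$ pointwise on the algebraic tensor product $A\odot\pi_H(B)'$, and to conclude $\theta_X^H(z)=0$ for $z\in\ker\theta_Y^H\subset A\otimes_{\max}\pi_H(B)'$ you must pass to the completion, which requires the approximating functionals to be uniformly bounded, i.e.\ control of $\sum_k\|\eta^{(i)}_k\otimes h\|^2=\langle h,\psi_i(1_A)h\rangle$. ``Vector states determine the kernel'' is not enough when the convergence is only on a dense subalgebra. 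This is precisely why the paper proves (3) $\Rightarrow$ (1) rather than (2) $\Rightarrow$ (1): the bound $\psi(1_A)\le\Omega_\xi(1_A)$ gives $\sum_k\|\zeta_k\otimes\eta\|^2\le\|\xi\otimes\eta\|^2$, which closes the estimate against $\|\theta_Y^H(w)\|<\varepsilon$. (Your route is repairable by adding $1_A$ to the test set, but you must say so.) Relatedly, in (4) $\Rightarrow$ (2) you approximate one $\omega\in M_*$ at a time, whereas (2) demands a single $\psi$ serving a finite family of normal functionals simultaneously; for a convex set, approximability in each seminorm separately does not imply membership in the closure for the topology they jointly generate. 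The paper handles this by averaging the finitely many states into one state $f$ and using the Radon--Nikodym theorem to write each as $\langle\xi_f,\pi_f(\cdot)x_g\xi_f\rangle$ with the common GNS vector $\xi_f$, so that one application of Fell's theorem at $\xi\otimes\xi_f$ covers them all.

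The implication (2) $\Rightarrow$ (3), which you rightly flag as the main obstacle, is the paper's Lemma \ref{lem-cone}, and your sketch does not engage with its actual difficulty: $b:=\Omega_\xi(1_A)$ need not be invertible in $B$, so there is no positive contraction $c\in B$ with $c^*\psi(1_A)c\le b$ produced by naive functional calculus, and a Hahn--Banach or bipolar separation applied to the convex set $\{\psi\in\cF_Y:\psi(1_A)\le\Omega_\xi(1_A)\}$ does not by itself manufacture the approximants. The paper's argument needs the Douglas factorization $\varphi(a)^{1/2}=cb^{1/2}$, the support projection $e\in B''$, the cut-offs $f_n(b)$, the renormalization $c_i=2(1+\phi_i(1_A))^{-1}$, and --- crucially --- the stability of $\cF_Y$ under $\psi\mapsto b^*\psi(\cdot)b$ together with the verification that $c_ib_{n(i)}$ actually lies in $B$, to keep the compressed maps inside $\cF_Y$. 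So the skeleton of your proof matches the paper's, but both the mass-control mechanism in the kernel argument and the compression lemma are missing ideas rather than omitted routine details.
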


The following technical lemmas originate in \cite[Lemma 2.2]{Delaroche-Havet} and are used to prove the implication (1) $\Rightarrow$ (2) in Theorem \ref{thm-weak}.

\begin{lemma}\label{prop-CS}
If $A$ and $B$ are unital $\rC^*$-algebras, $\varphi : A\to B$ is completely positive, and $f$ be a state on $B$,
then for any $a \in A$ and $b,c \in B$ it follows that
\[
|f( b^*\varphi(a) c)| \leq \min \left\{ |f(b^* \varphi(1)b )|^{1/2} |f(c^*\varphi (a^*a)c )|^{1/2},
f(b^* \varphi(aa^*)b )|^{1/2} |f(c^*\varphi (1)c )|^{1/2} \right\}.
\] 
\end{lemma}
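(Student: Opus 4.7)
The plan is to deduce both estimates as instances of the ordinary Cauchy--Schwarz inequality applied to a single positive semidefinite sesquilinear form on $A \odot B$. This is essentially a KSGNS-type packaging of the two-variable complete positivity of $\varphi$ combined with positivity of $f$.

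Concretely, I would define a sesquilinear form on $A \odot B$ by
\[
\langle a \otimes b,\, a' \otimes b'\rangle := f\bigl(b^* \varphi(a^* a') b'\bigr),
\]
extended sesquilinearly. To see that this form is positive semidefinite, pick any $\sum_{i=1}^n a_i \otimes b_i \in A \odot B$. The matrix $[a_i^* a_j]_{i,j=1}^n \in \lM_n(A)$ is positive (it equals $v^* v$ for the row vector $v = [a_1, \dots, a_n]$), so complete positivity of $\varphi$ gives $[\varphi(a_i^* a_j)]_{i,j} \geq 0$ in $\lM_n(B)$. Conjugating by the diagonal matrix $\diag(b_1,\dots,b_n)$ keeps positivity, so $[b_i^* \varphi(a_i^* a_j) b_j]_{i,j} \geq 0$, and applying $f$ entrywise (which preserves positivity of matrices over $B$) yields a positive matrix in $\lM_n$. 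Pairing it against the all-ones vector in $\lC^n$ shows that the sum of its entries — namely $\sum_{i,j}\langle a_i\otimes b_i, a_j\otimes b_j\rangle$ — is nonnegative, which is precisely positive semidefiniteness.

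Once the form is known to be positive semidefinite, Cauchy--Schwarz $|\langle x,y\rangle|^2 \leq \langle x,x\rangle\langle y,y\rangle$ delivers both halves of the claim. Substituting $x = 1 \otimes b$, $y = a \otimes c$ gives $\langle x,y\rangle = f(b^*\varphi(a)c)$, $\langle x,x\rangle = f(b^*\varphi(1)b)$, $\langle y,y\rangle = f(c^*\varphi(a^*a)c)$, producing the first bound. Substituting $x = a^* \otimes b$, $y = 1 \otimes c$ gives $\langle x,y\rangle = f(b^*\varphi(a)c)$ again, while now $\langle x,x\rangle = f(b^*\varphi(aa^*)b)$ and $\langle y,y\rangle = f(c^*\varphi(1)c)$, yielding the second bound. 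Taking the minimum finishes the estimate.

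There is no real obstacle: the whole argument reduces to verifying positive semidefiniteness of the displayed form, which is immediate from complete positivity of $\varphi$ and positivity of $f$. The only minor point of care is the passage from positivity of $[f(b_i^*\varphi(a_i^*a_j)b_j)] \in \lM_n$ to nonnegativity of the sum of its entries, handled by pairing with the all-ones vector.
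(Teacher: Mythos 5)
Your proposal is correct and is essentially the paper's own argument: the paper forms the interior tensor product $A\otimes_\varphi B$ and applies Cauchy--Schwarz to the scalar form $(\xi,\eta)\mapsto f(\langle\xi,\eta\rangle)$ with exactly the same two substitutions ($1\otimes b,\ a\otimes c$ and $a^*\otimes b,\ 1\otimes c$). The only difference is that you verify positive semidefiniteness of the form by hand from complete positivity, whereas the paper quotes it from the standard KSGNS/interior tensor product construction.
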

\begin{proof}
Consider the $A$-$B$ C$^*$-correspondence $X = A\otimes_\varphi B$.
Then we have
$
f( b^*\varphi(a) c) = f (b^* \i< 1 \otimes 1, a\otimes 1 > c ) = f ( \i< 1 \otimes b, a\otimes c >).
$
Since $X\times X \ni (\xi, \eta) \mapsto f ( \i< \xi, \eta >) \in \lC$ defines a  sesquilinear form, by the Cauchy--Schwarz inequality, we have
\[
|f ( \i< 1 \otimes b, a\otimes c >) |\leq |f( \i< 1\otimes b, 1 \otimes b >)|^{1/2} |f( \i< a  \otimes c, a \otimes c >)|^{1/2} = | f(b^* \varphi(1)b ) |^{1/2} | f(c^* \varphi(a^*a)c ) |^{1/2}.
\]
Since $f ( \i< 1 \otimes b,  a\otimes c >) = f ( \i< a^* \otimes b, 1\otimes c > )$ holds,
by the Cauchy--Schwartz inequality again, we get the desired inequality. 
\end{proof}

\begin{lemma}
Let $A$ and $C$ be unital $\rC^*$-algebras and $\varphi : A \to C$ be a u.c.p.\ map.
Fix a faithful $*$-representation $C \subset \lB (H)$.
Let $\{ \phi_i \}_{i \in \cI}$ be a net in $\CP (A, C)$ which converges to $\varphi$ in the point $\sigma$-strong topology on $\CP (A, \lB (H))$ and set $c_i:= 2(1 + \phi_i (1_A))^{-1}$.
Then, $\phi'_i : A \to C; a \mapsto c_i \phi_i (a) c_i$ converges to $\varphi$ in the point $\sigma$-weak topology and satisfies that $\phi'_i (1_A) \leq 1_C$.
\end{lemma}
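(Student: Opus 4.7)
The plan is to prove the two assertions independently: the order bound $\phi'_i(1_A) \leq 1_C$ and the $\sigma$-weak convergence $\phi'_i \to \varphi$.

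For the order bound, I would work entirely inside the abelian $\rC^*$-subalgebra generated by the positive element $y_i := \phi_i(1_A)$ (noting that $c_i$ commutes with $y_i$). Then
\[
\phi'_i(1_A) = c_i\, y_i\, c_i = 4\,(1+y_i)^{-1}\, y_i\, (1+y_i)^{-1} = 4 y_i (1+y_i)^{-2},
\]
so by continuous functional calculus the inequality $\phi'_i(1_A) \leq 1_C$ reduces to the scalar estimate $4t/(1+t)^2 \leq 1$ for $t \geq 0$, which is just $(1-t)^2 \geq 0$.

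For the convergence statement, the key auxiliary claim is that $c_i \to 1_C$ in the strong operator topology on $H$. This follows by writing
\[
c_i - 1 = 2(1+y_i)^{-1} - 1 = (1+y_i)^{-1}(1 - y_i),
\]
noting $\|(1+y_i)^{-1}\| \leq 1$, and applying the hypothesis that $y_i = \phi_i(1_A) \to \varphi(1_A) = 1_C$ in the $\sigma$-strong topology (which agrees with SOT on bounded sets). Once $c_i \to 1_C$ in SOT, the conclusion follows from joint SOT-continuity of multiplication on bounded sets: the nets $\{c_i\}$ and $\{\phi_i(a)\}$ are uniformly bounded (the latter because $\|\phi_i\| = \|\phi_i(1_A)\|$ is eventually bounded), and we have $c_i \to 1_C$ and $\phi_i(a) \to \varphi(a)$ in SOT, so $\phi'_i(a) = c_i \phi_i(a) c_i \to \varphi(a)$ in SOT, hence in the $\sigma$-weak topology since everything is norm-bounded.

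No step is a genuine obstacle; the main subtlety is just recognizing that SOT convergence of $y_i$ transfers to SOT convergence of $(1+y_i)^{-1}$ via the resolvent identity together with the uniform bound $\|(1+y_i)^{-1}\| \leq 1$, which makes the multiplication step on bounded sets legitimate.
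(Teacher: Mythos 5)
Your treatment of the bound $\phi'_i(1_A)\leq 1_C$ is exactly the paper's (functional calculus applied to $4t(1+t)^{-2}\leq 1$), and your observation that $c_i\to 1_C$ strongly via $c_i-1=(1+y_i)^{-1}(1-y_i)$ with $\|(1+y_i)^{-1}\|\leq 1$ is correct. The gap is in the convergence step, specifically in the assertion that $\|\phi_i\|=\|\phi_i(1_A)\|$ is ``eventually bounded.'' For a \emph{net}, point $\sigma$-strong convergence does not imply eventual norm boundedness (Banach--Steinhaus gives this for sequences, not nets; one can build nets of positive operators converging $\sigma$-strongly to $1$ with norms tending to infinity). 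This matters here because the lemma is invoked in the proof of Lemma \ref{lem-cone} for the net $\phi_i=b_{n(i)}\varphi_i(\cdot)b_{n(i)}$, for which no a priori bound on $\|\phi_i(1_A)\|$ is available. Without that bound, your ``joint SOT-continuity of multiplication on bounded sets'' step breaks down on the term $c_i\phi_i(a)(c_i-1)\xi$: the natural estimate $\|\phi_i(a)(c_i-1)\xi\|^2\leq\|a\|^2\,\|\phi_i(1_A)\|\,\langle\xi,(1-c_i)\phi_i(1_A)(1-c_i)\xi\rangle$ reintroduces the uncontrolled factor $\|\phi_i(1_A)\|$. (Note that the \emph{rescaled} maps are fine --- $\|\phi'_i\|=\|\phi'_i(1_A)\|\leq 1$ --- but that does not help you establish their convergence.)

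The paper avoids this by never working with vectors at all: it applies the Cauchy--Schwarz inequality for c.p.\ maps from the preceding lemma (Lemma \ref{prop-CS}), which for a normal state $f$ gives
\[
|f(c_i\phi_i(a)(c_i-1_C))|\leq |f(c_i\phi_i(aa^*)c_i)|^{1/2}\,|f((1_C-c_i)\phi_i(1_A)(1_C-c_i))|^{1/2},
\]
where the first factor is $\leq\|a\|\,f(\phi'_i(1_A))^{1/2}\leq\|a\|$ by the already-established unitality bound, and the second is $\leq f((1_C-\phi_i(1_A))^2)^{1/2}$ by functional calculus, which tends to $0$ precisely because $\phi_i(1_A)\to 1_C$ $\sigma$-strongly. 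So the whole estimate is controlled without any reference to $\|\phi_i(1_A)\|$. To repair your argument you would either need to add the hypothesis that the net is uniformly bounded (which would then need to be verified at the point of application, and is not) or replace the multiplication-continuity step by this Schwarz-type estimate.
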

\begin{proof}
We first note that $\phi'_i (1_A) =4 \phi_i (1_A)(1 + \phi_i (1_A))^{-2} \leq 1$. 
Let $\fF \subset A$ be a finite subset, $\cX \subset M_*$ be a finite subset of normal states,
and $\varepsilon>0$ be arbitrarily chosen.
By assumption, there exists $i_0 \in \cI$ such that
\[
| f  ( \varphi (a) - \phi_i (a) ) | < \varepsilon /2, \quad  2 \| a \| |f ( (1- \phi_i (1_A))^2 ) |^{1/2} < \varepsilon/2, \quad | f (\phi_i (1_A)) - 1 | <1 
\]
for all $a \in \fF$ and $f \in \cX$ as long as $i >i_0$.
By the previous lemma we have
\begin{align*}
| f (\phi'_i(a) - \phi_i(a) )| 
&=
| f ( c_i^*  \phi_i (a) c_i  - \phi_i(a) ) | \\
&\leq
| f (c_i^* \phi_i(a) (c_i -1_C ) ) | + | f ((c_i^* -1_C ) \phi_i (a)) | \\
&\leq
 |f(\phi_i (1_A) (1_C - c_i)^2)| ^{1/2}( | f (c_i^* \phi_i (aa^*) c_i)|^{1/2}+ |f (\phi_i (a^*a))|^{1/2}) \\
&\leq
|f(\phi_i (1_A) (1_C -c_i)^2)| ^{1/2} ( \| a \| + \|a \| f (\phi_i (1_A))^{1/2} )\\
&\leq  2 \| a \| |f(\phi_i (1_A) (1_C-c_i)^2)| ^{1/2} .
\end{align*}
Since $\phi_i (1_A) (1_C-c_i)^2=(1_C-\phi_i (1_A))^2 (1_C+\phi_i (1_A))^{-2} \phi_i (1_A) \leq (1_C-\phi_i (1_A))^2$,
we have $|f(\phi_i (1_A) (1_C-c_i)^2)| \leq | f ((1_C-\phi_i (1_A))^2) |.$
Hence, we get
$| f(\varphi (a) - \phi'_i(a)) | \leq | f (\varphi (a) - \phi_i (a) )| + | f (\phi_i(a) - \phi'_i (a)) | <  | f (\varphi (a) - \phi_i (a) )| + 2 \| a \| |f ( (1_C- \phi_i (1_A))^2 ) |^{1/2} < \varepsilon$ as long as $i>i_0$.
\end{proof}

The next lemma specialized to the case that $B=B''$ is exactly \cite[Lemma 2.2]{Delaroche-Havet}.
\begin{lemma}\label{lem-cone}
Let $A$ and $B$ be $\rC^*$-algebras with $A$ unital.
Fix a nondegenerate faithful $*$-representation $B \subset \lB (H)$.
Let $\cF$ be a convex subset of $\CP(A,B)$ such that for any $b\in B$ and $\psi \in \cF$, the c.p.\ map $b^*\psi( \cdot )b$ also belongs to $\cF$.
If $\varphi \in \CP(A,B)$ belongs to the point $\sigma$-weak closure of $\cF$ in $\CP (A,\lB (H) )$,
then there exists a net $\{ \psi_i \}_i$ in $\cF$ such that $\psi_i(1_A) \leq \varphi(1_A)$ and it converges to $\varphi$ in the point $\sigma$-weak topology.
\end{lemma}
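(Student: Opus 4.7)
The plan is to take a net $\{\phi_j\}_{j\in\cI}\subset\cF$ converging to $\varphi$ in the point $\sigma$-weak topology (available by hypothesis) and modify it by appropriate cut-downs by elements of $B$ to enforce the cone condition. Write $a := \varphi(1_A)$ and $b_j := \phi_j(1_A)$ (positive elements of $B$), and set, for each $j\in\cI$ and $\delta>0$,
\[
c_{j,\delta} := (b_j + \delta)^{-1/2}\, a^{1/2}.
\]
This element lies in $B$: the function $t\mapsto (t+\delta)^{-1/2} - \delta^{-1/2}$ vanishes at $0$ and is continuous, so continuous functional calculus gives $(b_j+\delta)^{-1/2} - \delta^{-1/2} \in B$, and hence $c_{j,\delta} = \delta^{-1/2}a^{1/2} + ((b_j+\delta)^{-1/2} - \delta^{-1/2})a^{1/2} \in B$. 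By the commutativity of functional calculus on $b_j$,
\[
c_{j,\delta}^*\, b_j\, c_{j,\delta} = a^{1/2}\, b_j(b_j+\delta)^{-1}\, a^{1/2} \leq a,
\]
since $b_j(b_j+\delta)^{-1}\leq 1$. Thus $\psi_{j,\delta} := c_{j,\delta}^*\, \phi_j(\cdot)\, c_{j,\delta} \in \cF$ by the invariance hypothesis on $\cF$, and $\psi_{j,\delta}(1_A) \leq \varphi(1_A)$ by construction.

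Next I would show that a suitable subnet of $\{\psi_{j,\delta}\}$ converges to $\varphi$ in the point $\sigma$-weak topology, after which a standard diagonal argument on the directed set of pairs $(j,\delta)$ produces the required net. Setting $u_\delta := (a+\delta)^{-1/2}\,a^{1/2}$, I would split the argument: first, for each fixed $\delta > 0$, prove $\psi_{j,\delta}(x) \to u_\delta^*\,\varphi(x)\,u_\delta$ $\sigma$-weakly as $j \to \infty$; then prove $u_\delta^*\,\varphi(x)\,u_\delta \to \varphi(x)$ $\sigma$-weakly as $\delta\to 0$. The second limit is straightforward: $u_\delta$ is a bounded continuous function of $a$ converging $\sigma$-strongly to the support projection $p$ of $a$ in $B^{**}$, and the Kadison--Schwarz inequality $\varphi(x)^*\varphi(x) \leq \|x\|^2\|\varphi(1_A)\|\,\varphi(1_A)$ forces $\varphi(x)(1-p) = (1-p)\varphi(x) = 0$, hence $p\,\varphi(x)\,p = \varphi(x)$.

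The main obstacle is the first stage, since multiplication is not jointly continuous in the $\sigma$-weak topology. To handle it, I plan to first show $(b_j+\delta)^{-1/2} \to (a+\delta)^{-1/2}$ $\sigma$-weakly by uniformly approximating the continuous bounded function $t\mapsto (t+\delta)^{-1/2}$ by polynomials on a common spectral interval; $\sigma$-weak continuity of fixed-factor left/right multiplication then yields $c_{j,\delta}\to u_\delta$ and $c_{j,\delta}^*c_{j,\delta} \to u_\delta^*u_\delta$ $\sigma$-weakly. I would then upgrade to $\sigma$-strong convergence $c_{j,\delta}\to u_\delta$ via the identity
\[
\|(c_{j,\delta} - u_\delta)\xi\|^2 = \langle(c_{j,\delta}^*c_{j,\delta} - u_\delta^*c_{j,\delta} - c_{j,\delta}^*u_\delta + u_\delta^*u_\delta)\xi,\xi\rangle,
\]
each of whose four summands converges to $\langle u_\delta^*u_\delta \xi,\xi\rangle$ by fixed-factor continuity. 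Finally, the decomposition
\[
c_{j,\delta}^*\phi_j(x)c_{j,\delta} - u_\delta^*\varphi(x)u_\delta = c_{j,\delta}^*(\phi_j(x)-\varphi(x))c_{j,\delta} + \bigl(c_{j,\delta}^*\varphi(x)c_{j,\delta} - u_\delta^*\varphi(x)u_\delta\bigr),
\]
combined with the uniform bound $\|c_{j,\delta}\|\leq \delta^{-1/2}\|a\|^{1/2}$ (for fixed $\delta$) and $\sigma$-strong convergence $c_{j,\delta}\to u_\delta$, will let both summands be estimated to $0$ $\sigma$-weakly by applying a vector-functional argument and then extending to general normal functionals via dominated summation.
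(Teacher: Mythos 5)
There is a genuine gap at the heart of your convergence argument: you work throughout with a net $\{\phi_j\}$ converging to $\varphi$ only in the point $\sigma$-weak topology, and you then assert that $(b_j+\delta)^{-1/2}\to(a+\delta)^{-1/2}$ $\sigma$-weakly ``by uniformly approximating $t\mapsto(t+\delta)^{-1/2}$ by polynomials,'' and likewise that $c_{j,\delta}^*c_{j,\delta}=a^{1/2}(b_j+\delta)^{-1}a^{1/2}\to u_\delta^*u_\delta$ $\sigma$-weakly. Continuous (even polynomial) functional calculus is not $\sigma$-weakly continuous: the Rademacher functions in $L^\infty[0,1]\subset\lB(L^2[0,1])$ are self-adjoint unitaries converging weakly to $0$ whose squares are all equal to $1$. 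So $b_j\to a$ $\sigma$-weakly gives no control on $(b_j+\delta)^{-1/2}$, and the first stage of your convergence proof collapses. (A secondary problem: a point $\sigma$-weakly convergent \emph{net} in $\CP(A,B)$ need not be norm bounded, so there is no ``common spectral interval'' on which to approximate uniformly.)

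The missing ingredient is precisely the convexity of $\cF$, which your proposal never uses. Since $\cF$ is convex, its point $\sigma$-weak and point $\sigma$-strong closures in $\CP(A,\lB(H))$ coincide, so you may choose $\phi_j\to\varphi$ point $\sigma$-strongly; then $b_j\to a$ $\sigma$-strongly, and the resolvent identity $(b_j+\delta)^{-1}-(a+\delta)^{-1}=(b_j+\delta)^{-1}(a-b_j)(a+\delta)^{-1}$ together with the uniform bound $\|(b_j+\delta)^{-1}\|\le\delta^{-1}$ gives $(b_j+\delta)^{-1}\to(a+\delta)^{-1}$ strongly, after which your remaining steps go through. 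With that repair, your normalization $c_{j,\delta}=(b_j+\delta)^{-1/2}a^{1/2}$, which forces $\psi_{j,\delta}(1_A)\le\varphi(1_A)$ in one stroke, is a legitimate and arguably more direct alternative to the paper's route, which first writes $\varphi=b^{1/2}\phi(\cdot)b^{1/2}$ with $\phi$ unital into the corner $eB''e$ and only then renormalizes the approximants by $2(e+\phi_i(1_A))^{-1}$.
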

\begin{proof}
Set $b:=\varphi (1_A)$.
For each $n \in \lN$, we define the continuous function $f_n \in C_0 (0, \| b \| ]$ by
\[
f_n(x):=
\begin{cases}
n \sqrt{2x} & 0 \leq x \leq \frac{1}{2n},\\
(x + \frac{1}{2n} )^{-1/2} & \text{otherwise},
\end{cases}
\]
and set $b_n:= f_n (b) \in B$.
Let $e \in B''$ be the support projection of $b$ and set $C:=e B''e$.
Define $\theta_n \in \CP (A,B)$ by $\theta_n (a) := b_n \varphi (a) b_n$ for $a \in A$.
We first claim that $\theta_n$ converges to a u.c.p.\ map $\phi$ from $A$ into $C$ in the point $\sigma$-strong topology.
To see this, we fix a positive contraction $a \in A$ arbitrarily.
Since we have $0 \leq \varphi(a) \leq b$,
by the Douglas decomposition theorem (see e.g.\ \cite[Theorem 17.1]{Conway}),
there exists $c \in \lB (H)$ such that $\varphi(a)^{1/2} =c b^{1/2}$.
Since $\{ b_n b^{1/2} \}_n$ is an increasing sequence which converges $e$ strongly,
$\theta_n (a) = b_n b^{1/2} c^*c b^{1/2} b_n$ also converges to $e c^*c e$ in the same topology.
In the case when $a = 1_A$, the $c$ is equal to $e$, and hence $\phi (a):= e c^* c e$ defines the desired u.c.p.\ map.

We note that $\varphi = b^{1/2}\phi(\cdot) b^{1/2}$ holds.
Indeed, for any $a \in A$ we have
\[\varphi (a)=e \varphi (a) e =\lim_n  b^{1/2}b_n \varphi (a) b_n b^{1/2} = \lim_n b^{1/2} \theta_n (a) b^{1/2} = b^{1/2}\phi (a) b^{1/2}.
\]
Thus, if we find a net $\phi_i'$ in $\cF$ in such a way that $\phi_i' (1_A) \leq \phi (1_A)=e$ and $\phi_i' (a)$ converges to $\phi (a)$ $\sigma$-weakly for $a \in A$, then $\psi_i:= b^{1/2} \phi_i'(\cdot) b^{1/2}$ gives the desired net.

Since $\cF$ is convex, the point $\sigma$-weak closure of $\cF$ in $\CP (A, \lB (H) )$ coincides with the point $\sigma$-strong closure of $\cF$.
This follows from the fact that for any finitely many elements $a_1, \dots, a_n \in A$ the set $\{ (\psi (a_1), \dots, \psi (a_n) )\in  \bigoplus_{i=1}^n \lB (H)  \mid \psi \in \cF \}$ is convex, and hence its $\sigma$-weak and $\sigma$-strong closures coincide.
Thus,
by the claim above, we can find nets $\varphi_i \in \cF$ and $n (i) \in \lN$ in such a way that $\phi_i:= b_{n(i)} \varphi_i (\cdot) b_{n(i)}$ converges to $\phi$ point $\sigma$-strongly.
The image of each $\phi_i$ is contained in $B \cap C$ since $b_n = e b_n$ holds for every $n \in \lN$.
By the preceding lemma, the net $\phi_i' =c_i \phi_i (\cdot) c_i $ with $c_i= 2 ( e+ \phi_i (1_A) )^{-1} \in C$ converges to $\phi$ in the point $\sigma$-weak topology and satisfies that $\phi_i' (1_A) \leq e$.
We show that $\phi_i'$ belongs to $\cF$.
Indeed, we have $ \phi'_i  = c_i \phi_i (\cdot) c_i  = c_i b_{n(i)} \varphi_i (\cdot) b_{n(i)}c_i $.
By the fact that $c_i$ is the norm limit of commutative polynomials of $e$ and $b_{n(i)} \varphi_i (1_A) b_{n(i)}$ and $b_{n(i)}e=b_{n(i)}$,
each $c_i b_{n(i)}$ is in $B$.
By the assumption of $\cF$, we get $\phi_i' \in \cF$.
\end{proof}

\if0
\begin{lemma}
Let $A$ and $B$ be unital $\rC^*$-algebras and $\varphi : A \to B$ be a u.c.p.\ map.
Fix a faithful $*$-representation $B \subset \lB (H)$.
Let $\{ \psi_i \}_{i \in \cI}$ be a net in $\CP (A, B)$ which converges to $\varphi$ in the point $\sigma$-strong topology on $\CP (A, \lB (H))$ and set $b_i:= 2(1 + \psi_i (1_A))^{-1}$.
Then, $\widehat{\psi}_i : A \to B; a \mapsto b_i \psi_i (a) b_i$ converges to $\varphi$ in the point $\sigma$-weak topology and satisfies that $\widehat{\psi}_i (1_A) \leq 1_B$.
\end{lemma}
\begin{proof}
We first note that $\widehat{\psi}_i (1_A) =4 \psi_i (1_A)(1 + \psi_i (1_A))^{-2} \leq 1$. 
Let $\fF \subset A$ be a finite subset, $\cX \subset M_*$ be a finite subset of normal states,
and $\varepsilon>0$ be arbitrarily chosen.
By assumption, there exists $i_0 \in \cI$ such that
\[
| f  ( \varphi (a) - \psi_i (a) ) | < \varepsilon /2, \quad  2 \| a \| |f ( (1- \psi_i (1))^2 ) |^{1/2} < \varepsilon/2, \quad | f (\psi_i (1)) - 1 | <1 
\]
for all $a \in \fF$ and $f \in \cX$ as long as $i >i_0$.
By the previous lemma we have
\begin{align*}
| f (\widehat{\psi}_i(a) - \psi_i(a) )| 
&=
| f ( b_i^*  \psi_i (a) b_i  - \psi_i(a) ) | \\
&\leq
| f (b_i^* \psi_i(a) (b_i -1 ) ) | + | f ((b_i^* -1 ) \psi_i (a)) | \\
&\leq
 |f(\psi_i (1) (1-b_i)^2)| ^{1/2}( | f (b_i^* \psi_i (aa^*) b_i)|^{1/2}+ |f (\psi_i (a^*a))|^{1/2}) \\
&\leq
|f(\psi_i (1) (1-b_i)^2)| ^{1/2} ( \| a \| + \|a \| f (\psi_i (1))^{1/2} )\\
&\leq  2 \| a \| |f(\psi_i (1) (1-b_i)^2)| ^{1/2} .
\end{align*}
Since $\psi_i (1) (1-b_i)^2=(1-\psi_i (1))^2 (1+\psi_i (1))^{-2} \psi_i (1) \leq (1-\psi_i (1))^2$,
we have $|f(\psi_i (1) (1-b_i)^2)| \leq | f ((1-\psi_i (1))^2) |.$
Hence, we get
$| f(\varphi (a) - \widehat{\psi}_i(a)) | \leq | f (\varphi (a) - \psi_i (a) )| + | f (\psi_i(a) - \widehat{\psi}_i (a)) | <  | f (\varphi (a) - \psi_i (a) )| + 2 \| a \| |f ( (1- \psi_i (1))^2 ) |^{1/2} < \varepsilon$ as long as $i>i_0$.
\end{proof}

The next lemma specialized to the case that $B=B''$ is exactly \cite[Lemma 2.2]{Delaroche-Havet}.
\begin{lemma}\label{lem-cone}
Let $A$ and $B$ be $\rC^*$-algebras with $A$ unital.
Fix a nondegenerate faithful $*$-representation $B \subset \lB (H)$.
Let $\cF$ be a convex subset of $\CP(A,B)$ such that for any $b\in B$ and $\psi \in \cF$, the c.p.\ map $b^*\psi( \cdot )b$ also belongs to $\cF$.
If $\varphi \in \CP(A,B)$ belongs to the point $\sigma$-weak closure of $\cF$ in $\CP (A,\lB (H) )$,
then there exists a net $\{ \varphi_i \}_i \subset \cF$ such that $\varphi_i(1) \leq \varphi(1)$ and it converges to $\varphi$ in the point $\sigma$-weak topology.
\end{lemma}
\begin{proof}
Since $\cF$ is convex, the point $\sigma$-weak closure $\overline{\cF}$ of $\cF$ in $\CP (A, \lB (H) )$ coincides with the point $\sigma$-strong closure of $\cF$.
This follows from the fact that for any finitely many elements $a_1, \dots, a_n \in A$ the set $\{ (\psi (a_1), \dots, \psi (a_n) )\in  \bigoplus_{i=1}^n \lB (H)  \mid \psi \in \cF \}$ is convex, and hence its $\sigma$-weak and $\sigma$-strong closures coincide.
Set $b:=\varphi(1)$ and let $e \in B''$ the support projection of $b$.
Define $\theta_n \in \CP (A,B'')$ by
$$
\theta_n (a) := (b+1/n)^{-1/2} \varphi (a) (b+1/n)^{-1/2}, \quad a \in A.
$$
For each $n \geq 1$, by the Kaplansky density theorem, we can find a bounded net $\{b_\lambda^{(n)} \}_\lambda$ of positive elements in $B$ which converges to $(b+1/n)^{-1/2}$ $\sigma$-strongly.
Since $b_\lambda^{(n)} \varphi (\cdot ) b_\lambda^{(n)}$ belongs to $\overline{\cF}$, so does $\theta_n$.
We claim that there exists $\theta \in \UCP (A, eB''e)$ such that $\theta_n$ converges to $\theta$ point $\sigma$-strongly in $\CP (A, \lB (H) )$.
To see this, we fix a positive contraction $a \in A$.
We then have $0 \leq \varphi (a) \leq b$.
By the Douglas decomposition theorem (see e.g. \cite[Theorem 17.1]{Conway}),
there exists $c \in \lB (H)$ such that $\varphi(a)^{1/2} =c b^{1/2}$.
We observe that $(b+1/n)^{-1/2}b^{1/2}$ is a bounded increasing net which converges to $e$ strongly.
This implies that $\theta_n(a) =(b+1/n)^{-1/2} \varphi (a)^{1/2}\varphi (a)^{1/2} (b+1/n)^{-1/2} =(b+1/n)^{-1/2}b^{1/2}c^*c b^{1/2}(b+1/n)^{-1/2}$ also converges to $ec^*ce$ strongly.
Because $A$ is spanned by its positive contractions, $\theta_n$ converges to a c.p.\ map $\theta$ in $\CP (A, eB''e)$.
Moreover, we have $\theta (1)= \lim_n b (b + 1/n)^{-1} =e$.
Thus, this $\theta$ is the desired one.

We next claim that there exists a net $\{ \phi_i \}_i $ in $\cF$ such that $b^{1/2}\phi_i ( \cdot ) b^{1/2}$ converges to $\theta$ point $\sigma$-strongly in $\CP (A, eB''e)$.
Take a finite subset $\fF \subset A$, $\varepsilon>0$, and a finite subset of normal states $\cX \subset \lB (H)_*$.
By the convexity of $\cF$ and the fact that $\theta = e \theta ( \cdot )e$ we can find $\psi \in \cF$
such that $\| \theta (a) - e \psi (a) e \|_f < \varepsilon$ for $a \in \fF$ and $f \in \cX$, where $\| x \|_f = f(x^*x)$.
Since $e=\lim_n b^{1/2} (b + 1/n)^{-1/2}$ and $(b + 1/n)^{-1/2}=\lim_\lambda b_\lambda^{(n)}$, there exists $n \in \lN$ and $\lambda$ such that $\| \theta (a) - b^{1/2}b_\lambda^{(n)} \psi (a)b_\lambda^{(n)}b^{1/2} \|_f < \varepsilon$ for $a \in \fF$ and $f \in \cX$.
The map $\phi:=b_\lambda^{(n)} \psi ( \cdot )b_\lambda^{(n)}$ forms the desired net.

We now apply the preceding lemma to the net $\phi_i':= b^{1/2}\phi_i ( \cdot ) b^{1/2} \in \CP (A, eB''e)$ and obtain a net $\{ \widehat{\phi_i'} \}_i$ which converges to $\theta$ point $\sigma$-weakly and satisfies that $\widehat{\phi_i'} (1) \leq e$.
Since $\varphi (a)=e \varphi (a) e  \leftarrow b^{1/2}(b + 1/n)^{-1/2} \varphi (a) (b + 1/n)^{-1/2} b^{1/2} = b^{1/2} \theta_n (a) b^{1/2} \to b^{1/2}\theta (a) b^{1/2}$, the proof will be complete
if $b^{1/2}\widehat{\phi'_i }(\cdot) b^{1/2}$ belongs to $\cF$.
Indeed,
one has $b^{1/2}\widehat{\phi'_i }( \cdot )b^{1/2} = 4b^{1/2}c_i b^{1/2} \phi_i (\cdot)b^{1/2} c_i b^{1/2}$,
where $c_i$ is the inverse element of $e + b^{1/2}\phi_i (1)b^{1/2} $ in $eB''e$.
By the assumption of $\cF$ it suffices to show that $b^{1/2}c_i$ is in $B$.
This follows from the fact that $c_i$ is the norm limit of commutative polynomials of $e$ and $b^{1/2}\phi_i (1)b^{1/2}$ and that $b^{1/2}e=b^{1/2}$.
\end{proof}
\fi

\begin{proof}[Proof of Theorem \ref{thm-weak}]
As in Remark \ref{rem-faithful}, replacing $B$ and $X, Y$ by $\pi_H (B)$ and $X_{\pi_H}, Y_{\pi_Y}$ we may assume that $\pi_H$ is faithful and identify $B$ with $\pi_H (B)$.

We prove $(1) \Rightarrow (2):$
Fix $\xi \in X$, a finite subset $\fF \subset A$, a finite subset $\cX$ of normal states on $M$ and $\varepsilon >0$ arbitrarily.
Set $f:= | \cX |^{-1} \sum_{g \in \cX} g$ and let $(K, \pi_f, \xi_f)$ be the GNS-representation associated with $f$.
Since each $g \in \cX$ enjoys $g \leq |\cX| f$,
by the Radon--Nikodym theorem for states (see e.g. \cite[Proposition 3.8.3]{Brown-Ozawa}), there exists $x_g \in \pi_f(M)' = \pi_f (B)'$ such that $g (y) = \i< \xi_f, \pi_f (y) x_g \xi_f >$ for $y\in M$.
By Lemma \ref{lem-normalrepn} we have $X \prec_K Y$, i.e., $\theta_X^K \prec \theta_Y^K$.
Since the image of $Y \odot \xi_f$ is dense in $Y \otimes_B K$, applying Fell's characterization of weak containment \cite[Theorem 1.2]{Fell} to these $*$-representations of $A \otimes_{\max} \pi_f (B)'$ and the vector $\xi \otimes \xi_f \in X \otimes_B K$,
we find $\eta_1, \dots, \eta_n \in Y$ such that $| \i< \xi \otimes \xi_f, \theta_X^K (a \otimes x_g ) (\xi \otimes \xi_f) > - \sum_{i=1}^n \i< \eta_i \otimes \xi_f,  \theta_Y^K (a \otimes x_g )  (\eta_i \otimes \xi_f)> | < \varepsilon$ for all $a\in \fF$ and $g \in \cX$.
We then have for $a\in \fF$ and $g \in \cX$
\begin{align*} 
g \left(  \sum_{i=1}^n  \i< \eta_i, \pi_Y (a) \eta_i >_B \right)
&=\sum_{i=1}^n \i<  \xi_f,  \pi_f ( \i< \eta_i,  \pi_Y (a) \eta_i >_B ) x_g \xi_f > \\
&=\sum_{i=1}^n \i<  \eta_i \otimes \xi_f,  \pi_Y (a) \eta_i \otimes x_g \xi_f > \\
&=\sum_{i=1}^n \i<  \eta_i \otimes \xi_f,  \theta_Y^K (a \otimes x_g ) ( \eta_i \otimes \xi_f) >.
\end{align*}
Since $g ( \Omega_\xi (a) )  = \i< \xi_f, \pi_f  ( \i< \xi,  \pi_X(a) \xi >_B ) x_g \xi_f > = \i< \xi \otimes \xi_f,  \theta_X^K (a \otimes x_g )(\xi \otimes \xi_f) >$,
the c.p.\ map $\psi:=\sum_{k=1}^m \Omega_{\zeta_k} \in \cF_Y$ satisfies that $| g  (\Omega_\xi (a) -  \psi (a)) | < \varepsilon$ for $a \in \fF$ and $g \in \cX$.

The implication (2) $\Rightarrow$ (3) follows from Lemma \ref{lem-cone}.
We prove (3) $\Rightarrow$ (1):
Let $z \in \ker \theta^H_Y$ be given and show that $\theta^H_X (z)=0$.
It suffices to prove that $\theta^H_X (z) \xi \otimes \eta=0$ for every $\xi \in X$ and $\eta \in H$.
We fix $\varepsilon >0$ arbitrarily and take $w=\sum_{i=1}^n a_i \otimes x_i \in A \odot \pi_H(B)'$ in such a way that $\| z - w  \|_{\rm max} < \varepsilon $.
By (3) we can choose $\psi = \sum_{k=1}^m \Omega_{\zeta_k} \in \cF_Y$ in such a way that $\psi (1) \leq \Omega_\xi (1)$ and $|\i< \eta, \sum_{i,j=1}^n  \Omega_\xi (a_i^* a_j) x_i^* x_j \eta > -
 \i< \eta, \sum_{i,j=1}^n \psi (a_i^* a_j)  x_i^* x_j \eta >| < \varepsilon^2$.
Note that $ \sum_{k=1}^m \|  \zeta_k \otimes \eta \|^2 = \i< \eta, \psi(1) \eta > \leq \i< \eta,  \Omega_\xi (1) \eta > \leq \| \xi \otimes \eta \|^2$.
One has
\begin{align*}
&\| \sum_{i=1}^n \theta^H_X (a_i \otimes x_i) \xi \otimes \eta \|^2
=
\i< \eta, \sum_{i,j=1}^n\Omega_\xi (a_i^* a_j)  x_i^* x_j \eta >
\approx_{\varepsilon^2}
 \i< \eta, \sum_{i,j=1}^n \sum_{k=1}^m \Omega_{\zeta_k} (a_i^* a_j) x_i^* x_j \eta >\\
 &
 =
 \sum_{k=1}^m  \| \sum_{i=1}^n \theta^H_Y (a_i \otimes x_i) \zeta_k \otimes \eta \|^2 
 \leq
\varepsilon^2 \sum_{k=1}^m  \| \zeta_k \otimes \eta \|^2
\leq
 \varepsilon^2 \| \xi\otimes \eta \|^2.
\end{align*}
Thus, we have $\| \theta^H_X (z)(\xi \otimes \eta ) \| \leq \| z - w \|_{\rm max} + \| \theta^H_X (w) ( \xi \otimes \eta )\| \leq \varepsilon + \sqrt{2}\varepsilon$.
Since $\varepsilon$ is arbitrary, we get $\theta^H_X (z)=0$.

Finally, the canonical isomorphisms $(Y \otimes_B M) \otimes_M H \cong Y \otimes_B H$ and $(X \otimes_B M) \otimes_M H \cong X\otimes_B H$ imply that $\ker \theta_Y^H = \ker \theta_{Y\otimes_B M}^H $ and $\ker \theta_{X}^H= \ker \theta_{X \otimes_B M}^H$,
which proves $(1) \Leftrightarrow (4)$.
\end{proof}
\if0 
\begin{remark}
The weak containments can be described in terms of a topology on (the unitary equivalence classes of) C$^*$-correspondences.
Let $X \in \Corr (A, B)$ and $H \in \Rep (B)$.
For finite subsets $\fF \subset A$ and $\cX = \{ \xi_1, \dots, \xi_n \} \subset X$ and a $\sigma$-weak open neighborhood $V$ of $0$ in $M:=\pi_H (B)''$ we define $\cV_H (X; \fF, \cX, V)$ by the set of $A$-$B$ C$^*$-correspondences $Y$ such that there exists $\eta_1, \dots, \eta_n \in Y$ such that $\pi_H (\i< \xi_i, \pi_X (a) \xi_j > - \i< \eta_i, \pi_Y (a) \eta_j > ) \in V$ for all $a\in \fF$ and $1\leq i,j \leq n$.
It is not hard to see that for each $X \in \Corr (A, B)$ the set $\cU_H (X):=\{ \cV (X; \fF, \cX, V) \mid \fF \subset A, \cX \subset X, 0 \in V \subset M: \text{open} \}$ forms a basis of neighborhoods of $X$.
We note that the topology $\cU_H$ on $\Corr (A, B)$ given by $\{ \cU_H (X) \}_X$ induces a topology on the set of unitarily equivalent classes of $A$-$B$ C$^*$-correspondences since $(X,\pi_X) \cong (Y,\pi_Y)$ implies $\cU_H (X)= \cU_H (Y)$.
\end{remark}
\fi 

\begin{corollary}\label{cor-weak}
Let $A$ and $B$ be $\rC^*$-algebras with $A$ unital and $X,Y \in \Corr(A,B)$ be given.
The following are equivalent:
\begin{itemize}
\item[$(1)$] $X \prec_\univ Y$.
\item[$(2)$] For any $\xi \in X$ the c.p.\ map $\Omega_\xi$ belongs to the point norm closure of $\cF_Y$.
\item[$(3)$] For any $\xi \in X$ there exists a net $\{ \psi_i \}_i$ in $\cF_Y$ such that $\psi_i(1_A) \leq  \Omega_\xi (1_A)$ and that $\lim_i \| \Omega_\xi (a) - \psi_i (a) \| =0$ for every $a\in A$.
\end{itemize}
\end{corollary}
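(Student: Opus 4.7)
The plan is to deduce the corollary from Theorem \ref{thm-weak} applied to the universal representation $(H_\univ, \pi_\univ)$ of $B$. For this representation one has $\pi_\univ(B)'' \cong B^{**}$ as $W^*$-algebras with predual $B^*$, so the $\sigma$-weak topology on $B^{**}$ restricted to $B \subset B^{**}$ coincides with the weak topology $\sigma(B, B^*)$ on $B$. Consequently, for any net $\{\psi_i\}$ in $\CP(A, B)$, the net $\pi_\univ \circ \psi_i$ converges to $\pi_\univ \circ \Omega_\xi$ in the point $\sigma$-weak topology precisely when $\psi_i(a) \to \Omega_\xi(a)$ weakly in $B$ for every $a \in A$.

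With this observation in hand, the implication (3) $\Rightarrow$ (2) of the corollary is immediate, and for (2) $\Rightarrow$ (1) I would note that point norm convergence trivially implies point $\sigma$-weak convergence of $\pi_\univ \circ \psi_i$, so Theorem \ref{thm-weak} (2) $\Rightarrow$ (1) applied with $H = H_\univ$ yields $X \prec_\univ Y$.

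The main step is (1) $\Rightarrow$ (3). First I would invoke Theorem \ref{thm-weak} (1) $\Rightarrow$ (3) with $H = H_\univ$ to produce, for each $\xi \in X$, a net $\{\psi_i\} \subset \cF_Y$ with $\psi_i(1_A) \leq \Omega_\xi(1_A)$ and $\pi_\univ \circ \psi_i \to \pi_\univ \circ \Omega_\xi$ in the point $\sigma$-weak topology, i.e., $\psi_i \to \Omega_\xi$ pointwise weakly in $B$. Set $\cF'_Y := \{\psi \in \cF_Y : \psi(1_A) \leq \Omega_\xi(1_A)\}$; this is convex as the intersection of two convex sets, and by the above $\Omega_\xi$ lies in its pointwise weak closure. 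To upgrade to point norm convergence I would fix an arbitrary finite $\fF \subset A$ and consider the convex set $\{(\psi(a))_{a \in \fF} : \psi \in \cF'_Y\} \subset B^{\fF}$ (equipped with, say, the maximum norm). Mazur's theorem gives that its weak and norm closures coincide, so for any $\varepsilon > 0$ there exists $\psi \in \cF'_Y$ with $\|\psi(a) - \Omega_\xi(a)\| < \varepsilon$ for every $a \in \fF$. Indexing over pairs $(\fF, \varepsilon)$, directed by inclusion of $\fF$ and reverse magnitude of $\varepsilon$, produces the desired net.

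In short, the argument is a routine Mazur-type upgrade from weak to norm convergence; the substantive content is already contained in Theorem \ref{thm-weak}, and I do not expect a genuine obstacle beyond correctly identifying the $\sigma$-weak topology of $\pi_\univ(B)''$ with the weak topology of $B$ when values happen to lie in $B$.
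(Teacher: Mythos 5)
Your proposal is correct and follows essentially the same route as the paper: pass to the universal representation, identify $\pi_u(B)''$ with $B^{**}$ so that point $\sigma$-weak convergence becomes pointwise weak convergence in $B$, and then use convexity of $\cF_Y$ (Mazur/Hahn--Banach) to upgrade the point weak closure to the point norm closure before invoking Theorem \ref{thm-weak}. Your extra care in checking that the subset $\{\psi\in\cF_Y:\psi(1_A)\leq\Omega_\xi(1_A)\}$ is itself convex, so that condition (3) survives the convex-combination step, is a detail the paper leaves implicit but is exactly right.
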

\begin{proof}
Let $(H_u, \pi_u)$ be the universal representation of $B$.
Then, it is known that the enveloping von Neumann algebra $\pi_u (B)''$ is isomorphic to the second dual $B^{**}$.
Hence, the relative topology on $\pi_u(B)$ induced from the $\sigma$-weak topology on $\pi_u (B)''$ coincides with the weak topology.
Since $\cF_Y$ is convex, the point weak closure and the point norm closure of $\cF$ coincide (see e.g.\ \cite[Lemma 2.3.4]{Brown-Ozawa}).
Hence, the assertion follows from the previous theorem.  
\end{proof}
%
%
\subsection{Elementary properties of weak containment}\label{ss-weak-prop}
In this subsection we establish some basic facts on weak containment.
\if0 
Since the next proposition follows easily from Theorem \ref{thm-weak}, we omit its proof.
\begin{proposition}\label{prop-weak}
Let $A$ and $B$ be $\rC^*$-algebras with $A$ unital.
For $X,Y,Z \in \Corr (A , B)$ and $H \in \Rep (B)$ the following hold true{\rm :}
\begin{itemize}
\item[(1)] $X \prec_H Y$ and $Y \prec_H Z \Rightarrow X \prec_H Z$,
\item[(2)] $(X^\infty, \pi_X^\infty) \prec_\univ (X, \pi_X) \prec_\univ (X^\infty, \pi_X^\infty)$.
\item[(3)] Suppose that there exists a cyclic vector $\xi \in X$, i.e., $\lspan \pi_X (A) \xi  B$ is norm dense in $X$.
Then $X \prec_H Y$ if and only if $\pi_H\circ \Omega_\xi$ belongs to the point $\sigma$-weak closure of $\{ \pi_H \circ \psi \mid \psi \in \cF_Y \}$. 
\end{itemize}
\end{proposition}
\fi 
For a Hilbert C$^*$-module over a unital C$^*$-algebra $B$, a vector $\xi \in X$ is said to be {\it normal} if $\i< \xi, \xi > =1_B$ holds.
We note that every C$^*$-correspondence $(A \otimes_\varphi B, \lambda_A \otimes 1_B)$ arising from a u.c.p.\ map $\varphi : A \to B$ admits the normal vector $1_A \otimes 1_B$.
\begin{proposition}\label{prop-unital}
Let $A$ and $B$ be unital $\rC^*$-algebras and $X,Y \in \Corr (A, B)$ and $H \in \Rep (B)$ be given.
Assume that $Y$ is unital and admits a normal vector $\eta \in Y$.
If $X \prec_K Y$ (resp. $X \prec_\univ Y$),
then for any $\xi \in X$ there exists $\psi_i \in \cF_Y$ with $\psi_i (1_A) =\Omega_\xi(1_A)$ such that $\pi_K \circ \psi_i $ converges to $\pi_K \circ \Omega_\xi$ point $\sigma$-weakly (resp. $\psi_i$ converges to $\Omega_\xi$ in the point norm topology).
\end{proposition}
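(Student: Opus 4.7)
The plan is to take the output of Theorem~\ref{thm-weak} (resp.\ Corollary~\ref{cor-weak}) -- a net $\{\psi'_i\}_i\subset\cF_Y$ with $\psi'_i(1_A)\le\Omega_\xi(1_A)$ and $\pi_H\circ\psi'_i\to\pi_H\circ\Omega_\xi$ point $\sigma$-weakly (resp.\ $\psi'_i\to\Omega_\xi$ in point norm) -- and to upgrade the inequality $\le$ into equality by topping each $\psi'_i$ up with a coefficient built from the normal vector $\eta$. Writing $b:=\Omega_\xi(1_A)$ and $d_i:=(b-\psi'_i(1_A))^{1/2}\in B$, I would set $\eta_i:=\eta\cdot d_i\in Y$ and observe that $\Omega_{\eta_i}(a)=d_i^*\Omega_\eta(a)d_i$; unitality of $\pi_Y$ together with $\langle\eta,\eta\rangle=1_B$ then forces $\Omega_{\eta_i}(1_A)=d_i^*d_i=b-\psi'_i(1_A)$. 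Hence the candidate $\psi_i:=\psi'_i+\Omega_{\eta_i}$ satisfies $\psi_i(1_A)=b=\Omega_\xi(1_A)$ on the nose, and it lies in $\cF_Y$ because any finite sum $\sum_{k=1}^{N}\Omega_{\zeta_k}$ can be rewritten as the genuine convex combination $\sum_{k=1}^{N}\tfrac{1}{N}\Omega_{\sqrt{N}\zeta_k}$.

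It then suffices to verify that inserting $\Omega_{\eta_i}$ does not spoil convergence, i.e., that $\pi_H\circ\Omega_{\eta_i}\to 0$ in the relevant topology. In the point $\sigma$-weak case the net $\{\pi_H(d_i)\}_i$ is uniformly bounded by $\|b\|^{1/2}$ (since $0\le\psi'_i(1_A)\le b$), and
\[
\pi_H(d_i)^2=\pi_H\bigl(b-\psi'_i(1_A)\bigr)\longrightarrow 0
\]
$\sigma$-weakly; the standard identity $\|\pi_H(d_i)v\|^2=\langle\pi_H(d_i)^2v,v\rangle$ promotes this to $\pi_H(d_i)\to 0$ strongly, hence $\sigma$-strongly on the bounded net, and sandwiching the fixed bounded operator $\pi_H(\Omega_\eta(a))$ then gives $\pi_H(\Omega_{\eta_i}(a))\to 0$ $\sigma$-strongly, a fortiori $\sigma$-weakly. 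In the $\prec_\univ$ case the crude bound $\|\Omega_{\eta_i}(a)\|\le\|d_i\|^2\,\|\Omega_\eta(a)\|=\|b-\psi'_i(1_A)\|\,\|\Omega_\eta(a)\|$ does the job, as $\|\psi'_i(1_A)-b\|\to 0$ by point-norm convergence at $a=1_A$.

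I do not expect any genuine obstacle: the statement is a mild refinement of the results of the previous subsection, and the normal vector $\eta$ supplies precisely the free parameter needed to absorb the missing mass $b-\psi'_i(1_A)$ while staying inside coefficients of $Y$. The only step that really deserves to be written out carefully is the verification that $\psi'_i+\Omega_{\eta_i}$ remains inside the convex hull $\cF_Y$, which is why the rescaling identity for sums of coefficients is worth recording explicitly.
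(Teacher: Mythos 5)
Your proposal is correct and follows essentially the same route as the paper: the paper also takes the net from Theorem \ref{thm-weak} (resp.\ Corollary \ref{cor-weak}) and adds the correction coefficient $\Omega_{\eta\, b_i^{1/2}}$ with $b_i=\Omega_\xi(1_A)-\psi_i(1_A)$, checking that this term tends to zero and that the sum stays in $\cF_Y$. Your verification of the convergence of the correction term (via strong convergence of $\pi_H(d_i)$) and your explicit rescaling argument for membership in the convex hull $\cF_Y$ are slightly more detailed than the paper's one-line estimates, but the construction is identical.
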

\begin{proof}
By Theorem \ref{thm-weak}, there exists a net $\psi_i \in \cF_Y$ with $\psi_i (1_A) \leq \Omega_\xi (1_A)$ such that $\pi_K \circ \psi_i$ approaches $\pi_K \circ \Omega_\xi$.
Set $b_i:=\Omega_\xi (1_A) - \psi_i (1_A) \in B$ and $\zeta_i:=\eta b_i^{1/2}$.
For $a\in A$ and normal state $f \in \lB (K)_*$ we have
$f( \Omega_{\zeta_i} (a) )= f (b_i^{1/2}\i< \eta, \pi_Y(a) \eta >b_i^{1/2}  ) \leq \| a \| f (b_i) \to 0$ and $\psi_i (1_A) + \Omega_{\zeta_i} (1_A)=\Omega_\xi(1_A)$. 
Thus $\psi_i + \Omega_{\zeta_i} \in \cF_Y$ is the desired one.
The assertion in the case that $X \prec_\univ Y$ follows from Corollary \ref{cor-weak}.
\end{proof}

Let $A$ and $B$ be C$^*$-algebras with $A$ unital, and let $X, Y, Z \in \Corr (A, B)$ and $H \in \Rep (B)$.
We note that $X \prec_H Z$ holds whenever $X \prec_H Y$ and $Y \prec_H Z$ hold thanks to Theorem \ref{thm-weak}.
\begin{proposition}\label{prop-tensor}
Let $A, B, C$ and $D$ be $\rC^*$-algebras with $A$ and $C$ unital.
For $X, Y \in \Corr (A, B)$, $Z, W \in \Corr (C, D)$, $H \in \Rep (B)$ and $K \in \Rep (D)$ if $X \prec_H Y$ and $Z \prec_K W$ hold,
then we have $X \otimes Z \prec_{H \otimes K} Y \otimes W$ as $(A\otimes C)$-$(B\otimes D)$ $\rC^*$-correspondences.
\end{proposition}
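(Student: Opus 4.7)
The plan is to verify condition (2) of Theorem \ref{thm-weak}: for each $\sigma \in X \otimes Z$ I shall exhibit a net in $\cF_{Y \otimes W}$ whose image under $\pi_H \otimes \pi_K$ approximates $(\pi_H \otimes \pi_K) \circ \Omega_\sigma$ in the point $\sigma$-weak topology. Since the map $\sigma \mapsto \Omega_\sigma$ is norm continuous (by Cauchy--Schwarz) and $X \odot Z$ is norm dense in the exterior tensor product $X \otimes Z$, it suffices to treat $\sigma = \sum_{k=1}^n \xi_k \otimes \zeta_k$.

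To handle the off-diagonal contributions $\i<\xi_k, \pi_X(a)\xi_l>$ with $k \neq l$ appearing in $\Omega_\sigma$, I pass to the matrix correspondences of Example \ref{ex-tensor}: set $\Xi = \sum_k \xi_k \otimes \delta_k \in X_n$ and $\tilde\zeta = \sum_k \zeta_k \otimes \delta_k \in Z_n$. The canonical isomorphism $X_n \otimes_{\lM_n(B)} H^n \cong X \otimes_B H$, together with the identification $\pi_H^{(n)}(\lM_n(B))' \cong \pi_H(B)'$, shows that $\theta_{X_n}^{H^n}$ coincides with $\theta_X^H$ (similarly for $Y$), so the hypothesis $X \prec_H Y$ promotes automatically to $X_n \prec_{H^n} Y_n$. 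Applying Theorem \ref{thm-weak} (strengthened by condition (3) to obtain uniform boundedness) to this and to $Z_n \prec_{K^n} W_n$, I obtain uniformly bounded nets $\psi_i = \sum_r \Omega_{\Upsilon_{i,r}} \in \cF_{Y_n}$ and $\phi_j = \sum_s \Omega_{\Psi_{j,s}} \in \cF_{W_n}$, with $\Upsilon_{i,r} = \sum_m \eta_{i,r,m} \otimes \delta_m$ and $\Psi_{j,s} = \sum_m \omega_{j,s,m} \otimes \delta_m$, such that $\pi_H^{(n)} \circ \psi_i \to \pi_H^{(n)} \circ \Omega_\Xi$ and $\pi_K^{(n)} \circ \phi_j \to \pi_K^{(n)} \circ \Omega_{\tilde\zeta}$ point $\sigma$-weakly.

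Now define $\alpha_{i,j,r,s} = \sum_m \eta_{i,r,m} \otimes \omega_{j,s,m} \in Y \otimes W$ and set $\chi_{i,j} = \sum_{r,s} \Omega_{\alpha_{i,j,r,s}} \in \cF_{Y \otimes W}$. Expanding the exterior tensor inner product yields
\[
\chi_{i,j}(a \otimes c) = \sum_{m,l} \psi_i(a)_{ml} \otimes \phi_j(c)_{ml}, \qquad \Omega_\sigma(a\otimes c) = \sum_{m,l} \Omega_\Xi(a)_{ml} \otimes \Omega_{\tilde\zeta}(c)_{ml}.
\]
Along the product directed set $(i,j)$, the entry-wise $\sigma$-weak convergence of the two uniformly bounded nets in $\lB(H^n) = \lM_n(\lB(H))$ and $\lB(K^n) = \lM_n(\lB(K))$ yields $\sigma$-weak convergence of the corresponding sums in $\lB(H \otimes K) \cong \lB(H) \botimes \lB(K)$. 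This follows from the standard observation that for bounded nets $x_i \to x$ and $y_j \to y$ $\sigma$-weakly, one has $x_i \otimes y_j \to x \otimes y$ $\sigma$-weakly: this is verified first on simple tensor normal functionals $\omega_1 \otimes \omega_2$ and then extended by a trace-norm density argument. Combined with the reduction of the first paragraph, this verifies condition (2) of Theorem \ref{thm-weak} for $X \otimes Z \prec_{H \otimes K} Y \otimes W$.

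The main technical obstacle is the appearance of the cross-term coefficients $\i<\xi_k, \pi_X(a)\xi_l>$, which are not themselves of the form $\Omega_\xi$; the matrix-module device of Example \ref{ex-tensor} repackages them into a single matrix-valued coefficient on $X_n$, and the entry-wise pairing $\sum_m \eta_{i,r,m} \otimes \omega_{j,s,m}$ is the precise construction that transports these cross-terms faithfully into $Y \otimes W$.
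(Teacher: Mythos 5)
Your proof is correct, but it takes a genuinely different route from the paper's. The paper first reduces to the case $(Z,\pi_Z)=(W,\pi_W)$ by transitivity (proving $X\otimes Z\prec_{H\otimes K}Y\otimes Z\prec_{H\otimes K}Y\otimes W$), and then argues at the level of kernels: it fixes $z\in\ker\theta_{Y\otimes Z}^{H\otimes K}$, uses polarization and positivity to reduce to vector states at \emph{elementary} tensors $\xi\otimes\zeta\otimes h\otimes k$, and runs a Fell-type $\varepsilon$-estimate using only single-vector coefficients $\Omega_\xi$ with the contractivity bound $\sum_r\i<\eta_r,\eta_r>\le\i<\xi,\xi>$ from Theorem \ref{thm-weak}(3); this sidesteps cross terms entirely, so no matrix-module device is needed. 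You instead verify condition (2) of Theorem \ref{thm-weak} head-on for a general vector $\sigma=\sum_k\xi_k\otimes\zeta_k$, handling both factors simultaneously with a product net, which forces you to confront the cross-term coefficients via the matrix correspondences $X_n$, $Z_n$. Two points in your argument deserve emphasis. First, in the paper the promotion $X\prec_H Y\Rightarrow X_n\prec_{(H^n,\pi_H^{(n)})}Y_n$ (Proposition \ref{lem-tensor}) is stated as a \emph{consequence} of the very proposition you are proving, so invoking it naively would be circular; your direct identification $X_n\otimes_{\lM_n(B)}H^n\cong X\otimes_B H$ together with $\pi_H^{(n)}(\lM_n(B))'\cong\pi_H(B)'$ supplies the needed independent proof, and it is essential that you included it. Second, your convergence step relies on the uniform boundedness furnished by condition (3) both to pass from elementary tensors $a\otimes c$ to all of $A\otimes C$ and to get $\sigma$-weak convergence of products of bounded nets in $\lB(H)\botimes\lB(K)$; both extensions are standard but should be stated. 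On balance, the paper's proof is shorter because polarization lets it work with diagonal coefficients only, while yours makes the coefficient-approximation structure of the tensor product completely explicit and treats both factors in one pass.
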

\begin{proof}
By the remark above, it suffices to show the case when $(Z, \pi_Z)=(W, \pi_W)$.
We show that $\ker \theta_{Y \otimes Z}^{H \otimes H} \subset \ker \theta_{X \otimes Z}^{H \otimes K}$.
Fix $z \in \ker \theta_{Y \otimes Z}^{H \otimes H}$ with $\| z \| \leq 1$ arbitrarily.
By the polarization trick, we only have to show that $\i<\xi \otimes \zeta \otimes h \otimes k,  \theta_{X \otimes Z}^{H \otimes H} (z) \xi \otimes \zeta \otimes h \otimes k > =0$ for all unit vectors $\xi \in X$, $\zeta \in Z$, $h \in H$, and $k \in K$.
For any $\varepsilon >0$, we can find a contraction $z_\varepsilon=\sum_l (a_l \otimes c_i ) \otimes x_l \in A \odot C \odot (\pi_H(B)' \botimes \pi_K (D)' )$ satisfying
$\| z -   \sum_l (a_l \otimes c_i ) \otimes x_l \| < \varepsilon$ in $( A \otimes C) \otimes_{\max} ( \pi_H(B)' \botimes \pi_K (D)' )$.
Note that $\| \theta_{Y \otimes Z}^{H \otimes K} ( z_\varepsilon ) \| < \varepsilon$.
Since $A$ is unital and $X \prec_H Y$ holds, Theorem \ref{thm-weak} enables us to find $\eta_1, \dots, \eta_p \in Y$ in such a way that $\sum_{r=1}^p \i< \eta_r, \eta_r > \leq \i< \xi, \xi>$ and
\[
 \left|
\sum_l  \l< h \otimes k, x_l ( \pi_H ( \Omega_\xi (a_l)  - \sum_{r=1}^p \Omega_{\eta_r} (a_l)) h ) \otimes (\pi_K ( \Omega_\zeta (c_l) ) k ) > 
\right|
< \varepsilon.
\]
Now we have
\begin{align*}
& | \i<\xi \otimes \zeta \otimes h \otimes k,  \theta_{X \otimes Z}^{H \otimes H} (z) \xi \otimes \zeta \otimes h \otimes k > |
\\ &\qquad \approx_{\varepsilon} 
| \i<\xi \otimes \zeta \otimes h \otimes k,  \theta_{X \otimes Z}^{H \otimes H} (z_\varepsilon) \xi \otimes \zeta \otimes h \otimes k >
\\ &\qquad =_{\phantom{\varepsilon}}
|\sum_l  \i< h \otimes k, x_l ( \pi_H ( \Omega_\xi (a_l) ) h ) \otimes (\pi_K ( \Omega_\zeta (c_l) ) k ) >  |
\\ &\qquad \approx_\varepsilon
| \sum_{r=1}^p  \sum_l  \i< h \otimes k, x_l ( \pi_H ( \Omega_{\eta_r} (a_l) ) h ) \otimes (\pi_K ( \Omega_\zeta (c_l) ) k ) > |
\\ &\qquad =_{\phantom{\varepsilon}}
 |\sum_{r=1}^p  \i< \eta_r \otimes \zeta \otimes h \otimes k, \theta_{Y \otimes Z}^{H \otimes K} ( z_\varepsilon ) \eta_r \otimes \zeta \otimes h \otimes k >
\\ &\qquad \leq_{\phantom{\varepsilon}}
\| \theta_{Y \otimes Z}^{H \otimes K} (z_\varepsilon ) \| \i< h \otimes k, \sum_{r=1}^p \i< \eta_r, \eta_r > h \otimes \i< \zeta, \zeta > k >
\\ &\qquad \leq _{\phantom{\varepsilon}}
\varepsilon \i< h \otimes k, \i< \xi, \xi> h \otimes \i< \zeta, \zeta > k >
\\ &\qquad \leq _{\phantom{\varepsilon}}
\varepsilon.
\end{align*}
Since $\varepsilon$ is arbitrary, we are done.
\end{proof}

The next useful proposition is a particular case of Proposition \ref{prop-tensor}.
\begin{proposition}\label{lem-tensor}
Let $A,B$ be $\rC^*$-algebras with $A$ unital and $X, Y \in \Cor (A, B)$ and $H \in \Rep (B)$.
If $X \prec_H Y$, then $(X_n, \pi_{X_n} ) \prec_{(H^n, \pi_H^{(n)})} (Y_n, \pi_{Y_n} )$ as $A$-$\lM_n(B)$ $\rC^*$-correspondences (see Example \ref{ex-tensor} for notations).
\end{proposition}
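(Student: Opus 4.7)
The plan is to derive this statement directly from Proposition~\ref{prop-tensor} by making the second factor the trivial $\mathbb{C}$-$\lM_n$ correspondence. Specifically, I would take $(C, D) = (\lC, \lM_n)$, let $(Z, \pi_Z) = (W, \pi_W) := (\lC_n, \pi_{\lC_n})$, where $\pi_{\lC_n} : \lC \to \lL_{\lM_n}(\lC_n)$ is the unital $*$-homomorphism, and let $K := \lC^n$ be the standard (and in fact the only nonzero) irreducible representation of $\lM_n$. Weak containment is reflexive, so the trivial relation $(\lC_n, \pi_{\lC_n}) \prec_{\lC^n} (\lC_n, \pi_{\lC_n})$ holds.

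Combining this trivial relation with the hypothesis $X \prec_H Y$ and applying Proposition~\ref{prop-tensor}, I would obtain
\[
(X \otimes \lC_n,\, \pi_X \otimes \pi_{\lC_n}) \prec_{H \otimes \lC^n} (Y \otimes \lC_n,\, \pi_Y \otimes \pi_{\lC_n})
\]
as $(A \otimes \lC)$-$(B \otimes \lM_n)$ C$^*$-correspondences.

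It then remains to translate this statement through the canonical isomorphisms $A \otimes \lC \cong A$, $B \otimes \lM_n \cong \lM_n(B)$, $X \otimes \lC_n \cong X_n$, $Y \otimes \lC_n \cong Y_n$, and $H \otimes \lC^n \cong H^n$. Under these identifications one checks that the exterior-tensor left action $\pi_X \otimes \pi_{\lC_n}$ corresponds to $\pi_{X_n} = \pi_X \otimes 1_{\lC_n}$ (and likewise for $Y$), and that the representation of $\lM_n(B) \cong B \otimes \lM_n$ on $H^n \cong H \otimes \lC^n$ coincides with $\pi_H^{(n)}$. These verifications are essentially definitional.

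There is no real obstacle here; the only thing to be careful about is the bookkeeping of left actions and representations under the identifications listed above, so that the weak containment produced by Proposition~\ref{prop-tensor} is genuinely the one asserted in the statement.
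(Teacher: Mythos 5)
Your proposal is correct and is exactly the paper's intended argument: the paper states this proposition without proof as ``a particular case of Proposition \ref{prop-tensor},'' and your specialization $(C,D)=(\lC,\lM_n)$, $Z=W=\lC_n$, $K=\lC^n$ together with the identifications $X\otimes\lC_n\cong X_n$, $H\otimes\lC^n\cong H^n$, $\pi_H\otimes(\text{std})\cong\pi_H^{(n)}$ is precisely the particular case meant. The bookkeeping you flag is indeed the only content, and it checks out.
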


\begin{proposition}\label{prop-int-tensor}
Let $A, B$ and $C$ be $\rC^*$-algebras with $A$ and $B$ unital and $X, Y \in \Corr (A, B)$, $Z, W \in \Corr (B, C)$ and $K \in \Rep (C)$ be given.
Suppose that $Z\prec_K W$ and either $X \prec_{Z \otimes_C K} Y$ or $X \prec_{W \otimes_C K} Y$ holds.
Then $X \otimes_B Z \prec_{K} Y \otimes_B W$ as $A$-$C$ $\rC^*$-correspondences.
\end{proposition}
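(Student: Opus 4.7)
The plan is to break the desired weak containment $X\otimes_B Z \prec_K Y\otimes_B W$ into two steps via an intermediate correspondence, varying one tensor factor at a time, and conclude by transitivity of $\prec_K$ (noted in the paragraph preceding Proposition \ref{prop-tensor}). The two building blocks are
\begin{itemize}
\item[(A)] for any $V\in\Corr(B,C)$, $X\prec_{V\otimes_C K} Y$ implies $X\otimes_B V\prec_K Y\otimes_B V$;
\item[(B)] $Z\prec_K W$ implies $U\otimes_B Z\prec_K U\otimes_B W$ for every $U\in\Corr(A,B)$.
\end{itemize}
Under the first hypothesis $X\prec_{Z\otimes_C K} Y$, apply (A) with $V=Z$ followed by (B) with $U=Y$; under the second hypothesis $X\prec_{W\otimes_C K} Y$, apply (B) with $U=X$ first, then (A) with $V=W$.

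For (A), invoke the associativity $X\otimes_B(V\otimes_C K)\cong (X\otimes_B V)\otimes_C K$ and observe that $\rho: \pi_K(C)'\to \pi_{V\otimes_C K}(B)'$, $y\mapsto 1_V\otimes y$, is a well-defined $*$-homomorphism (since $1_V\otimes y$ commutes with $\pi_V(b)\otimes 1_K$ for every $b\in B$). A direct check on elementary tensors gives the factorization $\theta_{X\otimes_B V}^K = \theta_X^{V\otimes_C K}\circ(\id_A\otimes_{\max}\rho)$, and likewise for $Y$, whence the kernel inclusion $\ker\theta_Y^{V\otimes_C K}\subseteq \ker\theta_X^{V\otimes_C K}$ pulls back to the desired one.

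For (B), by Theorem \ref{thm-weak} it suffices, given $\zeta\in U\otimes_B Z$, to produce a net $\{\psi_l\}$ in $\cF_{U\otimes_B W}$ with $\pi_K\circ\psi_l\to\pi_K\circ\Omega_\zeta$ point $\sigma$-weakly. Norm density and the continuity $\|\Omega_\zeta-\Omega_{\zeta'}\|\leq(\|\zeta\|+\|\zeta'\|)\|\zeta-\zeta'\|$ reduce the task to $\zeta=\sum_{i=1}^n u_i\otimes z_i$. Set $\varphi(a):=[\langle u_i,\pi_U(a)u_j\rangle_B]_{i,j=1}^n\in\lM_n(B)$, and view $z:=(z_1,\dots,z_n)\in Z^{\oplus n}$ as a vector in the $\lM_n(B)$-$C$ correspondence $\widetilde Z$ obtained by endowing $Z^{\oplus n}$ with the natural left $\lM_n(B)$-action by matrix multiplication; define $\widetilde W$ analogously. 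By Remark \ref{rem-factor} one has $\Omega_\zeta=\Omega_z^{\widetilde Z}\circ\varphi$. The nuclearity of $\lM_n$ yields $\lM_n(B)\otimes_{\max}\pi_K(C)'\cong (B\otimes_{\max}\pi_K(C)')\otimes \lM_n$, under which $\theta_{\widetilde Z}^K$ and $\theta_{\widetilde W}^K$ are identified with the ampliations $\theta_Z^K\otimes\id_{\lM_n}$ and $\theta_W^K\otimes\id_{\lM_n}$, so $Z\prec_K W$ promotes to $\widetilde Z\prec_K \widetilde W$. Applying Theorem \ref{thm-weak} to $z\in\widetilde Z$ over the unital algebra $\lM_n(B)$ produces $\omega_l=\sum_k\Omega_{w_k^{(l)}}^{\widetilde W}\in\cF_{\widetilde W}$ with $\pi_K\circ\omega_l\to\pi_K\circ\Omega_z^{\widetilde Z}$ point $\sigma$-weakly; writing $w_k^{(l)}=(w^{(l)}_{k,1},\dots,w^{(l)}_{k,n})\in W^{\oplus n}$, a direct unfolding gives $\omega_l\circ\varphi = \sum_k\Omega_{\sum_i u_i\otimes w_{k,i}^{(l)}}^{U\otimes_B W}\in\cF_{U\otimes_B W}$, and the desired approximation follows by composing with $\varphi$.

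The main obstacle I anticipate is the matrix bookkeeping in (B): correctly setting up $\widetilde Z,\widetilde W$ as $\lM_n(B)$-$C$ correspondences (not to be confused with the paper's $Z^n\in\Corr(B,C)$) and verifying that $Z\prec_K W$ promotes to $\widetilde Z\prec_K \widetilde W$ through the ampliation isomorphism. Part (A), by contrast, is a direct bookkeeping check using associativity of interior tensor products.
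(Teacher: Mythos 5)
Your proposal is correct and follows essentially the same route as the paper: the paper also factors the containment through the intermediate correspondence $Y\otimes_B Z$ (resp.\ $X\otimes_B W$), proves your step (A) by the identification $\theta_{X\otimes_B V}^K(\sum a_i\otimes x_i)=\theta_X^{V\otimes_C K}(\sum a_i\otimes(1_V\otimes x_i))$, and proves your step (B) via Remark \ref{rem-factor}, the $\lM_n(B)$-$C$ amplification of $Z\prec_K W$, and Theorem \ref{thm-weak}. The only difference is that you spell out details the paper leaves implicit (the justification of $\widetilde Z\prec_K\widetilde W$ and the density reduction to finite sums).
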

\begin{proof}
Suppose that $X \prec_{Z\otimes_C K} Y$.
We show that $X\otimes_B Z \prec_K Y \otimes_B Z \prec_K Y \otimes_B W$.
For any $\sum_i a_i \otimes x_i \in A \odot \pi_K(C)'$ we have
$\| \theta_{X\otimes_B Z}^K (\sum_i a _i \otimes x_i) \| = \| \theta_X^{Z\otimes_C K} (\sum_i a_i \otimes (1_Z \otimes x_i ) )\| \leq \|\theta_Y^{Z\otimes_C K} (\sum_i a_i \otimes (1_Z \otimes x_i ) )\| =\| \theta_{Y\otimes_B Z}^K (\sum_i a _i \otimes x_i) \|$.
Hence $X\otimes_B Z \prec_K Y \otimes_B Z$.
To see $Y \otimes_B Z \prec_K Y \otimes_B W$, let $\xi= \sum_{i=1}^n \eta_i \otimes \zeta_i \in Y \odot Z$ be arbitrary.
By Remark \ref{rem-factor},
the coefficient $\Omega_\xi$ is of the form $\Omega_\zeta \circ \Omega_\eta$ with $\eta=(\eta_1, \dots, \eta_n ) \in Y_n$ and $\zeta=(\zeta_1, \dots, \zeta_n ) \in Z^n$, where $\lM_n (B)$ acts on $Z^n$ by $\pi_Z \otimes \id_{\lM_n} : \lM_n (B) \to \lL_C (Z) \otimes \lM_n = \lL_ C( Z^n)$.

Since $(Z^n, \pi_Z \otimes \id_{\lM_n}) \prec_H (W^n, \pi_{W}\otimes \id_{\lM_n} )$ as $\lM_n (B)$-$C$ C$^*$-correspondences,
there exists a net $\psi_i \in \cF_{Z^n}$ such that $\pi_K \circ \psi_i \circ \Omega_\eta $ converges to $\pi_K \circ \Omega_\zeta \circ \Omega_\eta= \pi_K \circ \Omega_\xi$ in the point $\sigma$-weak topology.
Since $\psi_i \circ \Omega_\eta \in \cF_{X \otimes_B Z}$, we get $Y\otimes_B Z \prec_K Y \otimes_B W$.

When $X \prec_{W \otimes_C K} Y$, we can prove, in the same manner, that $X\otimes_B Z \prec_K X \otimes_B W \prec_K Y \otimes_B W$
\end{proof}

\begin{proposition}\label{prop-product-map}
Let $A$ and $B$ be $\rC^*$-algebras with $A$ unital.
Let $X, Y \in \Corr (A, B)$ and $H \in \Rep (B)$ be arbitrary and set $M=\pi_H (B)''$.
Then, $X \prec_H Y$ if and only if for any $\xi \in X$, the product map
$$
(\pi_H \circ \Omega_\xi ) \us \times_{\max} \iota_{M'} : A \us\otimes_{\max} M' \to \lB (H); \quad a \otimes x \mapsto \pi_H (\Omega_\xi (a) ) x
$$
factors through $\Imag \theta_Y^H$.
\end{proposition}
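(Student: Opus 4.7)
The strategy is to realize the product map as a compression of $\theta_X^H$, thereby turning the factorization condition into the kernel inclusion that defines $X \prec_H Y$. For each $\xi \in X$ I would introduce the bounded operator $V_\xi : H \to X \otimes_B H$ given by $V_\xi h := \xi \otimes h$ (bounded because $\|V_\xi h\|^2 = \i<h, \pi_H(\i<\xi,\xi>)h> \leq \|\xi\|^2 \|h\|^2$), and compute that its adjoint satisfies $V_\xi^*(\eta \otimes h') = \pi_H(\i<\xi,\eta>)h'$. A direct calculation on elementary tensors then yields
\[
V_\xi^*\, \theta_X^H(a \otimes x)\, V_\xi = \pi_H(\Omega_\xi(a))\, x \qquad (a \in A,\ x \in M'),
\]
and since both sides extend to c.p.\ maps from $A \otimes_{\max} M'$ into $\lB(H)$ that agree on the algebraic tensor product, we obtain the identity
\[
(\pi_H \circ \Omega_\xi) \us\times_{\max} \iota_{M'} = V_\xi^*\, \theta_X^H(\,\cdot\,)\, V_\xi.
\]
In particular $\ker \theta_X^H$ is contained in the kernel of the product map for every $\xi \in X$.

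The forward direction is then immediate: $X \prec_H Y$ means, by definition, $\ker \theta_Y^H \subset \ker \theta_X^H$, and the displayed inclusion above gives $\ker \theta_Y^H \subset \ker\bigl((\pi_H \circ \Omega_\xi) \us\times_{\max} \iota_{M'}\bigr)$, which is precisely the condition ensuring that the product map descends to a well-defined (c.p.) map on the quotient $(A \otimes_{\max} M') / \ker \theta_Y^H \cong \Imag \theta_Y^H$.

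For the converse, fix $z \in \ker \theta_Y^H$; since $\theta_Y^H$ is a $*$-homomorphism we also have $z^*z \in \ker \theta_Y^H$. Applying the factorization hypothesis to every $\xi \in X$ gives $V_\xi^* \theta_X^H(z^*z) V_\xi = 0$, which upon pairing with $h \in H$ rewrites as $\i<\xi \otimes h, \theta_X^H(z^*z)(\xi \otimes h)> = 0$. Because $\theta_X^H(z^*z) = \theta_X^H(z)^*\theta_X^H(z) \geq 0$, setting $T := \theta_X^H(z^*z)^{1/2}$ turns this into $\|T(\xi \otimes h)\|^2 = 0$, so $T$ vanishes on every elementary tensor. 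As these span a dense subspace of $X \otimes_B H$, we deduce $T = 0$ and hence $\theta_X^H(z) = 0$; therefore $\ker \theta_Y^H \subset \ker \theta_X^H$, i.e., $X \prec_H Y$.

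The only mildly delicate point is the final density-plus-positivity step that upgrades the vanishing of the diagonal matrix coefficients $\i<\xi \otimes h, \theta_X^H(z^*z)(\xi \otimes h)> = 0$ to the operator identity $\theta_X^H(z^*z) = 0$; this is dispatched by the square-root trick together with the density of elementary tensors in the interior tensor product $X \otimes_B H$. Once the compression identity for $V_\xi$ is in hand, the proposition follows as an essentially formal consequence.
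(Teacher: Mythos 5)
Your proof is correct and follows essentially the same route as the paper: the compression identity $V_\xi^*\,\theta_X^H(\cdot)\,V_\xi = (\pi_H \circ \Omega_\xi) \times_{\max} \iota_{M'}$ is exactly the matrix-coefficient computation $\i< \xi \otimes \eta, \theta_X^H(a \otimes x)(\xi \otimes \zeta)> = \i< \eta, \pi_H(\Omega_\xi(a)) x \zeta>$ on which the paper's argument rests. The only cosmetic difference is in the converse, where the paper approximates $z \in \ker \theta_Y^H$ by elements of $A \odot M'$ and uses boundedness of the factoring map together with polarization, whereas you pass to $z^*z$ and invoke positivity and the square-root trick; both are valid.
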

\begin{proof}
Thanks to Remark \ref{rem-faithful} we assume that $\pi_H$ is faithful and $B \subset \lB (H)$.
Suppose that $X \prec_H Y$.
For any $\sum_i a_i \otimes x_i \in A \odot M'$, $\xi \in X$, and $\eta, \zeta \in H$ one has
\begin{align*}
| \i< \eta, \sum_i \Omega_\xi (a_i) x_i \zeta >| 
&= | \i< \xi \otimes \eta, \theta_X^H (\sum_i a_i \otimes x_i  ) \xi \otimes \zeta >|
\leq \| \theta_Y^H (\sum_i a_i \otimes x_i  ) \| | \xi \otimes \eta \| \| \xi \otimes \zeta \| \\ 
& \leq \| \theta_Y^H (\sum_i a_i \otimes x_i  ) \| \| \xi \|^2 \| \eta \| \| \zeta \| ,
\end{align*}
which implies that $ \theta_Y ^H ( \sum_i a_i \otimes x_i ) \mapsto \sum_i \Omega_\xi (a_i) x_i$ is bounded and its norm is less than or equal to $\| \xi \|^2$.

Conversely, suppose that $\Phi: \Imag \theta_Y^H \to \lB (H); \theta_Y^H (a \otimes x)  \mapsto \Omega_\xi (a) x$ is bounded.
Let $z \in \ker \theta_Y^H$ be arbitrarily fixed.
We show that $\i< \xi \otimes \eta, \theta_X^H (z) \xi' \otimes \eta' > =0$ for all $\xi, \xi ' \in X$ and $\eta, \eta' \in H$.
By the polarization trick we may assume that $\xi = \xi'$.
Let $\varepsilon >0$ be arbitrary and take $\sum_i a_i \otimes x_i \in A \odot M'$ in such a way that $\| z - \sum_i a_i \otimes x_i \|_{\max} < \varepsilon$.
We then have
\begin{align*}
| \i< \xi \otimes \eta, \theta_X^H (\sum_i a_i \otimes x_i ) \xi \otimes \eta' >|
&=| \i< \eta, \sum_i \i< \xi, \pi_X( a_i) \xi > x_i \eta' > |
\leq \i< \eta, \Phi (\sum_i a_i \otimes x_i  ) \eta'> | \\
& \leq  \|\Phi \| \| \theta_Y^H (\sum_i a_i \otimes x_i   ) \| \| \eta \| \| \eta ' \| 
< \varepsilon  \|\Phi \| \| \eta \| \| \eta ' \|.
\end{align*}
Since $\varepsilon >0$ is arbitrary, we get $\theta_X^H (z)=0$, and hence $X \prec_H Y$.
\end{proof}

The next technical proposition will be used later.
The proof below is based on \cite[Proposition 3.6.5]{Brown-Ozawa}, called {\it The Trick}.

\begin{proposition}\label{prop-product-map2}
Let $A$ and $B$ be unital $\rC^*$-algebras, and $X,Y \in \Corr(A, B)$  and $H \in \Rep (B)$.
If $\pi_Y$ is unital and $X \prec_H Y$ holds, then for any normal vector $\xi \in X$ there exists a u.c.p.\ map $\Theta$ from $( \lL_B (Y) \otimes 1_H)'' \subset \lB (Y \otimes_B H)$ into $\pi_H (B)''$ such that $\Theta (\pi_Y (a) \otimes 1_H) = \pi_H \circ \Omega_\xi (a)$ for $a\in A$.
\end{proposition}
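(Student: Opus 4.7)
The plan is to combine Proposition \ref{prop-product-map} with Arveson's extension theorem and a multiplicative-domain argument, in the spirit of ``The Trick'' from \cite[Proposition 3.6.5]{Brown-Ozawa}. Set $M := \pi_H(B)''$. Since $\xi$ is a normal vector, $\Omega_\xi(1_A) = 1_B$, so $\pi_H \circ \Omega_\xi : A \to M$ is u.c.p.\ with range commuting with $M'$. The product map
\[
\Psi := (\pi_H \circ \Omega_\xi) \us\times_{\max} \iota_{M'} : A \us\otimes_{\max} M' \to \lB(H)
\]
is therefore u.c.p., and Proposition \ref{prop-product-map} yields $\ker \theta_Y^H \subset \ker \Psi$. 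Consequently $\Psi$ descends to a u.c.p.\ map $\Phi : \Imag \theta_Y^H \to \lB(H)$ satisfying $\Phi(\theta_Y^H(a \otimes x)) = \pi_H(\Omega_\xi(a))\, x$ for $a \in A$ and $x \in M'$.

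Next I would apply Arveson's extension theorem to lift $\Phi$ to a u.c.p.\ map $\widetilde\Phi : \lB(Y \otimes_B H) \to \lB(H)$. Since $\pi_Y$ is unital, $\theta_Y^H(1_A \otimes x) = 1_Y \otimes x$ for every $x \in M'$, whence
\[
\widetilde\Phi(1_Y \otimes x) = \Phi(1_Y \otimes x) = \pi_H(\Omega_\xi(1_A))\, x = x.
\]
Thus $\widetilde\Phi$ restricts to the identity on the unital $*$-subalgebra $1_Y \otimes M'$, and the multiplicative-domain theorem places $1_Y \otimes M'$ inside the multiplicative domain of $\widetilde\Phi$.

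Finally I restrict $\widetilde\Phi$ to the bicommutant $(\lL_B(Y) \otimes 1_H)''$. For any $T$ in this bicommutant and any $x \in M'$, the element $1_Y \otimes x$ commutes with $\lL_B(Y) \otimes 1_H$, and hence with $T$; combining this with the multiplicative-domain property gives
\[
\widetilde\Phi(T)\, x = \widetilde\Phi\bigl(T(1_Y \otimes x)\bigr) = \widetilde\Phi\bigl((1_Y \otimes x) T\bigr) = x\, \widetilde\Phi(T),
\]
so $\widetilde\Phi(T) \in (M')' = M = \pi_H(B)''$. Setting $\Theta := \widetilde\Phi|_{(\lL_B(Y) \otimes 1_H)''}$ yields the desired u.c.p.\ map, and the identity $\Theta(\pi_Y(a) \otimes 1_H) = \pi_H \circ \Omega_\xi(a)$ is immediate from the definition of $\Phi$. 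The main subtlety worth highlighting is the \emph{exact} equality $\widetilde\Phi(1_Y \otimes x) = x$, which is what drives the multiplicative-domain step, and where the normality of $\xi$ together with the unitality of $\pi_Y$ is essential.
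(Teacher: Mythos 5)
Your argument is correct and follows essentially the same route as the paper: obtain the u.c.p.\ map on $\Imag\theta_Y^H$ from Proposition \ref{prop-product-map} (using normality of $\xi$ for unitality), extend by Arveson, and use the multiplicative domain of the extension together with the fact that $1_Y\otimes M'$ commutes with $\lL_B(Y)\otimes 1_H$ to land in $(M')'=\pi_H(B)''$. No gaps.
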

\begin{proof}
Thanks to the previous proposition there exists a u.c.p.\ map $\Phi : \Imag \theta_Y^H \to \lB (H)$ such that $\Phi ( \theta_Y^H ( a \otimes x) ) = \pi_H ( \i< \xi, \pi_X( a) \xi > )x$ for $ a\in A $ and $\xi \in X$.
Since $\Imag \theta_Y^H$ is a unital C$^*$-subalgebra of $\lB (Y \otimes_B H)$,
by Arveson's extension theorem we can extend $\Phi$ to a u.c.p.\ map $\Psi$ from $\lB (Y \otimes_B H)$ into $\lB (H)$.
Note that $\Imag \theta_Y^H$ is contained in the multiplicative domain of $\Psi$, i.e., $\Psi (abc) = \Phi (a) \Psi (b) \Phi(c)$ holds for $a, c \in \Imag \theta_Y^H$ and $b \in \lB (Y \otimes_B H)$.
Let $\Theta$ be the restriction of $\Psi$ to $(\lL_B (Y) \otimes 1_H )''$.
Then, for $x \in (\lL_B (Y) \otimes 1_H )''$ and $y \in M'$ we have $y\Theta (x) =\Phi ( \theta_Y^H (1_A \otimes y )) \Psi (x) = \Psi ( \theta_Y^H (1_A \otimes y )x) = \Psi (x \theta_Y^H (1_A \otimes y ))=\Psi (x) \Phi ( \theta_Y^H (1_A \otimes y ))  =\Theta (x) y$, which implies $\Theta (x) \in \pi_H(B)''$.
\end{proof}

\section{Relative nuclearity}\label{sec-rel}
In this section we give the definition of relative nuclearity and prove Theorem \ref{thm-A}.
\subsection{Universal factorization property}\label{ss-rel-UFP}
To define relative nuclearity, we need the notion of universal factorization property, which plays a role of the original definition of nuclearity.
Recall the fact that for $\rC^*$-algebras $A$ and $B$ every nondegenerate representation $\sigma : A \odot B \to \lB (H)$ is of the form of $\sigma_A \times \sigma_B$, where $\sigma_A : A \to \lB (H)$ and $\sigma_B : B \to \lB (H)$ are unique $*$-homomorphisms having mutually commuting ranges and satisfying $\sigma (a \otimes b ) =\sigma_A (a) \sigma_B (b)$ (see e.g., \cite[Theorem 3.2.6]{Brown-Ozawa}).
\begin{definition}\label{def-UFP}
Let $A$ be a $\rC^*$-algebra.
We say that $(X,\pi_X)\in \Corr(A)$ has the {\it universal factorization property} ({\it UFP\/} for short) if it satisfies the following universal property:
For any $\rC^*$-algebra $B$, every $*$-representation $(H,\sigma) \in \Rep (A \otimes_{\max} B)$ is weakly contained in the $*$-representation $\phi_X^H: A \otimes_{\max} B \ni a \otimes b \mapsto \pi_X(a) \otimes \sigma_B (b) \in \lB (X\otimes _{\sigma_A} H)$.
In other words, there exists $*$-homomorphism $\Phi : \Imag \phi_X^H \to \lB( H)$ such that the following diagram commutes:
\[
\xymatrix @C=2cm
{
A \us\otimes_{\rm max} B
\ar[d]_{\phi_X^H} \ar[r]^{ \sigma} &   \lB( H) \\
\Imag \phi_X^H    \ar@{-->}[ru]_\Phi
}
\]
\end{definition}
Firstly, the UFP of a given C$^*$-correspondence is characterized in terms of weak containment.
\begin{proposition}\label{prop-UFP}
For any $\rC^*$-algebra $A$ and any $\rC^*$-correspondence $(X, \pi_X)$ over $A$,
$(X, \pi_X)$ has the UFP if and only if $(A,\lambda_A) \prec_\univ (X, \pi_X)$ holds.
\end{proposition}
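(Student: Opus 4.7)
The plan is to prove both implications by directly unwinding the definitions, using Lemma \ref{lem-normalrepn} to transfer between different representations of the coefficient algebra $A$.

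For the easier direction UFP $\Rightarrow (A,\lambda_A)\prec_\univ (X,\pi_X)$, I will specialize the UFP to the tautological choice $B:=\pi_u(A)'$ and $\sigma:=\pi_u\times\iota$, where $(H_u,\pi_u)$ denotes the universal representation of $A$ and $\iota:\pi_u(A)'\hookrightarrow \lB(H_u)$ is the inclusion. With this choice $\sigma_A=\pi_u$, so $X\otimes_{\sigma_A}H_u=X\otimes_A H_u$ and $\phi_X^{H_u}$ coincides with $\theta_X^{H_u}$. Moreover, the canonical unitary $A\otimes_A H_u\cong H_u$ sending $a\otimes h\mapsto \pi_u(a)h$ intertwines $\theta_A^{H_u}$ with $\sigma$. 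Thus the UFP applied to this pair $(B,\sigma)$ reads exactly $\theta_A^{H_u}\prec \theta_X^{H_u}$, which is the definition of $(A,\lambda_A)\prec_\univ (X,\pi_X)$.

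For the converse, I will fix an arbitrary C$^*$-algebra $B$ and any $\sigma\in\Rep(A\otimes_{\max}B)$ acting on $H$, with canonical decomposition $\sigma=\sigma_A\times\sigma_B$. Since $\sigma_A$ is nondegenerate, it extends uniquely to a normal unital $*$-homomorphism $\bar\sigma_A:\pi_u(A)''\cong A^{**}\to\lB(H)$ satisfying $\bar\sigma_A\circ\pi_u=\sigma_A$. Feeding this normal representation into Lemma \ref{lem-normalrepn} transfers the hypothesis $(A,\lambda_A)\prec_{H_u}(X,\pi_X)$ to $(A,\lambda_A)\prec_{(H,\sigma_A)}(X,\pi_X)$, i.e.\ $\theta_A^H\prec \theta_X^H$ as representations of $A\otimes_{\max}\sigma_A(A)'$. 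Since $\sigma_B(B)\subset\sigma_A(A)'$, the map $\id_A\otimes\sigma_B:A\otimes_{\max}B\to A\otimes_{\max}\sigma_A(A)'$ is a well-defined $*$-homomorphism, and the identifications $\theta_A^H\circ(\id_A\otimes\sigma_B)=\sigma$ and $\theta_X^H\circ(\id_A\otimes\sigma_B)=\phi_X^H$ pull the weak containment back to $\sigma\prec \phi_X^H$, giving exactly the factorization required by the UFP.

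Both directions reduce to bookkeeping once the right test representation of the maximal tensor product is chosen; the only nontrivial analytic input is Lemma \ref{lem-normalrepn}, which is already at our disposal. The step that demands the most care is the converse direction, where one must carefully keep track of the two different domains $A\otimes_{\max}B$ and $A\otimes_{\max}\sigma_A(A)'$ and the compatibility of $\theta^H$ with $\phi_X^H$ under $\id_A\otimes\sigma_B$.
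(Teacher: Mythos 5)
Your proof is correct and follows essentially the same route as the paper: the forward direction applies the UFP to the tautological pair $B=\pi_u(A)'$, $\sigma=\pi_u\times\iota$, and the converse passes from the universal representation to $(H,\sigma_A)$ via Lemma \ref{lem-normalrepn} and then identifies $\sigma$ and $\phi_X^H$ with the restrictions of $\theta_A^H$ and $\theta_X^H$ along $\id_A\otimes\sigma_B$. The only cosmetic differences are that you make the normal-extension step explicit (the paper asserts $\prec_\univ\Rightarrow\prec_{(H,\sigma_A)}$ without comment) and phrase the last step as a kernel pullback where the paper writes the corresponding norm inequality.
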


\begin{proof}
Suppose that $(X, \pi_X)$ has the UFP.
Let $(H, \pi_H)$ be the universal representation of $A$.
Applying the UFP to $\pi_H \times \iota : A \odot \pi_H(A)' \to \lB (H)$,
where $\iota : \pi_H (A)' \hookrightarrow \lB (H)$ is the inclusion map, we get
$\ker \theta_X^H = \ker \phi_X^H \subset \ker (\pi_H \times_{\rm max} \iota) = \ker \theta_A^H$.
Conversely, suppose that $(A, \lambda_A) \prec_\univ  (X, \pi_X)$ holds.
Let $B$ and $\sigma_A \times \sigma_B : A\odot B \to \lB (H)$ be given.
The condition that $(A, \lambda_A) \prec_\univ (X, \pi_X )$ implies
that $(A, \lambda_A) \prec_{(H, \sigma_A)} (X, \pi_X)$.
For any $\sum_i a_i \otimes b_i \in A \odot B $ we have
$ \| \sigma_A \times \sigma_B ( \sum_i a_i \otimes b_i ) \| =\| \theta_A^H (\sum_i a_i \otimes \sigma_B (b_i) ) \| \leq \| \theta_X^H ( \sum_i a_i \otimes \sigma_B (b_i) \| = \|\phi_X^H ( \sum_i a_i \otimes b_i ) \|$,
which implies $(H, \sigma) \prec (X \otimes_B H, \phi_X^H )$.
\end{proof}

We next translate the original definition of nuclearity into the language of C$^*$-correspondences by use of the notion of UFP.
We note that equivalence between nuclearity and CPAP follows from the next proposition together with Corollary \ref{cor-weak}.
\begin{proposition}\label{prop-nuclear}
Let $A$ be a $\rC^*$-algebra and $\pi_H : A \to \lB (H)$ be a faithful $*$-representation.
Then, the following are equivalent:
\begin{itemize}
\item[(1)] $A$ is nuclear.
\item[(2)] $(H \otimes A, \pi_H \otimes 1_A)$ has the UFP.
\item[(3)] $(A, \lambda_A) \prec_\univ (H \otimes A, \pi_H \otimes 1_A)$.
\end{itemize}
\end{proposition}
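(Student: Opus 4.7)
The plan is to get the equivalence (2)$\Leftrightarrow$(3) for free from Proposition \ref{prop-UFP} applied to $X=H\otimes A$, and then to establish (1)$\Leftrightarrow$(2) by identifying the representation $\phi_X^K$ explicitly and observing that its norm equals the minimal C$^*$-norm on $A\odot B$.

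First I would unpack $\phi_X^K$ for $X=H\otimes A$ and an arbitrary $*$-representation $\sigma=\sigma_A\times\sigma_B$ of $A\otimes_{\max}B$ on a Hilbert space $K$. The Hilbert $B$-module structure on $X$ is the exterior one ($\langle h\otimes a,h'\otimes a'\rangle=\langle h,h'\rangle a^{*}a'$), so the inner product computation
\[
\langle (h\otimes a)\otimes k,\,(h'\otimes a')\otimes k'\rangle_K=\langle h,h'\rangle\,\langle\sigma_A(a)k,\sigma_A(a')k'\rangle
\]
yields an isometric isomorphism $(H\otimes A)\otimes_{\sigma_A}K\cong H\otimes\overline{\sigma_A(A)K}$ sending $(h\otimes a)\otimes k\mapsto h\otimes\sigma_A(a)k$. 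Under this identification a direct check gives
\[
\phi_X^K(a\otimes b)=(\pi_H(a)\otimes 1_A)\otimes\sigma_B(b)\;\longleftrightarrow\;\pi_H(a)\otimes\sigma_B(b),
\]
i.e., $\phi_X^K$ is (a cut-down of) the spatial representation $\pi_H\otimes\sigma_B$. In particular, since $\pi_H$ is faithful, for every $z\in A\odot B$ one has $\|\phi_X^K(z)\|=\|(\pi_H\otimes\sigma_B)(z)\|\leq\|z\|_{\min}$.

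For (1)$\Rightarrow$(2), fix $B$ and $\sigma=\sigma_A\times\sigma_B$ on $K$. Nuclearity of $A$ gives $\|z\|_{\max}=\|z\|_{\min}$ on $A\odot B$, hence
\[
\|\sigma(z)\|\leq\|z\|_{\max}=\|z\|_{\min}\leq\|(\pi_H\otimes\sigma_B)(z)\|=\|\phi_X^K(z)\|,
\]
so $\sigma$ descends to a $*$-homomorphism from $\Imag\phi_X^K$ into $\lB(K)$, i.e., $\sigma\prec\phi_X^K$; this is the UFP. For (2)$\Rightarrow$(1), fix an arbitrary C$^*$-algebra $B$ and take any faithful representation $\sigma=\sigma_A\times\sigma_B$ of $A\otimes_{\max}B$ on some $K$. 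The UFP forces $\|z\|_{\max}=\|\sigma(z)\|\leq\|\phi_X^K(z)\|\leq\|z\|_{\min}$ for $z\in A\odot B$; combined with the automatic reverse inequality this yields $\|\cdot\|_{\max}=\|\cdot\|_{\min}$ on $A\odot B$, and since $B$ was arbitrary, $A$ is nuclear.

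The only delicate point is making the identification $(H\otimes A)\otimes_{\sigma_A}K\cong H\otimes\overline{\sigma_A(A)K}$ rigorous and checking that under it $\phi_X^K$ coincides with the spatial representation; once this is in place, both implications become one-line norm comparisons. The equivalence (2)$\Leftrightarrow$(3) is then immediate from Proposition \ref{prop-UFP}.
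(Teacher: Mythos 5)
Your overall strategy is the same as the paper's: deduce (2)$\Leftrightarrow$(3) from Proposition \ref{prop-UFP}, identify $(H\otimes A)\otimes_{\sigma_A}K$ with $H\otimes \overline{\sigma_A(A)K}=H\otimes K$ so that $\phi_X^K$ becomes the spatial representation $\pi_H\otimes\sigma_B$, and then compare norms. The identification itself, the direction (2)$\Rightarrow$(1), and the equivalence (2)$\Leftrightarrow$(3) are all fine. However, there is a false step in your chain of inequalities for (1)$\Rightarrow$(2): the inequality $\|z\|_{\min}\leq\|(\pi_H\otimes\sigma_B)(z)\|$, where $\|\cdot\|_{\min}$ denotes the minimal norm on $A\odot B$, requires $\sigma_B$ to be \emph{faithful} on $B$, and for a general nondegenerate representation $\sigma$ of $A\otimes_{\max}B$ it need not be. (Take $B=C\oplus D$ and $\sigma_B$ vanishing on the second summand: then $(\pi_H\otimes\sigma_B)(a\otimes(0\oplus d))=0$ while $\|a\otimes(0\oplus d)\|_{\min}=\|a\|\,\|d\|$.) As written, the middle of your chain fails, so you have not shown $\ker\phi_X^K\subset\ker\sigma$.

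The repair is exactly the extra step the paper takes: first factor through the quotient $q:=\id_A\odot\sigma_B:A\odot B\to A\odot\sigma_B(B)$. Since $\sigma_A$ and the inclusion $\sigma_B(B)\subset\lB(K)$ have commuting ranges, one has $\|\sigma(z)\|\leq\|q(z)\|_{A\otimes_{\max}\sigma_B(B)}$; nuclearity of $A$, applied against the C$^*$-algebra $\sigma_B(B)$ rather than against $B$, gives $\|q(z)\|_{A\otimes_{\max}\sigma_B(B)}=\|q(z)\|_{A\otimes_{\min}\sigma_B(B)}$, and the latter equals $\|(\pi_H\otimes\sigma_B)(z)\|=\|\phi_X^K(z)\|$ because $\pi_H$ is faithful on $A$ and $\sigma_B(B)$ is faithfully (namely, tautologically) represented on $K$. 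With this one-line correction your argument coincides with the paper's proof, which packages the same computation as ``$\Imag\phi_X^K\cong A\otimes\sigma_B(B)=A\otimes_{\max}\sigma_B(B)$, so the universal property of the maximal tensor product produces the factorizing map.''
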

\begin{proof}
We prove (1) $\Rightarrow$ (2):
Suppose that $A$ is nuclear and fix a C$^*$-algebra $B$ and nondegenerate $*$-representation $\sigma: A \odot B \to \lB (K)$ arbitrarily.
We observe that $(H \otimes A )\otimes _{\sigma_A} K \cong H \otimes K$, and this isomorphism induces $\Imag \phi_{K \otimes A}^H \cong A \otimes \sigma_B (B) \subset \lB (K \otimes H)$.
The nuclearity of $A$ implies that $\Imag \phi_{K \otimes A}^H \cong A \otimes_{\max} \sigma_B (B)$.
By the universality of the maximal tensor product, the mapping $\Phi : \Imag \phi_{K \otimes A}^H \ni \phi_{K \otimes A}^H (a \otimes b ) \mapsto \sigma_A (a) \sigma_B (b) \in \lB (H)$ is bounded,
and hence $(H \otimes A, \pi_H \otimes 1_A)$ has the UFP.

Equivalence between (2) and (3) follows from the previous proposition.
We prove (2) $\Rightarrow$ (1):
Suppose that $(H \otimes A, \pi_H \otimes 1_A)$ has the UFP.
Let $\sigma : A\otimes_{\max} B \to \lB (H)$ be a faithful $*$-representation.
Since $\Imag \phi_{H \otimes A}^K \cong A \otimes \sigma_B(B)$ holds as above,
the UFP gives the inverse of the canonical surjection from $A\otimes_{\max} B$ onto $A \otimes B$, and hence $A$ is nuclear.
\end{proof}

\subsection{Relative nuclearity}\label{ss-rel-rel}

\begin{definition}\label{def-rel-nuc}
Let $B \subset A$ be an inclusion of $\rC^*$-algebras with conditional expectation $E :A \to B$.
We say that the triple $(A, B, E)$ is {\it nuclear via} $(Z,\pi_Z) \in \Corr (B)$ if the $\rC^*$-correspondence $(L^2(A,E) \otimes_B Z \otimes_B A, \pi_E \otimes 1_Z \otimes 1_A)$ has the UFP.
When $(Z, \pi_Z)=(B, \lambda_B)$, we say that $(A, B, E)$ is {\it nuclear}.
\end{definition}
Let $B \subset A$ be an inclusion of C$^*$-algebras and $E:A \to B$ be a nondegenerate conditional expectation.
As we will see in the next section, $(A, B, E)$ is nuclear whenever $A$ is nuclear and the embedding $B \hookrightarrow A$ is full (see Example \ref{ex-nuc}).
We do not know whether or not this still holds true when we remove the assumption of fullness,
but we can prove the nuclearity of $(A, B, E)$ via some C$^*$-correspondences over $B$.
This is the merit of considering nuclearity via C$^*$-correspondences over subalgebras.
We also mention that the `$(Z, \pi_Z)$' does not affect much in some cases.
For example, in the case when $B= \lC 1_A$, $L^2(A,E) \otimes_B Z$ is a usual Hilbert space.
Thus, the nuclearity of $(A, \lC 1_A, E)$ via some C$^*$-correspondence over $\lC$ is equivalent to the one of $A$ by Proposition \ref{prop-nuclear}.
Moreover, in \S\S \ref{ss-rel-WFP} and \S\S \ref{ss-rel-ame} we will see that the nuclearity of $(A, B, E)$ via some C$^*$-correspondence over $B$ implies the relative injectivity of $\pi_H (B)'' \subset \pi_H (A)''$ for any $(H, \pi_H) \in \Rep (A)$, and a relative weak expectation property of A.

The next theorem says that this `via version' of relative nuclearity is characterized by a kind of `relative CPAP'.
\begin{theorem}\label{thm-nuc-CPAP}
Let $B \subset A$ be a unital inclusion of $\rC^*$-algebras with conditional expectation $E$, and $(Z, \pi_Z)$ be a $\rC^*$-correspondence over $B$.
Then, $(A, B, E)$ is nuclear via $(Z, \pi_Z)$ if and only if for any finite subset $\fF \subset A$ and $\varepsilon >0$, there exist $n,m \in \lN$, $\varphi_k : A \to \lM_n (B)$ and $\psi_k : \lM_n (B) \to A$, $1 \leq k \leq m$ such that $\| a - \sum_{k=1}^m \psi_k \circ \varphi_k (a) \| < \varepsilon$ for $a \in \fF$, and each $\varphi_k$ and $\psi_k$ are of the form
\[
\varphi_k : a \mapsto \left[ \rule{0pt}{10pt} \i< \eta_i, (\pi_E(a) \otimes 1_Z) \eta_j > \right]_{i,j=1}^n
\quad
\psi_k : \left[ \rule{0pt}{0pt}\, b_{ij}\, \right]_{i,j=1}^n \mapsto \sum_{i,j=1}^n y_i^* b_{ij} y_j 
\]
for some $\eta_1, \dots, \eta_n \in L^2(A, E) \otimes_B Z$ and $y_1, \dots, y_n \in A$.
\end{theorem}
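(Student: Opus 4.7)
The plan is to reduce the claim to the weak-containment characterization by combining Proposition \ref{prop-UFP} with Corollary \ref{cor-weak}. By definition, $(A, B, E)$ is nuclear via $(Z, \pi_Z)$ precisely when the C$^*$-correspondence $Y := L^2(A, E) \otimes_B Z \otimes_B A$, with left action $\pi_Y := \pi_E \otimes 1_Z \otimes 1_A$, has the UFP. Proposition \ref{prop-UFP} says this is equivalent to $(A, \lambda_A) \prec_\univ (Y, \pi_Y)$, and Corollary \ref{cor-weak} then reformulates this as the assertion that for every $\xi \in A$ the coefficient $\Omega_\xi \in \CP(A,A)$ lies in the point-norm closure of $\cF_Y$. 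So the whole theorem will follow once I identify $\cF_Y$, up to arbitrarily small error, with the set of finite sums $\sum_{k=1}^m \psi_k \circ \varphi_k$ of the form displayed in the statement, and show that the single vector $\xi = 1_A$ suffices.

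The identification of coefficients is the concrete content of Remark \ref{rem-factor}: if $\zeta = \sum_{i=1}^n \eta_i \otimes y_i \in (L^2(A, E) \otimes_B Z) \otimes_B A$ is a finite sum, then $\Omega_\zeta$ factors as $\psi \circ \varphi$ through $\lM_n(B)$ with $\varphi$ and $\psi$ of exactly the prescribed form. Since elementary tensors are dense in $Y$ and $\zeta \mapsto \Omega_\zeta$ is norm-continuous on bounded sets, every $\Omega_\zeta$ can be point-norm approximated by such finite-sum coefficients. A convex combination $\sum_k \lambda_k \Omega_{\zeta_k}$ is rewritten as $\sum_k \Omega_{\lambda_k^{1/2} \zeta_k}$, absorbing the weights into the vectors, and by zero-padding I can arrange a single common block size $n$ for all $k \in \{1, \dots, m\}$.

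For the forward direction, apply Corollary \ref{cor-weak} to the distinguished vector $1_A \in A$. Since $\Omega_{1_A} = \id_A$, the approximation produced is exactly the factorization asserted in the theorem. For the reverse direction, the hypothesis gives precisely the norm-approximation of $\Omega_{1_A}$ by elements of $\cF_Y$. To invoke Corollary \ref{cor-weak} one needs approximation of $\Omega_\xi$ for every $\xi \in A$, not only $\xi = 1_A$, but this is handled by the right $A$-action on $Y$: for $\zeta \in Y$ and $b \in A$ one has $\Omega_{\zeta b}(x) = b^* \Omega_\zeta(x) b$, so sandwiching an approximation of $\Omega_{1_A}$ on a finite subset $\fF$ by $b^*$ and $b$ yields an approximation of $\Omega_b$ on $\fF$ with error controlled by $\|b\|^2$.

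The main delicacy, and essentially the only nonformal point, is the bookkeeping required to absorb the convex coefficients into the vectors and to unify the index sizes across the $m$ summands while staying inside the correspondence $L^2(A, E) \otimes_B Z \otimes_B A$; the absorption uses $\lambda_k^{1/2}$ and the sesquilinearity of the $A$-valued inner product, and the size matching is achieved by appending zero entries. Beyond these manipulations, the argument invokes only the machinery of Section \ref{sec-weak} and the UFP reformulation of Section \ref{ss-rel-UFP}, so no further analytic input is needed.
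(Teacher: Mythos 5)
Your proposal is correct and follows essentially the same route as the paper: reduce via Proposition \ref{prop-UFP} (and Proposition \ref{prop-nuclear}) to the weak containment $(A,\lambda_A)\prec_\univ (L^2(A,E)\otimes_B Z\otimes_B A,\pi_E\otimes 1_Z\otimes 1_A)$, apply Corollary \ref{cor-weak} to the vector $1_A$ whose coefficient is $\id_A$, and use Remark \ref{rem-factor} to convert coefficients of elementary-tensor vectors into the displayed factorizations through $\lM_n(B)$. The bookkeeping you flag (absorbing convex weights into the vectors, padding to a common $n$, and sandwiching by $b^*(\cdot)b$ to pass from $\Omega_{1_A}$ to general $\Omega_b$ in the converse) is exactly what the paper's proof does, mostly implicitly.
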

\begin{proof}
Suppose that $(A, B, E)$ is nuclear via $(Z, \pi_Z)$
and fix a finite subset $\fF \subset A$ and $\varepsilon >0$ arbitrarily.
By Proposition \ref{prop-UFP} and Proposition \ref{prop-nuclear},
we have $(A, \lambda_A) \prec_\univ (L^2(A,E) \otimes_B Z \otimes_B A, \pi_E \otimes 1_Z \otimes 1_A)$.
Since the identity map on $A$ is noting but $\Omega_{1_A} \in \cF_A$,
thanks to Theorem \ref{thm-weak}, we can find $m \in \lN$ and $\xi_1, \dots, \xi_m \in L^2(A, E) \otimes_B  Z \otimes_B A$ in such a way that $\| a - \sum_{k=1}^m \Omega_{\xi_k} (a ) \| < \varepsilon$ for all $a\in \fF$.
Here we may assume that each $\xi_k$ is of the form $\sum_{i=1}^n \eta^{(k)}_i \otimes y^{(k)}_i \in (L^2(A, E) \otimes_B Z ) \odot A$.
By Remark \ref{rem-factor}, letting $\varphi_k (a) := \left[ \i< \eta^{(k)}_i, (\pi_E (a) \otimes 1) \eta^{(k)}_j > \right]_{i,j=1}^n$ for $a \in A$ and $\psi_k ([ b_{ij}]_{ij=1}^n ):=\sum_{i,j=1}^n y_i^{(k)*} b_{ij} y_j^{(k)}$ for $[b_{ij}]_{i,j=1}^n \in \lM_n (B)$,
we get $\| a - \sum_{k=1}^m \psi_k \circ \varphi_k (a) \| < \varepsilon$ for all $a \in \fF$.
The converse implication follows from Proposition \ref{prop-UFP}, Proposition \ref{prop-nuclear}, and Corollary \ref{cor-weak} again.
\end{proof}

\begin{proof}[Proof of Theorem \ref{thm-A}]
Suppose that $(A, B, E)$ is nuclear and fix a finite subset $\fF \subset A$ and $\varepsilon >0$ arbitrarily.
By the preceding theorem, we can find $n, m \in \lN$ and c.p.\ maps $\varphi_k : A \to \lM_n (B), \psi_k : \lM_n (B ) \to A$ satisfying that $\| a - \sum_{k=1}^m \psi_k \circ \varphi_k (a) \| < \varepsilon$ for $a \in A$.
We only have to modify $\varphi_k$'s.
For each $k$, $\varphi_k$ is of the form $a \mapsto  [ \i< \eta_i, \pi_E(a) \eta_j > ]_{i,j=1}^n$ for some $\eta_1, \dots, \eta_n \in L^2(A, E)$.
Since $L^2(A,E)$ is the completion of $A$, we may assume that $\eta_i$ comes from an element $x_i \in A$.
Since $\i< x_i, \pi_E( a) x_j > = E (x_i^* a x_j)$ holds, we are done.
\end{proof}
The next proposition implies that if both $(A, B, E)$ and $B$ are nuclear, then so is $A$. 
\begin{proposition}\label{prop-nuc-exact}
Let $A$ and $B$ be $\rC^*$-algebras with $A$ unital and $B$ nuclear.
If there exist $X \in \Corr (A, B)$ and $Y \in \Corr (B, A)$ such that $(X \otimes_B Y, \pi_X \otimes 1_Y)  \in \Corr (A)$ has the UFP, then $A$ is nuclear.
\end{proposition}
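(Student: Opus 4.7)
The plan is to combine the universal factorization property with the nuclearity of $B$ and the characterization of nuclearity via CPAP.

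First, by Proposition \ref{prop-UFP} the UFP of $(X \otimes_B Y, \pi_X \otimes 1_Y)$ is equivalent to $(A, \lambda_A) \prec_\univ (X \otimes_B Y, \pi_X \otimes 1_Y)$. Since $A$ is unital, I apply Corollary \ref{cor-weak} to $\xi = 1_A \in A$; noting $\Omega_{1_A} = \id_A$, this yields a net $\{\psi_i\}_i \subset \cF_{X \otimes_B Y}$ converging to $\id_A$ in the point-norm topology. Each $\psi_i$ is a convex combination $\sum_k \lambda_k \Omega_{\zeta_k}$ of coefficients, and after approximating each $\zeta_k \in X \otimes_B Y$ by an elementary tensor $\sum_{j=1}^{n_k} \xi_j^{(k)} \otimes \eta_j^{(k)} \in X \odot Y$ (and absorbing the weights $\lambda_k$), I may assume the approximation takes the form $\sum_k \Omega_{\zeta_k}$ with $\zeta_k \in X \odot Y$.

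By Remark \ref{rem-factor}, each coefficient $\Omega_{\zeta_k}$ factors as $\psi_k \circ \varphi_k$, where $\varphi_k : A \to \lM_{n_k}(B)$ and $\psi_k : \lM_{n_k}(B) \to A$ are completely positive. Therefore $\id_A$ is point-norm approximable by finite sums of c.p.\ maps that factor through matrix algebras over $B$; equivalently, by assembling the blocks, by single c.p.\ maps factoring through some $\lM_N(B)$ with $N = \sum_k n_k$.

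Since $B$ is nuclear, so is every $\lM_n(B) \cong \lM_n \otimes B$. By the equivalence between nuclearity and CPAP applied to $\lM_n(B)$, the identity on $\lM_n(B)$ is point-norm approximable by c.c.p.\ maps of the form $\beta \circ \alpha$ with $\alpha : \lM_n(B) \to \lM_p$ and $\beta : \lM_p \to \lM_n(B)$. Inserting such factorizations into each composite $\psi_k \circ \varphi_k$ (controlling the error on the fixed finite test set by continuity of the $\psi_k$'s), I obtain point-norm approximations of $\id_A$ by sums $\sum_k \psi_k \circ \beta_k \circ \alpha_k \circ \varphi_k$ that factor through scalar matrix algebras. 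This is exactly the CPAP for $A$, so $A$ is nuclear. The argument is essentially routine once one has Corollary \ref{cor-weak} in hand; the only point requiring any care is the composition-of-approximations step, where one must ensure the CPAP approximation for $\lM_{n_k}(B)$ is performed on the finite set $\varphi_k(\fF)$ with error scaled by $\|\psi_k\|_{\cb}$.
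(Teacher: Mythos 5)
Your argument is correct, but it takes a different route from the paper's. The paper stays entirely inside the weak-containment calculus: from the UFP it gets $(A,\lambda_A)\prec_\univ (X\otimes_B Y,\pi_X\otimes 1_Y)$, uses the nuclearity of $B$ in the form $(B,\lambda_B)\prec_\univ (H\otimes B,\pi_H\otimes 1)$ for a faithful representation $H$ of $B$ (Proposition \ref{prop-nuclear}), and then invokes Proposition \ref{prop-int-tensor} to conclude $X\otimes_B Y\prec_\univ X\otimes_B H\otimes Y$; the latter is a scalar representation of $A$, so transitivity and Proposition \ref{prop-nuclear} finish the proof in two lines. You instead unpack everything to the level of completely positive approximations: Corollary \ref{cor-weak} gives point-norm approximation of $\id_A=\Omega_{1_A}$ by coefficients of $X\otimes_B Y$, Remark \ref{rem-factor} factors these through $\lM_n(B)$, and the CPAP of $\lM_n(B)$ lets you splice in factorizations through scalar matrices, after which you appeal to the classical fact that point-norm approximability of $\id_A$ by factorable c.p.\ maps yields nuclearity. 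In effect you are inlining, for this special case, the coefficient-composition argument that the paper has already packaged as Proposition \ref{prop-int-tensor}. What the paper's route buys is brevity and reuse of established machinery; what yours buys is a self-contained, quantitative argument whose only inputs are Corollary \ref{cor-weak}, Remark \ref{rem-factor}, and the nuclearity-CPAP equivalence, with the $\varepsilon$-management (scaling by $\|\psi_k(1)\|$ on the finite test set, assembling the blocks into a single $\lM_N$) handled explicitly and correctly.
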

\begin{proof}
Since $(X \otimes_B Y, \pi_X \otimes 1_Y)$ has the UFP,
by Proposition \ref{prop-UFP} we have $(A, \lambda_A) \prec_\univ (X \otimes_B Y, \pi_X \otimes 1_Y) $. 
If $(H, \pi_H) \in \Rep (B)$ is a faithful representation,
then thanks to Proposition \ref{prop-nuclear} we have $(B, \lambda_B) \prec_\univ ( H \otimes B, \pi_H \otimes 1_A).$
Proposition \ref{prop-int-tensor} implies that $(X \otimes_B Y, \pi_X \otimes 1_Y) \prec_\univ ( X \otimes_B H \otimes Y, \pi_X \otimes 1_H \otimes 1_Y)$.
Since $( X \otimes_B H \otimes Y, \pi_X \otimes 1_H \otimes 1_Y)$ is a scalar representation of $A$, this implies the nuclearity of $A$.
\end{proof}

We next introduce the notion of strong relative nuclearity.
Let $B\subset A$ be an inclusion of $\rC^*$-algebras and $(X,\pi_X)$ be a $\rC^*$-correspondence over $A$.
A vector $\xi \in X$ is said to be $B${\it -central} if $\xi$ enjoys $\pi_X(b) \xi = \xi  b$ for all $b \in B$.
We denote by $B' \cap X$ the set of $B$-central vectors in $X$.
For the identity C$^*$-correspondence $(A, \pi_A) \in \Corr (A)$ and $B\subset A$, the set of $B$-central vectors is nothing but the relative commutant $B' \cap A$.
We also note that every c.p.\ map in $\cF_{B'\cap X}$ forms a $B$-bimodule map.

\begin{definition}\label{def-CCPAP}
Let $B \subset A$ be an inclusions of $\rC^*$-algebras.
\begin{itemize}
\item We say that a $\rC^*$-correspondence $(X,\pi_X)$ over $A$ has the {\it $B$-central completely positive approximate property} ($B$-{\it CCPAP\/} for short) if there exists a net of c.c.p.\ maps $\psi_i \in \cF_{B'\cap X}$ such that $\lim_{i} \| a - \psi_i (a) \| =0$ for every $a\in A$.
\item Let $E :A \to B$ be a conditional expectation. We say that the triple $(A, B,E)$ is {\it strongly nuclear via} $(Z, \pi_Z) \in \Corr (B)$ if $(L^2(A,E) \otimes_BZ \otimes_B A, \pi_E \otimes 1_Z \otimes 1_A)$ has the $B$-CCPAP.
When $(Z, \pi_Z) =(B, \lambda_B)$, we say that $(A, B, E)$ is {\it strongly nuclear}.
\end{itemize}
\end{definition}
For a unital $A$ the $B$-CCPAP of a $\rC^*$-correspondence over $A$ implies the UFP.
Thus, every strongly nuclear triple $(A, B, E)$ is nuclear, but we do not know whether or not the converse is true.
%
%

\subsection{Relative WEP}\label{ss-rel-WFP}
We next discuss relative weak expectation property recently introduced by Jian and Sepideh \cite{Jian-Sepideh} in relation with our relative nuclearity.
\begin{definition}[{\cite[Proposition 3.3.6]{Brown-Ozawa}}] 
An inclusion $B\subset A$ is said to be {\it relatively weakly injective} if the following equivalent conditions hold;
\begin{itemize}
\item[(1)] there exists a c.c.p.\ map $\varphi:A \to B^{**}$ such that $\varphi(b)=b$ for every $b\in B$;
\item[(2)] for every $*$-homomorphism $\pi:B\to \lB (H)$ there exists a c.c.p.\ map $\varphi:A \to \pi(B)''$ such that $\varphi(b)=\pi(b)$ for every $b\in B$;
\item[(3)] for every {\rm C}$^*$-algebra $C$ there is a natural inclusion
$
B\otimes_{\rm max} C \subset A\otimes_{\rm max}C.
$ 
\end{itemize}
\end{definition}

\begin{definition}[\cite{Jian-Sepideh}]
Let $A$ and $B$ be $\rC^*$-algebras.
Then, $A$ is said to have the $B$-WEP$_1$ (resp. $B$-WEP$_2$) if there exists $(X, \pi_X) \in \Corr (A, B)$ (resp. $(X, \pi_X) \in \Corr (A, B^{**} )$ with $X$ selfdual) such that $\pi_X $ is injective and the inclusion $\pi_X(A) \subset \lL_B (X)$ (resp. $\pi_X(A) \subset \lL_{B^{**}}(X)$ is relatively weakly injective.
\end{definition}
Note that Lance's WEP is exactly $\lC$-WEP$_1$.
In \cite{Jian-Sepideh} it was proved that $B$-WEP$_1$ implies $B$-WEP$_2$.
We note that the next proposition can be applied to every triple $(A, B, E)$ that is nuclear via some C$^*$-correspondence over $B$,
which is an analogue of the fact that `nuclearity $\Rightarrow$ WEP'.
\begin{proposition}\label{prop-BWEP}
Let $A$ and $B$ be $\rC^*$-algebras with $A$ unital.
If there exist $X \in \Corr (A, B)$ and $Y \in \Corr (B,A)$ such that $\pi_X$ is unital injective, and $(X\otimes_B Y, \pi_X \otimes 1_Y)$ has the UFP,
then the inclusion $\pi_X( A) \subset \lL_B (X)$ is relatively weakly injective.
In particular, $A$ has the $B$-WEP$_1$. 
\end{proposition}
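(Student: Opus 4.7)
The plan is to derive the desired c.c.p.\ lifting directly from Proposition \ref{prop-product-map2}, using the UFP hypothesis (translated via Proposition \ref{prop-UFP}) together with the canonical embedding $\lL_B (X) \hookrightarrow \lL_A (X \otimes_B Y)$ induced by $T \mapsto T \otimes 1_Y$.

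First, by Proposition \ref{prop-UFP} the UFP of $(X \otimes_B Y, \pi_X \otimes 1_Y) \in \Corr (A)$ is equivalent to $(A, \lambda_A) \prec_\univ (X \otimes_B Y, \pi_X \otimes 1_Y)$. Let $(H, \pi_H)$ denote the universal $*$-representation of $A$, so that $\pi_H$ is faithful and $\pi_H (A)'' \cong A^{**}$. Since $\pi_X$ is unital, $\pi_X \otimes 1_Y$ is also unital, and $1_A \in A$ is clearly a normal vector of the identity correspondence $(A, \lambda_A)$ satisfying $\Omega_{1_A} = \id_A$.

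Applying Proposition \ref{prop-product-map2} to $(A, \lambda_A)$ and $(X \otimes_B Y, \pi_X \otimes 1_Y)$ with the normal vector $1_A$, one obtains a u.c.p.\ map
\[
\Theta : \bigl( \lL_A (X \otimes_B Y) \otimes 1_H \bigr)'' \subset \lB ( (X \otimes_B Y) \otimes_A H) \longrightarrow \pi_H (A)'' \cong A^{**}
\]
that satisfies $\Theta ( (\pi_X \otimes 1_Y)(a) \otimes 1_H ) = \pi_H (a)$ for every $a \in A$. Now recall from Definition \ref{def-int-tensor} that the interior tensor product supplies a $*$-homomorphism $\iota_Y : \lL_B (X) \to \lL_A (X \otimes_B Y)$ given by $T \mapsto T \otimes 1_Y$, which restricts to the identity on $\pi_X (A)$ (since $(\pi_X (a) ) \otimes 1_Y = (\pi_X \otimes 1_Y)(a)$). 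Defining
\[
\Psi : \lL_B (X) \longrightarrow \pi_X (A)^{**}, \qquad \Psi (T) := \Theta \bigl( \iota_Y (T) \otimes 1_H \bigr)
\]
(using the identification $\pi_X (A)^{**} \cong A^{**}$ afforded by the injectivity of $\pi_X$) yields a u.c.p.\ map with $\Psi (\pi_X (a)) = \pi_H (a)$, i.e.\ $\Psi$ restricts to the identity on $\pi_X (A) \subset \pi_X (A)^{**}$. This is precisely condition (1) in the definition of relative weak injectivity for the inclusion $\pi_X (A) \subset \lL_B (X)$, and the $B$-WEP$_1$ assertion then follows immediately from the definition together with the fact that $\pi_X : A \to \lL_B (X)$ is a unital injective $*$-homomorphism.

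The proof is essentially a diagram chase once the right tool is identified; the only delicate point is to see that Proposition \ref{prop-product-map2} is directly applicable here (with $A$ playing the role of both C$^*$-algebras, $(A,\lambda_A)$ as the ``small'' correspondence, and $(X \otimes_B Y, \pi_X \otimes 1_Y)$ as the ``large'' one), and that the natural embedding $\iota_Y$ sends $\pi_X (a)$ to $(\pi_X \otimes 1_Y)(a)$, so that restricting the u.c.p.\ extension produces a genuine conditional-expectation-type map onto $\pi_X (A)^{**}$.
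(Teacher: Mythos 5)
Your proof is correct and follows essentially the same route as the paper's: both arguments apply Proposition \ref{prop-product-map2} to the weak containment $(A,\lambda_A)\prec_\univ (X\otimes_B Y,\pi_X\otimes 1_Y)$ (with the normal vector $1_A$) and then restrict the resulting u.c.p.\ map along $T\mapsto T\otimes 1_Y\otimes 1_H$. The only cosmetic difference is that you verify condition (1) of relative weak injectivity via the universal representation of $A$, whereas the paper verifies condition (2) for an arbitrary $(H,\pi_H)\in\Rep(A)$; these are equivalent by the definition quoted in the paper.
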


\begin{proof}
Let $(H, \pi_H) \in \Rep (A)$ be arbitrary.
We will show that there exists a u.c.p.\ map $\Psi : \lL_B (X) \to \pi_H (A)''$ such that $\Psi \circ \pi_X = \pi_H$.
The UFP of $X \otimes_B Y$ and Proposition \ref{prop-product-map2}
imply that there exists a u.c.p map $\Theta :  \lL_A (X \otimes_B Y) \otimes 1_H \to \pi_H (A)''$ satisfying $\Theta ( \pi_X (a) \otimes 1_Y \otimes 1_H) = \pi_H (a)$.
Hence, the mapping $\Psi: \lL_B (X) \ni x \mapsto \Theta (x \otimes 1_Y \otimes 1_H) \in \pi_H (A)''$ is the desired one.
\end{proof}
%
%

\subsection{Relative amenability for von Neumann algebras}\label{ss-rel-ame}
In this subsection, we see that our relative nuclearity is related to relative amenability for von Neumann algebras \cite{Popa}\cite{Delaroche2}\cite{Ozawa-Popa}.

Let $N \subset M$ be an inclusion of finite von Neumann algebras and $\tau$ be a faithful normal tracial state.
Let $L^2(M)$ and $L^2(N)$ be the GNS Hilbert spaces for $\tau$ and $\tau |_N$, respectively and $\xi_\tau \in L^2(M)$ be the corresponding cyclic vector.
We may assume that $M \subset \lB (L^2(M))$.
Denote by $E_N$ the unique faithful normal $\tau$-preserving conditional expectation from $M$ onto $N$.
Let $e_N \in \lB (L^2(M))$ be the orthogonal projection onto $L^2(N) \subset L^2(M)$,
which is called the Jones projection and satisfies that $e_N x e_N = E_N (x) e_N$ for $x \in M$.
The basic extension $\i<M, e_N>$ is the von Neumann subalgebra of $\lB (L^2(M))$ generated by $M$ and $e_N$.
It is known that $\i< M, e_N>$ is the $\sigma$-weak closure of $\lspan \{ x e_N y \mid x, y \in M \}$ and becomes semifinite with the canonical faithful normal semifinite tracial weight $\Tr :x e_N y \mapsto \tau (xy)$.
\begin{theorem}[{\cite[Theorem 2.1]{Ozawa-Popa}}]
Let $M$ be a finite von Neumann algebra with a faithful normal tracial state $\tau$ and $Q, N \subset M$ be von Neumann subalgebras.
Then, the following are equivalent.
\begin{itemize}
\item[$(1)$] There exists a $N$-central state $\varphi$ on $\i< M, e_Q>$ such that $\varphi |_M = \tau$.
\item[$(2)$]There exists a $N$-central state $\varphi$ on $\i< M, e_Q>$ such that $\varphi$ is normal on $M$ and faithful on $\cZ (N' \cap M)$.
\item[$(3)$]There exists a conditional expectation $\Phi : \i< M, e_Q>$ onto $N$ such that $\Phi|_M  = E_N$.
\item[$(4)$]There exists a net $\{ \xi_i \}_i$ in $L^2 \i< M, e_Q>$ such that $\lim_i \i< \xi_i, x \xi_i > =\tau (x)$ for $x \in M$ and $\lim_i \| [ u, \xi_i ] \|_2 =0$ for all $u \in \cU (N)$.
\end{itemize}
When any of these conditions holds, we say that $N$ is amenable relative to $Q$ inside $M$ and write $N \lessdot_M Q$.
\end{theorem}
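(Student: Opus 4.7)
The plan is to take (1) as the central hub. The trivial directions are (2) $\Rightarrow$ (1), plus (3) $\Rightarrow$ (1) by setting $\varphi := \tau \circ \Phi$ (the $N$-centrality being inherited from the $N$-bimodularity of a conditional expectation onto $N$), and (4) $\Rightarrow$ (1) by taking any weak-$*$ cluster point of $\{\omega_{\xi_i}\}$ on $\langle M, e_Q\rangle$: the two convergence conditions on $\xi_i$ force $\varphi|_M = \tau$ and $\varphi(uxu^*) = \varphi(x)$ for all $u \in \mathcal{U}(N)$, $x \in \langle M,e_Q\rangle$. All the work is then in the three substantive implications (1) $\Rightarrow$ (4), (1) $\Rightarrow$ (3), (1) $\Rightarrow$ (2).

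For (1) $\Rightarrow$ (4) I would apply a Day--Namioka convexity trick of Connes type. Since $\langle M, e_Q\rangle$ is semifinite with respect to $\Tr$, its normal state space is exactly $\{\omega_\xi : \xi \in L^2(\langle M,e_Q\rangle)_+,\ \|\xi\|_2 = 1\}$, and these are weak-$*$ dense in the full state space. For each finite $F \subset \mathcal{U}(N)$ and each finite $\Omega \subset M_*$ of normal functionals on $M$, consider the convex set
\[
\mathcal{C}_{F,\Omega} := \bigl\{\bigl(\omega_\xi|_M,\,(u\xi u^* - \xi)_{u\in F}\bigr) : \xi \in L^2(\langle M,e_Q\rangle)_+\bigr\} \;\subset\; M_* \oplus \bigoplus_{u\in F} L^2(\langle M,e_Q\rangle).
\]
Using the $N$-centrality of $\varphi$ and $\varphi|_M = \tau$, one checks that $(\tau,0)$ belongs to the weak closure of $\mathcal{C}_{F,\Omega}$; then Hahn--Banach (the weak and norm closures of a convex set in a locally convex space coincide) places $(\tau,0)$ in its norm closure, yielding a single $\xi_{F,\Omega,\varepsilon}$ realizing the desired approximation. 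Indexing by the net $(F,\Omega,\varepsilon)$ and using the traciality identity $\|u\xi u^* - \xi\|_2 = \|(u\xi u^* - \xi)u\|_2 = \|u\xi - \xi u\|_2$ for unitary $u$ produce the net of vectors in (4).

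For (1) $\Rightarrow$ (3), for fixed $x \in \langle M, e_Q\rangle$ the functional $N \ni n \mapsto \varphi(xn)$ is positive (by $N$-centrality, $\varphi(xn) = \varphi(n^{1/2}xn^{1/2})$ for $x,n \geq 0$) and normal: Cauchy--Schwarz gives
\[
|\varphi(x(n_\alpha - n))|^2 \leq \varphi(x^*x)\,\varphi((n_\alpha - n)^*(n_\alpha - n)) = \varphi(x^*x)\,\tau((n_\alpha - n)^*(n_\alpha - n)),
\]
and this vanishes whenever $n_\alpha \to n$ $\sigma$-strongly and boundedly in $N$, which is the standard criterion for normality. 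The Radon--Nikodym theorem for the faithful normal trace on $N$ then produces a unique $\Phi(x) \in N$ with $\tau(\Phi(x)n) = \varphi(xn)$; routine checks show $\Phi$ is a unital positive $N$-bimodule map restricting to $E_N$ on $M$, hence a conditional expectation. For (1) $\Rightarrow$ (2), a barycentric averaging argument over $\mathcal{U}(N)$, combined with separability of $\mathcal{Z}(N' \cap M)_*$, lets one enforce faithfulness on $\mathcal{Z}(N'\cap M)$ while preserving $\varphi|_M = \tau$ (hence normality on $M$) and $N$-centrality.

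The main obstacle is the convexity step in (1) $\Rightarrow$ (4): one has to choose the topology on $\mathcal{C}_{F,\Omega}$ so that the normality of $\varphi$ \emph{only} on $M$ (not on all of $\langle M,e_Q\rangle$), together with $N$-centrality, still forces $(\tau,0)$ into the weak closure, and then upgrade the resulting $L^1$-type approximation of $u\xi^2 u^* - \xi^2$ to the $L^2$-commutator bound required in (4) via the Powers--Størmer inequality $\|u\xi u^* - \xi\|_2^2 \leq \|u\xi^2 u^* - \xi^2\|_1$ valid for $\xi \geq 0$.
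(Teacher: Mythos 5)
The paper does not prove this statement at all: it is quoted verbatim from Ozawa--Popa \cite[Theorem 2.1]{Ozawa-Popa} and used as a black box, so there is no in-paper proof to compare against. Judging your proposal against the standard argument: the directions $(3)\Rightarrow(1)$, $(4)\Rightarrow(1)$, $(1)\Rightarrow(4)$ (Day--Namioka convexity plus Powers--St{\o}rmer, exactly Connes's scheme) and $(1)\Rightarrow(3)$ (Radon--Nikodym construction of $\Phi$ from $n\mapsto\varphi(xn)$) are all sound in outline. Note also that $(1)\Rightarrow(2)$ is even easier than you make it: if $\varphi|_M=\tau$ then $\varphi$ is automatically normal on $M$ and faithful on all of $M\supset \cZ(N'\cap M)$, so the same $\varphi$ works and no barycentric averaging is needed.

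The genuine gap is your assertion that $(2)\Rightarrow(1)$ is trivial. It is not. Condition $(2)$ only says $\varphi|_M$ is \emph{some} normal state $\psi$ satisfying $\psi(uxu^*)=\psi(x)$ for $u\in\cU(N)$; its Radon--Nikodym derivative $h=d\psi/d\tau$ is a positive operator affiliated with $N'\cap M$ and generally distinct from $1$, so $\varphi|_M\neq\tau$. The faithfulness of $\varphi$ on $\cZ(N'\cap M)$ is precisely the hypothesis needed to repair this: one checks that the support of $h$ has full central support in $N'\cap M$, cuts down by spectral projections of $h$, renormalizes the compressed states, and assembles a new $N$-central state restricting to $\tau$ via a maximality/exhaustion argument. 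This is one of the substantive implications in Ozawa--Popa's proof and cannot be dismissed; as written, your cycle of implications does not close. A second, minor, issue: the convex set $\mathcal{C}_{F,\Omega}$ you display records $u\xi u^*-\xi$ in $L^2$, whereas the Hahn--Banach step naturally yields smallness of $u\xi^2u^*-\xi^2$ in $L^1$ (the predual), which is only afterwards converted to an $L^2$ commutator estimate by Powers--St{\o}rmer; you say this correctly in your closing paragraph, so the displayed set should be adjusted to match.
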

When $M \lessdot_M N$ holds, we also say that $M$ is {\it amenable relative to} $N$.
The next lemma seems to be known among specialists, but we do give its proof for the reader's convenience.
\begin{lemma}
There exists a unitary $U$ from $L^2 \i< M, e_Q>$ onto $H:=L^2(M, E_Q) \otimes_Q L^2(M)$ that maps $x e_N y$ to $x \otimes y $ and satisfies that $ U \i< M, e_Q> U^*= (\lL_Q (L^2(M, E_Q) )\otimes 1_{L^2(M)})''$.
\end{lemma}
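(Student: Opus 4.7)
The plan is to construct $U$ on a dense subspace, check it is isometric, identify the images of the generators $M$ and $e_Q$ under $U(\cdot)U^*$, and then close up using the multiplier algebra formalism. First, I would consider the $*$-subalgebra $\cA := \lspan \{ x e_Q y : x, y \in M \} \subseteq \i< M, e_Q >$. By semifiniteness of $\Tr$ it is contained in $\mathfrak{n}_\Tr$ and is $\sigma$-weakly dense in $\i< M, e_Q >$, hence dense in $L^2\langle M, e_Q\rangle$. Define $U_0 : \cA \to L^2(M, E_Q) \otimes_Q L^2(M)$ by $x e_Q y \mapsto x \otimes y$, extended linearly; well-definedness follows because the identity $(xq) e_Q y = x e_Q (qy)$ in $\i<M, e_Q>$ (from $e_Q$ commuting with $Q$) matches the tensor balance $xq \otimes y = x \otimes qy$ over $Q$ for every $q \in Q$.

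Next, I would verify isometry using the defining relations $e_Q z e_Q = E_Q(z) e_Q$ and $\Tr(u e_Q v) = \tau(uv)$, which yield
\[
\langle x e_Q y, x' e_Q y' \rangle_{L^2 \langle M, e_Q \rangle} = \Tr (y^* e_Q x^* x' e_Q y') = \tau (y^* E_Q (x^* x') y'),
\]
and this matches $\langle y, E_Q(x^*x') y' \rangle_{L^2(M)} = \langle x \otimes y, x' \otimes y' \rangle_H$. Since the image of $U_0$ contains the dense subspace $\lspan \{ x \otimes y : x, y \in M\}$, it extends to the asserted unitary $U$.

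Then I would compute $U(\cdot)U^*$ on the generators of $\i<M, e_Q>$. Left multiplication by $m \in M$ sends $x e_Q y$ to $mx e_Q y \mapsto mx \otimes y = (\pi_{E_Q}(m) \otimes 1_{L^2(M)})(x \otimes y)$, so $UmU^* = \pi_{E_Q}(m) \otimes 1_{L^2(M)}$; similarly, using $e_Q x e_Q = E_Q(x) e_Q$, one gets $U e_Q U^* = \theta_{\xi_{E_Q}, \xi_{E_Q}} \otimes 1_{L^2(M)}$, where $\xi_{E_Q} \in L^2(M, E_Q)$ is the class of $1_M$ (so that $\theta_{\xi_{E_Q}, \xi_{E_Q}}(x) = E_Q(x)$). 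Both images lie in $\lL_Q(L^2(M, E_Q)) \otimes 1_{L^2(M)}$, and since $\i<M, e_Q>$ is generated by $M$ and $e_Q$ as a von Neumann algebra, this yields the inclusion $U\i<M, e_Q>U^* \subseteq (\lL_Q(L^2(M, E_Q)) \otimes 1_{L^2(M)})''$.

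For the reverse inclusion I would exploit the identity $\pi_{E_Q}(x) \theta_{\xi_{E_Q}, \xi_{E_Q}} \pi_{E_Q}(y^*) = \theta_{x, y}$ for $x, y \in M$, which shows $\theta_{x, y} \otimes 1_{L^2(M)} = U(x e_Q y^*) U^* \in U\i<M, e_Q>U^*$, hence $\lK_Q(L^2(M, E_Q)) \otimes 1_{L^2(M)} \subseteq U\i<M, e_Q>U^*$. The representation $T \mapsto T \otimes 1_{L^2(M)}$ of $\lK_Q(L^2(M, E_Q))$ on $L^2(M, E_Q) \otimes_Q L^2(M)$ is nondegenerate, since an approximate identity $(p_n)$ of $\lK_Q(L^2(M, E_Q))$ converges strongly to $1_{L^2(M, E_Q)}$ and so $p_n \otimes 1_{L^2(M)} \to 1_H$ strongly. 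Because $\lL_Q(L^2(M, E_Q))$ is the multiplier algebra of $\lK_Q(L^2(M, E_Q))$, the unique strict extension places $\lL_Q(L^2(M, E_Q)) \otimes 1_{L^2(M)}$ inside $(\lK_Q(L^2(M, E_Q)) \otimes 1_{L^2(M)})''$, and taking double commutants completes the argument. The main obstacle is exactly this last step: one must carefully deploy the multiplier / strict-extension machinery to move from the compacts (which obviously sit in $U\i<M, e_Q>U^*$) to the full algebra of adjointable operators appearing in the statement; the first two steps reduce to direct unpacking of definitions.
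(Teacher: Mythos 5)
Your proposal is correct and follows essentially the same route as the paper: define $U$ on the dense span of $Me_QM$, match the inner products via $\Tr(xe_Qy)=\tau(xy)$ and $e_Qze_Q=E_Q(z)e_Q$, identify $Ue_QU^*$ with the Jones-type projection $\theta_{\xi_{E_Q},\xi_{E_Q}}\otimes 1$ so that $\lK_Q(L^2(M,E_Q))\otimes 1=\ospan\{\theta_{x,y}\otimes 1\}$ lands in the image, and conclude by the equality $(\lK_Q(L^2(M,E_Q))\otimes 1)''=(\lL_Q(L^2(M,E_Q))\otimes 1)''$. Your extra care with the multiplier/strict-extension step is just a fuller justification of that last equality, which the paper asserts directly.
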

\begin{proof}
By \cite[Lemma 2.1]{Izumi-Longo-Popa} $ \lspan Me_N M$ is norm dense in $L^2\i< M, e_N>$.
For any finite sums $\sum_k x_k e_N y_k$, $\sum_l z_l e_N w_l  \in \lspan M e_N M$ we have
$$
\Tr ( (\sum_k x_k e_N y_k)^*(\sum_l z_l e_N w_l) )
= \sum_{k,l} \tau ( y_k^* E_N ( x_k^*y_l) z_l) = \i<\sum_k x_k \otimes y_k \xi_\tau, \sum_l z_l \otimes  w_l \xi_\tau >_H.
$$
Thus, the mapping $\lspan M e_N M \ni \sum_k x_k e_N y_k \mapsto \sum_k x_k \otimes y_k \xi_\tau \in H$ is bounded and extends to a unitary, which gives the isomorphism of Hilbert $M$-$M$ bimodules.
To see that $ U \i< M, e_Q> U^*= (\lL_Q (L^2(M, E_Q) )\otimes 1_{L^2(M)})''$, let $P_N \in \lL_N (L^2(M, E_N))$ the projection defined by $P_N x = E_N (x)$ for $x \in M$.
Then, it follows that $\lK (L^2(M, E_N)) = \ospan M P_N M$ and $U (x e_N y)U^*=x P_N y$ for $x, y \in M$.
Since $(\lK_N (L^2(M, E_N) )\otimes 1 )'' = (\lL_N (L^2(M, E_N) )\otimes 1 )''$,
we get $(\lL_Q (L^2(M, E_Q) )\otimes 1_{L^2(M)})''= ( \lspan M P_N M \otimes 1)'' =U(  \lspan M e_N M \otimes 1 )''U^* = U \i<M, e_N> U^*$.
\end{proof}
Here we give a characterization of relative amenability in terms of weak containment.
\begin{proposition}\label{prop-rel-ame}
Let $Q, N \subset M$ be inclusions of finite von Neumann algebras and $\tau$ be a faithful normal tracial state on $M$.
Then, $N \lessdot_M Q$ if and only if $L^2(M, E_N) \prec L^2 (M, E_Q) \otimes_Q L^2 (M, E_N)$ as $M$-$N$ $\rC^*$-correspondences.
\end{proposition}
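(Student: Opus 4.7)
The plan is to reduce the equivalence to a comparison of $\ast$-representations of $M \otimes_{\max} N'$, where $N' = JNJ \subset \lB(L^2(N))$ is the commutant in the standard form. Setting $Y := L^2(M, E_Q) \otimes_Q L^2(M, E_N) \in \Corr(M, N)$, Definition \ref{def-weak} combined with the canonical identifications $L^2(M, E_N) \otimes_N L^2(N) = L^2(M)$ and $Y \otimes_N L^2(N) \cong L^2\i< M, e_Q >$ (the latter via the preceding lemma, which also shows that $\i< M, e_Q >$ sits inside $(\lL_N(Y) \otimes 1_{L^2(N)})''$ as the von Neumann algebra generated by $\lL_Q(L^2(M, E_Q)) \otimes 1_{L^2(M, E_N)} \otimes 1_{L^2(N)}$) shows that the target weak containment $L^2(M, E_N) \prec Y$ is equivalent to $L^2(M) \prec L^2\i< M, e_Q >$ as representations of $M \otimes_{\max} N'$.

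For the implication $N \lessdot_M Q \Rightarrow L^2(M, E_N) \prec Y$, I would invoke condition (4) of the preceding theorem to obtain a net $\{\xi_i\} \subset L^2\i< M, e_Q >$ with $\i< \xi_i, x\xi_i > \to \tau(x)$ for $x \in M$ and $\|[u, \xi_i]\|_2 \to 0$ for $u \in \cU(N)$ (hence $\|[n, \xi_i]\|_2 \to 0$ for every $n \in N$, by $N = \lspan \cU(N)$). For $a \in M$ and $n \in N$, the action of $JnJ \in N'$ on $L^2\i< M, e_Q >$ coincides with the right action of $n^*$ from the $M$-bimodule structure, so
\[
\omega_{\xi_i}(a \otimes JnJ) = \i< \xi_i, a\xi_i \cdot n^* > = \i< \xi_i \cdot n, a\xi_i > \approx \i< n\xi_i, a\xi_i > = \omega_i(n^*a) \to \tau(n^*a) = \tau(an^*),
\]
using asymptotic $N$-centrality and traciality. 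Since $\omega_{\xi_\tau}(a \otimes JnJ) = \tau(an^*)$, the pointwise convergence $\omega_{\xi_i} \to \omega_{\xi_\tau}$ holds on $M \odot N'$; together with the uniform bound $\|\omega_{\xi_i}\| = \|\xi_i\|^2 \to 1$ this extends to weak-$*$ convergence on $M \otimes_{\max} N'$. Hence $\omega_{\xi_\tau}$ vanishes on $\ker\theta_Y^{L^2(N)}$, and the cyclicity of $\xi_\tau \in L^2(M)$ for the $M \otimes_{\max} N'$-action propagates this annihilation to every vector state of the $L^2(M)$-representation, yielding $L^2(M) \prec L^2\i< M, e_Q >$.

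For the converse, the vector $\xi_{E_N} \in L^2(M, E_N)$ is normal ($\i< \xi_{E_N}, \xi_{E_N} >_N = E_N(1) = 1_N$) and $\pi_Y$ is unital, so Proposition \ref{prop-product-map2} applied with $H = L^2(N)$ produces a u.c.p.\ map $\Theta : (\lL_N(Y) \otimes 1_{L^2(N)})'' \to N$ with $\Theta(\pi_Y(a) \otimes 1) = E_N(a)$ for $a \in M$. Since the canonical inclusion $M \hookrightarrow \i< M, e_Q >$ corresponds, under the embedding $\i< M, e_Q > \subset (\lL_N(Y) \otimes 1_{L^2(N)})''$ described above, to $a \mapsto \pi_Y(a) \otimes 1$, restricting $\Theta$ yields a u.c.p.\ map $\Phi : \i< M, e_Q > \to N$ with $\Phi|_M = E_N$ and thus $\Phi|_N = \id_N$. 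Tomiyama's theorem therefore forces $\Phi$ to be a conditional expectation extending $E_N|_M$, i.e.\ condition (3) of the preceding theorem, and so $N \lessdot_M Q$.

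The main technical hurdle is the careful bookkeeping of left/right $M$- and $N$-actions through the modular conjugation $J$ and the various interior tensor product identifications; once this dictionary between the $M$-$N$ correspondence picture and the $M$-$N'$ representation picture is set up, both implications follow from a direct state computation and a single application of Proposition \ref{prop-product-map2}.
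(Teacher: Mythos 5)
Your proof is correct and follows essentially the same route as the paper: the forward implication uses the net from condition (4) of the Ozawa--Popa theorem together with the identification $L^2\i<M,e_Q> \cong L^2(M,E_Q)\otimes_Q L^2(M)$ (your weak-$*$ convergence of the states $\omega_{\xi_i}$ plus cyclicity of $\xi_\tau$ is just a repackaging of the paper's direct estimate of $\i<a\xi_\tau,\theta(w)b\xi_\tau>$ after approximating $z\in\ker\theta_Y^{L^2(N)}$ by $w\in M\odot N^{\op}$), and the converse is the paper's application of Proposition \ref{prop-product-map2} to the normal vector $\xi_{E_N}$ followed by restriction to the copy of $\i<M,e_Q>$ inside $(\lL_N(Y)\otimes 1_{L^2(N)})''$.
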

\begin{proof}
Put $X_N:=L^2(M,E_N)$ and $X_Q:=L^2(M, E_Q)$ and let $\xi_N \in L^2 (M,E_N)$ be the vector corresponding to $1$.
We note that $X_N \otimes_N L^2 (N) \cong L^2(M)$.
Suppose that $X_N$ is weakly contained in $X_Q \otimes_Q X_N$ with respect to the standard representation $N \subset \lB (L^2 (N, \tau|_N) )$.
By Proposition \ref{prop-product-map2} there exists a u.c.p map $\Phi$ from $(\lL_Q (X_Q) \otimes 1 \otimes 1)'' \subset \lB (X_Q \otimes_Q X_N \otimes_N L^2 (N) ) = \lB (X_Q \otimes_Q L^2 (M) )$ into $N$ satisfying that $\Phi (x \otimes 1 \otimes 1) = \Omega_{\xi_N} (x) =E_N (x)$ for $x\in M$.
Since $(\lL_Q (X_Q) \otimes 1 \otimes 1 )'' $ is isomorphic to $\i< M, e_Q>$, we are done.

Suppose $N \lessdot_M Q$.
Let $\xi_i \in L^2 \i< M, e_Q >$ be a net in (4) of the theorem above.
Let $z \in \ker \theta_{X_Q \otimes_Q X_N}^{L^2(N)}$ and $\varepsilon >0$ be arbitrary.
Take $w = \sum_k x_k \otimes y_k^\op \in M \odot N^\op$ such that $\| z - w \|_{\max} < \varepsilon$.
Note that $X_N \otimes_N L^2(N) \cong L^2(M)$.
For any $a, b \in M$ we have
\begin{align*}
| \i< a \xi_\tau, \theta_{X_N}^{L^2(N)} (w ) b \xi_\tau > |&=  |\tau (\sum_k a^*x_k b y_k )|= \lim_i | \i< \xi_i,\sum_k a^*x_k b y_k \xi_i > | \\
&=\lim_i | \i< a \xi_i, \sum_k x_k b \xi_i y_k > | = \lim_i | \i< a \xi_i, \theta_{X_Q \otimes_Q X_N}^{L^2(N)} (w )b \xi_i >| \leq  \varepsilon \|a \| \|b \|,
\end{align*}
which implies $\theta_{X_N}^{L^2(N)} (z) =0$.
\end{proof}

Let $N\subset M$ be an inclusion of von Neumann algebras.
Recall that $M$ is said to be {\it injective relative to} $N$ if there exists a norm one projection from $JN'J \subset L^2 (M)$ onto $M$ (\cite[Definition 3.1]{Delaroche2}),
where $J : L^2 (M) \to L^2 (M)$ is the modular conjugation.
When $M$ is finite, this is the case that $M \lessdot_M N$, that is, $M$ is amenable relative to $N$ (inside $M$).
\begin{proposition}\label{prop-relative-injective}
Let $B\subset A$ be a unital inclusion of $\rC^*$-algebras.
If there exists a unital $X\in \Corr(A,B)$ such that $(X \otimes_B A, \pi_X \otimes 1_A)$ has the UFP,
then for any $*$-representation $\pi_H :A \to \lB(H)$,
there exists a norm one projection from $\pi_H(B)'$ onto $\pi_H(A)'$.
Consequently, $\pi_H(A)''$ is injective relative to $\pi_H(B)''$.
\end{proposition}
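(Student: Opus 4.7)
The plan is to adapt the multiplicative-domain argument from the proof of Proposition \ref{prop-BWEP}, using the UFP of $X\otimes_B A$ to manufacture the desired projection directly in $\lB(H)$. After restricting to $\pi_H(1_A)H$ we may assume $\pi_H$ is unital. First I would apply the UFP of $(X\otimes_B A,\pi_X\otimes 1_A)$ to the $*$-representation $\pi_H\times\iota:A\otimes_{\max}\pi_H(A)'\to\lB(H)$, where $\iota$ is the inclusion $\pi_H(A)'\hookrightarrow\lB(H)$ (whose image commutes with $\pi_H(A)$ by definition of the commutant). Under the canonical unitary $(X\otimes_B A)\otimes_A H\cong X\otimes_B H$ induced by $\pi_H$, the representation $\phi_{X\otimes_B A}^H$ sends $a\otimes x$ to $\pi_X(a)\otimes x\in\lB(X\otimes_B H)$, so the UFP yields a $*$-homomorphism $\Phi_0:\Imag\phi_{X\otimes_B A}^H\to\lB(H)$ with $\Phi_0(\pi_X(a)\otimes x)=\pi_H(a)x$. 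Extending by Arveson's theorem, I obtain a u.c.p.\ map $\Phi:\lB(X\otimes_B H)\to\lB(H)$ whose multiplicative domain contains $\Imag\phi_{X\otimes_B A}^H$; in particular $\pi_X(a)\otimes 1_H$ lies in the multiplicative domain of $\Phi$ for every $a\in A$.

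Next, I define the candidate projection by $E(y):=\Phi(1_X\otimes y)$ for $y\in\pi_H(B)'$, where $1_X\otimes y$ is a bona fide operator on $X\otimes_B H$ precisely because $y$ commutes with the left $B$-action $\pi_H|_B$. The key step is to verify $E(y)\in\pi_H(A)'$: since $\pi_X(a)\otimes 1_H$ and $1_X\otimes y$ commute (they act on distinct tensor factors) and the former sits in the multiplicative domain of $\Phi$, two applications of the multiplicative-domain identity yield
\[
\pi_H(a)\,E(y)=\Phi(\pi_X(a)\otimes y)=E(y)\,\pi_H(a),\qquad a\in A,
\]
so $E(y)$ commutes with $\pi_H(A)$. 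Moreover, for $y\in\pi_H(A)'$ itself, the operator $1_X\otimes y$ equals $\phi_{X\otimes_B A}^H(1_A\otimes y)$ by unitality of $\pi_X$, so $E(y)=\Phi_0(1_X\otimes y)=\pi_H(1_A)y=y$. Hence $E$ is a unital, completely positive, contractive projection onto $\pi_H(A)'$, which is the norm-one projection demanded by the first assertion.

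For the ``consequently'' clause I would invoke the standard characterization of relative injectivity from \cite{Delaroche2}: applying the construction to a standard-form representation of $M:=\pi_H(A)''$ on $L^2(M)$, the projection $E:\pi_H(B)'\to\pi_H(A)'=M'$ transforms, via the modular conjugation $J$, into a norm-one projection $JN'J\to JM'J=M$, which is precisely Delaroche's definition of $M$ being injective relative to $N:=\pi_H(B)''$. The only delicate technical point is the bookkeeping for the unitary identification $(X\otimes_B A)\otimes_A H\cong X\otimes_B H$, which must be tracked carefully so that the multiplicative-domain computation is literally valid; the heart of the argument is the simple observation that $\Phi(1_X\otimes y)$ lands in $\pi_H(A)'$ exactly because $\pi_X(a)\otimes 1_H$ belongs to the multiplicative domain of $\Phi$.
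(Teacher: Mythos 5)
Your proof is correct and follows essentially the same route as the paper: apply the UFP of $X\otimes_B A$ to $\pi_H\times\iota:A\odot\pi_H(A)'\to\lB(H)$, extend by Arveson to a u.c.p.\ map $\Phi$ on $\lB(X\otimes_B H)$, set $\Psi(y)=\Phi(1_X\otimes y)$, and verify the projection property via the multiplicative domain of $\Phi$ (the paper delegates this step to the argument of Proposition \ref{prop-product-map2}, which you spell out explicitly), then pass to the standard form and conjugate by $J$ for the relative injectivity claim.
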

\begin{proof}
We note that $X\otimes_B A \otimes_A H =X\otimes_B H$.
Applying the UFP of $X \otimes_B A$ to $\pi_H \times \iota : A\odot \pi_H(A)' \to \lB(H)$
we get the $*$-homomorphism from $\Imag \phi_X^H \subset \lB (X \otimes_B H)$ to $\lB (H)$. By Arveson's extension theorem, we obtain a u.c.p.\ map $\Phi: \lB (X\otimes_B H ) \to \lB (H)$ satisfying $\Phi ( \pi_X (a) \otimes x ) = \pi_H (a)x$ for $a \in A$ and $x\in \pi_H (A)'$.
Define $\Psi: \pi_H (B)' \to \lB (H)$ by $\Psi (x):= \Phi (1_X \otimes x)$.
Then we can prove that $\Psi$ is a norm one projection onto $\pi_H(A)'$ as in the proof of Proposition \ref{prop-product-map2}.
To see the second assertion, let $M:=\pi_H(A)''$ and $N:=\pi_H(B)''$.
We may assume that $M$ and $N$ are acting on the standard Hilbert space $L^2 (M)$.
Let $J$ be the modular conjugation on $L^2 (M)$.
We then have a u.c.p.\ map $\Phi : N' \to M'$.
Thus, $JN'J \ni J x J \mapsto J \Psi (x) J \in J M'J=M$ is the desired map. 
\end{proof}

\subsection{Permanence properties}\label{ss-rel-per}
For Hilbert $\rC^*$-modules $X$ and $Y$ over $\rC^*$-algebras $A$ and $B$, respectively,
we denote by $X \toplus Y$ the Hilbert $A \oplus B$-module $X \oplus_{\rm alg} Y$ equipped with the inner product
$\i< \xi \oplus \eta, \xi' \oplus \eta >=\i< \xi, \xi' > \oplus \i< \eta, \eta' >$ for $\xi, \xi' \in X$ and $\eta, \eta' \in Y$.
The next proposition immediately follows from Theorem \ref{prop-UFP} and the definition of relative CCPAP.
\begin{proposition}
Let $B_i \subset A_i, i=1,\dots, n$ be inclusions of $\rC^*$-algebras with $A_i$ unital.
For $X_i \in \Corr (A_i)$ the following hold true.
\begin{itemize}
\item[$(1)$] The $X_1 \toplus X_2 \toplus \cdots \toplus X_n \in \Corr (\bigoplus_{i=1}^n A_i)$ has the UFP (resp. $\bigoplus_{i=1}^n B_i$-CCPAP) if and only if each $X_i$ has the UFP (resp. $B_i$-CCPAP).
\item[$(2)$] The $\bigotimes_{i=1}^n X_i \in \Corr (\bigotimes_{i=1}^n A_i)$ has the UFP (resp. $\bigotimes_{i=1}^n B_i $-CCPAP) if each $X_i$ has the UFP (resp. $B_i$-CCPAP).
\end{itemize}
\end{proposition}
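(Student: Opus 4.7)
The plan is to combine Proposition \ref{prop-UFP} with Corollary \ref{cor-weak} to recast the UFP of a C$^*$-correspondence $(X,\pi_X)$ over a unital C$^*$-algebra $A$ as the assertion that the identity coefficient $\id_A = \Omega_{1_A}$ lies in the point-norm closure of $\cF_X \subset \CP(A,A)$. The $B$-CCPAP is already phrased in exactly this form, but with $\cF_X$ replaced by the smaller set $\cF_{B' \cap X}$ of $B$-bimodule c.c.p.\ maps. Consequently, both halves of (1) and (2) reduce, in both the UFP and the CCPAP formulations, to the formal question of how the coefficient sets $\cF_{\cdot}$ behave under $\toplus$ and $\otimes$.

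For (1), I would use the identification $(\bigoplus_{i} A_i, \lambda_{\bigoplus_i A_i}) \cong (A_1, \lambda_{A_1}) \toplus \cdots \toplus (A_n, \lambda_{A_n})$ and the observation that every vector in $X_1 \toplus \cdots \toplus X_n$ has the form $\xi_1 \oplus \cdots \oplus \xi_n$ with
\[
\Omega_{\xi_1 \oplus \cdots \oplus \xi_n}(a_1 \oplus \cdots \oplus a_n) = \Omega_{\xi_1}(a_1) \oplus \cdots \oplus \Omega_{\xi_n}(a_n).
\]
Such a vector is $(\bigoplus_i B_i)$-central exactly when each $\xi_i \in B_i' \cap X_i$. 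The ``only if'' direction is then immediate by projecting any approximation of $\id_{\bigoplus_i A_i}$ to each coordinate. For the ``if'' direction, given approximations $\psi_i^{(\alpha)} = \sum_{k} \lambda_{i,k} \Omega_{\xi_{i,k}} \in \cF_{X_i}$ of $\id_{A_i}$, a standard product-of-weights construction (taking vectors $\eta^{\vec{k}} = \oplus_i \sqrt{\lambda_{i,k_i}/\mu_{\vec k}}\,\xi_{i,k_i}$ with weights $\mu_{\vec k} = \prod_i \lambda_{i,k_i}$) assembles them into a single convex combination in $\cF_{X_1 \toplus \cdots \toplus X_n}$ approximating $\id_{\bigoplus_i A_i}$, and centrality is preserved coordinate-by-coordinate.

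For (2), given approximating nets $\psi_i^{(\alpha_i)} \in \cF_{X_i}$ (respectively $\cF_{B_i' \cap X_i}$) of $\id_{A_i}$, I would form the exterior tensor
\[
\Psi^{(\alpha_1, \ldots, \alpha_n)} := \psi_1^{(\alpha_1)} \otimes \cdots \otimes \psi_n^{(\alpha_n)}.
\]
Distributing the sums in each factor shows $\Psi^{(\alpha_1, \ldots, \alpha_n)}$ is a finite convex combination of coefficients $\Omega_{\xi_{1,j_1} \otimes \cdots \otimes \xi_{n,j_n}}$ of $\bigotimes_i X_i$, and in the CCPAP case the tensored vectors remain $(\bigotimes_i B_i)$-central. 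Point-norm convergence $\Psi^{(\alpha_1, \ldots, \alpha_n)} \to \id_{\bigotimes_i A_i}$ on elementary tensors is immediate from a telescoping identity, and uniform contractivity---which may be arranged via the inequality $\psi_i^{(\alpha_i)}(1_{A_i}) \leq 1_{A_i}$ furnished by Corollary \ref{cor-weak}---together with density of the algebraic tensor product in the minimal tensor product extends this to all of $\bigotimes_i A_i$.

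The argument is essentially bookkeeping, matching the author's remark that the proposition follows immediately. The only mildly technical points are the product-of-weights trick in (1) and the density extension in (2), both standard. No genuine obstacle arises once Proposition \ref{prop-UFP} and Corollary \ref{cor-weak} are in hand.
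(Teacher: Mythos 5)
Your proposal is correct and takes essentially the approach the paper intends: the paper gives no written proof, asserting the statement "immediately follows" from Proposition \ref{prop-UFP} and the definition of the CCPAP, and your reduction via Corollary \ref{cor-weak} to point-norm approximation of the identity coefficient $\Omega_{1}$, followed by the coordinate-wise and tensor-product bookkeeping on coefficient sets, is exactly the intended verification. The product-of-weights normalization and the density extension you flag are the right (and only) points needing care, and both are handled correctly.
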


\begin{proposition}[Direct sums and tensor products]\label{prop-sum-tensor}
Let $(A_i, B_i, E_i), 1\leq i \leq n$ be a finite family of unital inclusions of $\rC^*$-algebras with conditional expectations.
Then, the following hold true{\rm :}
\begin{itemize}
\item[$(1)$] $(\bigoplus_i A_i ,\bigoplus_i B_i, \bigoplus_i E_i )$ is nuclear (resp. strongly nuclear) if and only if so is each $( A_i, B_i, E_i)$.
\item[$(2)$] $(\bigotimes_i A_i ,\bigotimes_i B_i, \bigotimes_i E_i )$ is nuclear (resp. strongly nuclear) if so is each $( A_i, B_i, E_i)$.
\end{itemize}
\end{proposition}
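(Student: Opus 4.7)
The plan is to reduce both (1) and (2) to the preceding proposition on direct sums and tensor products of $\mathrm{C}^*$-correspondences by identifying the $\mathrm{C}^*$-correspondences $L^2(A,E) \otimes_B A$ that control (strong) nuclearity with $\toplus$ or exterior tensor products of the corresponding factor correspondences.

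For (1), set $A := \bigoplus_i A_i$, $B := \bigoplus_i B_i$ and $E := \bigoplus_i E_i$. Since the $B$-valued inner product $E(a^*b)$ on $A$ is block diagonal, the GNS Hilbert module $L^2(A,E)$ is naturally $\bigtoplus_i L^2(A_i, E_i)$ as a Hilbert $B$-module, with left action $\pi_E = \bigoplus_i \pi_{E_i}$. Because $B_i \cdot A_j = 0$ inside $A$ for $i\neq j$, a direct computation on elementary tensors shows that the interior tensor product $L^2(A,E) \otimes_B A$ decomposes as $\bigtoplus_i (L^2(A_i, E_i) \otimes_{B_i} A_i)$ as a $\mathrm{C}^*$-correspondence over $A$. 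The preceding proposition then gives that this $\mathrm{C}^*$-correspondence has the UFP (resp.\ $B$-CCPAP) iff each summand does, which is exactly the asserted equivalence for (strong) nuclearity. For the $B$-CCPAP one further observes that the $B$-central vectors of the $\toplus$-decomposition are precisely tuples of $B_i$-central vectors, so $\cF_{B' \cap (L^2(A,E) \otimes_B A)}$ is generated componentwise by the $\cF_{B_i' \cap (L^2(A_i, E_i) \otimes_{B_i} A_i)}$.

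For (2), I would establish, through inner-product computations on elementary tensors, a natural isomorphism
\[
L^2(\textstyle{\bigotimes_i} A_i, \textstyle{\bigotimes_i} E_i) \otimes_{\bigotimes_i B_i} \textstyle{\bigotimes_i} A_i \;\cong\; \bigotimes_i \bigl(L^2(A_i, E_i) \otimes_{B_i} A_i\bigr)
\]
of $\mathrm{C}^*$-correspondences over $\bigotimes_i A_i$, where the right-hand side is the exterior tensor product. This reduces to the fact that $L^2(\bigotimes_i A_i, \bigotimes_i E_i) \cong \bigotimes_i L^2(A_i, E_i)$ as Hilbert modules with matching left actions, combined with the observation that the $B_i$--$A_i$ pairings on distinct tensor factors commute and interior tensoring can therefore be performed factorwise. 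Then the second clause of the preceding proposition transfers the UFP and the $\bigotimes_i B_i$-CCPAP from the factors to the tensor product, the latter using that elementary tensors of $B_i$-central vectors are $\bigotimes_i B_i$-central.

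The main obstacle is verifying these two identifications of $\mathrm{C}^*$-correspondences carefully, keeping track of how the coefficient algebras change at each step and ensuring compatibility with the minimal $\mathrm{C}^*$-tensor product used throughout. These are essentially bookkeeping arguments with inner products on elementary tensors, but they are the only nontrivial content; once in place, the preceding proposition does all of the remaining work.
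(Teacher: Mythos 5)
Your proposal is correct and follows essentially the same route as the paper: the paper's proof consists precisely of the two canonical isomorphisms $L^2(\bigoplus_i A_i, \bigoplus_i E_i) \otimes_{\bigoplus_i B_i} \bigoplus_i A_i \cong \bigtoplus_i (L^2(A_i,E_i)\otimes_{B_i}A_i)$ and $L^2(\bigotimes_i A_i, \bigotimes_i E_i) \otimes_{\bigotimes_i B_i} \bigotimes_i A_i \cong \bigotimes_i (L^2(A_i,E_i)\otimes_{B_i}A_i)$, combined with the preceding proposition on the UFP and CCPAP of $\toplus$-sums and exterior tensor products. Your additional remarks on central vectors just make explicit what the paper leaves implicit.
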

\begin{proof}
Let $(A, B, E):=(\bigoplus_i A_i ,\bigoplus_i B_i, \bigoplus_i E_i )$ and $(\widetilde{A}, \widetilde{B}, \widetilde{E}):= (\bigoplus_i A_i ,\bigoplus_i B_i, \bigoplus_i E_i )$.
The assertions follow from the propositions above together with the next canonical isomorphisms 
$L^2 (A, E ) \otimes_{B} A \cong \bigotimes_i (L^2 (A_i, E_i) \otimes_{B_i} A_i )$ and $L^2(\widetilde{A}, \widetilde{E}) \otimes_{\widetilde{B}} \widetilde{A} \cong \bigtoplus_i (L^2 (A_i, E_i) \otimes_{B_i} A_i )$.
\end{proof}

\begin{proposition}[Inductive limits]
Let $B \subset A$ be a unital inclusion of $\rC^*$-algebras with a conditional expectation $E :A \to B$.
If there exists an increasing net $1_A \in A_i, i \in \cI$, of unital $\rC^*$-subalgebras of $A$
such that $E(A_i) =A_i \cap B$ and $(A_i, A_i \cap B, E|_{A_i} )$ is nuclear and $\bigcup_i A_i$ is norm dense in $A$, then $(A, B, E)$ is nuclear.
\end{proposition}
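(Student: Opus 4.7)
The plan is to use the relative CPAP characterization of Theorem \ref{thm-nuc-CPAP} (equivalently Theorem \ref{thm-A}) and a standard density/approximation argument. Fix a finite subset $\fF \subset A$ and $\varepsilon > 0$. Without loss of generality we include $1_A$ in $\fF$, and we fix a small parameter $\delta > 0$ with $2\delta + (1+\delta)\delta < \varepsilon$. By the density of $\bigcup_i A_i$ in $A$ and the fact that $\{A_i\}$ is an increasing net indexed by a directed set, we can choose a single index $i$ and elements $\{a'\}_{a \in \fF} \subset A_i$ such that $\|a - a'\| < \delta$ for every $a \in \fF$, with the element corresponding to $1_A$ being $1_{A_i} = 1_A$ itself.

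Now apply Theorem \ref{thm-A} to the nuclear triple $(A_i, A_i \cap B, E|_{A_i})$ (note that $E(A_i) = A_i \cap B$ guarantees $E|_{A_i}$ is a genuine conditional expectation onto $A_i \cap B$). This yields $n, m \in \lN$, elements $x_l^{(k)}, y_l^{(k)} \in A_i$, and c.p.\ maps
\[
\varphi_k : A_i \to \lM_n(A_i \cap B), \quad a \mapsto \bigl[ E(x_l^{(k)*} a x_{l'}^{(k)}) \bigr]_{l,l'=1}^n,
\]
\[
\psi_k : \lM_n(A_i \cap B) \to A_i, \quad [b_{ll'}] \mapsto \sum_{l,l'} y_l^{(k)*} b_{ll'} y_{l'}^{(k)},
\]
satisfying $\|a' - \sum_{k=1}^m \psi_k \circ \varphi_k(a')\| < \delta$ for each $a \in \fF$. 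The crucial point is that the formulas defining $\varphi_k$ and $\psi_k$ extend verbatim (using the same elements $x_l^{(k)}, y_l^{(k)} \in A_i \subset A$ and the global conditional expectation $E : A \to B$) to c.p.\ maps $\tilde\varphi_k : A \to \lM_n(B)$ and $\tilde\psi_k : \lM_n(B) \to A$ of exactly the form demanded by Theorem \ref{thm-A}. By construction $\tilde\varphi_k|_{A_i} = \varphi_k$ and $\tilde\psi_k|_{\lM_n(A_i \cap B)} = \psi_k$.

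It remains to verify the approximation on $\fF$. The potential obstacle is that the extensions $\tilde\psi_k \circ \tilde\varphi_k$ could have unbounded total norm; this is resolved by having placed $1_A$ inside $\fF$. Indeed, the c.p.\ map $\Phi := \sum_{k=1}^m \tilde\psi_k \circ \tilde\varphi_k$ satisfies
\[
\|\Phi\| = \|\Phi(1_A)\| = \bigl\| \sum_{k=1}^m \psi_k \circ \varphi_k(1_{A_i}) \bigr\| \leq 1 + \delta.
\]
Hence for any $a \in \fF$,
\[
\|a - \Phi(a)\| \leq \|a - a'\| + \|a' - \sum_k \psi_k \circ \varphi_k(a')\| + \|\Phi(a' - a)\| < \delta + \delta + (1+\delta)\delta < \varepsilon,
\]
which is precisely the relative CPAP condition. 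By Theorem \ref{thm-nuc-CPAP}, $(A, B, E)$ is nuclear.
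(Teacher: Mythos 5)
Your proof is correct and follows essentially the same route as the paper's: approximate the finite set by elements of some $A_i$, invoke the nuclearity (relative CPAP) of $(A_i, A_i\cap B, E|_{A_i})$, observe that the resulting approximating maps extend verbatim to $A$ (equivalently, that $L^2(A_i,E|_{A_i})\otimes_{A_i\cap B}A_i$ embeds canonically into $L^2(A,E)\otimes_B A$), and conclude with the triangle inequality. The only cosmetic difference is the norm control: the paper takes the approximating coefficient map to be completely contractive from the outset (as condition $(3)$ of Corollary \ref{cor-weak} permits), whereas you bound $\|\Phi\|=\|\Phi(1_A)\|\le 1+\delta$ by adjoining $1_A$ to the test set --- both devices work.
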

\begin{proof}
Take finitely many elements $\{ x_1, \dots, x_n \} \subset A$ and $\varepsilon >0$ arbitrarily.
Since $\bigcup_i A_i$ is norm dense in $A$, there exists $i \in \cI$ and $y_1, \dots, y_n \in A_i$ such that $\| x_i - y_i \| <\varepsilon /3$ for $1\leq i \leq n$.
Set $B_i:=B \cap A_i$ and $E_i := E|_{A_i}$.
By the nuclearity of $(A_i, B_i, E_i )$ we can find a c.c.p.\ map $\psi \in \cF_{L^2(A_i, E_i) \otimes_{B_i} A_i}$ such that $\| \psi (y_i) - y_i \| <\varepsilon/3$ for $1 \leq i \leq n$.
Since $L^2(A_i, E_i) \otimes_{B_i} A_i$ can be embedded in $L^2(A, E) \otimes_B A$ canonically and $1_A =1_{A_i}$, the $\psi$ is a c.c.p.\ on $A$.
Now we get $\| x_i - \psi (x_i) \| \leq \| x_i - y_i \| + \| y_i - \psi (y_i) \| + \| \psi (y_i - x_i ) \| < \varepsilon$ for $1\leq i \leq n$.
\end{proof}

The following proposition can be shown in the same manner.
\begin{proposition}[Inductive limits with common subalgebras]
Let $B \subset A$ be a unital inclusion of $\rC^*$-algebras with a conditional expectation $E :A \to B$.
If there exists an increasing net $A_i, i \in \cI$, of unital $\rC^*$-subalgebras of $A$ containing $B$ such that $(A_i, B, E|_{A_i} )$ is strongly nuclear and $\bigcup_i A_i$ is norm dense in $A$, then $(A, B, E)$ is strongly nuclear.
\end{proposition}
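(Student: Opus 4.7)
My plan is to adapt the argument for the preceding proposition on inductive limits, with the essential new issue being that the approximating c.c.p.\ maps must lie in $\cF_{B' \cap X}$ (not merely in $\cF_X$), where $X := L^2(A,E) \otimes_B A$. The hypothesis that each $A_i$ contains $B$, rather than the weaker $B \cap A_i$, is precisely what makes $B$-centrality transferable along the embeddings.

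The first ingredient is the canonical embedding
\[
\iota_i : X_i := L^2(A_i, E|_{A_i}) \otimes_B A_i \hookrightarrow X = L^2(A, E) \otimes_B A.
\]
Since $E$ extends $E|_{A_i}$, the inclusion $A_i \hookrightarrow A$ descends to an isometric inclusion of Hilbert $B$-modules $L^2(A_i, E|_{A_i}) \hookrightarrow L^2(A, E)$; tensoring with $A_i \hookrightarrow A$ on the right produces $\iota_i$ as an isometric embedding of $A_i$-$A_i$ correspondences that intertwines the left $B$-actions. In particular $\iota_i$ sends $B' \cap X_i$ into $B' \cap X$, and for every $\xi \in X_i$ and every $a \in A_i$ one has the compatibility $\Omega_{\iota_i(\xi)}(a) = \Omega_\xi(a) \in A_i \subset A$.

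Given a finite set $\{x_1, \dots, x_n\} \subset A$ and $\varepsilon > 0$, I would first use norm density of $\bigcup_i A_i$ to find $i \in \cI$ and $y_k \in A_i$ with $\|x_k - y_k\| < \varepsilon/3$. Then strong nuclearity of $(A_i, B, E|_{A_i})$ furnishes a c.c.p.\ map $\psi = \sum_l \lambda_l \Omega_{\xi_l} \in \cF_{B' \cap X_i}$ with $\|\psi(y_k) - y_k\| < \varepsilon/3$ for each $k$. Setting $\widetilde\psi := \sum_l \lambda_l \Omega_{\iota_i(\xi_l)}$ yields an element of $\cF_{B' \cap X}$ that, by the compatibility above and the identity $1_A = 1_{A_i}$, is a c.c.p.\ map on $A$ agreeing with $\psi$ on $A_i$. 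The standard three-term triangle inequality
\[
\|x_k - \widetilde\psi(x_k)\| \le \|x_k - y_k\| + \|y_k - \widetilde\psi(y_k)\| + \|\widetilde\psi(x_k - y_k)\| < \varepsilon
\]
then concludes the $B$-CCPAP of $X$, hence strong nuclearity of $(A, B, E)$.

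The only thing requiring genuine care is the preservation of $B$-centrality under $\iota_i$ and the fact that $\widetilde\psi|_{A_i} = \psi$; both are formal consequences of $\iota_i$ being a $B$-bimodule map, but they are where the hypothesis $B \subset A_i$ is used (without it, a $(B \cap A_i)$-central vector need not become $B$-central in $X$). Beyond this point the argument is bookkeeping that mirrors the preceding proposition line for line, so I do not expect any serious obstacle.
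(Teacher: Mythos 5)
Your proof is correct and follows essentially the same route as the paper, which simply states that this proposition "can be shown in the same manner" as the preceding inductive-limit proposition; you have correctly identified that the only new content is the preservation of $B$-centrality under the canonical isometric embedding $L^2(A_i,E|_{A_i})\otimes_B A_i\hookrightarrow L^2(A,E)\otimes_B A$, which is exactly where the hypothesis $B\subset A_i$ enters. The remaining bookkeeping (density, the choice of $\psi\in\cF_{B'\cap X_i}$, the use of $1_A=1_{A_i}$ to keep $\widetilde\psi$ contractive, and the three-term estimate) matches the paper's argument.
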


%
%
\section{Examples}\label{sec-exam}
In this section, we give several examples of nuclear and strongly nuclear triples.
\subsection{Inclusions of nuclear C$^*$-algebras}\label{ss-exam-inc}
\begin{example}
For any $\rC^*$-algebra $A$ the triple $(A, A, \id )$ is strongly nuclear.
Further assume that $A$ is unital and $\varphi :  A \to \lC$ is a nondegenerate state.
Then, the nuclearity of $A$ is equivalent to the strong nuclearity of $(A, \lC 1_A, \varphi )$ by Proposition \ref{prop-UFP} and Proposition \ref{prop-nuclear}. 
\end{example}
The following lemma can be shown in the same manner as \cite[Lemma 2.21]{Dadarlat-Eilers}.
\begin{lemma}
Let $B \subset A$ be a unital inclusion of $\rC^*$-algebras and $\varphi$ be a state on $B$.
If the embedding $\iota : B \hookrightarrow A$ is full, that is, $\ospan AbA =A$ holds for all $b \in B \setminus \{0 \}$,
then for any finite subset $\fF \subset B$ and $\varepsilon >0$, there exist $n \in \lN$ and $a_1, \dots, a_n \in A$ such that $\| \varphi (b)1_A - \sum_{i=1}^n a_i^* b a_i \|  < \varepsilon$ for $b \in \fF$.
\end{lemma}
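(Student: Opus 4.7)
The plan, following \cite[Lemma 2.21]{Dadarlat-Eilers}, is a Hahn--Banach separation argument with fullness entering at the critical step. First, enumerating $\fF = \{b_1,\dots,b_m\}$ and splitting each $b_i$ into its real and imaginary parts, I reduce to the case where every $b_i$ is self-adjoint. Consider the convex cone
\[
K := \Bigl\{ \bigl(\textstyle\sum_{j=1}^n a_j^* b_i a_j\bigr)_{i=1}^m : n\in\lN,\ a_1,\dots,a_n\in A \Bigr\} \subset A^m
\]
together with the target vector $v_\varphi := (\varphi(b_i)\,1_A)_{i=1}^m$; the lemma amounts to the claim $v_\varphi \in \overline{K}$. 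Assume otherwise. By Hahn--Banach applied to $\overline{K}$ and the exterior point $v_\varphi$, there exist $\rho_1,\dots,\rho_m \in A^*$ with $\Real\sum_i \rho_i(w_i) \le 0$ for all $(w_i)\in K$ and $\Real\sum_i \varphi(b_i)\rho_i(1_A) > 0$. Specializing the first inequality to single-summand tuples yields
\[
\Real\sum_{i=1}^m \rho_i(a^* b_i a) \le 0 \qquad (a\in A). \quad (\ast)
\]
Since the $b_i$ are self-adjoint, each $\rho_i$ may be replaced by its self-adjoint part, after which one writes the Jordan decomposition $\rho_i = \rho_i^+ - \rho_i^-$.

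The heart of the argument is to exploit fullness to construct $a\in A$ (or a finite family of such elements) for which the left-hand side of $(\ast)$ approaches $\Real\sum_i \varphi(b_i)\rho_i(1_A)$, forcing a contradiction. The main technical input I plan to use is the single-element form of the lemma: for any positive nonzero $b \in B$ and any $\varepsilon'>0$, fullness ($1_A \in \ospan AbA$) combined with a polarization trick yields $a_1,\dots,a_n \in A$ with $\| \sum_j a_j^* b a_j - 1_A \| < \varepsilon'$. Placing this averaging step inside a GNS dilation that simultaneously realizes the $\rho_i^\pm$ as vector functionals $\xi_i^\pm$ in a representation $\pi:A\to \lB(H)$, inequality $(\ast)$ becomes a comparison between compressions of the operators $\pi(b_i)$ to the subspaces generated by $\{\pi(a)\xi_i^\pm\}_i$; the single-element averaging then drives these compressions toward the scalar $\varphi(b_i)$, contradicting the strict positivity of $\Real\sum_i \varphi(b_i)\rho_i(1_A)$.

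The chief obstacle I foresee is the simultaneous handling of all $b_i$ together with the signed nature of the separating functionals $\rho_i$. I plan to address this through a matricial amplification: passing to the block-diagonal element $\diag(b_1,\dots,b_m) \in \lM_m(B) \subset \lM_m(A)$, whose ambient inclusion $\lM_m(B)\subset\lM_m(A)$ remains full since $B\subset A$ is (any nonzero matrix $b$ in $\lM_m(B)$ has a nonzero entry $b_{k\ell}$, and conjugation by matrix units together with fullness of $B\subset A$ applied to $b_{k\ell}$ shows $\ospan \lM_m(A)\,b\,\lM_m(A) = \lM_m(A)$), then combining with a suitable state on $\lM_m$ encoding the weights $\varphi(b_i)$ and extracting the required simultaneous approximation from the single-element averaging applied at the amplified level.
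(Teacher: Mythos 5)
The Hahn--Banach reduction is fine as far as it goes, but the step you call ``the heart of the argument'' is where the entire proof actually lives, and as described it is circular. After separating, you must exhibit a finite family $a_1,\dots,a_n$ for which $\sum_j a_j^* b_i a_j$ is close to $\varphi(b_i)1_A$ when tested against the finitely many functionals $\rho_i^{\pm}$ --- i.e.\ a weak form of the very statement being proved, and since the relevant set of c.p.\ maps is convex its point--weak and point--norm closures coincide, so the duality step gains nothing. More to the point, neither of your proposed mechanisms can produce the \emph{value} $\varphi(b)$. The single-element consequence of fullness gives, for one fixed positive $c\in B$, elements with $\sum_j a_j^* c a_j = 1_A$; it carries no information about the state $\varphi$, and applying it in $\lM_m(A)$ to $\mathrm{diag}(b_1,\dots,b_m)$ only yields a mixed relation $\sum_{j,k} c_{jk}^* b_k c_{jk} = 1_A$ in which the weights $\varphi(b_k)$ never appear. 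Some device that encodes $\varphi$ into elements of $B$ is indispensable, and your plan contains none.

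The paper itself gives no proof, only the reference to Dadarlat--Eilers, and the argument there is of a different type: excision rather than duality. The standard route is: (i) the set of states for which the conclusion holds is convex and weak$^*$-closed, so by Krein--Milman one may assume $\varphi$ is pure; (ii) by Akemann--Anderson--Pedersen excision choose $h\in B_+$ with $\|h\|=1$ and $\|hbh-\varphi(b)h^2\|<\delta$ for $b\in\fF$; (iii) set $k:=(h-\tfrac12)_+\in B_+\setminus\{0\}$, use fullness of $k^2$ (via your polarization trick) to find $d_1,\dots,d_n$ with $\sum_i d_i^* k^2 d_i=1_A$, and put $a_i:=kd_i$. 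The reason for passing from $h$ to $k$ is norm control, which is the hidden difficulty here: one cannot bound $\|\sum_i d_i^* h(b-\varphi(b))h\,d_i\|$ by $\delta$ alone because $\|\sum_i d_i^* d_i\|$ is uncontrolled; but writing $k=h\,r(h)$ with $r(t)=(t-\tfrac12)_+/t$ one has $r(h)^2\le 4k^2$, hence $\sum_i d_i^* k(b-\varphi(b))k\,d_i=\sum_i (r(h)d_i)^*\bigl(h(b-\varphi(b))h\bigr)(r(h)d_i)$ has norm at most $\delta\|\sum_i d_i^* r(h)^2 d_i\|\le 4\delta$, while $\varphi(b)\sum_i d_i^* k^2 d_i=\varphi(b)1_A$ exactly. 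This excision mechanism (or an equivalent way of localizing at $\varphi$) is what your proposal is missing; without it the separation argument cannot be closed.
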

\begin{example}[Inclusions of nuclear $\rC^*$-algebras]\label{ex-nuc}
Let $B \subset A$ be a unital inclusion of $\rC^*$-algebras with nondegenerate conditional expectation $E: A\to B$.
Take a faithful representation $\pi_H : B \to \lB (H)$.
Then, $A$ is nuclear if and only if $(A, B, E)$ is nuclear via $(H \otimes B, \pi_H \otimes 1)$.
Moreover, when the embedding $\iota : B \hookrightarrow A$ is full, the triple $(A, B, E)$ is nuclear.

Since $(L^2(A,E) \otimes_B H, \pi_E \otimes 1_H)$ is also a faithful representation of $A$,
$A$ is nuclear if and only if $(A, \id_A) \prec_\univ (( L^2(A,E) \otimes_B H) \otimes A , \pi_E \otimes 1_H \otimes 1_A)$ holds, thanks to Proposition \ref{prop-UFP} and Proposition \ref{prop-nuclear}.
The natural isomorphism $( L^2(A, E) \otimes_B H) \otimes A = L^2(A, E) \otimes_B (H \otimes B) \otimes_B A$ implies that this is the case when $(A, B,E)$ is nuclear via $(H\otimes B, \pi_H \otimes 1_A)$.

Now assume that $B$ is fully embedded.
Since every coefficient of $(H \otimes A, \pi_H \otimes 1_A)$ is approximated by c.p.\ maps of the form $b \mapsto \sum_{i,j} a_i^* \varphi (b_i^* b b_j) a_j$ for some state $\varphi$ and $a_i \in A, b_i \in B$,
by the preceding lemma and Corollary \ref{cor-weak}, we have $(H \otimes A, \pi_H \otimes 1_A) \prec_\univ (A, \iota)$.
Thus, thanks to Proposition \ref{prop-int-tensor} we have
\[
(A, \lambda_A) \prec_\univ (L^2(A, E) \otimes_{\pi_H \otimes 1_A} (H \otimes A), \pi_E \otimes 1_{H \otimes A}) \prec_\univ (L^2(A, E) \otimes_{ \iota} A, \pi_E \otimes 1_A).
\]
We note that if $A$ or $B$ is simple, then $\iota : B \to A$ is full.
\end{example}

\begin{example}[Continuous fields of nuclear C$^*$-algebras]
Let $A$ be a continuous filed of unital nuclear $\rC^*$-algebras on a compact Hausdorff space $X$,
that is, there is a unital embedding $C(X) \subset A' \cap A$, such that, the quotient maps $p^x : A \to A^x := A/ J_x$, where $J_x$ is the closed ideal generated by the kernel of the evaluation map $\ev_x :  C (X) \to \lC$ at $x$,
satisfies that $X \ni x \mapsto  \| p^x (a) \| \in \lC$ is continuous for each $a \in A$ and $A^x$ is nuclear for each $x \in X$.
Assume that there exists a conditional expectation $E : A \to C(X)$ such that for each $x \in X$ there exists a nondegenerate state $\varphi^x$ on $A^x$ satisfying $\ev_x \circ E = \varphi^x \circ p^x$.
Then, the triple $(A, C(X), E)$ is strongly nuclear (see \cite{Bauval}).

For the reader's convenience, we give a sketch of the proof of this observation.
Since every vector in $L^2(A, E) \otimes_{C(X) } A$ is $C(X)$-central, it suffices to show the nuclearity of $(A, B, E)$.
Fix $ a \in A$ and $\varepsilon >0$.
For each $x \in X$, the nuclearity of $A^x$ implies that there exist find finitely many vectors $\xi_{x,i} \in A \odot A \subset L^2(A, E) \otimes_{C(X) } A$ such that $\|p^x (a -  \sum_{i} \i< \xi_{x,i},  a \xi_{x,i} >) \| < \varepsilon$. 
Since $A$ is a continuous field, we can find an open neighborhood $U_x$ of $x$ such that $\|p^y (a -  \sum_{i} \i< \xi_{x,i},  a \xi_{x,i} > ) \| < \varepsilon$ for $y \in U_x$.
By the compactness of $X$, there exists a finite subset $F \subset X$ such that $X= \bigcup_{x \in F} U_x$.
Let $\{ h_x \}_{x \in F}$ be a partition of unity for this covering.
Then, the vectors $\{ \xi_{x,i} h_x^{1/2} \}_{x \in F, i}$ in $L^2(A, E) \otimes_{C(X)} A$ do the job.
Indeed, we have
\begin{align*}
 \| a - \sum_{x \in F} \sum_i  \i< \xi_{x,i}h_x^{1/2},  a\xi_{x,i} h_x^{1/2}> \| &=\sup_{y \in X}  \| \| p^y ( a - \sum_{x \in F}  \sum_i  \i< \xi_{x,i},  a \xi_{x,i} > h_x (y) ) \| \\ &\leq \sup_{y \in X} \sum_{x \in F} \| p^y ( a - \sum_i  \i< \xi_{x,i},  a \xi_{x,i} >  ) \|  h_x (y) < \varepsilon.
\end{align*}
\end{example}
When $B$ is finite dimensional, relative nuclearity implies strong one thanks to the following proposition.
\begin{proposition}
Let $B \subset A$ be a unital inclusions of $\rC^*$-algebras with $B$ finite dimensional.
If $X \in \Corr (A)$ has the UFP, then $X$ has the $B$-CCPAP.
\end{proposition}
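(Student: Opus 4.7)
The plan is to promote the UFP of $X$, equivalently $(A,\lambda_A) \prec_\univ (X, \pi_X)$ by Proposition \ref{prop-UFP}, to the $B$-CCPAP by exploiting the matrix-unit structure of $B = \bigoplus_{s=1}^N \lM_{n_s}$ in order to manufacture $B$-central vectors out of arbitrary vectors of $X$. Note that the naive Haar averaging $\int_{\cU(B)} \pi_X(u)\xi u^*\,du$ is $B$-central but interacts poorly with the quadratic map $\xi \mapsto \Omega_\xi$, so I would instead use an explicit linear combination over matrix units.

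Fix matrix units $\{e_{ij}^{(s)}\}$ for $B$ and set $M := \sum_s n_s$. For each $\xi \in X$ define
\[
\eta(\xi) := \sum_{s,i} \pi_X\!\left(e_{i1}^{(s)}\right) \xi \, e_{1i}^{(s)} \;\in\; X.
\]
Using $e_{kl}^{(s)} e_{ij}^{(t)} = \delta_{st}\delta_{li}\, e_{kj}^{(s)}$ twice, one checks that $\pi_X(b)\eta(\xi) = \eta(\xi)\, b$ for every $b \in B$, so $\eta(\xi) \in B' \cap X$. Unwinding the inner product, using adjointability of $\pi_X(e_{i1}^{(s)})$ and pulling the right actions out on both sides, yields the key identity
\[
\Omega_{\eta(\xi)}(a) \;=\; \sum_{s,t,i,j} e_{i1}^{(s)} \, \Omega_\xi\!\left(e_{1i}^{(s)}\, a\, e_{j1}^{(t)}\right) e_{1j}^{(t)}, \qquad a \in A.
\]

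Given a finite set $\fF \subset A$ and $\varepsilon > 0$, I let $\fF' := \{e_{1i}^{(s)} a e_{j1}^{(t)} : a \in \fF,\ (s,i),(t,j)\text{ index pairs}\}$. Applying Corollary \ref{cor-weak} with the vector $1_A \in (A, \lambda_A)$ (for which $\Omega_{1_A} = \id_A$) produces $\psi = \sum_k \Omega_{\xi_k} \in \cF_X$ with $\psi(1_A) \leq 1_A$ and $\|\psi(b) - b\| < \varepsilon/M^2$ for every $b \in \fF'$. Setting $\psi' := \sum_k \Omega_{\eta(\xi_k)} \in \cF_{B' \cap X}$, the key identity gives
\[
\psi'(a) = \sum_{s,t,i,j} e_{i1}^{(s)}\, \psi\!\left(e_{1i}^{(s)}\, a\, e_{j1}^{(t)}\right) e_{1j}^{(t)},
\]
whereas $a = \sum_{s,t,i,j} e_{i1}^{(s)}(e_{1i}^{(s)}\, a\, e_{j1}^{(t)}) e_{1j}^{(t)}$ because $\sum_{s,i} e_{ii}^{(s)} = 1_A$. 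A term-by-term bound then yields $\|a - \psi'(a)\| \leq M^2 \cdot (\varepsilon/M^2) = \varepsilon$ for $a \in \fF$.

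To finish, I check that $\psi'$ is completely contractive. Since $e_{1i}^{(s)} e_{j1}^{(t)} = \delta_{st}\delta_{ij} e_{11}^{(s)}$, one has $\psi'(1_A) = \sum_{s,i} e_{i1}^{(s)} \psi(e_{11}^{(s)}) e_{1i}^{(s)}$. Because $\psi(e_{11}^{(s)}) \leq \psi(1_A) \leq 1_A$, the standard inequality $c^* x c \leq c^* y c$ for $0 \leq x \leq y$ yields $e_{i1}^{(s)} \psi(e_{11}^{(s)}) e_{1i}^{(s)} \leq e_{ii}^{(s)}$ for each $(s,i)$, whence $\psi'(1_A) \leq 1_A$. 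Letting $\fF$ and $\varepsilon$ vary produces the required net, so $X$ has the $B$-CCPAP. The main obstacle is guessing the correct ansatz $\eta(\xi)$: it must be simultaneously $B$-central (to land in $\cF_{B' \cap X}$) and have a quadratic coefficient that telescopes back to $a$ when $\psi$ is close to $\id_A$. Once this formula is in hand, the remaining work is routine finite-dimensional manipulation with the matrix units.
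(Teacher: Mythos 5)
Your proof is correct and follows essentially the same route as the paper: the same ansatz $\eta(\xi)=\sum_{s,i}\pi_X(e_{i1}^{(s)})\xi\,e_{1i}^{(s)}$, the same enlargement of the finite set to $\{e_{1i}^{(s)}ae_{j1}^{(t)}\}$ before invoking the UFP (via Corollary \ref{cor-weak}), and the same contractivity check. No substantive differences.
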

\begin{proof}
Write $B =\bigoplus_{r=1}^p \lM_{n(r)}$.
Let $\{ e^{(r)}_{ij} \}_{i,j=1}^{n(r)}$ be a matrix unit system of $\lM_{n(r)}$.
Take a finite subset $\fF \subset A$ and $\varepsilon >0$ arbitrarily.
By the the UFP, we can find $\xi_1, \dots, \xi_m \in X$ in such a way that
$$
\| e_{1i}^{(r)} a e_{j1 }^{(s)} - \sum_{k=1}^m \i< \xi_k, e_{1i}^{(r)} a e_{j1 }^{(s)} \xi_k > \| < \frac{\varepsilon}{p^2  \max_{1 \leq r \leq p} n(r)^2} \quad 1 \leq r,s \leq p,\; 1 \leq i,j \leq n(r)
$$
and $\sum_{k=1}^m  \i< \xi_k, \xi_k > \leq 1$.
Set $ \eta_k := \sum_{r=1}^p \sum_{i=1}^{n(r)} e_{i1}^{(r)} \xi_k e_{1i}^{(r)}$ for $ 1 \leq k \leq m$.
Then, we have $e^{(r)}_{ij} \eta_k =e^{(r)}_{ij} e_{j1}^{(r)} \xi e_{1j}^{(r)} =e^{(r)}_{i1}\xi e_{1i}^{(r)}e_{ij}^{(r)} = \eta_k e_{ij}^{(r)}$, and hence $\eta_k$ is $B$-central.
For $a \in \fF$ we have
\begin{align*}
\sum_{k=1}^m \i< \eta_k, a \eta_k >
&= \sum_{k=1}^m\l< \sum_{r=1}^p \sum_{i=1}^{n(r)} e_{i1}^{(r)} \xi e_{1i}^{(r)}, a \sum_{s=1}^p \sum_{j=1}^{n(s)} e_{j1}^{(s)} \xi e_{1j}^{(s)} >
=\sum_{r, s=1}^p \sum_{i =1}^{n(r)} \sum_{j=1}^{n(s)} \sum_{k=1}^m e_{i1}^{(r)} \i< \xi, e_{1i}^{(r)} a e_{j1}^{(s)} \xi >  e_{1j}^{(s)}\\
& \approx_\varepsilon \sum_{r, s=1}^p \sum_{i =1}^{n(r)} \sum_{j=1}^{n(s)} e_{i1}^{(r)}  e_{1i}^{(r)} a e_{j1}^{(s)} e_{1j}^{(s)} 
= \left(\sum_{r=1}^p \sum_{i =1}^{n(r)}  e_{ii}^{(r)} \right) a  \left( \sum_{s=1}^p \sum_{j=1}^{n(s)}e_{jj}^{(s)} \right)
=a
\end{align*}
We also have $\sum_{k=1}^m \i< \eta_k, \eta_k >= \sum_{r=1}^p \sum_{i=1}^{n(r)} e_{i1}^{(r)} \i< \xi, e_{11}^{(r)} \xi >  e_{1i}^{(r)} \leq \sum_{i=1}^{n(r)} e_{ii}^{(r)} = 1_A$.
\end{proof}

The following proposition is important in terms of Theorem \ref{thm-D}.
\begin{proposition}\label{prop-fin-dim}
Let $B \subset A$ be a unital inclusion of $\rC^*$-algebras with nondegenerate conditional expectation $E$.
Suppose that $B$ is finite dimensional.
Then, $A$ is nuclear if and only if there exists a unital countably generated $(Z, \pi_Z) \in \Corr (B)$ such that $Z$ admits a $B$-central vector $\zeta \in Z$ with $\i< \zeta, \zeta > =1_B$ and $(A, B, E)$ is strongly nuclear via $(Z, \pi_Z)$.
\end{proposition}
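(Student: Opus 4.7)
For the backward implication, assume $(A,B,E)$ is strongly nuclear via $(Z,\pi_Z)$; in particular $(L^2(A,E)\otimes_B Z\otimes_B A,\pi_E\otimes 1_Z\otimes 1_A)$ has the UFP. Since $B$ is finite dimensional and hence nuclear, I will apply Proposition \ref{prop-nuc-exact} to $X=L^2(A,E)\otimes_B Z\in\Corr(A,B)$ and $Y=(A,\lambda_A)\in\Corr(B,A)$ to conclude that $A$ is nuclear.

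For the forward implication, my plan is to set $(Z,\pi_Z):=(H\otimes B,\pi_H\otimes 1_B)$ where $\pi_H:B\to\lB(H)$ is any faithful unital representation on a finite dimensional Hilbert space $H$; such $H$ exists because $B$ is finite dimensional, and this $Z$ is automatically unital and finitely generated as a Hilbert $B$-module. The key observation is the canonical identification
\[
L^2(A,E)\otimes_B Z\otimes_B A\;\cong\;(L^2(A,E)\otimes_B H)\otimes A
\]
of C$^*$-correspondences over $A$: the right-hand side is the scalar representation attached to $\pi_E\otimes 1_H:A\to\lB(L^2(A,E)\otimes_B H)$, which is faithful by nondegeneracy of $E$. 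Proposition \ref{prop-nuclear} then yields $(A,\lambda_A)\prec_\univ(L^2(A,E)\otimes_B Z\otimes_B A,\pi_E\otimes 1_Z\otimes 1_A)$ as a direct consequence of the nuclearity of $A$, and Proposition \ref{prop-UFP} upgrades this to the UFP, i.e.\ nuclearity of $(A,B,E)$ via $Z$. Invoking the preceding proposition (which says that, for finite dimensional $B$, UFP implies the $B$-CCPAP) then promotes this to strong nuclearity via $Z$.

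It remains to exhibit a $B$-central normal vector in $Z$. Decomposing $B=\bigoplus_r\lM_{n(r)}$ with central projections $p_r$ and matrix units $e_{ij}^{(r)}$, and taking $H=\bigoplus_r\lC^{n(r)}$ with $\pi_H$ the standard representation and standard basis $\{e_k^{(r)}\}$, I will show that
\[
\zeta\;:=\;\sum_r\sum_{k=1}^{n(r)}e_k^{(r)}\otimes e_{1k}^{(r)}\;\in\;H\otimes B=Z
\]
is the required vector: a one-line matrix-unit calculation gives $\i<\zeta,\zeta>=\sum_{r,k}e_{k1}^{(r)}e_{1k}^{(r)}=\sum_r p_r=1_B$, and checking $\pi_Z(c)\zeta=\zeta c$ on each $c=e_{ij}^{(s)}$ reduces to the identity $e_i^{(s)}\otimes e_{1j}^{(s)}=e_i^{(s)}\otimes e_{1j}^{(s)}$.

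I do not anticipate any serious obstacle; the only nontrivial point is spotting the tensor-product identification in the middle paragraph, which is what lets me convert the nuclearity of $A$ (a statement about scalar representations of $A$) into the UFP of a correspondence over $A$ mediated by $B$. Everything else is either a direct application of an earlier proposition or a routine piece of matrix-unit bookkeeping.
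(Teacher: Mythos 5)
Your proof is correct, and its skeleton matches the paper's: the backward implication via Proposition \ref{prop-nuc-exact} (using that finite-dimensional $B$ is nuclear, and that the $B$-CCPAP implies the UFP for unital $A$), and the forward implication by identifying $L^2(A,E)\otimes_B(H\otimes B)\otimes_B A$ with a faithful scalar representation of $A$ and then invoking the proposition that, for finite-dimensional $B$, the UFP implies the $B$-CCPAP. Where you genuinely diverge is in the choice of $Z$. The paper does \emph{not} take $Z=H\otimes B$; it reduces, via Proposition \ref{prop-int-tensor}, to finding \emph{some} unital countably generated $Z$ with a $B$-central normal vector satisfying $(H\otimes B,\pi_H\otimes 1)\prec_\univ(Z,\pi_Z)$, and produces it as $(L^2(\lM_n,\Phi),\pi_\Phi\circ\iota)$, where $\iota:B\hookrightarrow\lM_n$ is the block-diagonal embedding (full because $\lM_n$ is simple, so Example \ref{ex-nuc} applies) and $\Phi$ is the trace-preserving conditional expectation; there $\xi_\Phi$ is visibly $B$-central and normal. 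Your shortcut is to observe that $H\otimes B$ itself, for $H$ the multiplicity-one standard representation, already carries such a vector, namely $\zeta=\sum_r\sum_k e_k^{(r)}\otimes e_{1k}^{(r)}$ — and this checks out: $\pi_Z(e_{ij}^{(s)})\zeta=e_i^{(s)}\otimes e_{1j}^{(s)}=\zeta e_{ij}^{(s)}$ and $\langle\zeta,\zeta\rangle=\sum_{r,k}e_{k1}^{(r)}e_{1k}^{(r)}=1_B$. This lets you bypass the detour through $\lM_n$, the fullness argument, and the weak-containment chain $(H\otimes B)\prec_\univ(H\otimes L^2(\lM_n,\Phi))\prec_\univ L^2(\lM_n,\Phi)$, at the cost of a short explicit matrix-unit computation. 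Both routes are sound; yours is the more economical.
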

\begin{proof}
The `only if' part follows from Proposition \ref{prop-nuc-exact}.
Suppose that $A$ is nuclear.
Take a faithful $*$-representation $\pi_H : B \to \lB (H)$.
By Example \ref{ex-nuc}, we have $(A, \lambda_A) \prec_\univ (L^2(A, E) \otimes_B (H \otimes B) \otimes_B A, \pi_E \otimes 1_{H \otimes B}  \otimes 1_A )$.
Thanks to the previous proposition and Proposition \ref{prop-int-tensor}, it enoughs to find a unital countably generated $(Z, \pi_Z) \in \Corr (B)$ containing a $B$-central normal vector and satisfying that $(H \otimes B, \pi_H \otimes 1) \prec_\univ (Z, \pi_Z)$.
Write $B = \bigoplus_{r=1}^p \lM_{n(r)}$ and set $n:=\sum_{r=1}^p n(r)$.
Let $\iota : B \hookrightarrow \lM_n$ be the natural embedding.
Since $\lM_n$ is simple, this embedding is full, and hence we have $(H \otimes \lM_n, \pi_H \otimes 1) \prec_\univ (\lM_n, \iota)$ as $B$-$\lM_n$ C$^*$-correspondences by Example \ref{ex-nuc}.
Let $\Phi : \lM_n \to B$ be the canonical trace preserving conditional expectation.
We then have 
\[
(H \otimes B, \pi_H \otimes 1_B) \prec_\univ (H \otimes L^2(\lM_n, \Phi ), \pi_H \otimes 1 ) \prec_\univ (L^2(\lM_n, \Phi), \pi_\Phi).
\]
Therefore, $(L^2(\lM_n, \Phi), \pi_\Phi \circ \iota ) \in \Corr (B)$ and $\xi_\Phi \in L^2(\lM_n, \Phi)$ are the desired ones.
\end{proof}
\subsection{Tensor products, subalgebras of finite index, and relative amenable groups}\label{ss-exam-other}
\begin{example}[Tensor products with nuclear C$^*$-algebras]
Let $A$ and $B$ be unital $\rC^*$-algebra and $f: A \to \lC$ be a nondegenerate state.
Then $A$ is nuclear if and only if the triple $(A \otimes B, \lC 1_A \otimes B, f \otimes \id_B)$ is strongly nuclear.
Indeed, if $A$ is nuclear, then there exists a net $\psi_i \in \cF_{H_f \otimes A}$ approximates $\id_{A \otimes B}$.
Since $X:=L^2 (A \otimes B, f \otimes \id_B) \otimes_B( A \otimes B )$ is isomorphic to $(H_f \otimes A) \otimes B$,
the $\psi_i \otimes \id_B$ belongs to $\cF_{B'\cap X}$.
Conversely, suppose that $(A \otimes B, B,f \otimes \id_B)$ is nuclear and take a net $\varphi_i \in \cF_X$ converges to $\id_{A\otimes B}$.
If $g$ be a state on $B$, then $A \ni a \mapsto (\id_A \otimes g ) \circ \psi_i (a \otimes 1_B)$ is nuclear and converges to $\id_A$.
\end{example}

\begin{example}[Finite Watatani index]
Let $1_A \in B \subset A$ be a unital inclusion of $\rC^*$-algebras
and assume that there exists a conditional expectation $E :A \to B$ of finite Watatani index,
that is, there exists a finite family of elements , called a quasi-basis, $\{ u_1, \dots, u_n \} \in A$ such that $a = \sum_{j=1}^n u_j E (u_j^* a) = \sum_{j=1}^n E(a u_j) u_j^* $ holds for $a \in A$.
Then $(A, B, E)$ is strongly nuclear.
Recall the fact that the element $e:=\sum_{i=1}^n u_i u_i^*$ (called the index of $E$) is an  invertible element in $\cZ (A)=A'\cap A$ \cite[Lemma 2.3.1]{Watatani}.
Letting $\xi:=\sum_{i=1}^n u_i \otimes u_i^* e^{-1/2} \in L^2 (A, E) \otimes_B A$ we have
\begin{align*}
(\pi_E (a) \otimes_B 1_A ) \xi
&= \sum_{i=1}^n a u_i \otimes u_i^* e^{-1/2}
=\sum_{i=1}^n \sum_{j=1}^n u_j E(u_j^* a u_i ) \otimes u_i^* e^{-1/2} \\
&= \sum_{j=1}^n u_j  \otimes \sum_{i=1}^n E(u_j^* a u_i ) u_i^* e^{-1/2} 
=\sum_{j=1}^n u_j \otimes u_j^* a e^{-1/2} = \xi  a,
\end{align*}
hence $\xi \in A' \cap ( L^2 (A, E) \otimes_B A)$.
Since $\i< \xi, \xi >= e^{-1} \sum_{i,j=1}^n u_i E(u_i^* u_j )u_j^*=1_A$, we get $\Omega_\xi (a) =a$ for all $a\in A$.
\end{example}
\begin{example}[Finite probabilistic index]
Let $B \subset A$ be a unital inclusion of C$^*$-algebras.
A conditional expectation $E : A \to B$ is said to be {\it finite probabilistic index finite} if there exists a constant $\lambda >0$ such that
$\lambda^{-1} E -\id _A$ is a positive map on $A$.
In this case, the triple $(A, B, E)$ is nuclear.
We show that $(X, \pi_X):= (L^2(A, E) \otimes_B A ,\pi_E \otimes 1_A)$ has the UFP.
By \cite[Theorem 1]{Frank-Kirchberg} there exists $\mu >0$ such that $\mu^{-1} E - \id _A$ is completely positive.
Let $C$ be any C$^*$-algebra and $\sigma: A \odot C \to \lB (H)$ be any nondegenerate representation. 
Fix $\sum_{i=1}^n a_i \otimes c_i \in A \odot C$ and $\xi \in H$ arbitrarily.
We observe that $[\mu^{-1}  E ( a_i^* a_j) - a_i^* a_j]_{i,j=1}^n \in \lM_n (A)$ is positive.
Hence, we get
\begin{align*}
\| \sum_{i=1}^n \sigma (a_i  \otimes c_i) \xi \| ^2 
&=  \i< \xi, \sum_{i,j=1}^n  \sigma_A (a_i^* a_j)  \sigma_C (c_i^* c_j) \xi >
\leq \mu^{-1} \i< \xi, \sum_{i,j=1}^n \sigma_A ( E ( a_i^* a_j) ) \sigma_C (c_i^* c_j) \xi >\\
&=\mu^{-1} \| \sum_{i=1}^n \pi_E (a_i ) \xi_E \otimes 1_A \otimes \sigma_C (c_i) \xi \|^2
\leq \mu^{-1} \| \sum_{i=1}^n \phi_X^H (a_i \otimes c_i ) \|^2 \| \xi \|^2.
\end{align*}
Hence, the mapping $\sum_{i=1}^n \phi_X^H (a_i  \otimes c_i ) \mapsto \sum_{i=1}^n  \sigma (a_i  \otimes c_i)$ is bounded.
\end{example}
\begin{remark}
We note that if $E :A \to B$ is Watatani index finite, then it is probabilistic index finite (\cite[Proposition 2.6.2]{Watatani}).
When $A$ is simple, the converse holds true by \cite[Theorem 3.2]{Izumi}.
Let $\Gamma$ be a finite group acting on a unital C$^*$-algebra $A$ by $\alpha$.
By averaging we obtain a conditional expectation $E_\alpha$ from $A$ into the fixed point algebra $A^\alpha$.
It is clear that $| \Gamma|^{-1} E_\alpha - \id_A$ is positive.
Hence, if $A$ is simple, then $(A, A^\alpha, E_\alpha)$ is strongly nuclear.
Thus, there are many example of strongly nuclear triples that are not ones arising from crossed products.
\end{remark}

\begin{example}[Co-amenable subgroups]
Let $\Lambda < \Gamma$ be an inclusion of discrete groups.
Recall that $\Lambda$ is said to be {\it co-amenable} in $\Gamma$ if there exists an left invariant mean on $\ell^\infty (\Gamma /\Lambda)$ for the action of $\Gamma$ on $\Gamma / \Lambda$ by the left multiplication.
When $\Lambda$ is a normal subgroup of $\Gamma$, this is equivalent to the amenability of the quotient group $\Gamma/ \Lambda$.
The compression map by the orthogonal projection onto $\ell^2(\Lambda) \subset \ell^2(\Gamma)$ gives a conditional expectation $E : \rC^*_\re (\Gamma) \to \rC^*_\re (\Lambda)$ such that $E(\lambda_g)=0$ whenever $g \notin \Lambda$.
Then, $( \rC^*_\re (\Gamma ), \rC^*_\re (\Lambda), E)$ is nuclear if and only if $\Lambda$ is co-amenable in $\Gamma$.
Moreover,
if $\Lambda$ is a co-amenable normal subgroup of $\Gamma$, then the triple is strongly nuclear.

Suppose that $\Lambda$ is co-amenable in $\Gamma$.
Fix a finite subset $\fF \subset \Gamma$ and $\varepsilon>0$ arbitrarily.
We denote by $\pi$ the quotient map from $\Gamma$ onto $\Gamma / \Lambda$.
By \cite[Theorem 4.1]{Greenleaf} there exist a finite subset $F \subset \Gamma / \Lambda$ such that
$|F|- |g^{-1}F \cap F | < \varepsilon | F| $ for $g \in \fF$.
Take a lift $G \subset \Gamma$ of $F$, i.e., $\pi (G) = F$ and $| F| =| G |$.
Letting $\xi=|F|^{-1/2}\sum_{f \in G} \lambda_{f} \otimes \lambda_{f}^* \in L^2 (\rC^*_\re (\Gamma) , E) \otimes_{\rC^*_\re (\Lambda)} \rC^*_\re (\Gamma)$ we have
$$
\i< \xi, \lambda_g \xi > = \frac{1}{|F|} \sum_{f, h\in G} \lambda_{f} E (\lambda_{f}^* \lambda_{g } \lambda_{h} ) \lambda_{h}^* = \frac{| F \cap g^{-1} F |}{| F|} \lambda_g
 \approx_\varepsilon \lambda_g$$
for all $g \in \fF$.
In the case that $\Lambda$ is a normal subgroup, the vector $\xi$ is $\rC^*_\re (\Lambda)$-central.

Conversely, if $( \rC^*_\re (\Gamma ), \rC^*_\re (\Lambda), E)$ is nuclear, then the group von Neumann algebra $L (\Gamma)$ is injective relative to $L(\Lambda)$ by Proposition \ref{prop-relative-injective}.
By \cite[Corollary 7]{Monod-Popa}, this implies the co-amenability of $\Lambda$ in $\Gamma$.
\end{example}

\begin{remark}
By \cite{Monod-Popa}\cite{Pestov} there are triples of groups $K < H < G$ such that $K$ is co-amenable in $G$ but not in $H$.
Thus, the example above says that $A:=\rC^*_\re (G)$, $B:=\rC^*_\re (H)$, and $C:=\rC^*_\re (K)$ satisfy that $(A, C, E)$ is nuclear and $(B, C, E|_B)$ is not nuclear, where $E:A \to C$ is the canonical conditional expectation as above.
We also note that there is a conditional expectation from $A$ to $B$.
\end{remark}

\subsection{Crossed products}\label{ss-exam-cro}
Let $B$ be a unital $\rC^*$-algebra and $\alpha : \Gamma \act B$ be an action of a discrete group.
Let $B \rtimes_\alpha \Gamma$ and $B \rtimes_{\alpha, \re} \Gamma$ denote the full and reduced crossed products, respectively.
We denote by $\alpha^{**}$ the action of $\Gamma$ on $B^{**}$ induced from $\alpha$.
\begin{definition}[{\cite[D\'{e}finition 4.1]{Delaroche}}]
Let $B$ be a unital $\rC^*$-algebra and $\Gamma$ be a discrete group.
An action $\alpha : \Gamma \to \Aut (B)$ is said to be {\it amenable}
if there exists a net of functions $\varphi_i :\Gamma \to \cZ(B^{**})$ with a finite support such that
\begin{itemize}
\item $\sum_{g\in \Gamma} \varphi_i (g)^* \varphi_i (g)\leq 1$ for all $i$,
\item $\sum_{g \in \Gamma} \varphi_i (g)^* \alpha^{**}_f (\varphi_i (f^{-1}g))$ converges to $1$ $\sigma$-weakly for all $f \in \Gamma$.
\end{itemize}
\end{definition}

\begin{proposition}
Let $\alpha : \Gamma \act B$ be an action of discrete group and $A:=B \rtimes_{\alpha, \re} \Gamma$.
Then, the following are equivalent{\rm :}
\begin{itemize}
\item[$(1)$] The action $\alpha : \Gamma \act B$ is amenable.
\item[$(2)$] There exists a net of vectors $\xi_i$ in the linear span of $\{ \xi  x \in L^2(A, E) \otimes_B A^{**} \mid \xi \in B' \cap ( L^2 (A,E) \otimes_B A), x \in \cZ (B^{**}) \}$ such that $\i< \xi_i, \xi_i > \leq 1$ and $\Omega_{\xi_i}(a)$ converges to $a$ $\sigma$-weakly for $a\in A$.
\item[$(3)$] The triple $(A, B, E)$ is nuclear.
\end{itemize}
When $B$ is commutative, these conditions are also equivalent to
\begin{itemize}
\item[$(4)$] The triple $(A, B, E)$ is strongly nuclear.
\end{itemize}
\end{proposition}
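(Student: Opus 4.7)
The plan is to establish the cycle $(1)\Rightarrow(2)\Rightarrow(3)\Rightarrow(1)$, and then $(2)\Rightarrow(4)$ under commutativity of $B$. Throughout I identify $L^2(A,E)$ with the Hilbert $B$-module possessing the $B$-orthonormal family $\{\widehat{\lambda_g}\}_{g\in\Gamma}$ with $\widehat{\lambda_g}\cdot b=\widehat{\alpha_g(b)\lambda_g}$, and write $Y:=L^2(A,E)\otimes_B A$.

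For $(1)\Rightarrow(2)$, put $\eta_g:=\widehat{\lambda_g}\otimes\lambda_{g^{-1}}\in Y$. Using $\lambda_g\alpha_{g^{-1}}(b)=b\lambda_g$ together with the $B$-balancing of the interior tensor product, one checks that $\eta_g\in B'\cap Y$ and that
\[
\langle\eta_g,(\pi_E(a\lambda_k)\otimes 1)\eta_h\rangle\;=\;\delta_{g,kh}\,a\lambda_k\qquad (a\in B,\ h,k\in\Gamma).
\]
Given an amenability net $\varphi_i\colon\Gamma\to\cZ(B^{**})$, define $\xi_i:=\sum_g \eta_g\cdot\varphi_i(g)$, a finite sum in $L^2(A,E)\otimes_B A^{**}$. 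Then $\langle\xi_i,\xi_i\rangle=\sum_g\varphi_i(g)^*\varphi_i(g)\le 1$. Since each $\varphi_i(g)\in\cZ(B^{**})$ commutes with $a\in B$ while $\lambda_k\varphi_i(h)=\alpha_k^{**}(\varphi_i(h))\lambda_k$, one computes
\[
\Omega_{\xi_i}(a\lambda_k)\;=\;a\cdot\Bigl(\sum_g\varphi_i(g)^*\alpha_k^{**}(\varphi_i(k^{-1}g))\Bigr)\cdot\lambda_k,
\]
and the second amenability condition forces the inner sum to converge $\sigma$-weakly to $1_{B^{**}}$. Uniform boundedness and linearity extend $\Omega_{\xi_i}(a)\to a$ to all of $A$.

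For $(2)\Rightarrow(3)$, decompose $\xi_i=\sum_k\eta_k^{(i)}\cdot x_k^{(i)}$ and, by Kaplansky density in $B^{**}$, pick nets $b_{k,j}^{(i)}\in B$ of norm at most $\|x_k^{(i)}\|$ converging $\sigma$-strongly$^*$ to $x_k^{(i)}$. Set $\zeta_{i,j}:=\sum_k\eta_k^{(i)}\, b_{k,j}^{(i)}\in Y$; joint $\sigma$-strong$^*$ continuity of multiplication on bounded sets yields $\Omega_{\zeta_{i,j}}(a)=\sum_{k,l}b_{k,j}^{(i)*}\langle\eta_k^{(i)},a\eta_l^{(i)}\rangle b_{l,j}^{(i)}\to\Omega_{\xi_i}(a)$ $\sigma$-weakly for each $a\in A$. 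Combining with $\Omega_{\xi_i}\to\id_A$ from (2), and filtering on the product directed set, produces a net of coefficients $\Omega_{\zeta_n}$, $\zeta_n\in Y$, converging point-$\sigma$-weakly to $\id_A=\Omega_{1_A}$. Corollary~\ref{cor-weak} and Proposition~\ref{prop-UFP} then give $(A,\lambda_A)\prec_\univ Y$, i.e.\ nuclearity of $(A,B,E)$.

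For $(3)\Rightarrow(1)$, apply Proposition~\ref{prop-relative-injective} with $X=L^2(A,E)$: this forces $A^{**}=\pi_u(A)''$ to be injective relative to $B^{**}=\pi_u(B)''$, where $\pi_u$ is the universal representation of $A$. Anantharaman--Delaroche's characterization of amenability for actions on $\rC^*$-algebras \cite{Delaroche} identifies this relative injectivity with amenability of $\alpha$, closing the cycle. For $(2)\Rightarrow(4)$ with $B$ commutative one has $\cZ(B^{**})=B^{**}$ and $B\subset\cZ(B)$, so the Kaplansky approximants $b_{k,j}^{(i)}$ above commute with $B$ and the vectors $\zeta_{i,j}=\sum_k\eta_k^{(i)}b_{k,j}^{(i)}$ remain $B$-central; after a harmless rescaling to restore $\langle\zeta_{i,j},\zeta_{i,j}\rangle\le 1$, the resulting coefficients lie in $\cF_{B'\cap Y}$, giving the $B$-CCPAP of $Y$. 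The main obstacle will be the $(3)\Rightarrow(1)$ step, which relies on the precise form of Anantharaman--Delaroche's equivalence between amenability of a $\Gamma$-action on a (possibly noncommutative) $\rC^*$-algebra and relative injectivity of the bidual crossed product over $B^{**}$; the remaining implications are direct Hilbert-module computations combined with Kaplansky density.
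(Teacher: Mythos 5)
Your proposal follows the paper's own proof very closely: the same test vectors $\sum_g u_g\otimes u_g^{*}\varphi_i(g)$ for $(1)\Rightarrow(2)$, Kaplansky density for $(2)\Rightarrow(3)$ and $(2)\Rightarrow(4)$, and Proposition \ref{prop-relative-injective} combined with Anantharaman--Delaroche for $(3)\Rightarrow(1)$. The computations you sketch for $(1)\Rightarrow(2)$ are correct and match the paper's.

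The one step that does not quite close as written is $(3)\Rightarrow(1)$. You apply Proposition \ref{prop-relative-injective} to the universal representation of $A$ and conclude that $A^{**}$ is injective relative to the weak closure of $B$ therein; but the Anantharaman--Delaroche characterization of amenability of $\alpha$ (the paper cites \cite[Proposition 3.4]{Delaroche2}) is phrased in terms of the von Neumann crossed product $B^{**}\bar{\rtimes}_{\alpha^{**}}\Gamma$ being injective relative to $B^{**}$, and $A^{**}=(B\rtimes_{\alpha,\re}\Gamma)^{**}$ is \emph{not} that algebra --- it only surjects onto it via the central cover, so an extra reduction to the relevant corner would be needed. The fix is immediate, and is exactly what the paper does: since Proposition \ref{prop-relative-injective} applies to \emph{every} representation of $A$, apply it to the representation of $A$ on $H\otimes\ell^2(\Gamma)$ induced from the universal representation $B\subset\lB(H)$, whose generated von Neumann algebra is precisely $B^{**}\bar{\rtimes}_{\alpha^{**}}\Gamma$. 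One further small point: for $(2)\Rightarrow(4)$ the $B$-CCPAP demands point-\emph{norm} convergence, whereas your construction yields point-$\sigma$-weak convergence; the upgrade uses convexity of $\cF_{B'\cap Y}$ (point-weak and point-norm closures of convex sets of c.p.\ maps coincide), which you should state explicitly even though the paper is equally terse there.
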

\begin{proof}
We first prove (1) $\Rightarrow$ (2): Take a net $\{ \varphi_i \}_i \in C_\cpt (\Gamma, \cZ (B^{**}))$ in the definition of amenable actions.
We set $\xi_i := \sum_{g \in \Gamma} u_g \otimes u_g^* \varphi_i (g) \in L^2 (A,E) \otimes_B A^{**}$.
Since $\varphi_i (g) \in \cZ (B^{**})$ we get $\xi_i \in B' \cap (L^2(A,E) \otimes_B A^{**})$.
For any $b \in B$ and $f \in \Gamma$ we have
\begin{align*}
\i< \xi_i, (\pi_X (b u_f) \otimes 1_{A^{**}}) \xi_i >
&= \sum_{g \in \Gamma} \sum_{h \in \Gamma} \i< u_g \otimes u_g^*\varphi_i (g) , b u_{fh} \otimes  u_h^* \varphi_i (h)> \\
&= \sum_{g \in \Gamma} \varphi_i (g)^* u_g \left ( u_g^* b u_g     \right ) u_{f^{-1}g}^*\varphi_i (f^{-1}g) \\
&= \left ( \sum_{g \in \Gamma} \varphi_i(g)^* \alpha_f (\varphi_i (f^{-1} g ) ) \right )b u_f,
\end{align*}
which converges to $b u_f$ by the choice of $\varphi_i$.
We also have $\i< \xi_i, \xi_i > =\sum_{g \in \Gamma} \varphi_i(g)^*\varphi_i (g) \leq 1$.
Since $A$ is the closed span of $\{ b u_g \mid b \in B, g \in \Gamma\}$, we get (2).

We next prove (2) $\Rightarrow$ (3):  Approximating $\xi_i$ in (2) by vectors in $L^2(A,E) \otimes_B A$ we get $\eta_i \in L^2(A, E) \otimes_B A$ such that $\Omega_{\eta_i}$ converges to $\id_A$ point $\sigma$-weakly.

We show (3) $\Rightarrow$ (1): Assume that $(A, B , E)$ is nuclear.
Let $B \subset \lB (H)$ be the universal representation.
Then, $A$ is realized as a $\sigma$-weakly dense subalgebra of $B^{**}\bar{\rtimes}_{\alpha^{**}} \Gamma \subset \lB (H \otimes \ell^2(\Gamma))$.
By Proposition \ref{prop-relative-injective} $B^{**}\bar{\rtimes}_{\alpha^{**}} \Gamma$ is injective relative to $B^{**}$,
and hence $\alpha : \Gamma \act B$ is amenable thanks to \cite[Proposition 3.4]{Delaroche2}.

Now we suppose that $B$ is abelian. The implication (4) $\Rightarrow$ (3) is trivial.
We show (2) $\Rightarrow$ (4):
Since $\cZ (B^{**})=B^{**}=B''$ holds,
we can approximate the $\xi_i$ in (2) by $B$-central vectors in $L^2(A, E) \otimes_B A$. 
\end{proof}
\begin{proposition}
Let $B$ and $\alpha : \Gamma \act B$ be as above.
If $\Gamma$ is amenable, then the $(A, B, E)$ is strongly nuclear.
\end{proposition}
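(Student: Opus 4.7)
The idea is to construct $B$-central vectors in $L^2(A,E) \otimes_B A$ directly from a F{\o}lner sequence for $\Gamma$, mimicking the construction used in the co-amenable subgroups example. Let $\{F_i\}_i$ be a F{\o}lner sequence in $\Gamma$, i.e., finite subsets satisfying $|F_i \cap fF_i|/|F_i| \to 1$ for every $f \in \Gamma$. My candidate vectors are
\[
\xi_i := |F_i|^{-1/2} \sum_{g \in F_i} u_g \otimes u_g^* \in L^2(A, E) \otimes_B A,
\]
where $\{u_g\}_{g\in\Gamma}$ are the canonical unitaries implementing $\alpha$ in $A$.

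First, I would check that each $\xi_i$ is $B$-central. Using the covariance relation $b u_g = u_g \alpha_{g^{-1}}(b)$ and moving the element $\alpha_{g^{-1}}(b) \in B$ across the balanced tensor over $B$, one gets
\[
(\pi_E(b) \otimes 1)(u_g \otimes u_g^*) = bu_g \otimes u_g^* = u_g \otimes \alpha_{g^{-1}}(b) u_g^* = u_g \otimes u_g^* b,
\]
so each summand, and hence $\xi_i$, lies in $B' \cap (L^2(A,E) \otimes_B A)$. Next, a direct computation using $E(u_k) = 0$ for $k \neq e$ gives, for $b \in B$ and $f \in \Gamma$,
\[
\Omega_{\xi_i}(bu_f) = |F_i|^{-1} \sum_{g, h \in F_i} u_g \, E(u_g^* b u_{fh}) \, u_h^* = \frac{|F_i \cap fF_i|}{|F_i|} \, bu_f,
\]
since the term $E(u_g^* b u_{fh}) = \alpha_{g^{-1}}(b) \cdot \delta_{g^{-1}fh, e}$ is nonzero exactly when $h = f^{-1}g \in F_i$, and $u_g u_{f^{-1}g}^* = u_f$. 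Specialising to $b = 1_A$ and $f = e$ gives $\langle \xi_i, \xi_i \rangle = 1_A$, so each $\Omega_{\xi_i}$ lies in $\cF_{B' \cap (L^2(A,E) \otimes_B A)}$ and is unital (in particular contractive).

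Finally, I would promote the pointwise estimate to norm convergence of $\Omega_{\xi_i}$ on all of $A$. By the F{\o}lner property, for any finite $\fF \subset \Gamma$ and $\varepsilon > 0$ one has $\|bu_f - \Omega_{\xi_i}(bu_f)\| \leq (1 - |F_i \cap fF_i|/|F_i|)\|b\| < \varepsilon$ for all $f \in \fF$, $b \in B$, once $i$ is large enough. Linearity handles finite sums $\sum_{f \in \fF} b_f u_f$, and since such sums are norm dense in $A$ and each $\Omega_{\xi_i}$ is contractive, a standard $\varepsilon/3$ argument extends the approximation to all of $A$. This exhibits $\id_A$ as the point-norm limit of c.c.p.\ maps in $\cF_{B' \cap (L^2(A,E) \otimes_B A)}$, which is precisely the $B$-CCPAP of $L^2(A,E) \otimes_B A$, i.e., strong nuclearity of $(A,B,E)$. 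No serious obstacle is anticipated: the only things to verify carefully are the $B$-centrality (which hinges on the covariance identity) and the coefficient computation (which reduces to the support condition on $E$).
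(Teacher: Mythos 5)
Your proposal is correct and follows essentially the same route as the paper: the paper's proof consists precisely of taking $\xi_i := |F_i|^{-1/2}\sum_{g\in F_i} u_g \otimes u_g^*$ for a F\o lner sequence, noting $B$-centrality, and computing $\Omega_{\xi_i}(u_f) = |F_i|^{-1}|F_i\cap f F_i|\,u_f$. Your write-up simply supplies the details (the covariance computation for centrality, the support condition on $E$, and the density/contractivity argument) that the paper leaves implicit.
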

\begin{proof}
Since $\Gamma$ is amenable, there exist F\o lner sets $F_i \subset \Gamma$.
If we put $\xi_i := | F_i |^{-1/2} \sum_{g \in F_i } u_g \otimes u_g^*$,
then $\xi_i$ is $B$-central and we have $\Omega_{\xi_i} (u_g) =|F|^{-1}|F \cap g^{-1} F|u_g $, which converges to $u_g$ in norm for all $g \in \Gamma$.
\end{proof}

\if0 
\begin{proposition}[{cf.\ \cite[Proposition 2.4]{Ozawa-Popa}}]
Let $B$ be unital $\rC^*$-algebra and $\alpha : \Gamma \act B$ be an action of discrete group.
Let $E_B$ denote the canonical conditional expectation from $A:=B \rtimes_{\alpha, \re} \Gamma$ onto $B$.
If $A$ admits a $\Gamma$-invariant state $\varphi$ with the GNS-representation $(H_\varphi, \pi_\varphi, \xi_\varphi)$ faithful, then the following hold true.
\begin{itemize}
\item[$(1$)] There exists a nondegenerate conditional expectation $E_\Gamma$ from $A$ onto $\rC^*_\re (\Gamma)$.
\item[$(2)$] The group $\Gamma$ is amenable if and only if $(\rC^*_\re (\Gamma), E_\Gamma) \lessdot_A^\nuc (B, E_B)$.
\end{itemize}
\end{proposition}
\begin{proof}
Since $(H_\varphi, \pi_\varphi)$ is faithful, we assume that $A \subset \lB (H_\varphi)$.
To see the first assertion we realize $A$ as a subalgebra $\rC^* (\pi (A), u_g, g \in \Gamma)$ of $\lB (H_\varphi \otimes \ell^2 (\Gamma) )$,
where $\pi (a) (\xi \otimes \delta_g) = \alpha_g^{-1} (a) \xi \otimes \delta_g$ and $u_g = 1 \otimes \lambda_g$ for $\xi \in H_\varphi$ and $\delta \in \Gamma$.
We still denote by $\varphi$ the vector state on $\lB (H_\varphi)$ corresponding to $\xi_\varphi$.
Consider the u.c.p.\ map $\Phi:=\varphi \otimes \id : \lB (H_\varphi \otimes \ell^2 (\Gamma )) \to \lB (\ell^2(\Gamma))$.
Then, $\Phi (\pi (a)) =\varphi (a) 1$ and $\Phi (u_g) =\lambda_g$ for $a\in A$ and $g \in \Gamma$.
Hence $E_\Gamma :=\Phi |_A$ is a conditional expectation from $A$ onto $\rC^*_\re (\Gamma)$. 
Suppose that $x \in A$ satisfies that $E_\Gamma (a x c )=0$ for all $a, c \in A$.
Take a sequence $x_n \in \lspan \{ \pi (a) u_g \mid a \in A, g \in \Gamma \}$.
For $g \in \Gamma$, let $b_g$ and $b_g^{(n)}$ denote the $g$-th Fourier coefficient of $x$ and $x_n$, respectively.
Let $\tau$ be the canonical tracial state on $\rC^*_\re (\Gamma)$.
Since $\lim_{n \to \infty} \| b_g - b_g^{(n)} \| =0$, we have
$\varphi ( a b_g c )  \gets \varphi (a b_g^{(n)} c  ) = \tau ( \lambda_g^* E_\Gamma (a x_n \alpha_g^{-1} (c) ) ) \to \tau ( \lambda_g^* E_\Gamma (a x \alpha_g^{-1} (c) ) ) =0
$
for all $a,c \in A$ and $g\in \Gamma$, which implies $x=0$.
Thus, the $E_\Gamma$ is nondegenerate.

To see the second assertion, suppose that $\Gamma$ is amenable.
We set $X_B:=L^2(A, E_B)$ and $X_\Gamma:=L^2(A, E_\Gamma)$.
Then, $(A, B, E_B)$ is (strongly) nuclear, and hence we have $A \prec_\univ X_B \otimes_B A$.
By Proposition \ref{prop-int-tensor} we get $X_\Gamma \prec_\univ X_B \otimes_B X_\Gamma$.

Conversely, suppose that $(\rC^*_\re (\Gamma), E_\Gamma) \lessdot_A^\nuc (B, E_B)$.
By Proposition \ref{prop-product-map2} there exists a u.c.p.\ map $\Psi$ from $( \lL_B (X_B ) \otimes 1 \otimes 1 )'' \subset \lB (X_B\otimes_B X_\Gamma \otimes_{\rC^*_\re (\Gamma) } \ell^2 (\Gamma) )$ onto $L(\Gamma) \subset \lB (\ell^2 (\Gamma) )$.
Now consider an injective $*$-homomorphism $\theta : \ell^\infty (\Gamma) \to \lB (X_B \otimes_B X_\Gamma \otimes_{\rC^*_\re (\Gamma)} \ell^2 (\Gamma) )$ defined by $\theta (\delta_g) =u_g e_B u_g^* \otimes 1 \otimes 1$.
Now for any $\psi \in \ell^\infty (\Gamma)$ and $g \in \Gamma$ we have $\tau (\Psi( \theta ( g.\psi)) =\tau ( \Psi ( u_g \theta (f) u_g^*) ) =\tau (u_g \Psi (\theta (\psi ) )u_g^* ) = \tau ( \Psi ( \theta (\psi )))$, where $(g.\psi ) (h) = \psi (g^{-1} h)$.
Therefore, $\psi \mapsto \tau ( \Psi ( \theta (\psi )))$ defines a left invariant mean on $\Gamma$.
\end{proof}
\fi 
%
\subsection{Groupoids}\label{ss-exam-grpd}
\begin{definition}
A {\it groupoid} $\cG$ is given by a unit space $\cG^{(0)}$ and range and source maps $r, s : \cG \to \cG^{(0)}$
and the multiplication $\cG^{(2)}:= \{ (g, h ) \in \cG \times \cG \mid s (g) = r (h) \} \to \cG; (g, h) \mapsto gh$
satisfying that
\begin{itemize}
\item $s (gh) =s(h)$ and $r (gh) =r (g)$ for $(g,h) \in \cG^\comp$,
\item $s(x) = r(x) = x$ for $x \in \cG^\unit$,
\item $g= r(g) g = g s (s)$ for $g \in \cG$,
\item every $g$ has the inverse $g^{-1} \in \cG$ such that $gg^{-1} =r (g)$ and $g^{-1}g=s(g)$.
\end{itemize}
\end{definition}
A {\it topological groupoid} is a groupoid with a topology such that range, source, multiplication and inverse maps are continuous.
Here the topology on $\cG^\comp$ is the relative topology in $\cG \times \cG$.
A topological groupoid is said to be {\it \'{e}tale} (or {\it $r$-discrete}) if $s$ and $r$ are local homermorphisms, i.e., for any $g \in \cG$ there exists a open neighborhood $U$ of $g$ such that $r|_U : U \to r (U)$ and $s|_U : U \to s(U)$ are homeomorphisms.

\medskip
In what follows, $\cG$ denotes a locally compact Hausdorff \'{e}tale groupoid.
For $x, y \in \cG^\unit$ we set $\cG_x := s^{-1}(x)$, $\cG^y:=r^{-1} (y)$, and $\cG_x^y:= \cG_x \cap \cG^y$.
We note that $\cG_x$ is discrete in $\cG$ and $\cG^\unit$ is clopen in $\cG$.
For $\varphi, \psi \in C_\cpt (\cG)$ we define the convolution product and the adjoint by
$$
\varphi * \psi (g) := \sum_{hk =g} \varphi (h) \psi (k), \quad \varphi^* (g) := \overline{\varphi (g^{-1})}
$$
Since $\cG^\unit$ is clopen in $\cG$, we have $C_\cpt ( \cG^\unit) \subset C_\cpt (\cG)$.
We note that for $\varphi \in C_\cpt (\cG)$ and $\psi \in C_\cpt (\cG^\unit)$ we have
$$
\varphi * \psi (g)  = \varphi (g) \psi ( s(g)) \quad \psi* \varphi (g) = \psi (r(g)) \varphi (g).
$$
In particular, for $\psi, \psi' \in C_\cpt ( \cG^\unit)$ we get $\psi* \psi'(x) = \psi (x)\psi'(x)$.
Therefore, if $\cG^\unit$ is compact, the $\chi_{\cG^\unit}$ forms the unit of $(C_\cpt (\cG), *)$.

We also note that $C_\cpt (\cG)$ forms an inner product $C_\cpt (\cG^\unit)$-module in the following way:
for $\xi, \eta \in C_\cpt (\cG)$ and $\varphi \in C_\cpt (\cG^\unit)$, define
$$
(\xi \varphi ) (g) = \xi (g) \varphi (s (g)) \quad \i< \xi, \eta >(x) = \sum_{g \in \cG_x} \overline{\xi (g)} \eta (g).
$$
We denote by $L^2(\cG)$ the Hilbert $C_\cpt (\cG)$-module given by completion of $C_\cpt (\cG)$.
Define the {\it left regular representation} $\lambda : C_\cpt (\cG )\ni \varphi \mapsto \lambda (\varphi) \to \lL_{C_\cpt (\cG)} ( L^2 (\cG))$ by $\lambda ( \varphi) \psi:=\varphi * \psi$.
The {\it reduced groupoid $\rC^*$-algebra} $\rC^*_\re (\cG)$ of $\cG$ is the completion of the image of $\lambda$ in $\lL_{C_\cpt (\cG)} ( L^2(\cG))$.
As noted above $\rC^*_\re (\cG)$ is unital if $\cG^\unit$ is compact.
The restriction map from $C_\cpt (\cG)$ onto $C_\cpt (\cG^\unit)$ gives the conditional expectation $E$ from $\rC^*_\re (\cG)$ onto $C_\cpt ( \cG^\unit)$.

\begin{definition}
We say that $\cG$ is {\it amenable} if there exists a net of positive functions $\mu_i \in C_\cpt (\cG)$ such that
$$
\lim_i \sup_{g \in K} | \sum_{h \in \cG_{r (g)}} \mu_i (h) - 1 | =0, \quad \lim_i \sup_{g \in K} \sum_{h \in \cG_{r(g)}} | \mu_i (hg ) - \mu_i (h) | =0 
$$
for all compact subset $K$ of $\cG$.
\end{definition}

\begin{lemma}[{\cite[Lemma 5.6.12]{Brown-Ozawa}}]
For $\varphi \in C_\cpt (\cG)$ we set $\| \varphi \|_{I,s}:= \sup_{x \in \cG^\unit} \sum_{ g \in \cG_x} | \varphi (g)|$.
Then, it follows that $\| \lambda (\varphi )\| \leq \max \{ \| \varphi \|_{I,s}, \| \varphi^* \|_{I,s} \}$.
\end{lemma}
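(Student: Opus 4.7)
The plan is to bound $\|\lambda(\varphi)\xi\|$ pointwise on the unit space by means of a Cauchy--Schwarz estimate applied to the convolution sum, in the spirit of a Schur-test argument. Throughout, I take $\xi \in C_\cpt(\cG)$ and work with the norm $\|\cdot\|$ of $L^2(\cG)$ as a Hilbert $C_\cpt(\cG^\unit)$-module, so that $\| \xi \|^2 (x) = \sum_{g \in \cG_x} | \xi (g) |^2$ for $x \in \cG^\unit$.

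First, I would rewrite the convolution in the form $(\varphi * \xi)(g) = \sum_{h \in \cG^{r(g)}} \varphi(h) \xi(h^{-1} g)$, valid since $\cG$ is \'etale and $\cG_x$ is discrete in $\cG$. Applying the Cauchy--Schwarz inequality to the measure assigning mass $|\varphi(h)|$ to each $h \in \cG^{r(g)}$ yields
\[
|(\varphi * \xi)(g)|^2 \leq \Bigl( \sum_{h \in \cG^{r(g)}} |\varphi(h)| \Bigr) \Bigl( \sum_{h \in \cG^{r(g)}} |\varphi(h)| \, |\xi(h^{-1}g)|^2 \Bigr).
\]
The first factor is $\sum_{h \in \cG_{r(g)}} |\varphi^*(h)| \leq \|\varphi^*\|_{I,s}$ via the substitution $h \mapsto h^{-1}$.

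Next I would sum over $g \in \cG_x$ and interchange the order of summation using the substitution $k = h^{-1}g$. For fixed $x \in \cG^\unit$, the map $(g,h) \mapsto (k,h) = (h^{-1}g, h)$ is a bijection from $\{ (g,h) : g \in \cG_x,\ h \in \cG^{r(g)} \}$ onto $\{ (k,h) : k \in \cG_x,\ h \in \cG_{r(k)} \}$, with inverse $g = hk$. This yields
\[
\|\varphi * \xi\|^2(x) \leq \|\varphi^*\|_{I,s} \sum_{k \in \cG_x} \Bigl( \sum_{h \in \cG_{r(k)}} |\varphi(h)| \Bigr) |\xi(k)|^2 \leq \|\varphi^*\|_{I,s}\, \|\varphi\|_{I,s}\, \|\xi\|^2(x).
\]
Taking the supremum over $x \in \cG^\unit$ and using $(\|\varphi\|_{I,s} \|\varphi^*\|_{I,s})^{1/2} \leq \max\{\|\varphi\|_{I,s}, \|\varphi^*\|_{I,s}\}$ yields the desired estimate on the dense subspace $C_\cpt(\cG) \subset L^2(\cG)$, which then extends by continuity.

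The only delicate point is the bookkeeping of the index sets $\cG_x$, $\cG^{r(g)}$, and $\cG_{r(k)}$ during the change of variables, so I would verify the bijection explicitly: given $g \in \cG_x$ and $h \in \cG^{r(g)}$, $k := h^{-1}g$ lies in $\cG_x$ since $s(k) = s(g) = x$, and $r(k) = s(h)$ shows $h \in \cG_{r(k)}$; conversely, $g = hk$ is recovered from $(k,h)$. Once this is in place, the computation is essentially formal.
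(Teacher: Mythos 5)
Your proof is correct; the paper itself gives no proof of this lemma and simply cites \cite[Lemma 5.6.12]{Brown-Ozawa}, whose argument is exactly the Schur-test computation you carry out (Cauchy--Schwarz against the measure $|\varphi(h)|$, the substitution $h\mapsto h^{-1}$ converting $\cG^{r(g)}$-sums into $\cG_{r(g)}$-sums to produce $\|\varphi^*\|_{I,s}$, and the change of variables $k=h^{-1}g$ to produce $\|\varphi\|_{I,s}$). The index-set bookkeeping you verify is the only delicate point and you have it right, so nothing further is needed.
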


To simplify our proofs, we use the following ad hoc notation and terminology.
For a subset $K \subset \cG$ and $x, y \in \cG^\unit$ we set $K_x:=K \cap \cG_x $, $K^y:=K \cap \cG^y$, and $K_x^y:=K_x \cap K^y$.
We also say that a subset $U \subset \cG$ is a {\it $\cG$-set} if $r|_U$ and $s|_U$ are homeomorphisms.
We note that if $U$ is a $\cG$-set, then $|U_x| \leq 1$ for all $x\in \cG^\unit$. 
\begin{lemma}
For any compact subset $K \subset \cG$ we have $C_K:= \sup_{x \in \cG^\unit} \max \{  |K_x|, |K^x| \} < \infty$.
\end{lemma}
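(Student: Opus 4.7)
The plan is to exploit the étale hypothesis together with compactness of $K$. By definition of étale groupoid, every $g \in \cG$ admits an open neighborhood $U$ on which both $r|_U$ and $s|_U$ are homeomorphisms onto their (open) images; such a $U$ is precisely a $\cG$-set in the sense just defined. So first I would, for each $g \in K$, pick an open $\cG$-set $U_g$ containing $g$. Since $\{U_g\}_{g \in K}$ is an open cover of the compact set $K$, a finite subcover $U_{g_1}, \dots, U_{g_n}$ exists.

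Next I would use the already observed fact that for a $\cG$-set $U$, $|U_x| \leq 1$ for all $x \in \cG^\unit$ (because $s|_U$ is injective), and symmetrically $|U^x| \leq 1$ (because $r|_U$ is injective). Then for any $x \in \cG^\unit$,
\[
|K_x| \;\leq\; \sum_{i=1}^n |(U_{g_i})_x| \;\leq\; n, \qquad |K^x| \;\leq\; \sum_{i=1}^n |(U_{g_i})^x| \;\leq\; n,
\]
and taking the supremum gives $C_K \leq n < \infty$.

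There is no real obstacle here: the étale condition is precisely what guarantees that each point of $K$ has a neighborhood in which both fiber maps are injective, and compactness then bounds the number of such fibers uniformly. The only step that warrants care is the remark that the open neighborhood supplied by the étale definition can be taken simultaneously injective for $r$ and $s$, which follows by intersecting the two neighborhoods given by the local-homeomorphism conditions for $r$ and $s$ respectively.
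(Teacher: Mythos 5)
Your proof is correct and follows essentially the same route as the paper: cover the compact set $K$ by finitely many open $\cG$-sets (which exist by the \'etale hypothesis) and observe that each such set meets every fiber $\cG_x$ and $\cG^x$ in at most one point, so $C_K$ is bounded by the number of sets in the finite subcover. The paper writes out only the bound on $|K_x|$ and leaves the symmetric case implicit, exactly as you note.
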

\begin{proof}
By the compactness of $K$, there exists a finite family of open $\cG$-sets $\{ U_i \}_{i \in \cI}$ which covers $K$.
Fix $x\in \cG^\unit$.
If $g \in K_x$ belongs to $g \in U_i$, then we have $(U_{i})_x =\{ g \}$.
This implies that $\sup_{x \in \cG^\unit} |K_x| \leq |\cI|$.
\end{proof}

\begin{lemma}
Let $V \subset \cG$ be an open $\cG$-set and $\nu \in C_\cpt (\cG)$ be arbitrary.
If $\supp (\nu ) \subset V$ holds,
then $\lambda_\nu^* \otimes \lambda_\nu$ is a $C_0 (\cG^\unit)$-central vector in $L^2( \cG) \otimes_{C_0 (\cG^\unit)} \otimes \rC^*_\re (\cG)$.
\end{lemma}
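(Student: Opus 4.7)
The strategy is to translate $C_0(\cG^\unit)$-centrality of $\xi := \lambda_\nu^* \otimes \lambda_\nu$ into a single pointwise identity on $\cG^\unit$ that is made accessible by the $\cG$-set hypothesis on $V$. Fix $\varphi \in C_0(\cG^\unit)$ and write $\pi$ for the left action of $\rC^*_\re(\cG)$ on $L^2(\cG)$, so $\pi(\varphi)\nu^* = \varphi * \nu^*$. Since the interior tensor product is balanced over $C_0(\cG^\unit)$, the desired identity $\pi(\varphi)\xi = \xi\varphi$, namely
\[
(\varphi * \nu^*)\otimes \lambda_\nu \;=\; \nu^* \otimes \lambda_{\nu*\varphi},
\]
will follow at once from the existence of a single function $\psi \in C_0(\cG^\unit)$ satisfying the two balancing identities
\[
\varphi * \nu^* \;=\; \nu^* \cdot \psi \quad\text{in } L^2(\cG), \qquad \psi * \nu \;=\; \nu * \varphi \quad\text{in } C_\cpt(\cG),
\]
because then
\[
(\varphi * \nu^*)\otimes \lambda_\nu \;=\; (\nu^*\cdot\psi)\otimes \lambda_\nu \;=\; \nu^* \otimes (\psi * \nu) \;=\; \nu^* \otimes (\nu * \varphi).
\]

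Unwinding the convolutions pointwise using $(\varphi * \nu^*)(g) = \varphi(r(g))\nu^*(g)$, $(\nu^*\cdot\psi)(g) = \nu^*(g)\psi(s(g))$, $(\psi * \nu)(h) = \psi(r(h))\nu(h)$ and $(\nu * \varphi)(h) = \nu(h)\varphi(s(h))$, both required equalities collapse to the single condition
\[
\psi(r(h)) \;=\; \varphi(s(h)) \qquad\text{for every } h \in \supp \nu.
\]
This is exactly where the $\cG$-set hypothesis does the work. Since $V$ is an open $\cG$-set, $r|_V$ is a homeomorphism onto the open subset $r(V)\subset \cG^\unit$, so the function $\psi_0 := \varphi\circ s\circ (r|_V)^{-1}$ is well defined and continuous on $r(V)$. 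A standard application of Urysohn's lemma on the locally compact Hausdorff space $\cG^\unit$ then produces $\psi \in C_\cpt(\cG^\unit)\subset C_0(\cG^\unit)$ that agrees with $\psi_0$ on the compact set $r(\supp\nu)\subset r(V)$; this $\psi$ manifestly satisfies the pointwise condition above.

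I do not foresee any genuine obstacle: the argument is essentially bookkeeping with the convolution product combined with the elementary topological remark that a $\cG$-set lets one transport values continuously from $s(V)$ to $r(V)$ via $s\circ(r|_V)^{-1}$. The only delicate point is passing from the continuous function $\psi_0$ defined on the \emph{open} set $r(V)$ to a genuine element of $C_0(\cG^\unit)$, which is handled by standard bump-function/Urysohn arguments using the compactness of $r(\supp\nu)$ inside $r(V)$.
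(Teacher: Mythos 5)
Your proposal is correct and follows essentially the same route as the paper: both reduce $C_0(\cG^\unit)$-centrality of $\lambda_\nu^*\otimes\lambda_\nu$ to producing $\psi\in C_\cpt(\cG^\unit)$ with $\psi(r(h))=\varphi(s(h))$ on $\supp\nu$, obtained by transporting $\varphi$ along $s\circ(r|_V)^{-1}$ and cutting off with a Urysohn function supported near $r(\supp\nu)$ (resp.\ $s(\supp\nu)$ in the paper). Your write-up is in fact slightly more careful in isolating the two balancing identities and noting that they collapse to one pointwise condition, which the paper leaves partly implicit.
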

\begin{proof}
Set $r_V:=r|_V : V \to r (V)$ and $s_V:=s|_V: V \to s(V)$.
Fix $\varphi \in C_\cpt (\cG^\unit)$ arbitrarily.
Since $\cG$ is locally compact and Hausdorff, we can find a open subset $U \subset \cG$ such that $\supp (\nu) \subset U \subset \overline{U} \subset V$.
By Urysohn's lemma we can find a function $\psi \in C_\cpt ( \cG^\unit)$ such that $0 \leq \psi \leq 1$, $\psi |_{s(U)} =1$, and $\supp (\psi) \subset s (V)$.
Let $\theta:=s_V \circ (r_V)^{-1} : r (V) \to s (V)$.
Since $\supp ((\varphi \psi ) \circ \theta ) \subset r_V \circ (s_V)^{-1}( \supp (\psi)) \subset r (V)$,
we can extend $(\varphi \psi ) \circ \theta$ to $\widetilde{\varphi} \in C_\cpt (\cG^\unit)$ in such a way that $\widetilde{\varphi}(x)=0$ for $x \in \cG^\unit \setminus r (V)$.
We will show that $\varphi * \nu^* = \nu^* * \widetilde{\varphi}$.
We observe that both of $\varphi * \nu^*$ and $\nu^* * \widetilde{\varphi}$ vanish on $\cG \setminus U^{-1}$.
For $g \in U$ we get $\varphi* \nu^* (g^{-1}) = \varphi (r (g^{-1})) \nu^* (g^{-1}) =\varphi (s (g))\nu^*(g^{-1}) =(\varphi \psi) (s (g))\nu^*(g^{-1})= \widetilde{\varphi}( r(g)) \nu^*(g^{-1}) =\nu^*(g^{-1}) \widetilde{\varphi} ( s(g^{-1}))= \nu^* * \widetilde{\varphi}  (g^{-1})$.
Thus, we have $\varphi * \nu^* = \nu^* * \widetilde{\varphi}$, and hence $\lambda( \varphi ) \lambda (\nu^*) \otimes \lambda (\nu) = \lambda ( \nu^* * \widetilde{\varphi})  \otimes \lambda (\nu ) = \lambda( {\nu}) ^* \otimes \lambda ( \widetilde{\varphi}*\nu)  =\lambda (\nu)^*\otimes \lambda (\nu)  \lambda (\varphi) $.
\end{proof}

\begin{theorem}
A locally compact Hausdorff \'{e}tale groupoid $\cG$ with $\cG^\unit$ compact is amenable if and only if $(\rC^*_\re (\cG), C (\cG^\unit), E)$ is strongly nuclear.
\end{theorem}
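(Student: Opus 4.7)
For the forward direction, starting from a net $\mu_i \in C_\cpt(\cG)$ witnessing amenability, I will construct explicit $B$-central vectors in $L^2(A, E) \otimes_B A$ whose coefficients approximate $\id_A$ in point-norm. For each $i$, cover the compact set $\supp(\mu_i)$ by finitely many open $\cG$-sets $V_{i,1}, \ldots, V_{i,n_i}$ (available since $r, s$ are local homeomorphisms) and choose a subordinate partition of unity $\{h_{i,j}\}$. Setting $\zeta_{i,j} := h_{i,j}^{1/2}$ and $\eta_{i,j} := h_{i,j}^{1/2} \mu_i^{1/2}$, both in $C_\cpt(\cG)$ with support in $V_{i,j}$, define
\[
\xi_i := \sum_{j=1}^{n_i} \lambda(\zeta_{i,j}) \otimes \lambda(\eta_{i,j})^* \in L^2(A, E) \otimes_B A.
\]
The argument of the preceding lemma extends without change to show that whenever two functions $\zeta, \eta \in C_\cpt(\cG)$ are both supported in a single open $\cG$-set $V$, the vector $\lambda(\zeta) \otimes \lambda(\eta)^*$ is $C(\cG^\unit)$-central (the same intertwining function $\widetilde\varphi = \varphi \circ \theta_V^{-1}$ from that proof intertwines both $\zeta$ and $\eta^*$). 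Hence each summand, and so $\xi_i$, is $B$-central.

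The approximation property follows by a chart-by-chart computation. Expanding
\[
\Omega_{\xi_i}(\lambda(\psi)) = \sum_{j,k} \lambda\!\left(\eta_{i,j} * E(\zeta_{i,j}^* * \psi * \zeta_{i,k}) * \eta_{i,k}^*\right),
\]
the fact that $\zeta_{i,j}$ and $\eta_{i,j}$ sit inside $\cG$-sets reduces every convolution to a pointwise formula involving the chart homeomorphisms $r|_{V_{i,j}}^{-1}, s|_{V_{i,j}}^{-1}$. Summation over the partition of unity produces, modulo a remainder, the element $\lambda(\psi \cdot F_i)$, where
\[
F_i(g) := \sum_{h \in \cG_{r(g)}} \mu_i(h)^{1/2}\,\mu_i(hg)^{1/2}.
\]
By Cauchy--Schwarz together with the elementary inequality $(\sqrt{a}-\sqrt{b})^2 \leq |a-b|$, the two F\o lner-type conditions force $F_i \to 1$ uniformly on compacta, while the bound $\|\lambda(\varphi)\| \leq \max\{\|\varphi\|_{I,s}, \|\varphi^*\|_{I,s}\}$ controls the remainder. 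One also checks $\Omega_{\xi_i}(1_A)(x) = \sum_{h \in s^{-1}(x)} \mu_i(h) \leq 1$, yielding strong nuclearity of $(A, B, E)$; this mirrors the amenable-action case in \S\S\ref{ss-exam-cro}.

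For the reverse direction, strong nuclearity of $(A, B, E)$ gives in particular the UFP of $L^2(A, E) \otimes_B A$, so Proposition \ref{prop-relative-injective} applies with $X = L^2(A, E)$: for the regular representation $\pi : A \hookrightarrow \lB(L^2(\cG))$, the groupoid von Neumann algebra $\pi(A)''$ is injective relative to $L^\infty(\cG^\unit) = \pi(B)''$. For étale groupoids with compact unit space, this relative injectivity of the groupoid von Neumann algebra is equivalent to (topological) amenability of $\cG$, by Anantharaman-Delaroche's characterization (\cite{Delaroche2}).

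The main technical obstacle is the chart-by-chart bookkeeping in the forward direction: verifying rigorously that the double sum $\sum_{j,k}$ over partition indices, combined with the cross-chart convolutions $\zeta_{i,j}^* * \psi * \zeta_{i,k}$, assembles into the claimed expression $\lambda(\psi \cdot F_i)$ plus a remainder controlled by $\|\cdot\|_{I,s}$. The geometric input that each of $\zeta_{i,j}, \eta_{i,j}$ is supported on a $\cG$-set is precisely what makes each convolution collapse to a tractable pointwise formula; beyond this, the calculation is a topological refinement of the classical F\o lner argument for discrete amenable groups.
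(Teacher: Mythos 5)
Your forward direction follows the same strategy as the paper's: cover $\supp(\mu_i)$ by finitely many open $\cG$-sets, take a subordinate partition of unity, build a $C(\cG^\unit)$-central vector as a finite sum of elementary tensors each supported over a single chart, and run a F\o lner-type estimate. Your extension of the centrality lemma to $\lambda(\zeta)\otimes\lambda(\eta)^*$ with $\zeta,\eta$ supported in one $\cG$-set is correct. However, your specific vector has a concrete defect. Because the \emph{unstarred} leg $\lambda(\zeta_{i,j})$ sits in the first tensor factor, the convolution $\eta_{i,j}*E(\zeta_{i,j}^**\psi*\zeta_{i,k})*\eta_{i,k}^*$ evaluated at $g$ selects the unique chart element $w\in V_{i,j}$ with $r(w)=r(g)$, so what your construction actually produces is $\sum_{w\in r^{-1}(r(g))}\mu_i(w)^{1/2}\mu_i(g^{-1}w)^{1/2}$, not the claimed $F_i(g)=\sum_{h\in s^{-1}(r(g))}\mu_i(h)^{1/2}\mu_i(hg)^{1/2}$. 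The amenability conditions control $\sup_x\sum_{h\in s^{-1}(x)}\mu_i(h)$ but say nothing about $\sup_x\sum_{w\in r^{-1}(x)}\mu_i(w)$, so neither $F_i\to 1$ nor the boundedness of $\langle\xi_i,\xi_i\rangle$ (which you need to upgrade convergence on $C_\cpt(\cG)$ to convergence on all of $\rC^*_\re(\cG)$) follows as written. The paper avoids this by inverting the first leg, taking $\lambda(\theta_i)^*\otimes\lambda(\theta_i)$, which makes the sum run over $s^{-1}(r(g))$ as required; your construction can be repaired the same way. Beyond that, the ``chart-by-chart bookkeeping'' you defer is genuinely the bulk of the argument (the paper spends a page on it, with the overlap constants $C_K$, $C_{K_\mu}$ and the bound $\|\lambda(\varphi)\|\le\max\{\|\varphi\|_{I,s},\|\varphi^*\|_{I,s}\}$ converting pointwise estimates into norm estimates), so this half is a plan with a transposition error rather than a complete proof.

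The reverse direction is where the clearest gap lies. The equivalence you invoke between topological amenability of an \'etale groupoid and relative injectivity of its von Neumann algebra over $L^\infty(\cG^\unit)$ is not in \cite{Delaroche2}: that reference treats von Neumann algebras and actions of discrete groups (its Proposition 3.4, which the paper does use for crossed products, characterizes amenable group actions), and no groupoid version is stated there. Moreover, Proposition \ref{prop-relative-injective} only yields, after composing the regular representation with a choice of quasi-invariant measure on $\cG^\unit$, a statement about the von Neumann algebra of $(\cG,\mu)$ — i.e.\ at best measurewise amenability — and passing from that to topological amenability is a separate, nontrivial step. The paper's argument sidesteps all of this in two lines: strong nuclearity gives the UFP of $L^2(A,E)\otimes_B A$, Proposition \ref{prop-nuc-exact} applied with the nuclear subalgebra $C(\cG^\unit)$ yields nuclearity of $\rC^*_\re(\cG)$, and \cite[Theorem 5.6.18]{Brown-Ozawa} then gives amenability of $\cG$ directly. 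You should replace your converse with this argument.
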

\begin{proof}
If $(\rC^*_\re (\cG), C (\cG^\unit), E)$ is strongly nuclear, then $\rC^*_\re (\cG)$ is nuclear by Proposition \ref{prop-nuc-exact}.
By \cite[Theorem 5.6.18]{Brown-Ozawa} this implies the amenability of $\cG$.
Conversely, suppose that $\cG$ is amenable.
Fix a finite subset $\fF \subset C_\cpt (\cG)$ and $0< \varepsilon <1/10$ arbitrarily.
We will show that there exists a $C (\cG^\unit)$-central vector $\xi \in L^2( \cG) \otimes_{C (\cG^\unit)} \rC^*_\re (\cG)$ such that
$ \| \lambda (\varphi ) - \Omega_\xi (\lambda (\varphi) ) \| < 9 \varepsilon $ for all $\varphi \in \fF$.
Since $\cG^\unit$ is compact, we may assume that $1 \in \fF$.
Set $K:=\bigcup_{\varphi \in \fF} \supp (\varphi) \cup \supp (\varphi)^{-1}$.
Then, $K$ is compact.
By the amenability of $\cG$, we can find $\mu \in C_\cpt (\cG)_+$ such that
\begin{equation*}
\sup_{ g \in K } | 1 - \sum_{h \in \cG_{r (g)} }\mu (h) | < \frac{ \varepsilon}{ C_K}, \quad
\sup_{ g \in K } \sum_{h \in \cG_{r(g)}} | \mu (h) - \mu (hg) | < \frac{\varepsilon^2}{ C_K^2}.
\end{equation*}
We note that for any $g \in K$ and $h \in \cG_{r (g)}$ it follows that $\mu (h), \mu (hg) \leq 3$.
Indeed, $\mu (h) \leq  \sum_{h \in \cG_{r (g)}} \mu (h) \leq | \sum_{h \in \cG_{r (g)}} \mu (h) - 1 | + 1 \leq 2$ and
$\mu (hg) \leq | \mu (hg) - \mu (h) | + \mu (h) \leq 3$.

Since $\cG$ is locally compact and Hausdorff, we can find a compact subset $K_\mu$ and open subset $O$ in such a way that
$\supp (\mu ) \subset O \subset K_\mu$.
By continuity of $\mu^{1/2}$, there exists a finite open covering $\{ U_i \}_{i \in \cI}$ of $\supp (\mu)$ consisting of $\cG$-sets and $g_i \in U_i$ such that
$| \mu (g_i )^{1/2} - \mu (h)^{1/2} | < \varepsilon / (C_{K_\mu} C_K)$ for all $h \in U_i$ and $ i \in \cI$. 
We may assume that $U_i \subset  O \subset K_\mu$.
Let $\{ \nu_i \}_{i \in \cI}$ be a corresponding partition of unity and set $\beta := \sum_{ i\in \cI} \mu (g_i)^{1/2} \nu_i$.
We note that $\| \beta - \mu^{1/2} \|_\infty <\varepsilon/ (C_{K_\mu} C_K).$

Set $\theta_i:=\nu_i^{1/2}$ and $\xi:=\sum_{i \in \cI} \mu (g_i)^{1/2} \lambda ( \theta_i )^* \otimes \lambda( \theta_i) $.
By the preceding lemma this $\xi$ is $C ( \cG^\unit)$-central.
For $\varphi \in \fF$ we denote by $\widehat{\varphi}$ the element in $C_\cpt (\cG)$ corresponding to $\Omega_\xi (\lambda_\xi)$.
We fix $g \in K$ arbitrarily and set $y:=r (g)$.
For any $\psi \in C_\cpt (\cG)$ we define $\psi (\star):=0$.
When $(U_i)_y \neq \emptyset$ we denote by $h_i$ a unique element in $(U_i)_y$.
When $(U_i)_y =\emptyset$, we set $h_i:=\star$.
We also define $\star g:=\star$.
We show that
\begin{equation}\widehat{\varphi} (g) = \left( \sum_{i, j \in \cI} \mu (g_i )^{1/2}\mu (g_j)^{1/2} \nu_i (h_i) \nu_j ( h_ig) \right) \varphi (g). \label{eq-grpd}
\end{equation}
Firstly, letting $\varphi_{ij}:=\theta_i * \varphi * \theta_j^* |_{\cG^\unit}$ we have 
$$\i< \xi, \lambda (\varphi)  \xi >=  \sum_{i,j \in \cI}\mu (g_i)^{1/2}\mu (g_j)^{1/2} \lambda (\theta_i) ^* E ( \lambda ( \theta_i * \varphi * \theta_j^* ) ) \lambda ( \theta_j)  = \sum_{i,j \in \cI} \mu (g_i)^{1/2}\mu (g_j)^{1/2} \lambda ( \theta_i^* *\varphi_{ij} * \theta_j ).$$
Hence, for $i,j \in \cI$ we have
\begin{equation}
\theta_i^* *\varphi_{ij} * \theta_j  (g)
=\sum_{h \in \cG^y} \sum_{k \in \cG^{s(h)}} \theta_i^* (h) \varphi_{ij}(k ) \theta_j (k^{-1}h^{-1}g) 
=\theta_i (h_i)  \varphi_{ij}(r(h_i)) \theta_j (h_ig). \label{eq-grpd-2}
\end{equation}
Set $z:=r (h_i)$.
The equation (\ref{eq-grpd-2}) implies that $h_i g$ belongs to $(U_j)^{z}$ whenever $\theta_i^* *\varphi_{ij} * \theta_j  (g) \neq 0$.
In this case, we have
\begin{align*}
\varphi_{ij} (z)
&=\theta_i * \varphi * \theta_j^* (z)
=\sum_{h \in \cG^z} \sum_{k \in \cG^{s(h)}} \theta_i (h ) \varphi (k) \theta_j^* (k^{-1} h^{-1}z )\\
&=\theta_i (h_i) \sum_{k \in \cG^y} \varphi (k) \theta_j (h_i k)
=\theta_i (h_i) \varphi (g) \theta_j (h_ig).
\end{align*}
Thus, we get (\ref{eq-grpd}).

Next, we take a subset $\cJ \subset \cI$ in such a way that the sets $\cI_i:= \{j \in \cI \mid h_i = h_j \}, i \in \cJ$ satisfy that $\cI = \bigsqcup_{i \in \cJ}  \cI_i$.
We then have
\begin{align*}
\sum_{i,j \in \cI}\mu (g_i)^{1/2}\mu (g_j)^{1/2}\nu_i (h_i) \nu_j (h_ig) 
&= \sum_{i \in \cI}\mu (g_i)^{1/2}\nu_i (h_i) \sum_{j \in \cI} \mu (g_j)^{1/2} \nu_j (h_ig) \\
&= \sum_{i_0 \in \cJ} \sum_{i \in \cI_{i_0}} \mu (g_i)^{1/2}\nu_i (h_i) \sum_{j \in \cI} \mu (g_j)^{1/2} \nu_j (h_{i_0}g) \\
&= \sum_{i_0 \in \cJ} ( \sum_{i \in \cI_{i_0}} \mu (g_i)^{1/2}\nu_i (h_{(i_0)})) ( \sum_{j \in \cI} \mu (g_j)^{1/2}\nu_j (h_{i_0}g) )\\
&= \sum_{i \in \cJ} \beta (h_i) \beta (h_ig).
\end{align*}
By the choice of $\mu$ we have
$
| 1 - \sum_{i \in \cJ} \mu ( h_i) |= | 1 - \sum_{h \in \cG_y} \mu (h) | <  \varepsilon/ C_K.
$
Since $\| \varphi \|_\infty \leq 1$, $|\cJ| \leq C_{K_\mu }$,
$| \beta (h_i) | \leq \| \beta - \mu^{1/2} \|_\infty + | \mu(u_{i, y} )^{1/2} | \leq 1 + \sqrt{3} \leq 3$, and $| \beta (h_ig) | \leq \| \beta - \mu^{1/2} \|_\infty + | \mu(u_{y}^{(i)} g )^{1/2} | \leq 1 + \sqrt{3} \leq 3$, 
we have
\begin{align*}
&| \widehat{\varphi} (g) - \varphi (g) |\\
&\quad \leq | \sum_{i \in \cJ} \beta (h_i) \beta (h_ig) - \mu(h_i) | + \frac{ \varepsilon}{C_K} \\
&\quad\leq | \sum_{i \in \cJ} \beta (h_i) \beta (h_ig) - \mu(h_i)^{1/2}\mu (h_ig)^{1/2} |
+  | \sum_{i \in \cJ} \mu (h_i)^{1/2} (  \mu (h_ig )^{1/2} - \mu (h_i)^{1/2} ) |
+ \frac{ \varepsilon}{C_K}\\
&\quad\leq  6 C_{K_\mu} \| \beta - \mu^{1/2}\| _\infty
+  | \sum_{i \in \cJ} \mu (h_i)^{1/2} (  \mu (h_ig )^{1/2} - \mu (h_i)^{1/2} ) |
+ \frac{ \varepsilon}{C_K}\\
&\quad\leq   \frac{  6 \varepsilon}{C_K}
+  \sum_{i \in \cJ} \mu (h_i)^{1/2} |  \mu (h_ig )^{1/2} - \mu (h_i)^{1/2} |
+ \frac{ \varepsilon}{C_K}.
\end{align*}
By the Cauchy--Schwartz inequality and the fact that $|a^{1/2} - b^{1/2} |^2 \leq | a^{1/2} - b^{1/2}| (a^{1/2} + b^{1/2}) =| a - b|$ for all positive real numbers $a,b >0$,
we have
\begin{align*}
\sum_{i \in \cJ} \mu (h_i )^{1/2} |  \mu (h_ig )^{1/2} - \mu (h_i)^{1/2} ) | 
& \leq \left( \sum_{i \in \cJ} \mu (h_i) \right)^{1/2}\left( \sum_{i \in \cJ} |  \mu (h_ig )^{1/2} - \mu (h_i)^{1/2}  |^2 \right)^{1/2} \\
& \leq \left( \sum_{h \in \cG_y} \mu (h) \right)^{1/2}\left( \sum_{k \in \cG_y} |  \mu (kg ) - \mu (k)  | \right)^{1/2}\\
& \leq \frac{2 \varepsilon}{C_K}.
\end{align*}
Therefore, we have
$| \widehat{\varphi} (g) - \varphi (g) | \leq  9\varepsilon/C_K$
for all $g\in K$.
Since $\supp (\varphi ) \subset K$ and $\supp (\widehat{\varphi} ) \subset K$,
we now have
\begin{align*}
\| \varphi - \widehat{\varphi} \|_{I,s}
&= \sup_{x \in \cG^\unit} \sum_{g \in \cG_x }| \varphi (g) - \widehat{\varphi} (g) |
= \sup_{x \in \cG^\unit} \sum_{g \in K_x }| \varphi (g) - \widehat{\varphi} (g) |\\
&= \sup_{g \in K} C_K | \varphi (g) - \widehat{\varphi} (g) |
\leq 9 \varepsilon.
\end{align*}
Since $K=K^{-1}$, we also have
$\| \varphi^* - \widehat{\varphi}^* \|_{I,s} \leq 9 \varepsilon$.
Therefore, we get $\| \lambda (\varphi) - \Omega_\xi (\lambda( \varphi) ) \| < 9 \varepsilon$ for all $\varphi \in \fF$.
Hence, we obtain a net of $C (\cG^\unit)$-central vectors $\{ \xi_i \}_i$ such that $\Omega_{\xi_i}$ converges to the identity map on the dense subspace $C_\cpt (\cG) \subset \rC^*_\re (\cG)$.
In particular, we have $\| \Omega_{\xi_i} \|^2 = \| \i< \xi_i, \xi_i > \| \leq 2$.
Thus, $\{ \Omega_{\xi_i} \}_i$ forms a bounded net, and hence converges on the whole $\rC^*_\re (\cG)$.
\end{proof}

\setcounter{theorem}{0}
\renewcommand{\thetheorem}{\arabic{section}.\arabic{theorem}}

%
%
\section{Weyl--von Neumann--Voiculescu type results}\label{sec-voic}

The main result in this section is Theorem \ref{thm-voic} below, which says that weak containment is characterized by a certain Weyl--von Neumann--Voiculescu type assertion.
Our proof is based on Arveson's argument \cite{Arveson}.
We use the condition (3) in Theorem \ref{thm-weak} and Corollary \ref{cor-weak} instead of Glimm's lemma.

We denote by $\{ \delta_n \}_{n=1}^\infty$ the canonical basis of $\ell^2( \lN)$ and by $p_n \in \lB (\ell^2( \lN))$ the orthogonal projection onto $\lC \delta_n$.
Recall that a $\rC^*$-algebra $B$ is said to be $\sigma$-{\it unital} if it admits a countable approximate unit. 
Recall that every separable $\rC^*$-algebra is $\sigma$-unital.
The following lemma can be found in the proof of \cite[Lemma 10]{Kasparov}.
Thus, we give only a sketch of proof for the reader's convenience.
\begin{lemma}\label{lem-quasicentral}
Let $A$ and $B$ be $\rC^*$-algebras with $A$ unital separable and $B$ $\sigma$-unital.
Then, for any finite subset $\fF \subset A$ and $\varepsilon >0$ there exists a sequence of positive elements $\{ e_n \}_{n=1}^\infty \subset \lK_B (H_B)$ such that each $e_n$ has the form of $\sum_{i=1}^{d(n)} p_i \otimes b_i$ for some $b_1, \dots, b_{d(n)} \in B$, $\sum_{n=1}^\infty e_n^2 =1_{H_B}$ strictly, and
\begin{align*}
a - \sum_{n=1}^\infty e_n a e_n \in \lK_B (H_B) \;\; \text{for}\; a \in A,
\quad 
\| b - \sum_{n=1}^\infty e_n b e_n \| < \varepsilon  \;\;\text{for}\; b \in \fF,
\end{align*}
where the infinite sums converge strictly.
\end{lemma}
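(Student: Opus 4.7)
The plan is to produce the sequence $\{e_n\}$ by first constructing a quasi-central approximate unit $\{u_n\}_{n=0}^\infty$ (with $u_0:=0$) of $\lK_B(H_B)$ in the prescribed ``diagonal'' form and moreover coordinate-wise increasing, and then setting $e_n:=(u_n-u_{n-1})^{1/2}$.

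First, I would fix a countable dense subset $\{a_k\}_{k=1}^\infty$ of $A$ with $\fF$ appearing among the first finitely many $a_k$'s, and an increasing countable approximate unit $\{h_m\}_{m=1}^\infty$ of $B$ satisfying $h_{m+1}h_m=h_m$, provided by the $\sigma$-unitality of $B$. The elements $u_{n,m}:=\sum_{i=1}^n p_i\otimes h_m\in\lK_B(H_B)$ have the prescribed form and jointly constitute an approximate unit of $\lK_B(H_B)$. Applying the Arveson--Akemann--Pedersen quasi-central averaging inside a separable C$^*$-subalgebra of $\lL_B(H_B)$ containing $A$ and all the $u_{n,m}$'s, I would extract inductively a sequence $u_n=\sum_{i=1}^{d(n)} p_i\otimes b_i^{(n)}$, each a finite convex combination of the $u_{n',m'}$'s, such that, writing $b_i^{(n)}=0$ for $i>d(n)$:
\begin{itemize}
\item[(a)] $b_i^{(n+1)}\geq b_i^{(n)}$ in $B$ for every $i,n$; in particular $u_n\leq u_{n+1}$;
\item[(b)] $u_n\to 1_{H_B}$ strictly;
\item[(c)] $\|[u_n,a_k]\|$ tends to zero rapidly in $n$ for each $k$, and moreover $\sum_n\|[u_n,b]\|^{1/2}<\varepsilon/(2C_b)$ for every $b\in\fF$, where $C_b$ is the constant in the standard commutator inequality cited below.
\end{itemize}
Coordinate-wise monotonicity (a) is maintained by, at each inductive step, first applying Arveson averaging to improve quasi-centrality and then adding a small further convex combination of $u_{n',m'}$'s with $m'$ taken very large on each previously-populated coordinate, so that $u_{n+1}$ dominates $u_n$ entrywise; the error in (c) introduced by this perturbation is absorbed by tightening tolerances geometrically.

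Setting $e_n:=(u_n-u_{n-1})^{1/2}$, condition (a) forces $u_n-u_{n-1}=\sum_i p_i\otimes(b_i^{(n)}-b_i^{(n-1)})$ to be a direct sum of positive ``diagonal'' pieces (the $p_i$ being pairwise orthogonal rank-one projections), whence $e_n=\sum_i p_i\otimes(b_i^{(n)}-b_i^{(n-1)})^{1/2}$ has the required form, and $\sum_{n=1}^N e_n^2=u_N\to 1_{H_B}$ strictly. The standard commutator inequality $\|[x^{1/2},a]\|\leq C_a\|[x,a]\|^{1/2}$ for positive contractions $x$ (a consequence of Haagerup's inequality $\|x^{1/2}-y^{1/2}\|\leq\|x-y\|^{1/2}$) then gives
\[
\|[e_n,a]\|\leq C_a\,\bigl(\|[u_n,a]\|+\|[u_{n-1},a]\|\bigr)^{1/2},
\]
so (c) yields $\sum_n\|[e_n,a]\|<\infty$ for every $a=a_k$ and $\sum_n\|[e_n,b]\|<\varepsilon$ for $b\in\fF$. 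Writing $\sum_{n=1}^N e_n a e_n=u_N a-\sum_{n=1}^N[e_n,a]\,e_n$ and passing to the strict limit, since $u_N a\to a$ strictly while the second sum converges in norm (as $\|e_n\|\leq 1$), we obtain
\[
a-\sum_{n=1}^\infty e_n a e_n=\sum_{n=1}^\infty[e_n,a]\,e_n
\]
as a norm-convergent series of compact operators; hence it lies in $\lK_B(H_B)$, and its norm is bounded by $\varepsilon$ when $a\in\fF$.

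The principal obstacle will be the simultaneous enforcement of coordinate-wise monotonicity (a) with the quasi-centrality (c): naive Arveson averaging preserves the form $\sum_i p_i\otimes b_i$ but may destroy dominance in each individual $p_i$-coordinate, and only such dominance guarantees that $e_n$ itself (not merely $e_n^2$) retains the prescribed diagonal form $\sum_i p_i\otimes(\text{elt.\ of\ }B)$. The inductive construction must therefore alternate averaging steps with coordinate-wise ``topping-up'' steps, with the commutator tolerances tightened geometrically so as to survive each adjustment.
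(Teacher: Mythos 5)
Your proposal is correct and follows essentially the same route as the paper: build the diagonal approximate unit from the $p_i\otimes v_m$, pass to an increasing quasicentral approximate unit $\{u_n\}$ lying in its convex hull by applying Arveson's theorem inside a separable $\rC^*$-subalgebra of $\lL_B(H_B)$ containing $A$, and set $e_n:=(u_n-u_{n-1})^{1/2}$, with the same telescoping identity $a-\sum_n e_nae_n=\sum_n e_n[e_n,a]$ at the end. The one place you deviate --- the inductive ``topping-up'' to enforce coordinate-wise dominance $b_i^{(n)}\le b_i^{(n+1)}$, which you flag as the principal obstacle --- is solving a non-problem: because the $p_i$ are mutually orthogonal, compressing the positive operator $u_n-u_{n-1}=\sum_i p_i\otimes(b_i^{(n)}-b_i^{(n-1)})$ by $p_i\otimes 1_B$ shows that $u_{n-1}\le u_n$ already forces $b_i^{(n-1)}\le b_i^{(n)}$ for every $i$, so the ordinary increasing quasicentral approximate unit automatically gives $e_n$ of the required diagonal form, and your alternating averaging/adjustment scheme (whose details are in any case the vaguest part of the write-up) can simply be deleted.
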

\begin{proof}[Sketch]
Set $\fF_1:=\fF$ and take an increasing sequence of finite subsets $\fF_1 \subset \fF_2 \subset \cdots \subset A$ such that $\bigcup_{n=1}^\infty \fF_n$ is norm dense in $A$.
Take a countable approximate unit $\{ v_n \}_{n \geq 1}$ of $B$.
Consider the following two separable $\rC^*$-subalgebras of $\lL_B (H_B)$,
$$
\cA:=\rC^* (1_{H_B}, A, p_n \otimes v_m, n\in \lN, m \in \lN), \quad \cJ:= \rC^* (p_n \otimes v_m, n\in \lN, m \in \lN). 
$$
Since $\{ \sum_{i=1}^np_i \otimes v_n \}_{n=1}^\infty$ forms an approximate unit of $\cJ \subset \lK_B (H_B) \cong \lK (\ell^2 (\lN)) \otimes B$,
applying \cite[Theorem 1]{Arveson} to $\{ \sum_{i=1}^np_i \otimes v_n \}_n$ and $\cJ \triangleleft \cA$ we obtain a countable quasicentral approximate unit $\{ u_n \}_{n=1}^\infty$ contained in the convex hull of $\{ p_n \otimes v_m \}_{n,m} \subset \lK_B (H_B)$ satisfying that $\| [ a , (u_{n}- u_{n-1})^{1/2} ] \| < \varepsilon /2^n$ for $a \in \fF_n$.
Here we set $u_0:=0$.
Then, the $e_n:=(u_n-u_{n-1})^{1/2}$ is the desired one.
\end{proof}

\begin{lemma}\label{lem-glimm}
Let $A$ and $B$ be $\rC^*$-algebras with $A$ unital and $(Y, \pi_Y) \in \Corr(A,B)$ be unital.
Let $\varphi \in \CP ( A, \lL_B (H_B) )$, $(K,\pi_K) \in {\rm Rep}(B)$, and $b_1,\dots, b_n \in B$ be given.
For the compact operator $e = \sum_{i=1}^n p_i \otimes b_i \in \lK_B (H_B)$,
we define the c.p.\ map
$$
\psi : A\to \lK_B(H_B); \quad a \mapsto e^* \varphi (a) e.
$$
If $(A\otimes_\varphi H_B, \lambda_A \otimes 1_{H_B}) \prec_K Y$,
then for any finite subsets $\fF \subset A$, $\cX \subset \lB (H_B \otimes_B K)_*$, and $\varepsilon>0$
there exist $m \in \lN$ and $V \in \lL_B (H_B, Y \otimes \lC^m)$ such that $V^* V \leq e^*\varphi (1)e$ and
$$
\left| f ( ( \psi (a)- V^* \pi^m_Y (a) V)\otimes1_K ) \right| < \varepsilon, \quad a\in \fF, f\in \cX.
$$
When the given $(K, \pi_K)$ is the universal representation of $B$, the above $V$ can be chosen in such a way that $\| \psi(a) - V^*\pi_Y^m  (a)V \| <\varepsilon $ for $a \in \fF$.
\end{lemma}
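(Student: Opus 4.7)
The strategy is to realize $\psi$ as the coefficient of a single vector in the $n$-fold matrix amplification of $A \otimes_\varphi H_B$, apply the weak containment hypothesis to approximate it by coefficients on $Y_n$, and repackage the approximation into a single adjointable $V$.

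Set $h_i := \delta_i \otimes b_i \in H_B$ for $1 \leq i \leq n$. Letting $Q_n := \sum_{i=1}^n p_i \otimes 1 \in \lL_B(H_B)$, the identity $e = Q_n e Q_n$ places $\psi(a)$ in $Q_n \lK_B(H_B) Q_n \cong \lM_n(B)$, and a direct computation shows that under this identification $\psi(a)$ is the matrix $[\langle h_i, \varphi(a) h_j\rangle_B]_{i,j=1}^n$. Putting $\xi_i := 1_A \otimes h_i \in A \otimes_\varphi H_B$ and $\Xi := \sum_{i=1}^n \xi_i \otimes \delta_i \in (A \otimes_\varphi H_B) \otimes \lC_n = (A \otimes_\varphi H_B)_n$, we obtain the crucial identification $\Omega_\Xi(a) = [\langle h_i, \varphi(a) h_j\rangle_B]_{i,j} = \psi(a)$ in $\lM_n(B)$.

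By Proposition \ref{lem-tensor}, the hypothesis $A \otimes_\varphi H_B \prec_K Y$ upgrades to $(A \otimes_\varphi H_B)_n \prec_{(K^n, \pi_K^{(n)})} Y_n$ as $A$-$\lM_n(B)$ correspondences. Applying Theorem \ref{thm-weak}$(3)$ to $\Xi$, with respect to a finite family of normal functionals on $\lB(K^n)$ to be extracted from $\cX$ as described below, we obtain $m \in \lN$ and vectors $\eta_k = \sum_{i=1}^n y_k^{(i)} \otimes \delta_i \in Y \otimes \lC_n = Y_n$, $1 \leq k \leq m$, with $\sum_k \Omega_{\eta_k}(1_A) \leq \Omega_\Xi(1_A)$ in $\lM_n(B)$ and $\pi_K^{(n)} \circ \sum_k \Omega_{\eta_k}$ arbitrarily $\sigma$-weakly close to $\pi_K^{(n)} \circ \Omega_\Xi$ on $\fF$. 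Define $V \in \lL_B(H_B, Y^m)$ by $V(\delta_j \otimes c) = \sum_{k=1}^m \delta_k \otimes y_k^{(j)} c$ for $j \leq n$ and $V = 0$ on the remaining summands; writing $V = V_0 \circ Q_n$ with $V_0 : B^n \to Y^m$ the finitely-indexed operator of matrix $[y_k^{(j)}]$ makes adjointability manifest, and yields $V^*(\delta_k \otimes y) = \sum_{i=1}^n \delta_i \otimes \langle y_k^{(i)}, y\rangle_B$. A direct computation gives $V^* V = \sum_k \Omega_{\eta_k}(1_A) \leq e^* \varphi(1_A) e$ and $V^* \pi_Y^m(a) V = \sum_k \Omega_{\eta_k}(a)$ in $\lM_n(B) \subset \lL_B(H_B)$, so $\psi(a) - V^* \pi_Y^m(a) V$ corresponds to $\Omega_\Xi(a) - \sum_k \Omega_{\eta_k}(a) \in \lM_n(B)$, supported on the first $n$ coordinates.

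It remains to translate the matrix $\sigma$-weak approximation into the required bound on $\cX$. Because $(\psi(a) - V^* \pi_Y^m(a) V) \otimes 1_K$ is supported on $\bigoplus_{j=1}^n \delta_j \otimes K \subset H_B \otimes_B K$, each $f \in \cX$ may be approximated within $\varepsilon/2$ by a finite sum of vector functionals supported on that finite-dimensional submodule, which unfolds entry-wise into a fixed finite family of normal functionals on $\lB(K^n) = \lM_n(\lB(K))$---precisely the family fed into Theorem \ref{thm-weak} above. This yields $|f((\psi(a) - V^* \pi_Y^m(a) V) \otimes 1_K)| < \varepsilon$ for $a \in \fF$. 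When $K$ is the universal representation, Corollary \ref{cor-weak} promotes the approximation to point-norm convergence $\sum_k \Omega_{\eta_k}(a) \to \Omega_\Xi(a)$ in $\lM_n(B)$, and since the operator norm of a $Q_n$-supported element of $\lL_B(H_B)$ agrees with its $\lM_n(B)$-matrix norm, this directly gives $\|\psi(a) - V^* \pi_Y^m(a) V\| < \varepsilon$. The only delicate point is the last paragraph's bookkeeping---turning elements of $\cX$ into the right normal functionals for Theorem \ref{thm-weak}---which is routine thanks to the finite-rank support of the operators involved.
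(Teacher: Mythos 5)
Your proof is correct and takes essentially the same route as the paper's: you identify $\psi$ with the coefficient $\Omega_\zeta$ of the vector $\zeta=(1_A\otimes(\delta_i\otimes b_i))_{i=1}^n\in(A\otimes_\varphi H_B)_n$, upgrade the hypothesis to $(A\otimes_\varphi H_B)_n\prec_{K^n}Y_n$ via Proposition \ref{lem-tensor}, apply Theorem \ref{thm-weak}$(3)$ (resp.\ Corollary \ref{cor-weak} in the universal case), and reassemble the resulting vectors of $Y_n$ into $V$ exactly as in the paper. The only cosmetic difference is your extra $\varepsilon/2$-approximation of the functionals in $\cX$, which is not needed since compressing a normal functional to the corner supported on $\bigoplus_{j=1}^n\delta_j\otimes K$ already produces a normal functional on $\lB(K^n)$.
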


\begin{proof}
For each $a \in A$, the support and range of $\psi (a)$ are contained in $\lC^n \otimes B$, we can regard $\psi(a)$ as an element in $\lK_B (\lC^n \otimes B)$.
Put $\zeta:=( 1_A \otimes (\delta_1 \otimes b_1), \dots,1_A \otimes  (\delta_n \otimes b_n) ) \in (A\otimes_\varphi H_B)_n$.
Then, via the isomorphisms $\lK_B (\lC^n \otimes B) \cong \lM_n (B)$ and $\lB ((\lC^n \otimes B) \otimes_B K) \cong \lB (K^n)$,
the operator $\psi (a)$ and $\psi (a) \otimes 1_K$ are identified with $\Omega_\zeta (a)$ and $\pi_{K}^{(n)} \circ \Omega_\zeta (a)$, respectively.
By Lemma \ref{lem-tensor}, we have $(A\otimes_\varphi H_B)_n \prec_{K^n} Y_n$.
Thus, there exist $\xi^{(1)}, \dots, \xi^{(m)} \in Y_n$ such that
\begin{equation}
 \left|f \circ \pi_{K}^{(n)} \left( \Omega_\zeta (a) - \sum_{k=1}^m \i< \xi^{(k)}, \pi_{Y_n} (a) \xi^{(k)} >_{\lM_n (B)} \right) \right| < \varepsilon,\quad a\in \fF, f \in \cX \label{eq-Glimm}
\end{equation}
and $\sum_{k=1}^m \i< \xi^{(k)}, \xi^{(k)} > \leq \Omega_\zeta (1) =e^*\varphi(1) e$.
Write $\xi^{(k)}=(\xi_1^{(k)}, \dots, \xi_n^{(k)} )\in Y_n$ with $\xi_i^{(k)} \in Y$ and set
$\xi_i:=(\xi_i^{(1)}, \dots, \xi_i^{(m)} ) \in Y^m$.
Define $V \in \lL_B (H_B, Y^m)$ by
$V (\delta_i \otimes b) = \xi_i  b$ for $1 \leq i \leq n$ and $V (\delta_i \otimes b)=0$ for $n<i$.
For $a \in A$, via the isomorphism $\lK_B (\lC^n \otimes B) \cong \lM_n (B)$, the operator $V^* \pi_Y^m (a) V$ is identified with $[ \i< \xi_i, \pi_Y^{m} (a) \xi_j >]_{i,j=1}^n$.
On the other hand, one has
\begin{align*}
\sum_{k=1}^m \i< \xi^{(k)} , \pi_{Y_n} (a) \xi^{(k)} >_{\lM_n (B)} 
=\left[ \sum_{k=1}^m \i< \xi_i^{(k)}, \pi_Y (a) \xi_j^{(k)} >_B \right]_{i,j=1}^n 
=\left[ \i< \xi_i, \pi_Y^{m} (a) \xi_j >_B \right]_{i,j=1}^n
\end{align*}
Hence, $V$ is the desired one.

In the case that $A \otimes_\varphi H_B \prec_\univ Y$,
by Corollary \ref{cor-weak},
we can choose $\xi^{(1)}, \dots, \xi^{(m)}$ in such a way that
$\| \Omega_\eta (a) - \sum_{k=1}^m \i< \xi^{(k)}, \pi_{Y_n} (a) \xi^{(k)} > \| < \varepsilon$ for $a \in \fF$ instead of (\ref{eq-Glimm})
\end{proof}

\begin{theorem}\label{thm-voic}
Let $A$ and $B$ be $\rC^*$-algebras with $A$ unital and $B$ $\sigma$-unital and $X$ be a countably generated Hilbert $B$-module.
For unital $(Y, \pi_Y) \in \Corr(A,B)$ and given $\varphi \in \CP ( A, \lL_B (X) )$ and $(K,\pi_K) \in {\rm Rep}(B)$, the following are equivalent:
\begin{itemize}
\item[$(1)$] $(A\otimes_\varphi X, \lambda_A \otimes 1_X) \prec_{(K,\pi_K)} (Y, \pi_Y)$.
\item[$(2)$] There exists a net $V_i \in \lL_B (X, Y^\infty)$ with $\|V_i^*V_i\| \leq \| \varphi (1_A) \| $ and $( V_i^*\pi^\infty_Y (a)V_i ) \otimes 1_K$ converges to $\varphi(a) \otimes 1_K$ in the $\sigma$-weak topology on $\lB (X \otimes_B K)$ for all $a \in A$.
\end{itemize}
Further suppose that $\varphi$ is unital, and $(K,\pi_K)$ is the universal representation of $B$.
Then, any of conditions above is also equivalent to
\begin{itemize}
\item[$(3)$] There exists a net of isometries $V_i \in \lL_B (X, Y^\infty)$such that $\lim_{i} \| V_i^* \pi^\infty_Y (a) V_i- \varphi (a) \| =0$ for all $a\in A$.
When $A$ is separable, we can choose the $V_i$ as a sequence satisfying that $V_i^*  \pi^\infty_Y (a)  V_i- \varphi (a) \in \lK_B (X)$ for all $a \in A$.
\end{itemize}
\end{theorem}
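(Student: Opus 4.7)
The plan is to derive $(2) \Rightarrow (1)$ directly from Theorem \ref{thm-weak}, and to prove $(1) \Rightarrow (2)$ (together with the upgrade to (3) under the extra hypotheses) by a Weyl--von Neumann--Voiculescu type construction: cut $\varphi$ by a quasicentral approximate unit from Lemma \ref{lem-quasicentral}, dilate each piece using Lemma \ref{lem-glimm}, and stack the pieces block-diagonally into one operator $V : X \to Y^\infty$.

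For $(2) \Rightarrow (1)$, I would take a dense vector $\xi = \sum_{j=1}^N a_j \otimes x_j \in A \otimes_\varphi X$, for which $\Omega_\xi(a) = \sum_{j,k} \langle x_j, \varphi(a_j^* a a_k) x_k\rangle_B$. Setting $\eta_i := \sum_j \pi_Y^\infty(a_j) V_i x_j \in Y^\infty$ gives $\Omega_{\eta_i}(a) = \sum_{j,k}\langle x_j, V_i^* \pi_Y^\infty(a_j^* a a_k) V_i x_k\rangle_B$, so the hypothesis $(V_i^* \pi_Y^\infty(\cdot) V_i) \otimes 1_K \to \varphi(\cdot) \otimes 1_K$ $\sigma$-weakly forces $\pi_K \circ \Omega_{\eta_i}(a) \to \pi_K \circ \Omega_\xi(a)$ $\sigma$-weakly for every $a \in A$. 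Theorem \ref{thm-weak} then yields $A \otimes_\varphi X \prec_K Y^\infty$, and $Y^\infty \prec_K Y$ is automatic since every coefficient of $Y^\infty$ is a norm-convergent sum of coefficients of $Y$.

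For $(1) \Rightarrow (2)$, I would first reduce to the case $X = H_B$: by Kasparov's stabilization theorem (using that $X$ is countably generated) there is an isometry $V_X : X \hookrightarrow H_B$ with complemented range, and setting $\tilde\varphi := V_X \varphi(\cdot) V_X^*$ yields a canonical identification $(A \otimes_{\tilde\varphi} H_B, \lambda_A \otimes 1_{H_B}) \cong (A \otimes_\varphi X, \lambda_A \otimes 1_X)$, so (1) passes to $\tilde\varphi$; any net $\tilde V_i : H_B \to Y^\infty$ doing the job for $\tilde\varphi$ then pulls back to $V_i := \tilde V_i V_X$ for $\varphi$. Assume now $X = H_B$. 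For a finite set $\fF \subset A$, a finite set $\cX \subset \lB(H_B \otimes_B K)_*$, and $\varepsilon > 0$, I would apply Lemma \ref{lem-quasicentral} to the separable subalgebra $\rC^*(\varphi(A)) \subset \lL_B(H_B)$ to obtain positive elements $e_n \in \lK_B(H_B)$ of the prescribed form with $\sum_n e_n^2 = 1_{H_B}$ strictly, $\|\varphi(b) - \sum_n e_n \varphi(b) e_n\| < \varepsilon/3$ for $b \in \fF$, and $\varphi(a) - \sum_n e_n \varphi(a) e_n \in \lK_B(H_B)$ for every $a \in A$. For each $n$, the compact cutdown $\psi_n(a) := e_n \varphi(a) e_n$ is exactly of the form treated by Lemma \ref{lem-glimm}, which under hypothesis (1) furnishes $V_n \in \lL_B(H_B, Y^{m_n})$ with $V_n^* V_n \leq e_n \varphi(1_A) e_n$ and $|f((\psi_n(a) - V_n^* \pi_Y^{m_n}(a) V_n) \otimes 1_K)| < \varepsilon / (3 \cdot 2^n \cdot |\fF| \cdot \max_{f\in\cX}\|f\|)$ for $a \in \fF$, $f \in \cX$. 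Placing the $V_n$ into pairwise orthogonal blocks of $Y^\infty$ and setting $V := \bigoplus_n V_n$, the partial sums of $V^* V$ are dominated by $\sum_n e_n \varphi(1_A) e_n$, which by quasicentrality is norm-bounded by $\|\varphi(1_A)\| + \varepsilon$; thus $V$ converges strictly and is adjointable. Since $\pi_Y^\infty = 1_{\ell^2} \otimes \pi_Y$ preserves the block decomposition, $V^* \pi_Y^\infty(a) V = \sum_n V_n^* \pi_Y^{m_n}(a) V_n$ strictly, and combining the two estimates gives $|f((V^* \pi_Y^\infty(a) V - \varphi(a)) \otimes 1_K)| < \varepsilon$ for $a \in \fF$, $f \in \cX$. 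Indexing over the triples $(\fF, \cX, \varepsilon)$ produces the required net, after a harmless rescaling to enforce $\|V_i^* V_i\| \leq \|\varphi(1_A)\|$.

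For the equivalence with (3) under the extra hypotheses, the second assertion of Lemma \ref{lem-glimm} (available because $(K, \pi_K)$ is the universal representation) upgrades each $\sigma$-weak bound to the norm bound $\|\psi_n(a) - V_n^* \pi_Y^{m_n}(a) V_n\| < \varepsilon / 2^n$, so $\|V^* \pi_Y^\infty(a) V - \varphi(a)\| < \varepsilon$ for $a \in \fF$. When $\varphi(1_A) = 1$, quasicentrality additionally gives $\|V^* V - 1\| < \varepsilon$, so for $\varepsilon < 1$ the polar correction $V(V^*V)^{-1/2}$ is a genuine isometry differing from $V$ by $O(\varepsilon)$ in the previous estimates. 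Separability of $A$ then permits a diagonal choice $\fF_i = \{a_1, \dots, a_i\}$, $\varepsilon_i = 1/i$ along a countable dense sequence to extract the desired sequence $V_i$. Finally, $V_i^* \pi_Y^\infty(a) V_i - \varphi(a) \in \lK_B(X)$ because $\sum_n (V_n^* \pi_Y^{m_n}(a) V_n - e_n \varphi(a) e_n)$ is a norm-convergent series of compact operators (each $V_n$ is compact since $V_n^* V_n \leq e_n \varphi(1_A) e_n \in \lK_B(H_B)$), while $\varphi(a) - \sum_n e_n \varphi(a) e_n$ is compact by the construction of the $e_n$. The main technical obstacle will be the strict-convergence and block-orthogonality bookkeeping needed to make $V = \bigoplus_n V_n$ a well-defined adjointable operator with the correct norm bound, together with the coherent passage between $\sigma$-weak estimates in (2) and norm estimates in (3); once these are set up, everything else follows mechanically from Lemma \ref{lem-quasicentral} and Lemma \ref{lem-glimm}.
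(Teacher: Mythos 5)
Your proposal is correct and follows essentially the same route as the paper's own proof: reduction to $X=H_B$ by Kasparov stabilization, a quasicentral approximate unit from Lemma \ref{lem-quasicentral}, block-diagonal assembly of the dilations supplied by Lemma \ref{lem-glimm}, the polar correction $V(V^*V)^{-1/2}$ for (3), and the coefficient computation via Theorem \ref{thm-weak} for $(2)\Rightarrow(1)$. The only cosmetic difference is the norm bound on $V^*V$, where the direct estimate $e_n\varphi(1_A)e_n\le\|\varphi(1_A)\|e_n^2$ gives $\|V^*V\|\le\|\varphi(1_A)\|$ exactly, with no $\varepsilon$-loss or rescaling needed.
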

\begin{proof}
First, we show (1) $\Rightarrow$ (2):
It suffices to show the case that $X=H_B$.
Indeed, by Kasparov's stabilization theorem there exists a projection $P \in \lL_B (H_B)$ such that $PH_B\cong X$.
Hence, we can regard $\varphi$ as a map into $\lL_B (H_B)$.
Since $A \otimes_\varphi X =A\otimes_\varphi H_B$, we get $A\otimes_\varphi H_B \prec_K X$.
If we obtain a net $V_i \in \lL_B (H_B, Y^\infty)$ as in (2), then $V_iP \in \lL _B (X,Y^\infty)$ does the job.
Fix $\varepsilon>0$ and finite subsets $\fF \subset A$ and $\cX \subset \Ball ( \lB (H_B \otimes_B K)_*)$ arbitrarily.
It suffices to show that there exists $V \in \lL_B (H_B, Y^\infty )$ such that $\| V^*V  \| \leq \| \varphi(1) \| $ and
$$
| f ((\varphi (a) - V^* \pi_Y^\infty (a) V) \otimes 1_K )| < \varepsilon, \quad a\in \fF, f\in \cX.
$$
When $A$ is separable, take an increasing sequence of finite subsets $\fF=\fF_1 \subset \fF_2 \subset \dots \subset A$ such that $A $ is the norm closure of $\bigcup_{n=1}^\infty \fF_n$. When $A$ is non-separable, set $\fF_n:=\fF$ for all $n\in \lN$.
Let $\cA$ be the separable $\rC^*$-subalgebra of $\lL_B (H_B)$ generated by $\bigcup_{n=1}^\infty \varphi (\fF_n)$.
By Lemma \ref{lem-quasicentral} we obtain a sequence of positive operators $\{ e_n \}_{n=1}^\infty$ such that each $e_n$ has the form of $\sum_{i=1}^{d(n)} p_i \otimes b_i$, $\sum_{n=1}^\infty e_n^2=1_{H_B}$ strictly, and
\begin{align*}
x - \sum_{n=1}^\infty e_n x e_n \in \lK_B (H_B) \;\; \text{for}\; x \in \cA,
\quad 
\| \varphi (a) - \sum_{n=1}^\infty e_n \varphi (a) e_n \| < \varepsilon /2  \;\;\text{for}\; a \in \fF.
\end{align*}
Let $\psi_n : A \to \lL_B (H_B )$ be the c.p.\ map defined by $e_n$ as in Lemma \ref{lem-glimm}.
By Lemma \ref{lem-glimm} there exist $d(n) \in \lN$ and $W_n \in \lL_B ( H_B, Y^{d(n)})$ such that $W_n^*W_n \leq e_n \varphi (1) e_n$ and
\begin{equation}
\left| f \left(  ( \psi_n(a) - (W^*_n \pi_Y ^{d(n)} (a) W_n ) \otimes 1_K \right) \right| < 2^{-n} \varepsilon, \quad a \in \fF_n, \; f \in \cX. \label{eq-voic}
\end{equation}
For any $\xi \in H_B$ and $N \in \lN$ one has $\sum_{n=1}^N \i< W_n \xi ,W_n\xi > \leq \| \varphi(1) \| \i< \xi, \sum_{n=1}^N  e_n^2 \xi > \leq \| \varphi (1) \| \i< \xi, \xi>$, and hence $V:=\bigoplus_n W_n : H_B  \to \bigoplus_n Y^{d(n)} \cong Y^\infty$ is well-defined and satisfies that $V^*V \leq \| \varphi (1) \| 1$.
We observe that that $V^* \pi^\infty_Y (a)  V = \sum_{n=1}^\infty W_n^* \pi_Y^{d(n)} (a) W_n$ for $a\in A$.
Now for any $a\in \fF$ and $f\in \cX$ we have
\begin{align*}
&| f( ( \varphi(a) - V^*\pi_Y^\infty (a) V )\otimes 1_K) | \\
&\qquad \leq \| \varphi (a) - \sum_{n=1}^\infty e_n \varphi (a) e_n \|
+ |f(  \sum_{n=1}^\infty ( \psi_n (a) -  W_n^* \pi^{d(n)}_Y (a) W_n )\otimes 1_K  ) |\\
&\qquad  \leq \| \varphi (a) - \sum_{n=1}^\infty e_n \varphi (a) e_n \|
+ \sum_{n=1}^\infty \left| f \left( (  \psi_n(a) - W^*_n \pi_Y^{d(n)} (a) W_n) \otimes 1_K \right) \right|\\
&\qquad < \varepsilon.
\end{align*}

In the case that $A \otimes_\varphi H_B \prec_\univ Y$,
by Lemma \ref{lem-glimm} we can choose $W_n$ in such a way that
\begin{equation*}
\left \|  \psi_n(a) - W^*_n \pi_Y^{d(n)} (a) W_n \right\| < 2^{-n} \varepsilon, \quad a\in \fF_n
\end{equation*}
instead of (\ref{eq-voic}).
This implies that $\| \varphi(a) - V^*\pi^\infty_Y (a) V \| <\varepsilon$ for all $a\in \fF$.
We also have $\sum_{n=1}^\infty \| \psi_n (a) - W_n^* \pi_Y^{d(n)} (a)  W_n \| < \infty$ holds for all $a \in \bigcup_n \fF_n$.
Thus, if $A$ is separable, then $\varphi(a) - V^*\pi^\infty_Y (a) V \in \lK_B (H_B)$ for all $a\in A$. Moreover, if $\varphi$ is unital, then we can choose $V$ in such a way that $\| 1 - V^*V \| < \varepsilon<1$. Set $V_0:=V(V^*V)^{-1/2}$. Then $V_0$ is an isometry and enjoys that $\| \psi(a) - V_0^*\pi^\infty_Y(a) V_0  \| < \delta(\varepsilon)$ for $a\in \fF$, where $\delta(\varepsilon)$ is a positive number such that $\lim_{\varepsilon \searrow 0} \delta(\varepsilon)=0$, and hence we get (1) $\Rightarrow$ (3).

Finally, we prove (2) $\Rightarrow$ (1):
Suppose that we have a net $V_i \in \lL_B (X, Y^\infty)$ in (2).
Fix $\xi = \sum_{k=1}^m a_k \otimes \xi_k \in A\odot H_B$ arbitrarily.
Set $\zeta_i := \sum_{k=1}^m \pi_Y^{\infty} (a_k) V_i \xi_k \in Y^\infty$ and show that $\pi_K \circ\Omega_{\zeta_i}$ converges to $\Omega_\xi$ point $\sigma$-weakly.
Since $\{ \pi_K \circ\Omega_{\zeta_i} \}_i$ is norm bounded, it suffices to show convergence in the point weak operator topology.
For any $a\in A$ and $\eta \in K$ we have
\begin{align*}
\i< \eta, \pi_K\circ \Omega_{\zeta_i} (a) \eta >
&= \sum_{k,l=1}^m \i< \xi_k \otimes \eta, ( V_i^* \pi_Y^\infty (a_k a a_l ) V_i \xi_l ) \otimes \eta >\\
&\rightarrow \sum_{k,l=1}^m \i< \xi_k \otimes \eta, (\varphi (a_k a a_l ) \xi_l ) \otimes \eta >\\
&=\pi_K \circ  \Omega_\xi (a).
\end{align*}
By Theorem \ref{thm-weak} we get $A\otimes_\varphi H_B \prec_K Y^\infty$.
Since $Y^\infty \prec_\univ Y$ holds, we are done.
\end{proof}

Here is a characterization of weak containment for unital countably generated C$^*$-correspondences.
\begin{corollary}
Let $A$ and $B$ be $\rC^*$-algebras with $A$ unital and $B$ $\sigma$-unital.
For unital $\rC^*$-correspondences $X,Y \in \Corr(A,B) $ with $X$ countably generated and a representation $K \in \Rep (B)$, $X$ is weakly contained in $Y$ with respect to $K$ if and only if there exists a net of contractions $V_i \in \lL_B (X, Y^\infty)$ and $V_i^* \pi_Y^\infty (a) V_i \otimes 1_K$ converges to $\pi_X(a)  \otimes 1_K$ $\sigma$-weakly in $\lB (X \otimes_B K)$ for all $a\in A$.
\end{corollary}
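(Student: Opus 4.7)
The plan is to derive this as a direct consequence of Theorem \ref{thm-voic} applied to the u.c.p.\ map $\varphi = \pi_X : A \to \lL_B(X)$. Since $\pi_X$ is a unital $*$-homomorphism, $\pi_X(1_A) = 1_X$ and so the contractivity condition $\|V_i^* V_i\| \leq \|\pi_X(1_A)\| = 1$ in Theorem \ref{thm-voic}(2) is precisely the requirement that $V_i$ be contractions.

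The key identification I would use is that for a (unital) $*$-homomorphism $\pi_X : A \to \lL_B(X)$, the interior tensor product $A \otimes_{\pi_X} X$ is canonically unitarily isomorphic to $X$ via the map $a \otimes \xi \mapsto \pi_X(a)\xi$. Indeed, one checks directly that
\[
\langle a \otimes \xi, a' \otimes \xi' \rangle = \langle \xi, \pi_X(a^* a') \xi' \rangle = \langle \pi_X(a)\xi, \pi_X(a')\xi' \rangle,
\]
so the map extends to an isometry $A \otimes_{\pi_X} X \to X$, and unitality of $\pi_X$ together with the fact that $\pi_X(A)X$ is dense in $X$ shows that it is surjective. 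Under this identification, the left action $\lambda_A \otimes 1_X$ is transported to $\pi_X$, so $(A \otimes_{\pi_X} X, \lambda_A \otimes 1_X) \cong (X, \pi_X)$ as elements of $\Corr(A, B)$. Consequently, weak containment with respect to $(K, \pi_K)$ transfers: $(A \otimes_{\pi_X} X, \lambda_A \otimes 1_X) \prec_K (Y, \pi_Y)$ if and only if $(X, \pi_X) \prec_K (Y, \pi_Y)$.

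With this in hand, the corollary is immediate: the equivalence (1)$\Leftrightarrow$(2) of Theorem \ref{thm-voic}, specialized to $\varphi = \pi_X$, becomes exactly the stated equivalence between $X \prec_K Y$ and the existence of a net of contractions $V_i \in \lL_B(X, Y^\infty)$ with $V_i^* \pi_Y^\infty(a) V_i \otimes 1_K \to \pi_X(a) \otimes 1_K$ $\sigma$-weakly in $\lB(X \otimes_B K)$. There is no genuine obstacle here; the only nontrivial content beyond Theorem \ref{thm-voic} is the identification $A \otimes_{\pi_X} X \cong X$ together with the observation that under it the condition $\|V_i^* V_i\| \leq \|\pi_X(1_A)\|$ reduces to contractivity.
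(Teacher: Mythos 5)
Your proposal is correct and is exactly the argument the paper intends: the corollary is stated without proof as an immediate specialization of Theorem \ref{thm-voic} to $\varphi=\pi_X$, using the canonical identification $(A\otimes_{\pi_X}X,\lambda_A\otimes 1_X)\cong(X,\pi_X)$ and the observation that $\|\pi_X(1_A)\|=1$ turns the norm bound into contractivity. Nothing is missing.
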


\begin{corollary}\label{cor-voic}
Let $A$ and $B$ be $\rC^*$-algebras with $A$ unital and $B$ $\sigma$-unital.
For unital $\rC^*$-correspondences $(X,\pi_X), (Y,\pi_Y ) \in \Corr(A,B) $ with $X$ countably generated, $(X, \pi_X) \prec_\univ (Y,\pi_Y)$ if and only if there exists a net of isometries $V_i \in \lL_B (X, Y^\infty)$ such that $\lim_i \| V_i \pi_X(a) - \pi^\infty_Y (a) V_i \| =0$ for all $a\in A$.
When $A$ is separable, we may choose $V_i$'s as a sequence satisfies that $V_i \pi_X (a) - \pi^\infty_Y (a) V_i \in \lK_B (X, Y^\infty)$ for all $a\in A$.
\end{corollary}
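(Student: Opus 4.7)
The plan is to deduce this corollary directly from Theorem \ref{thm-voic} applied to the unital $*$-homomorphism $\varphi := \pi_X \in \CP(A,\lL_B(X))$, combined with a short multiplicative-domain-style manipulation to pass between the one-sided approximation $\|V_i^*\pi_Y^\infty(a) V_i - \pi_X(a)\| \to 0$ and the two-sided intertwining approximation $\|V_i\pi_X(a) - \pi_Y^\infty(a) V_i\| \to 0$. A preliminary observation: since $\pi_X$ is a unital $*$-homomorphism, the map $a\otimes\xi \mapsto \pi_X(a)\xi$ realises a canonical unitary equivalence $(A\otimes_{\pi_X} X,\lambda_A\otimes 1_X) \cong (X,\pi_X)$, so condition (1) of Theorem \ref{thm-voic} for this choice of $\varphi$ is precisely $(X,\pi_X)\prec_\univ (Y,\pi_Y)$.

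For the forward direction I would invoke Theorem \ref{thm-voic}(3) with this $\varphi$ and $(K,\pi_K)$ the universal representation of $B$ to obtain isometries $V_i \in \lL_B(X, Y^\infty)$ satisfying $\|V_i^*\pi_Y^\infty(a) V_i - \pi_X(a)\| \to 0$ for every $a \in A$ (and, in the separable case, a sequence with $V_i^*\pi_Y^\infty(a) V_i - \pi_X(a) \in \lK_B(X)$). Setting $T_i(a) := V_i\pi_X(a) - \pi_Y^\infty(a) V_i$, the identity $V_i^*V_i = 1_X$ together with the multiplicativity of $\pi_X$ and $\pi_Y^\infty$ gives
\[
T_i(a)^* T_i(a) = \bigl[V_i^*\pi_Y^\infty(a^*a) V_i - \pi_X(a^*a)\bigr] + \pi_X(a^*)\bigl[\pi_X(a) - V_i^*\pi_Y^\infty(a) V_i\bigr] + \bigl[\pi_X(a^*) - V_i^*\pi_Y^\infty(a^*) V_i\bigr]\pi_X(a).
\]
Each bracketed factor tends to zero in norm by the choice of the $V_i$, so $\|T_i(a)\|^2 \to 0$, yielding the required intertwining. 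In the separable case each bracket lies in $\lK_B(X)$, and so does $T_i(a)^*T_i(a)$; I would then invoke the standard fact that $T \in \lL_B(X, Y^\infty)$ with $T^*T \in \lK_B(X)$ belongs to $\lK_B(X, Y^\infty)$ (approximate $T$ by $T\cdot T^*T(T^*T + 1/n)^{-1}$, noting that the right factor is compact and that $\lK_B(X)$ acts on $\lL_B(X, Y^\infty)$ by right multiplication with image in $\lK_B(X,Y^\infty)$) to conclude $T_i(a) \in \lK_B(X, Y^\infty)$.

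The converse direction is immediate: given isometries $V_i$ with $\|V_i\pi_X(a) - \pi_Y^\infty(a) V_i\| \to 0$, left multiplication by the contraction $V_i^*$ yields $\|V_i^*\pi_Y^\infty(a) V_i - \pi_X(a)\| \to 0$, which is condition (3) of Theorem \ref{thm-voic} for $\varphi = \pi_X$ and hence, by the preliminary observation, equivalent to $(X,\pi_X)\prec_\univ (Y,\pi_Y)$.

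The only slightly delicate point is the algebraic identity displayed above: it is the right packaging of the three approximations (for $a$, $a^*$, and $a^*a$) into a single quantity vanishing in norm, and it is what makes the compactness statement in the separable case fall out with no extra work. Everything else is bookkeeping on top of Theorem \ref{thm-voic}.
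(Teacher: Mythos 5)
Your proposal is correct and follows essentially the same route as the paper: the converse is immediate from Theorem \ref{thm-voic}, and the forward direction rests on exactly the algebraic identity you display, which the paper also uses verbatim (up to rearrangement of the three terms) to reduce the intertwining estimate and the compactness claim to condition (3) of Theorem \ref{thm-voic}. The extra details you supply — the canonical unitary $(A\otimes_{\pi_X}X,\lambda_A\otimes 1_X)\cong(X,\pi_X)$ and the deduction of $T\in\lK_B(X,Y^\infty)$ from $T^*T\in\lK_B(X)$ — are correct and are left implicit in the paper.
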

\begin{proof}
The ``if part'' follows from the preceding theorem.
Let $V \in \lL_B (X, Y^\infty )$ be an isometry.
One has
\begin{align*}
(V \pi_X (a) - \pi^\infty_Y (a) V)^*(V \pi_X (a) - \pi^\infty_Y (a) V) 
&=\pi_X (a^*) (\pi_X (a)-  V^* \pi_Y^\infty (a) V)\\
&+ (\pi_X (a^*) - V^*\pi_Y^\infty (a^*)V )\pi_X (a) \\
&+ V^* \pi_Y^\infty (a^*a ) V - \pi_X (a^*a ).
\end{align*}
Thus, the assertion follows from Theorem \ref{thm-voic}.
\end{proof}
We note that $V_i$ in the preceding corollary satisfies $\pi^\infty_Y (a) V_iV_i^* - V_i V_i^* \pi^\infty_Y (a)$ converges to 0 in norm, and is compact if $A$ is separable.
Indeed,
$$
\pi_Y^\infty (a) V_iV_i^* - V_i V_i^* \pi_Y^\infty (a)
=(\pi_Y^\infty (a) V_i - V_i\pi_X (a) )V_i^* +V_i( \pi_X (a)  V_i^* - V_i^* \pi_Y^\infty (a)). 
$$

\begin{definition}
Let $A$ and $B$ are $\rC^*$-algebras. For $(X,\pi_X), (Y,\pi_Y) \in \Corr (A,B)$
we say that $(X,\pi_X)$ and $(Y,\pi_Y)$ are {\it approximately unitarily equivalent}, written $(X,\pi_X) \sim (Y,\pi_Y)$ if there exists a sequence of unitaries $U_n \in \lL_B (X,Y)$ such that $\pi_X (a) - U^*_n \pi_Y(a) U_n \in \lK_B (X)$ and $\lim_{n\to \infty} \|\pi_X (a) - U^*_n \pi_Y(a) U_n  \|=0$ for all $a \in A$.
\end{definition}

We introduce a notation which is useful to prove the next theorem.
Let $X$ and $Y$ be Hilbert $B$-modules and $\varphi : A \to \lL_B (X)$ and $\psi :A\to \lL_B(Y)$ be maps.
For a subset $\fF \subset A$ and $\varepsilon>0$ we denote by $\varphi \sim_{(\fF, \varepsilon)} \psi$ if
there exists a unitary $U \in \lL_B (X,Y)$ such that $\varphi (a)- U^* \psi (a) U \in \lK_B (X)$ and $\| \varphi(a)- U^* \psi (a) U \| <\varepsilon$ for all $a\in \fF$.
We also write $\varphi \sim \psi$ if $\varphi \sim_{(\fF, \varepsilon)} \psi$ for any finite subset $\fF \subset A$ and $\varepsilon>0$.

\begin{theorem}\label{thm-absorbing}
Let $A$ and $B$ be $\rC^*$-algebras with $A$ unital and $B$ $\sigma$-unital.
For unital $\rC^*$-correspondences $(X, \pi_X)$, $(Y, \pi_Y) \in \Corr(A,B)$ with $X$ countably generated,
$(X, \pi_X) \prec_\univ (Y,\pi_Y)$ holds if and only if there exists a net of unitaries $U_i \in \lL_B (X \oplus Y^\infty,Y^\infty)$ such that $\lim_{i} \|\pi_X (a) \oplus \pi_Y^\infty (a) - U^*_i \pi_Y^\infty(a) U_i\|=0$ for all $a \in A$.
When $A$ is separable, this is the case that $(X \oplus Y^\infty, \pi_X \oplus \pi_Y^\infty) \sim (Y^\infty, \pi_Y^\infty)$.
\end{theorem}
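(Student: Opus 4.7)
The plan is to handle the two directions separately, with the backward direction being elementary and the forward direction doing all the work via Corollary \ref{cor-voic} plus a Voiculescu-type absorption argument.

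For the backward direction, suppose the net of unitaries $U_i$ exists. To show $X \prec_\univ Y$, I would verify condition (2) of Corollary \ref{cor-weak}. For any $\xi \in X$, identify $\xi$ with $(\xi, 0) \in X \oplus Y^\infty$; its coefficient as a vector in $X$ coincides with its coefficient as a vector in $X \oplus Y^\infty$. The approximate intertwining of $U_i$ shows $\Omega_{(\xi,0)}$ is approximated in norm by $\Omega_{U_i(\xi,0)}$, a coefficient of $(Y^\infty, \pi_Y^\infty)$. But any coefficient of $Y^\infty$ has the form $\sum_n \langle y_n, \pi_Y(\cdot)y_n\rangle$ with $(y_n) \in Y^\infty$, and truncating then rescaling shows such a sum lies in the point-norm closure of $\cF_Y$. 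Hence $\Omega_\xi \in \overline{\cF_Y}$, giving $X \prec_\univ Y$.

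For the forward direction, I would first observe that $X \oplus Y^\infty \prec_\univ Y$: every coefficient on $X \oplus Y^\infty$ splits as the sum of an $X$-coefficient (approximable by $\cF_Y$ by hypothesis) and a $Y^\infty$-coefficient (handled as above). Applying Corollary \ref{cor-voic} to this, with the canonical identification $(Y^\infty)^\infty \cong Y^\infty$ as $A$-$B$ correspondences, produces a net of isometries $V_i \in \lL_B(X \oplus Y^\infty, Y^\infty)$ with
\[
\lim_i \| V_i\, (\pi_X \oplus \pi_Y^\infty)(a) - \pi_Y^\infty(a)\, V_i \| = 0 \qquad (a \in A),
\]
and with differences lying in $\lK_B$ when $A$ is separable. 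To promote $V_i$ to a unitary, set $P_i := V_iV_i^*$, $Q_i := 1-P_i$, and note (as in the remark after Corollary \ref{cor-voic}) that $[P_i, \pi_Y^\infty(a)]$ and $[Q_i, \pi_Y^\infty(a)]$ tend to zero in norm. The crucial step is to construct isometries $W_i \in \lL_B(Y^\infty, Y^\infty)$ with $W_iW_i^* = Q_i$ and $\|\pi_Y^\infty(a)W_i - W_i\pi_Y^\infty(a)\| \to 0$. Given such $W_i$, the definition $U_i(\xi, \eta) := V_i\xi + W_i\eta$ gives a unitary, since $V_iX = P_iY^\infty$ and $W_iY^\infty = Q_iY^\infty$ are orthogonal complements in $Y^\infty$; a direct expansion of $U_i^* \pi_Y^\infty(a) U_i$, using the vanishing of the cross terms (which follows from $V_i^*Q_i = 0 = W_i^*P_i$ together with the approximate commutation of $P_i, Q_i$ with $\pi_Y^\infty$), yields the desired asymptotic equality to $\pi_X(a) \oplus \pi_Y^\infty(a)$.

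The main obstacle is the construction of the $W_i$, which is a \emph{self-absorption} statement for $\pi_Y^\infty$: the compression $Q_i\pi_Y^\infty(\cdot)Q_i$ on the approximately invariant subspace $Q_iY^\infty$ is approximately unitarily equivalent to $\pi_Y^\infty$ on $Y^\infty$. I would obtain this by exploiting the canonical unitary equivalence $(Y^\infty, \pi_Y^\infty) \cong (Y^\infty \oplus Y^\infty, \pi_Y^\infty \oplus \pi_Y^\infty)$, which gives $\pi_Y^\infty$ infinite multiplicity, and then a second application of Corollary \ref{cor-voic} to the weak containment $Y^\infty \prec_\univ Y^\infty$ restricted to the approximately invariant subspace $Q_iY^\infty$. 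The bookkeeping to ensure that the resulting isometry has range exactly $Q_iY^\infty$ (rather than merely a subspace of it) and maintains compactness of commutators in the separable case is the genuinely technical point.
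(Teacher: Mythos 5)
Your backward direction is correct and is essentially the paper's argument (the paper phrases it through transitivity of $\prec_\univ$, you phrase it through Corollary \ref{cor-weak}; these are the same computation). The forward direction, however, has a genuine gap at exactly the point you flag. You apply Corollary \ref{cor-voic} to $X\oplus Y^\infty\prec_\univ Y$ to get isometries $V_i\in\lL_B(X\oplus Y^\infty,Y^\infty)$ and then need isometries $W_i$ with $W_iW_i^*=Q_i:=1-V_iV_i^*$ that approximately commute with $\pi_Y^\infty$. This amounts to asserting that $(Q_iY^\infty,\,Q_i\pi_Y^\infty(\cdot)Q_i)$ is approximately unitarily equivalent to $(Y^\infty,\pi_Y^\infty)$, and nothing in the construction of $V_i$ controls the complement of its range: $Q_i$ could a priori be small or even zero. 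The proposed fix via the infinite multiplicity $(Y^\infty,\pi_Y^\infty)\cong(Y^\infty\oplus Y^\infty,\pi_Y^\infty\oplus\pi_Y^\infty)$ does not close the gap either: if you force the range of $V_i$ into one equivariant copy $Z_1\cong Y^\infty$ so that the complement contains a full copy $Z_2\cong Y^\infty$, the compression to the complement becomes $R_i\oplus\pi_Y^\infty$ with $R_i$ the compression to $Z_1\ominus\operatorname{ran}V_i$, and showing $R_i\oplus\pi_Y^\infty\sim\pi_Y^\infty$ is an absorption statement of exactly the same type as the theorem you are proving — the argument is circular. The same difficulty recurs for the compactness of the cross terms in the separable case.

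The paper circumvents this with the classical Voiculescu trick, which you should adopt: apply Corollary \ref{cor-voic} not to $X\oplus Y^\infty$ but to $X^\infty\prec_\univ Y$, obtaining isometries $V_n\in\lL_B(X^\infty,Y^\infty)$ with $V_n\pi_X^\infty(a)-\pi_Y^\infty(a)V_n$ small (and compact). Then $U_n:=V_n\oplus P_n:X^\infty\oplus P_nY^\infty\to Y^\infty$ with $P_n:=1-V_nV_n^*$ is \emph{automatically} a unitary — no second isometry onto the complement is ever constructed; the complement is simply carried along with its compressed action, giving $\pi_Y^\infty\sim_{(\fF,\varepsilon)}\pi_X^\infty\oplus P_n\pi_Y^\infty(\cdot)P_n$. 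The extra summand is then absorbed using the \emph{exact} unitary equivalence $X\oplus X^\infty\cong X^\infty$:
\[
\pi_X\oplus\pi_Y^\infty\ \sim_{(\fF,\varepsilon)}\ \pi_X\oplus\pi_X^\infty\oplus P_n\pi_Y^\infty(\cdot)P_n\ \cong\ \pi_X^\infty\oplus P_n\pi_Y^\infty(\cdot)P_n\ \sim_{(\fF,\varepsilon)}\ \pi_Y^\infty .
\]
Working with $X^\infty$ rather than $X\oplus Y^\infty$ is what makes this exact absorption identity available; your setup forecloses it, which is why you are left needing a self-absorption of $\pi_Y^\infty$ that is as hard as the theorem itself.
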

\begin{proof}
Suppose that $(X, \pi_X) \prec_\univ (Y,\pi_Y)$.
We only prove the case that $A$ is separable since the proof for general $A$ proceeds in the same manner.
It suffices to show that $\pi_X \oplus \pi_Y^\infty  \sim \pi_Y^\infty$.
Since $X^\infty \prec_\univ X \prec_\univ Y$ and $X^\infty$ is also countably generated,
we can apply the previous corollary and obtain a sequence of isometries $V_n \in \lL_B (X^\infty, Y^\infty)$ such that $V_n \pi_X^\infty (a) - \pi_Y^\infty (a)  V_n$ is compact and converges to 0 in norm for all $a \in A$.
Put $P_n:=1-V_nV_n^* \in \lL_B (Y^\infty)$.
Then $U_n:=V_n\oplus P_n : X^\infty \oplus P_n Y^\infty\to Y^\infty$ is a unitary.
Fix $a\in A$. We have
\begin{align*}
\pi^\infty_Y (a) U_n - U_n ( \pi_X^\infty (a) \oplus P_n \pi_Y^\infty (a) P_n ) 
&=\pi^\infty_Y (a) P_n + \pi^\infty_Y (a) V_n - P_n \pi^\infty_Y (a) P_n - V_n \pi_X^\infty (a)\\
&=\pi^\infty_Y (a) V_nV_n^*  - V_nV_n^* \pi^\infty_Y (a) + \pi^\infty_Y (a) V_n - V_n \pi_X^\infty (a),
\end{align*}
which is compact and converges to 0 by the remark above.
Hence, for any finite subset $\fF \subset A$ and $\varepsilon >0$ there exists $n \in \lN$ such that
$\pi_Y^\infty \sim_{(\fF, \varepsilon)} \pi_X^\infty \oplus P_n \pi_Y^\infty(\cdot) P_n$.
Now we have
$$
\pi_X \oplus \pi^\infty_Y \us\sim_{(\fF, \varepsilon)} \pi_X \oplus \pi_X^\infty \oplus P_n \pi_Y^\infty(\cdot) P_n \sim \pi_X^\infty \oplus P_n \pi_Y^\infty (\cdot) P_n \us\sim_{(\fF, \varepsilon)} \pi^\infty_X.
$$
This implies that $\pi_X\oplus \pi_Y^\infty \sim_{(\fF, 3\varepsilon)} \pi_Y^\infty$.

Conversely, suppose that $(X\oplus Y^\infty, \pi_X \oplus \pi_Y^\infty ) \sim (Y^\infty, \pi^\infty_Y)$.
Then, we have $(X\oplus Y^\infty , \pi_X \oplus \pi_Y^\infty ) \prec_\univ (Y^\infty, \pi^\infty_Y)$, and hence
$(X, \pi_X) \prec_\univ(X\oplus Y^\infty, \pi_X \oplus \pi_Y^\infty)  \prec_\univ  (Y^\infty, \pi_Y^\infty) \prec_\univ  (Y, \pi_Y).
$
\end{proof}

\begin{definition}[{\cite[Definition 1.6]{Skandalis}}]
An $A$-$B$ C$^*$-correspondence $X$ is said to be {\it nuclear} if for any $n \in \lN$ and any $\xi = ( \xi_1, \dots, \xi_n) \in X_n=X\otimes \lC_n$ with $\| \xi \| \leq 1$,
the c.c.p.\ map $\Omega_\xi : A \to \lM_n (B); a \mapsto [ \i< \xi_i, \pi_X(a) \xi_j >_B ] _{i,j=1}^n$ is nuclear.
\end{definition}

The nuclearity of a given C$^*$-correspondence is characterized in terms of our weak containment.
Recall that a c.p.\ map $\theta : A \to B$ is said to be {\it factorable} if there exist $n \in \lN$ and c.p.\ maps $\varphi : A \to \lM_n$ and $\psi : \lM_n \to A$ such that $\theta = \psi \circ \varphi$.
The set of factorable maps from $A$ into $B$ is known to be convex.
By \cite[Proposition 3.8.2]{Brown-Ozawa} any c.c.p.\ map is nuclear if and only if it can be approximated by factorable c.p.\ maps in the point norm topology.
\begin{proposition}\label{prop-nuclear-bimodule}
Let $A$ and $B$ be $\rC^*$-algebras with $A$ unital. 
Let $H \in \Rep (A)$ be faithful and $X \in \Corr (A, B)$ and $K \in \Rep (B)$ be given.
Then, $X \prec_\univ (H \otimes B, \pi_H \otimes 1_B)$ holds if and only if $X$ is nuclear. 
\end{proposition}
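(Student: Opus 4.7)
The plan is to invoke Corollary \ref{cor-weak} in both directions, converting $\prec_\univ$ into a point-norm approximation condition on coefficients, and then match this with the characterization of nuclear c.c.p.\ maps as point-norm limits of factorable ones.

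For the forward direction, assume $X \prec_\univ (H \otimes B, \pi_H \otimes 1_B)$. By Proposition \ref{lem-tensor} (and the observation that if $\pi : C \to \lB(K)$ is faithful with $\pi(C)'' = C^{**}$, then $\prec_\pi$ coincides with $\prec_\univ$, by the proof of Corollary \ref{cor-weak}), we obtain $X_n \prec_\univ ((H \otimes B)_n, (\pi_H \otimes 1_B)^{(n)})$ as $A$-$\lM_n(B)$ C$^*$-correspondences for every $n \in \lN$. By Corollary \ref{cor-weak}, every $\Omega_\xi$ with $\xi \in X_n$ is a point-norm limit of c.c.p.\ maps in $\cF_{(H \otimes B)_n}$. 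For a finite-sum element $\eta = \sum_i (\sum_k h_{ik} \otimes b_{ik}) \otimes \delta_i \in (H \otimes B)_n$ one computes
\[
\Omega_\eta (a) = \left[ \sum_{k,l} b_{ik}^* \i< h_{ik}, \pi_H (a) h_{jl} > b_{jl} \right]_{i,j=1}^{n},
\]
which factors as $A \to \lM_N \to \lM_n(B)$: the first map is the c.c.p.\ map $a \mapsto [\i< h_{ik}, \pi_H (a) h_{jl} >]_{(i,k),(j,l)}$, and the second is conjugation by $W \in \lL_B (\lC^n \otimes B, \lC^N \otimes B)$ with $W(\delta_j \otimes b) = \sum_l \delta_{(j,l)} \otimes b_{jl} b$. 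Every element of $\cF_{(H \otimes B)_n}$ is therefore a convex combination of factorable c.p.\ maps, hence nuclear; the point-norm limit $\Omega_\xi$ is nuclear too, so $X$ is nuclear.

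For the reverse direction, assume $X$ is nuclear. By Corollary \ref{cor-weak}, it suffices to show that for every $\xi \in X$ with $\|\xi\| \leq 1$, the coefficient $\Omega_\xi : A \to B$ lies in the point-norm closure of $\cF_{H \otimes B}$. Fix a finite $\fF \subset A$ and $\varepsilon > 0$. Since $\Omega_\xi$ is nuclear, choose c.c.p.\ maps $\varphi : A \to \lM_m$ and $\psi : \lM_m \to B$ with $\|\Omega_\xi (a) - \psi \circ \varphi (a)\| < \varepsilon/3$ for $a \in \fF$. Faithfulness of $\pi_H$ gives faithfulness of $\pi_H \otimes \id_{\lM_m}$ on $\lM_m(A)$, so the convex hull of its vector states is weak-$*$ dense in the state space; since $\lM_m$ is finite-dimensional, this translates to point-norm density, yielding a convex combination $\varphi' = \sum_k \lambda_k V_k^* \pi_H (\cdot) V_k$ with $V_k : \lC^m \to H$ such that $\|\varphi (a) - \varphi'(a)\| < \varepsilon/(3(1 + \|\psi\|_\cb))$ for $a \in \fF$. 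Now write the Stinespring form $\psi(x) = \i< v, (x \otimes 1_Y) v >_B$ with $v \in \lC^m \otimes Y$ and $Y$ a finitely generated Hilbert $B$-module; then $\psi (V_k^* \pi_H (a) V_k) = \Omega_{\zeta_k}(a)$ with $\zeta_k := (V_k \otimes 1_Y) v \in H \otimes Y$.

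By Kasparov's stabilization theorem, $Y$ embeds as a direct summand of $H_B = \ell^2(\lN) \otimes B$, so each $\zeta_k$ becomes a vector in $H \otimes H_B \cong (H \otimes B)^\infty$; truncating to finitely many coordinates produces $\zeta_k^{\leq N}$ with $\Omega_{\zeta_k^{\leq N}} = \sum_{j=1}^{N} \Omega_{\zeta_k^{(j)}}$, a finite sum of coefficients $\Omega_{\zeta_k^{(j)}}$ with $\zeta_k^{(j)} \in H \otimes B$. Rescaling via $\Omega_{\zeta_k^{(j)}} = \|\zeta_k^{(j)}\|^2 \, \Omega_{\zeta_k^{(j)}/\|\zeta_k^{(j)}\|}$ and padding with a zero vector converts this into a convex combination in $\cF_{H \otimes B}$ approximating $\Omega_{\zeta_k}$. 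Taking the $\lambda_k$-convex combination of these (which remains inside the convex set $\cF_{H \otimes B}$) approximates $\psi \circ \varphi' = \sum_k \lambda_k \Omega_{\zeta_k}$; chaining this with $\psi \circ \varphi' \approx \psi \circ \varphi \approx \Omega_\xi$ on $\fF$ yields the required point-norm approximation of $\Omega_\xi$ by an element of $\cF_{H \otimes B}$.

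The main obstacle is bookkeeping in the reverse direction: one must interlace the nuclear factorization $\Omega_\xi \approx \psi \circ \varphi$, the replacement of $\varphi$ by compressions of $\pi_H$, and the Kasparov-stabilization passage from the Stinespring module $Y$ to the standard module $H_B$, while ensuring that the final object is a genuine convex combination of coefficients $\Omega_\eta$ with $\eta \in H \otimes B$ and that all norm estimates survive. A secondary technicality concerns unitality of $B$: the Stinespring parameterization of $\psi$ above is cleanest when $B$ is unital, and in the non-unital case one inserts a bounded approximate unit and checks that the estimates pass to the limit.
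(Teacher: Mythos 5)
Your proof is correct and follows essentially the same route as the paper: both directions run through Corollary \ref{cor-weak} and Proposition \ref{lem-tensor}, with the forward direction identical and the reverse direction differing only in how you realize $\psi\circ(V_k^*\pi_H(\cdot)V_k)$ as a sum of coefficients of $H\otimes B$ --- you use the KSGNS vector of $\psi$ plus Kasparov stabilization and truncation, whereas the paper writes the same vectors explicitly via the square root of the Choi matrix of $\beta$ (which, since $Y$ is a finitely generated summand of a free module, makes the stabilization step unnecessary). No gaps.
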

\begin{proof}
Suppose that $X \prec_\univ H \otimes B$.
Since every c.c.p.\ map in $\cF_{(H\otimes B)_n}$ can be approximated by factorable maps for $n \in \lN$, $(H \otimes B, \pi_H \otimes 1_H)$ is nuclear,
and hence so is $X$ by Corollary \ref{cor-weak} and Lemma \ref{lem-tensor}.
Conversely, suppose that $X$ is nuclear and show the condition (2) in Corollary \ref{cor-weak}.
Fix $\xi \in X$ with $\| \xi \| \leq 1$, a finite subset $\fF \subset A$, and $\varepsilon >0$ arbitrarily.
Since $\Omega_\xi$ is a nuclear map, we may and do assume that $\Omega_\xi$ is of the form $\beta \circ \alpha$ for some c.c.p.\ maps $\alpha : A \to \lM_n$ and $\beta : \lM_n \to B$.
Let $(H_\alpha, \pi_\alpha, V_\alpha)$ be the Strinespring dilation of $\alpha$.
Since $(H_\alpha, \pi_\alpha)$ is weakly contained in $(H, \pi_H)$ and $\alpha=\Omega_\eta$ with $\eta =  ( V_\alpha \delta_1, \dots, V_\alpha\delta_n ) \in (H_\alpha)_n$,
by Lemma \ref{lem-tensor} again, there exist $ \xi^{(1)}, \dots, \xi^{(m)} \in H \otimes \lC_n$ such that $\| \alpha (a) - \sum_{k=1}^m \Omega_{\xi^{(k)}} (a) \| < \varepsilon$ for $a\in \fF$.
Let $[ x_{ij} ]_{i,j=1}^n \in \lM_n (M)$ be the square root of the Choi matrix $[ \beta (e_{ij}) ]_{i,j=1}^n \in \lM_n (M)$.
Note that $\beta (e_{ij}) = \sum_{l=1}^n x_{li}^*x_{lj}$.
Write $\xi^{(k)}=(\xi^{(k)}_1, \dots, \xi^{(k)}_n )$ with $\xi_i^{(k)} \in H$.
Letting $\eta_l^{(k)}:= \sum_{i=1}^n \xi_k^{(i)} \otimes x_{li} \in H \otimes M$ we have
\begin{align*}
\beta \circ \alpha (a)
&\approx_\varepsilon \sum_{k=1}^m \beta ( \Omega_{\xi^{(k) }} (a) )
=\sum_{k=1}^m \beta \left( \left[ \l< \xi_i^{(k)}, \pi_H (a) \xi_j^{(k)} >\right]_{i,j=1}^n  \right) \\
&=\sum_{k=1}^m \sum_{l=1}^n \sum_{i,j=1}^n \l< \xi_i^{(k)}, \pi_H (a) \xi_j^{(k)} > x_{li}^*  x_{lj}
=\sum_{k=1}^m \sum_{l=1}^n \Omega_{\eta_l^{(k)}} (a)
\end{align*}
for every $a\in A$, which implies $\beta \circ \alpha$ belongs to the point norm closure of $\cF_{H \otimes A}$.
\end{proof}

We should remark that our results do not include original Voiculescu's theorem \cite{Voiculescu} as well as Kasparov's generalized one \cite[Theorem 5]{Kasparov} completely.
The following corollary follows from Theorem \ref{thm-voic} and is a particular case of \cite[Theorem 5]{Kasparov}.
\begin{corollary}
Let $A$ be unital separable $\rC^*$-algebra, $B$ be $\sigma$-unital, $\pi_H : A \to \lB (H)$ be a faithful representation with $H$ separable, and $\varphi :  A \to \lL_B (H_B)$ be unital completely positive. Then $(A \otimes_\varphi H_B, \lambda_A \otimes 1_{H_B} ) $ is nuclear if and only if
there exists a sequence of isometries $V_n \in \lL_B (H_B)$ such that $\varphi (a) - V_n^* (\pi_H^\infty (a) \otimes 1_B ) V_n \in \lK_B (H_B)$ and $\lim_{n \to \infty } \| \varphi (a) - V^*_n (\pi_H^\infty (a) \otimes 1_B ) V_n \|=0$ and $a\in A$.
\end{corollary}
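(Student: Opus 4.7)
The plan is to combine Proposition~\ref{prop-nuclear-bimodule} with Theorem~\ref{thm-voic}. First, one may assume without loss of generality that $\pi_H$ is unital: if it is not, replace $H$ by $\pi_H(1_A)H$ (or enlarge $H$ by a faithful unital representation on a separable Hilbert space) to obtain a faithful unital representation, which does not affect the equivalence below. By Proposition~\ref{prop-nuclear-bimodule}, the nuclearity of the $A$-$B$ correspondence $(A \otimes_\varphi H_B, \lambda_A \otimes 1_{H_B})$ is then equivalent to the weak containment
\[
(A \otimes_\varphi H_B, \lambda_A \otimes 1_{H_B}) \prec_\univ (H \otimes B, \pi_H \otimes 1_B).
\]

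Next, I would apply Theorem~\ref{thm-voic} with $X := H_B$, the unital correspondence $Y := (H \otimes B, \pi_H \otimes 1_B)$, the given u.c.p.\ map $\varphi$, and $K$ the universal representation of $B$. The hypotheses hold: $H_B = \ell^2(\mathbb{N}) \otimes B$ is countably generated because $B$ is $\sigma$-unital (its countable approximate unit produces a countable generating set), $Y$ is unital, $\varphi$ is u.c.p., and $A$ is separable. The equivalence (1)$\Leftrightarrow$(3) of Theorem~\ref{thm-voic} then translates the displayed weak containment into the existence of a sequence of isometries $V_n \in \lL_B(H_B, Y^\infty)$ satisfying $V_n^* \pi_Y^\infty(a) V_n - \varphi(a) \in \lK_B(H_B)$ and $\| V_n^* \pi_Y^\infty(a) V_n - \varphi(a) \| \to 0$ for every $a \in A$.

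The last step is the identification of $Y^\infty$ with $H_B$. As a Hilbert $B$-module, $Y^\infty = \ell^2(\mathbb{N}) \otimes H \otimes B$, with left $A$-action $1_{\ell^2} \otimes \pi_H \otimes 1_B$. Since $H$ is separable, $\ell^2(\mathbb{N}) \otimes H$ is isomorphic to $\ell^2(\mathbb{N})$ as a Hilbert space; fixing such a unitary $w$, the operator $w \otimes 1_B$ is a unitary in $\lL_B(H_B, Y^\infty)$, and conjugation by it transforms $\pi_Y^\infty$ into precisely the representation denoted $\pi_H^\infty(a) \otimes 1_B$ on $H_B$ in the statement. Transporting the $V_n$'s along this unitary produces isometries in $\lL_B(H_B)$ with the required properties. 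Conversely, given such isometries one transports back to obtain condition (3) of Theorem~\ref{thm-voic}, which yields the above weak containment, and hence nuclearity by Proposition~\ref{prop-nuclear-bimodule}. There is no genuine obstacle in this argument; the only bookkeeping point is this final identification, which is routine thanks to the separability of $H$.
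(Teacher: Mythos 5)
Your proposal is correct and is essentially the argument the paper intends: the corollary is stated as a consequence of Theorem \ref{thm-voic}, and the only missing link is precisely Proposition \ref{prop-nuclear-bimodule}, which converts nuclearity of $(A\otimes_\varphi H_B,\lambda_A\otimes 1_{H_B})$ into the weak containment $\prec_\univ (H\otimes B,\pi_H\otimes 1_B)$ to which the equivalence $(1)\Leftrightarrow(3)$ of Theorem \ref{thm-voic} applies. Your remaining bookkeeping (unitality of $\pi_H$, countable generation of $H_B$ from $\sigma$-unitality of $B$, and the identification $Y^\infty\cong H_B$ via separability of $H$) is exactly what is needed and is handled correctly.
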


\renewcommand{\thetheorem}{\arabic{section}.\arabic{subsection}.\arabic{theorem}}

%
%
\section{Relative $K$-nuclearity}\label{sec-rel-K}
\subsection{Preliminaries on $KK$-theory}\label{ss-rel-K-pre}
In this section we prove that our strong relative nuclearity implies Germain's relative $K$-nuclearity.
Firstly, we recall some definitions and facts on $KK$-theory.
We refer to \cite{Blackadar} and \cite{Jensen-Thomsen} for $KK$-theory.
\begin{notation}
For a trivially graded $\rC^*$-algebra $B$, a {\it graded Hilbert $B$-module} is a Hilbert $B$-module $X$ such that there exist closed submodules $X_0$ and $X_1$ of $X$, called {\it even} and {\it odd} parts of $X$, such that $X=X_0 \oplus X_1$.
To make the grading clear, we will write $X=X_0 \hoplus X_1$.
An operator $x \in \lL_B (X_0 \hoplus X_1)$ is said to be of {\it degree} $i \in \{ 0, 1\}$ if $x X_j \subset X_{i+j \, (\text{mod } 2)}$.
A $*$-homomorphism $\phi: A \to \lL_B (X_0 \hoplus X_1)$ is said to be of degree 0 if $\phi (a)$ is of degree 0 for all $a\in A$.
In this case, there exist $*$-homomorphism $\phi_0: A \to \lL_B (X_0)$ and $\phi_1 : A \to \lL_B (X_1)$ such that $\phi (a) = \phi_0 (a) \oplus \phi_1 (a)$ for $a\in A$.
We will write $\phi =\phi_0 \hoplus \phi_1$.
\end{notation}

\begin{definition}
For (trivially graded) $\rC^*$-algebras $A$ and $B$, a {\it Kasparov $A$-$B$ bimodule} is a triple $(X, \phi, F )$ such that $X$ is a countably generated graded Hilbert $B$-module, $\phi :A \to \lL_B ( X)$ is a $*$-homomorphism of degree 0,
and $F \in \lL_B (X)$ is of degree 1 and satisfies the following condition:
\begin{itemize}
\item $[F, \phi (a)] \in \lK_B (X)$ for $a\in A$,
\item $(F-F^*) \phi (a) \in \lK_B (X)$ for $a\in A$,
\item $(1 - F^2) \phi (a) \in \lK_B (X)$ for $a \in A$.
\end{itemize}
If $[F, \phi (a)]=(F-F^*) \phi (a)=(1 - F^2) \phi (a)=0 $ for all $a\in A$,
we call $(X, \phi, F )$ {\it degenerate}.
We denote by $\lE (A,B)$ and $\lD (A, B)$ the set of Kasparov $A$-$B$ bimodules and degenerate ones, respectively.
\end{definition}

We say that two $A$-$B$ Kasparov bimodules $(X, \phi, F)$ and $(Y, \psi, G)$ are {\it unitarily equivalent}, denoted by $(X, \phi, F) \cong (Y, \psi, G)$, if there exists a unitary $U \in \lL (X , Y)$ of degree $0$ such that $\psi = U \phi (\cdot ) U^*$ and $F= UGU^*$.

For a C$^*$-algebra $B$ we set $IB:=B \otimes C[0,1]$.
We identify $IB$ with $C([0,1], B)$, the space of $B$-valued continuous functions on $[0,1]$.
For $ t \in [0,1]$ the {\it evaluation at} $t$ is the surjective $*$-homomorphism, still written $t$, from $IB$ onto $B$ defined by $t (f):= f (t)$ for $f \in IB$.
If $X$ is a Hilbert $IB$-module, then the pushout $X_t$ of $X$ by $t$ is a Hilbert $B$-module.
For $(X, \pi_X) \in \Corr (A, IB)$ we denote by $(\pi_X)_t$ the $*$-homomorphism $A \ni a \mapsto \pi_X(a)_t \in \lL_B (X_t)$.
For a Hilbert $A$-module $X$, we set $IX:=X\otimes C[0,1]$.
We also identify $IX$ with the Hilbert $IA$-module $C([0,1], X)$ of $X$-valued continuous functions on $[0,1]$ equipped with the inner product $\i< f, g >_{IA} (t):= \i< f  (t), g(t) >_A$ for $f, g \in IX$.
The following proposition is probably well-known, but we give its proof for the reader's convenience.
\begin{proposition}\label{lem-path}
Let $X$ be a Hilbert $A$-module. 
Then, for any strict continuous, norm bounded path $ \{ x_t \}_{0 \leq t \leq 1} \in \lL_A (X)$,
there exists a unique operator $x \in \lL_{IA} (IX)$ of which the evaluation at $t$ is $x_t$.
Conversely, for any element $y \in \lL_{IA} (IX)$, the evaluations $\{y_t \}_{0 \leq t \leq 1}$ of $y$ defines a strict continuous, norm bounded path in $\lL_B (X)$.

Moreover, an operator $z$ is in $\lK_{IA} (IX)$ if and only if the corresponding path is a norm continuous path in $\lK_A (X)$.
\end{proposition}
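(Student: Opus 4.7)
The plan is to establish each direction of the correspondence separately, and then upgrade to the compact case. For the first direction, given a norm-bounded strictly continuous path $\{x_t\}_{t \in [0,1]}$ in $\lL_A (X)$, I would define $x : IX \to IX$ pointwise by $(xf)(t) := x_t f(t)$. The key verification is that $t \mapsto x_t f(t)$ is norm continuous in $X$: the decomposition $\| x_t f(t) - x_s f(s)\| \le M \| f(t) - f(s)\| + \| x_t f(s) - x_s f(s)\|$, where $M := \sup_t \| x_t\|$, reduces this to uniform continuity of $f$ and strict continuity of $\{x_t\}$ at the fixed vector $f(s)$. Adjointability then follows by performing the same construction on the path $\{x_t^*\}$ and checking $\i< xf , g >_{IA}(t) = \i< f , x^* g >_{IA}(t)$ pointwise; uniqueness is immediate, since any operator on $IX$ whose evaluations all vanish is zero on all constant sections, hence zero everywhere by $IA$-linearity.

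For the converse, given $y \in \lL_{IA} (IX)$, I would define $y_t \in \lL_A (X)$ as the pushout of $y$ by the evaluation $t : IA \to A$; concretely, writing $c_\xi \in IX$ for the constant function with value $\xi \in X$, I set $y_t \xi := (y c_\xi)(t)$. That this descends to $X$ (i.e., is well-defined on cosets modulo $IX \cdot I_t$ where $I_t := \ker(\ev_t) \subset IA$) follows from the $IA$-linearity of $y$, and the general pushout formalism of Definition \ref{def-int-tensor} provides both the bound $\| y_t\| \le \| y\|$ and the identity $(y_t)^* = (y^*)_t$. Strict continuity is then automatic: $t \mapsto y_t \xi = (y c_\xi)(t)$ is norm continuous because $y c_\xi \in IX$, and similarly for $y^*$.

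The easy half of the compact case is handled by observing that for $f, g \in IX$, the path associated to $\theta_{f,g} \in \lK_{IA} (IX)$ is $t \mapsto \theta_{f(t), g(t)}$, which is a norm continuous path in $\lK_A (X)$; since the norm on $\lL_{IA} (IX)$ equals $\sup_t \| \cdot \|_{\lL_A(X)}$, and since norm continuous paths of compacts form a closed subspace, the property passes to norm limits and hence to all of $\lK_{IA}(IX)$. For the nontrivial converse, suppose $\{ k_t\}$ is a norm continuous path in $\lK_A (X)$ and fix $\varepsilon > 0$. By uniform continuity on the compact interval $[0,1]$, pick a partition $0 = t_0 < \cdots < t_n = 1$ so that $\| k_t - k_{t_i}\| < \varepsilon$ whenever $t$ lies in the support of a subordinate partition of unity $\{h_i\} \subset C[0,1]$. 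The path $t \mapsto \sum_i h_i(t) k_{t_i}$ then approximates $\{k_t\}$ to within $\varepsilon$ pointwise; after further approximating each $k_{t_i} \in \lK_A (X)$ by a finite sum $\sum_j \theta_{\xi_j^{(i)}, \eta_j^{(i)}}$, the path is approximated to arbitrary precision by finite sums of rank-one operators of the form $\theta_{h_i \cdot c_\xi, c_\eta}$, all of which lie in $\lK_{IA}(IX)$. Because the operator norm on $\lL_{IA}(IX)$ is the sup of pointwise norms, pointwise $\varepsilon$-approximation upgrades to norm approximation, producing the required element $z \in \lK_{IA} (IX)$.

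The main obstacle I anticipate is this last converse step, specifically the bookkeeping needed to verify that the partition-of-unity construction really does lie in $\lK_{IA}(IX)$ and that the pointwise estimates combine into a uniform estimate; the heart of the matter is the identity $\| y\| = \sup_t \| y_t\|$ for $y \in \lL_{IA}(IX)$, which I would derive from $\| y\|^2 = \| y^* y\| = \sup_t \| (y^* y)_t\| = \sup_t \| y_t\|^2$ using the already-established correspondence between operators and paths.
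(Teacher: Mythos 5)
Your argument for the two path--operator correspondences is essentially the paper's: the same telescoping estimate $\| x_t f(t) - x_s f(s)\| \le \|x_t\|\,\|f(t)-f(s)\| + \|(x_t-x_s)f(s)\|$ for the forward direction, and evaluation against constant sections $y_t\xi = (yc_\xi)(t)$ for the converse. The only real divergence is in the compact case: the paper disposes of it in one line by citing the isomorphism $\lK_{IA}(IX)\cong \lK_A(X)\otimes C[0,1]\cong C([0,1],\lK_A(X))$, whereas you prove that isomorphism by hand via rank-one operators $\theta_{f,g}\mapsto (t\mapsto\theta_{f(t),g(t)})$ in one direction and a partition-of-unity approximation in the other. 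Your version is longer but self-contained, and it correctly identifies $\|y\|=\sup_t\|y_t\|$ as the hinge that turns pointwise estimates into norm estimates. One small wrinkle: deriving that identity from $\|y\|^2=\|y^*y\|=\sup_t\|(y^*y)_t\|$ is circular as written, since the middle equality is an instance of the very identity being proved; it is cleaner to compute directly $\|yf\|^2=\|\i<yf,yf>\|_{IA}=\sup_t\|y_tf(t)\|^2\le(\sup_t\|y_t\|^2)\|f\|^2$ and combine with the trivial bound $\|y_t\|\le\|y\|$. With that adjustment the proof is complete.
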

\begin{proof}
Let $ \{ x_t \}_{0 \leq t \leq 1} \in \lL_A (X)$ be strict continuous and norm bounded.
Fix $f \in IX$ arbitrarily.
Indeed, for any $t, s\in [0,1]$ we have 
$
\| x_t f (t) - x_s f (s) \|
\leq \| (x_t - x_s )  f (t) \| +  \| x_s \| \| f (t) -  f (s) \| .
$
Since $ x_t $ is norm bounded and strict continuous, $t \mapsto x_t  f (t) $ and $t \mapsto x^*_t  f (t) $ define elements in $IX$.
Hence the mapping $x : f  \mapsto [ t\mapsto x_t  f (t) ]$ is the desired element in $\lL_{IA} (IX)$.

Conversely, let $y \in \lL_{IA} (IX)$ be given and $\{ y_t \}_{0\leq t\leq 1}$ be the corresponding evaluations. For $\xi \in X$, let $\xi \otimes 1 \in X\otimes C[0,1]= IX$ be the constant function. We then have $y_t \xi = [ y (\xi \otimes 1) ] (t)$ for all $t \in [0,1]$,
which implies $\{ y_t \}_{0 \leq t \leq 1}$ is strictly continuous.
The third assertion follows from the isomorphism $\lK (IX) \cong \lK (X) \otimes C[0,1] \cong C( [0,1], \lK (X) )$.
\end{proof}

Recall that for $\cX=(X, \phi, F) \in \lE (A, IB)$ the evaluation at $t \in [0, 1]$ is the Kasparov $A$-$B$ bimodule $\cX_t:=(X_t, \phi_t, F_t)$.
\begin{definition}
Two Kasparov $A$-$B$ bimodules $\cX$ and $\cY$ are said to be {\it homotopic} is there exists a Kasparov $A$-$IB$ bimodule $\cZ$ such that $\cX \cong \cZ_0$ and $\cY \cong \cZ_1$.
The $KK(A,B)$ is the set of homotopy equivalence classes of all $A$-$B$ Kasparov $A$-$B$ bimodules.
\end{definition}

Let $\cX=(X , \phi, F), \cY =(Y, \psi, G) \in \lE (A, B)$.
We denote by $[\cX]=[ X, \phi, F]$ and $[\cY]=[ Y, \psi, G]$ the elements in $KK (A, B)$ corresponding to $\cX$ and $\cY$, respectively.
The addition of $[\cX]$ and $[\cY]$ is defined by $[\cX]+[\cY]:=[\cX \oplus \cY]$, where $\cX\oplus \cY = (X \oplus Y, \phi \oplus \psi , F \oplus G)$.
All degenerate Kasparov bimodules are homotopic to the trivial bimodule $0=(0,0,0)$ and define the zero element in $KK(A, B)$.
Write $X=X_0 \hoplus X_1$ and set $-X:= X_1 \hoplus X_0$.
Let $U : X \to -X$ be the canonical isomorphism and set $\phi_-:=U \phi (\cdot) U^*$.
The inverse of $[\cX]$ is represented by $-\cX:=(-X, \phi_-, -F)$.
If $\phi : A \to B$ is a $*$-homomorphism, then $(B \hoplus 0, \phi \hoplus 0, 0 )$ defines a Kasparov $A$-$B$ bimodule.
We still denote by $\phi$ the element in $KK (A, B)$ corresponding to $(B \hoplus 0, \phi \hoplus 0, 0 )$. 

Let $A$, $B$ and $C$ be $\rC^*$-algebras.
For ${\bf x} \in  KK(A,B)$ and ${\bf y} \in KK (B, C)$, we denote by ${\bf x} \otimes_B {\bf y}$ the Kasparov product of ${\bf x}$ and ${\bf y}$.

Let $f : A \to B$ be a $*$-homomorphism.
Then, $f^* : KK (B, C) \to KK (A, C)$ is the group homomorphism given by $\lE(B,C) \ni (X, \phi, F) \mapsto f^*(X, \phi, F) :=(X, \phi \circ f, F) \in \lE (A, C)$.
Similarly, the $f_* : KK (C,A) \to KK (C, B)$ is defined by $\lE (C, A) \ni (Y, \psi, G) \mapsto f_* (Y, \psi, G):= (Y\otimes_f B, \psi \otimes 1, G \otimes 1) \in \lE (C, B)$.
Let ${\bf x} \in KK(B, C)$ be an element represented by $(X, \phi, F) \in \lE (B, C)$.
If $f : A \to B$ and $g : C \to D$ are $*$-homomorphisms, then we have $ f \otimes_B {\bf x} = f^* ({\bf x}) =[ X, \phi \circ f , F]$ and ${\bf x} \otimes_C g = g_* ({\bf x}) = [ X\otimes_g D,\phi \otimes 1, F \otimes 1]$.

\begin{definition}
An element ${\bf x} \in KK(A, B)$ is said to be a $KK$-{\it subequivalence} if there exists ${\bf y} \in KK( B, A)$ such that $\id_A ={\bf x} \otimes_B {\bf y}$.
When $\id_A ={\bf x}\otimes_B {\bf y}$ and $\id_B = {\bf y }\otimes_B {\bf x}$ hold, we say that  ${\bf x}$ is a $KK$-{\it equivalence}.
In this case, the $A$ and $B$ are said to be $KK$-{\it equivalent}.
\end{definition}
If ${\bf x} \in KK(A, B)$ is a $KK$-equivalence, then for any $C$ the mappings ${\bf x} \otimes_B (\cdot ) :KK(B, C) \to KK(A, C)$ and $( \cdot ) \otimes_A {\bf x} : KK (C, A) \to KK (C, B)$ are isomorphisms.
In particular, we have $K_* (A) \cong K_* (B)$ if $A$ and $B$ are $\sigma$-unital.
%
%
\subsection{Strong relative nuclearity implies relative $K$-nuclearity}\label{ss-rel-K-K}
The main result of this subsection is Theorem \ref{thm-K-nuclear}.
This technical theorem plays an important in \S\S \ref{ss-KK-CD}.

For a Hilbert $A$-module $X$ we denote by $J_X$ the degree 1 unitary $\left[ \begin{smallmatrix} 0&1\\ 1& 0 \end{smallmatrix} \right] \in \lL_A (X\hoplus X)$.
Since $I(X^\infty)=(X^\infty) \otimes C[0,1]=\ell^2(\lN) \otimes X \otimes C[0,1]=(IX)^\infty$, we will write $IX^\infty$.
For $\rC^*$-correspondence $(X, \pi_X)$ we set $(IX, \pi_{IX}):= (IX, \pi_X \otimes 1_{C[0,1]})$.
\begin{definition}[{\cite[Definition 3.4]{Germain-fields}}]
Let $1_A \in B \subset A$ be a unital inclusion of $\rC^*$-algebras with a conditional expectation $E : A \to B$.
We say that $A$ is {\it $K$-nuclear relative to} $(B, E)$ if the $\rC^*$-correspondence $(X, \pi_X):= (L^2 (A, E) \otimes_B A, \pi_E \otimes 1_A)$ satisfies the following:
\begin{itemize}
\item[(i)] there exist unital $*$-homomorphisms $\pi^+$ and $\pi^-$ from $A$ into $\lL_{IA} (IX^\infty)$ such that $\cX=(IX^\infty \hoplus IX^\infty, \pi^+ \hoplus \pi^-, J_{IX^\infty} ) \in \lE (A, IA)$, i.e., $\pi^+ (a)- \pi^- (a) \in \lK_{IA} (IX^\infty)$ for all $a \in A$;
\item[(ii)] $\pi^+ (b) = \pi^- (b) =\pi_{IX}^\infty (b)$ for all $b \in B$;
\item[(iii)] the evaluation of $\cX$ at $t=1$ is degenerate, i.e., we have $\pi^+_1 = \pi^-_1$;
\item[(iv)] there exists a unitary $U \in \lL_A (A \oplus X^\infty, X^\infty)$ such that
$\pi^+_0  = U(\lambda_A \oplus \pi_X^\infty )U^*$ , $\pi^-_0= \pi_X^\infty$, and
and $U(1_A \oplus 0)= \xi \otimes 1_A$ for some $\xi \in L^2(A,E)^\infty$.
\end{itemize}
\end{definition}

\begin{proposition}
If $A$ is unital separable $\rC^*$-algebra, $B\subset A$ is a $\rC^*$-subalgebra,
and $X \in \Corr (A)$ has the $B$-CCPAP, then we can chose a sequence $\psi_n \in \cF_{B'\cap X}$ such that $\psi_n(1_A) =1_A$ for $n\in \lN$ and $\lim_{n \to \infty} \| a - \psi_n (a ) \| =0 $ for $a\in A$. 
\end{proposition}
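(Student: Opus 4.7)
My plan is to pass from the defining net of the $B$-CCPAP to a sequence using the separability of $A$, and then to renormalize each term by conjugation with a well-chosen central positive element so as to force $\psi_n(1_A) = 1_A$ exactly without leaving $\cF_{B'\cap X}$.

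By definition of $B$-CCPAP, there is a net $\{\psi_i\}_i \subset \cF_{B'\cap X}$ with $\psi_i(a) \to a$ in norm for every $a \in A$, and $\|\psi_i\| \leq 1$. Fix a countable norm-dense sequence $\{a_j\}_{j\geq 1} \subset A$; a standard diagonal extraction combined with the uniform bound $\|\psi_i\| \leq 1$ and a $3\varepsilon$-argument produces a sequence $\psi_n \in \cF_{B'\cap X}$ with $\|\psi_n(a)-a\| \to 0$ for every $a \in A$.

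The renormalization rests on the observation that $\i<\xi,\xi> \in B'\cap A$ whenever $\xi \in B'\cap X$: for $b \in B$,
\[
\i<\xi,\xi>b = \i<\xi, \xi b> = \i<\xi, \pi_X(b)\xi> = \i<\pi_X(b^*)\xi, \xi> = \i<\xi b^*, \xi> = b\i<\xi,\xi>,
\]
using $B$-centrality and the identity $\i<\xi a, \eta> = a^*\i<\xi, \eta>$. Consequently $\psi_n(1_A) \in B'\cap A$ for every $n$, and $\psi_n(1_A) \to 1_A$ in norm; hence for all $n$ beyond some $n_0$ we have $\psi_n(1_A) \geq \tfrac12\cdot 1_A$, and $u_n := \psi_n(1_A)^{-1/2}$ is a self-adjoint element of the $\rC^*$-subalgebra $B'\cap A$, with $u_n \to 1_A$ in norm and $\|u_n\| \leq \sqrt{2}$ eventually.

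I then set $\tilde\psi_n(a) := u_n \psi_n(a) u_n$ for $n \geq n_0$. Writing $\psi_n = \sum_k t_k \Omega_{\xi_k^{(n)}}$ as a convex combination over $\xi_k^{(n)} \in B'\cap X$, the fact that $u_n$ commutes with every $b \in B$ gives $\tilde\psi_n = \sum_k t_k \Omega_{\xi_k^{(n)} u_n}$, and each perturbed vector $\xi_k^{(n)} u_n$ is still $B$-central, so $\tilde\psi_n \in \cF_{B'\cap X}$. By construction $\tilde\psi_n(1_A) = u_n^2 \psi_n(1_A) = 1_A$, while
\[
\|\tilde\psi_n(a) - a\| \leq \|u_n\|^2\,\|\psi_n(a) - a\| + \|u_n a u_n - a\| \to 0
\]
for every $a \in A$. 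Discarding the initial segment $n < n_0$ produces the required sequence. The only non-routine point is confirming that the scalar correction $u_n$ lies in $B'\cap A$, which is exactly what the identity $\i<\xi,\xi> \in B'\cap A$ supplies; everything else is bookkeeping.
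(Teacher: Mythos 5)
Your proposal is correct and follows essentially the same route as the paper: extract a sequence using separability of $A$, observe that $\psi_n(1_A)$ lies in $B'\cap A$ (the paper deduces this from the fact that maps in $\cF_{B'\cap X}$ are $B$-bimodule maps, you compute it directly from the inner product — same fact), and conjugate by $\psi_n(1_A)^{-1/2}$, which preserves membership in $\cF_{B'\cap X}$ precisely because this element is $B$-central. The only cosmetic difference is that the paper builds the sequence so that $\|1_A-\psi_n(1_A)\|<2^{-n}$ from the start, whereas you discard an initial segment to ensure invertibility of $\psi_n(1_A)$.
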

\begin{proof}
Take a countable dense set $\{ a_n \}_{n=1}^\infty$ in $A$ with $a_1 =1_A$. 
Take $\psi_{n} \in \cF_{B' \cap X}$ in such a way that $\| a_k - \psi_n (a_k) \| < 2^{-n}$ for $ 1\leq k \leq n $ and $\psi_n (1_A) \leq 1_A$.
Then $\{ \psi_n \}_n $ enjoys the second assertion.
We next replace $\psi_n$ by a u.c.p.\ map.
Since $\psi_n (1_A) b = \psi_n (b) =b \psi_n (1_A)$ for $b \in B$, it follows that $\psi_n (1_A) \in B' \cap A$.
Define the u.c.p.\ map $\varphi_n \in \cF_{X}$ by $\varphi_n (a) := \psi_n (1_A)^{-1/2} \psi_n (a) \psi_n (1_A)^{-1/2}$.
Since $\psi_n (1_A)^{-1/2}$ is in $B' \cap A$ and converges to $1_A$ in norm,
$\{ \varphi_n \}_{n=1}^\infty$ is the desired one.
\end{proof}

For $X, Y \in \Corr (A, C)$ and $B \subset A$ we define the set ${}_B \lL_C (X, Y):= \{ x \in \lL_C(X, Y) \mid x\pi_X(b) = \pi_Y (b) x \text{ for } b \in B \}$ and ${}_B\lK_C (X, Y):=\lK_C (X, Y) \cap {}_B\lL_C (X, Y)$.
\begin{theorem}\label{thm-strongvoic}
Let $A$ be a unital separable $\rC^*$-algebra, $B$ be a $\rC^*$-subalgebra of $A$, and $(X,\pi_X) \in \Corr (A)$ be unital and have the $B$-CCPAP.
Then, there exists a sequence of isometries $V_n \in {}_B\lL_A (H_A, X^\infty)$ such that
\begin{itemize}
\item[$(1)$] $\lambda_A^\infty (a) - V_n^* \pi_X^\infty (a) V_n \in \lK_A (H_A)$ for $a\in A$,
\item[$(2)$] $\lim_{n \to \infty}\|  \lambda_A^\infty (a) - V_n^* \pi_X^\infty (a) V_n \| =0$ for $a\in A$.
\end{itemize}
\end{theorem}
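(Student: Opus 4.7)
The strategy is to bypass the quasi-central approximate unit machinery of Theorem \ref{thm-voic} and construct the isometries $V_n$ directly from the $B$-CCPAP. The key elementary observation is that a $B$-central vector $\eta \in B' \cap X$ automatically yields a $B$-equivariant right-$A$-linear map $A \to X$, $c \mapsto \eta c$, because $\pi_X(b)(\eta c) = (\pi_X(b)\eta) c = (\eta b) c = \eta (bc)$ for $b \in B$. Hence any u.c.p.\ map $\psi = \sum_{k=1}^m \Omega_{\eta_k} \in \cF_{B' \cap X}$ with $\psi(1_A) = 1_A$ produces an isometry $W \in {}_B\lL_A(A, X^m)$ via $c \mapsto (\eta_1 c, \dots, \eta_m c)$ satisfying $W^* \pi_X^m(a) W = \lambda_A(\psi(a))$; this packages the $B$-CCPAP into a $B$-equivariant Stinespring-type statement on the unital module $A$.

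First I will apply the preceding proposition to fix a sequence of u.c.p.\ maps $\psi_j \in \cF_{B' \cap X}$ with $\psi_j(1_A) = 1_A$ and $\|\psi_j(a) - a\| \to 0$ for every $a \in A$, writing each $\psi_j = \sum_{k=1}^{m_j} \Omega_{\eta_k^{(j)}}$ with $\eta_k^{(j)} \in B' \cap X$ and letting $W_j \in {}_B\lL_A(A, X^{m_j})$ be the associated isometry. Identifying $H_A = \bigoplus_{k \geq 1} A$ via the standard basis of $\ell^2(\lN)$ and, for each $n \geq 0$, identifying $X^\infty \cong \bigoplus_{k \geq 1} X^{m_{n+k}}$ (possible because $m_j \geq 1$ for every $j$), I define
\[
V_n := \bigoplus_{k \geq 1} W_{n+k} : H_A \to X^\infty
\]
block-diagonally. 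This is visibly an isometry in ${}_B\lL_A(H_A, X^\infty)$, and since $\pi_X^\infty$ is block-diagonal, a block-wise computation yields
\[
\lambda_A^\infty(a) - V_n^* \pi_X^\infty(a) V_n = \bigoplus_{k \geq 1} \lambda_A\bigl(a - \psi_{n+k}(a)\bigr).
\]

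Finally I will verify (1) and (2). Under the canonical identification $\lK_A(H_A) = \lK(\ell^2(\lN)) \otimes A$, the right hand side equals $\sum_{k \geq 1} p_k \otimes (a - \psi_{n+k}(a))$, which is norm convergent to a compact operator because $\|a - \psi_{n+k}(a)\| \to 0$ as $k \to \infty$, giving (1); its operator norm equals $\sup_{j \geq n+1}\|a - \psi_j(a)\|$, which tends to $0$ as $n \to \infty$ for each $a \in A$, giving (2). The only subtle design choice is the shift $j \mapsto n+k$ in the definition of $V_n$: a single isometry built from the sequence $\{\psi_j\}_{j \geq 1}$ itself would give an error of norm $\sup_{j \geq 1}\|a - \psi_j(a)\|$, which need not be small, whereas the shift replaces this by a tail supremum that does vanish as $n \to \infty$. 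No serious obstacle arises beyond this bookkeeping.
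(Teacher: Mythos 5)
Your proof is correct and takes essentially the same approach as the paper: both constructions assemble the isometry as a block-diagonal direct sum of the $B$-equivariant Stinespring-type isometries $W_j$ associated to u.c.p.\ maps $\psi_j \in \cF_{B'\cap X}$ with $\psi_j(1_A)=1_A$. The only difference is bookkeeping — you tail-shift a single pointwise-convergent sequence $\{\psi_j\}_j$, whereas the paper fixes $(\fF,\varepsilon)$ and chooses $\psi_n$ with error $2^{-n}\varepsilon$ on an exhausting sequence of finite sets — and both variants yield conditions $(1)$ and $(2)$ equally well.
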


\begin{proof}
Since $A$ is separable, it suffices to show that for any finite subset $\fF \subset A$ and $\varepsilon>0$ there exists an isometry $V \in {}_B\lL_A (H_A, X^\infty)$ satisfying (1) and  $\|  \lambda_A^\infty (a) - V^* \pi_X^\infty (a) V \| <\varepsilon$ for $a\in \fF$.
Take an increasing sequence of finite subsets $\fF=\fF_1 \subset \fF_2 \subset \cdots$ of $A$ such that $A = \overline{\bigcup_{n=1}^\infty \fF_n}^{\| \cdot \|}$.
Write $H_A=\bigoplus_{n=1}^\infty A^{(n)}$ with $A^{(n)}:=A$.
Since $X$ has the $B$-CCPAP, for each $n \in \lN$ there exists $\psi_n \in \cF_{B'\cap X}$ such that $\psi_n(1_A) = 1_A$ and $\| a - \psi_n (a) \| < 2^{-n}\varepsilon $ for $a \in \fF_n$.
Write $\psi_n= \sum_{r=1}^{d(n)} \Omega_{\xi_n^{(r)}}$ with $\xi_n^{(r)} \in B'\cap X$.
Define $W_n \in \lL_A (A^{(n)}, X^{d(n)} )$ by $W_n 1_A =(\xi_n^{(1)}, \dots, \xi_n^{(d(n))})$.
We then have
$
\psi_n (a) = \sum_{r=1}^{d(n)} \i< \xi_n^{(r)}, \pi_X (a) \xi_n^{(r)} >_X = \i< W_n 1_A, \pi_X^{d(n)}(a) W_n1_A>_{A^{(n)}} = W_n^* \pi_X^{d(n)}(a) W_n
$
for $a\in A$.
Hence, $W_n$ is an isometry in ${}_B\lL_A (A^{(n)}, X^{d(n)})$.
Define $V \in \lL_A (H_A, X^\infty)$ by $V:=\bigoplus_{n=1}^\infty W_n : H_A \to \bigoplus_{n=1}^\infty X^{d(n)} \cong X^\infty$.
Then, it follows that $V$ is an isometry in ${}_B \lL_A (H_A, X^\infty)$.
Moreover, for any $a\in \bigcup_{n=1}^\infty \fF_n$, we have $\sum_{n=1}^\infty \| a - \psi_n(a) \| < \infty$, and hence $V$ satisfies (1).
By the construction of $V$ we also have $\|  \lambda_A^\infty (a) - V^* \pi_X^\infty (a) V \| <\varepsilon$ for $a\in \fF$.
\end{proof}

\begin{corollary}\label{cor-cvoic}
Let $A$ be a unital separable $\rC^*$-algebra, $B$ be a $\rC^*$-subalgebra of $A$, and $(X,\pi_X) \in \Corr (A)$ be unital and have the $B$-CCPAP.
Then, there exists a sequence of unitaries $U_n \in {}_B\lL_A (H_A \oplus X^\infty, X^\infty)$ such that
\begin{itemize}
\item[$(1)$] $\pi_X^\infty (a) - U_n( \lambda_A^\infty(a) \oplus \pi_X^\infty (a) ) U^*_n \in \lK_A (X^\infty)$ for $a\in A$ and $n\in \lN$,
\item[$(2)$] $\lim_{n \to \infty}\|  \pi_X^\infty (a) - U_n( \lambda_A^\infty(a) \oplus \pi_X^\infty (a)) U^*_n \| =0$ for $a\in A$.
\end{itemize}
\end{corollary}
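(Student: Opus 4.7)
The plan is to invoke Theorem \ref{thm-strongvoic} to produce a sequence of $B$-equivariant isometries $V_n:H_A\to X^\infty$, and then combine them with a fixed auxiliary $A$-equivariant unitary $W:H_A\oplus H_A\to H_A$ to form the required $U_n$. This runs in parallel to the proof of Theorem \ref{thm-absorbing}, with some extra bookkeeping required because the left-hand side of the statement is $H_A\oplus X^\infty$ rather than $X^\infty\oplus X^\infty$.

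First, applying Theorem \ref{thm-strongvoic} to $(X,\pi_X)$ yields a sequence $V_n\in{}_B\lL_A(H_A,X^\infty)$ of isometries with
$$D_n(a):=\lambda_A^\infty(a)-V_n^*\pi_X^\infty(a)V_n\in\lK_A(H_A),\qquad\|D_n(a)\|\to 0$$
for all $a\in A$. The identity
$$(V_n\lambda_A^\infty(a)-\pi_X^\infty(a)V_n)^*(V_n\lambda_A^\infty(a)-\pi_X^\infty(a)V_n)=\lambda_A^\infty(a^*)D_n(a)+D_n(a^*)\lambda_A^\infty(a)-D_n(a^*a),$$
as in the proof of Corollary \ref{cor-voic}, then forces $E_n(a):=\pi_X^\infty(a)V_n-V_n\lambda_A^\infty(a)$ to be compact with $\|E_n(a)\|\to 0$.

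Next, set $P_n:=1-V_nV_n^*\in\lL_A(X^\infty)$. The $B$-equivariance of $V_n$ makes $P_n$ commute with $\pi_X^\infty(B)$, so I form the $B$-equivariant unitary
$$\tilde U_n:H_A\oplus P_nX^\infty\to X^\infty,\qquad\tilde U_n(h,y):=V_nh+y.$$
A block-matrix expansion based on $D_n$, $E_n$, and $P_nV_n=0$ produces
$$\tilde U_n^{\,*}\pi_X^\infty(a)\tilde U_n=\bigl(\lambda_A^\infty(a)\oplus P_n\pi_X^\infty(a)P_n\bigr)+K_n(a),$$
with $K_n(a)\in\lK_A(H_A\oplus P_nX^\infty)$ and $\|K_n(a)\|\to 0$.

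To absorb the extra $H_A$ summand, fix once and for all an $A$-linear unitary $W:H_A\oplus H_A\to H_A$ induced from a unitary $\ell^2(\lN)\oplus\ell^2(\lN)\cong\ell^2(\lN)$ tensored with $1_A$; it is automatically $B$-equivariant and satisfies $W^*\lambda_A^\infty(a)W=\lambda_A^\infty(a)\oplus\lambda_A^\infty(a)$ for every $a\in A$. I then define
$$U_n:=\tilde U_n\circ(W\oplus\id_{P_nX^\infty})\circ(\id_{H_A}\oplus\tilde U_n^{\,*}):H_A\oplus X^\infty\to X^\infty,$$
where the obvious associativity $H_A\oplus(H_A\oplus P_nX^\infty)\cong(H_A\oplus H_A)\oplus P_nX^\infty$ is used silently. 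Being a composition of $B$-equivariant unitaries in $\lL_A$, $U_n$ lies in ${}_B\lL_A(H_A\oplus X^\infty,X^\infty)$. Substituting the formula for $\tilde U_n^{\,*}\pi_X^\infty(a)\tilde U_n$ and using $W^*\lambda_A^\infty(a)W=\lambda_A^\infty(a)\oplus\lambda_A^\infty(a)$ yields
$$U_n^{\,*}\pi_X^\infty(a)U_n=\bigl(\lambda_A^\infty(a)\oplus\pi_X^\infty(a)\bigr)+R_n(a)$$
with $R_n(a)$ compact and $\|R_n(a)\|\to 0$; conjugating by $U_n$ then establishes both (1) and (2). The hard part is purely bookkeeping: tracking compactness and norm smallness of $K_n(a)$ through the chain of conjugations and arranging for the extra $H_A$ summand on the domain of $U_n$ to be absorbed by the auxiliary $W$ rather than leaking into a $P_n$-compression on the $X^\infty$-factor.
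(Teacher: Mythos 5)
Your proposal is correct and matches the paper's own proof essentially step for step: the paper likewise takes the isometries $V_n$ from Theorem \ref{thm-strongvoic}, sets $P_n=1-V_nV_n^*$, forms $W_n:=V_n\oplus P_n$ (your $\tilde U_n$) and a fixed unitary $T:=(S_1\oplus S_2)\otimes 1_A:H_A\oplus H_A\to H_A$ (your $W$), and defines $U_n:=W_n(T\oplus 1_{X^\infty})(1_{H_A}\oplus W_n^*)$, checking $B$-equivariance because each factor intertwines the $B$-actions. The only difference is presentational: you spell out the block-matrix bookkeeping that the paper delegates to "as in the proof of Theorem \ref{thm-absorbing}".
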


\begin{proof}
Let $\{ V_n \}_n \subset \lL_A (H_A, X^\infty)$ be a sequence of isometries in the preceding theorem and put $P_n:=1 - V_nV_n^*$.
Note that $P_n \in \pi_X^\infty(B)'$.
Let $S_1,S_2 \in \lB (\ell^2(\lN))$ be the isometries defined by $S_1 \delta_n= \delta_{2n-1}$ and $S_2 \delta_n= \delta_{2x}$ for $n \geq 1$.
Then, the operator $T:=(S_1 \oplus S_2 ) \otimes 1_A : H_A \oplus H_A \to H_A$ is a unitary.
Define unitaries $W_n:=V_n \oplus P_n : H_A \oplus P_n X^\infty \to X^\infty$ and $U_n:=W_n (T\oplus 1_{X^\infty})(1_{H_A} \oplus W_n^*) : H_A\oplus X^\infty \to X^\infty$.
As in the proof of Theorem \ref{thm-absorbing} the sequence $\{ U_n\}_n$ satisfies (1) and (2).
Since each $U_n$ is the product of three unitaties intertwine the left actions of $B$, we have $U_n \in {}_B\lL_A (H_A \oplus X^\infty, X^\infty)$.
\end{proof}

For a Hilbert C$^*$-module $X$ and $\xi \in X$, we set $\xi^{(n)}:=\xi \otimes \delta_n \in X \otimes \ell^2 (\lN) =X^\infty$.
\begin{corollary}
Let $B \subset A$ and $X$ be as in the preceding corollary.
For any $\xi_0 \in  B' \cap X$ with $\i< \xi_0, \xi_0> =1_A$ there exists a unitary $U \in {}_B\lL_A (H_A \oplus X^\infty, X^\infty)$ such that
\begin{itemize}
\item[$(1)$] $\pi_X^\infty (a) - U( \lambda_A^\infty(a) \oplus \pi_X^\infty (a) ) U^* \in \lK_A (X^\infty)$ for $a\in A$,
\item[$(2')$] $U(1_A^{(1)}\oplus 0) =\xi_0^{(1)}$.
\end{itemize}
\end{corollary}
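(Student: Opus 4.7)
The plan is to revisit the construction of the isometry $V$ in the proof of Theorem \ref{thm-strongvoic} and modify it so that $\xi_0$ is placed in the first block, and then feed the modified $V$ into the construction of Corollary \ref{cor-cvoic}. The key observation enabling this is that the proof of Corollary \ref{cor-cvoic} uses only the compactness of $V^*\pi_X^\infty(a) V - \lambda_A^\infty(a)$ and of $\pi_X^\infty(a) V - V\lambda_A^\infty(a)$, not any norm-smallness; so we have room to designate the first block of $V$ freely, at the acceptable cost of discarding the norm-convergence condition (2) of the previous corollary.

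Concretely, since $\xi_0 \in B' \cap X$ with $\langle \xi_0, \xi_0\rangle = 1_A$, the c.p.\ map $\psi_1 := \Omega_{\xi_0}$ belongs to $\cF_{B' \cap X}$ and satisfies $\psi_1(1_A) = 1_A$; I would use this single-term decomposition (with $d(1)=1$ and $\xi_1^{(1)} = \xi_0$) as the first block of the construction. For $n \geq 2$, choose $\psi_n \in \cF_{B' \cap X}$ approximating $\id_A$ on an increasing exhaustion $\fF_n \subset A$ exactly as in Theorem \ref{thm-strongvoic}. The associated $V := \bigoplus_n W_n \in {}_B\lL_A(H_A, X^\infty)$ with $W_n(1_A) = (\xi_n^{(1)}, \dots, \xi_n^{(d(n))})$ is then a $B$-intertwining isometry satisfying $V(1_A^{(1)}) = W_1(1_A) = \xi_0 \otimes \delta_1 = \xi_0^{(1)}$. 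Following the construction in the proof of Corollary \ref{cor-cvoic}, set $W := V \oplus (1-VV^*)$ and $U := W(T \oplus 1_{X^\infty})(1_{H_A} \oplus W^*)$ with $T = (S_1 \oplus S_2) \otimes 1_A$; tracing $1_A^{(1)} \oplus 0$ through the three factors yields $U(1_A^{(1)} \oplus 0) = V(1_A^{(1)}) = \xi_0^{(1)}$, which is condition (2').

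The main point to verify is that compactness condition (1) survives despite $\psi_1$ not approximating $\id_A$. The operator $V^*\pi_X^\infty(a) V - \lambda_A^\infty(a) = \bigoplus_n(\psi_n(a) - a)$ is a diagonal operator on $H_A$ whose first entry is bounded and whose entries for $n \geq 2$ tend to zero in norm, hence compact. Similarly, $\pi_X^\infty(a) V - V\lambda_A^\infty(a)$ is block-diagonal with $n$-th block equal to right-multiplication by $(\pi_X(a)\xi_n^{(r)} - \xi_n^{(r)} a)_r \in X^{d(n)}$; a direct computation gives its squared norm as $\|\psi_n(a^*a) - a^*\psi_n(a) - \psi_n(a)^* a + a^*a\|$, which tends to $0$ for $n \geq 2$, so this operator is compact as well. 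These two compactness properties feed into the algebraic identity from the proof of Theorem \ref{thm-absorbing} to produce (1) for $U$; since no norm-smallness is required, the anomalous first block presents no obstruction.
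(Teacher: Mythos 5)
Your proposal is correct and follows essentially the same route as the paper: the paper likewise replaces the first block $W_1$ of the isometry from Theorem \ref{thm-strongvoic} by $1_A \mapsto \xi_0$ (exactly your choice $\psi_1 = \Omega_{\xi_0}$, $d(1)=1$), keeps the remaining blocks, and feeds the resulting $B$-intertwining isometry into the unitary construction of Corollary \ref{cor-cvoic}, using $T(1_A^{(1)}\oplus 0)=1_A^{(1)}$ to trace $(2')$. Your extra verification that compactness in $(1)$ survives the anomalous first block is the right observation and is implicitly what the paper relies on when it asserts $\widetilde{V}$ still "enjoys (1)".
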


\begin{proof}
We use the notation in the proof of Theorem \ref{thm-strongvoic}.
Replace $W_1 : A_1 \to X^{d(1)}$ in the proof of Theorem \ref{thm-strongvoic} by $\widetilde{W}_1 : A_1 \ni 1_A \mapsto \xi_0 \in X$.
Then $\widetilde{V}=\widetilde{W}_1 \oplus \bigoplus_{n=2}^\infty W_n \in {}_B\lL_A ( H_A, X^\infty)$ is also an isometry enjoys (1) in Theorem \ref{thm-strongvoic} and that $\widetilde{V} 1_A^{(1)}  =\xi_0^{(1)}$.
Put $\widetilde{P}:=1 - \widetilde{V}\widetilde{V}^*$ and $\widetilde{W}:=\widetilde{V}\oplus \widetilde{P}$.
We note that the $T$ above satisfies that $T( 1_A^{(1)} \oplus 0)=1_A^{(1)}$.
Then $\widetilde{U}=\widetilde{W}(T \oplus 1_{X^\infty} )(1_{H_A}\oplus \widetilde{W}^*) \in {}_B\lL_A ( H_A \oplus X^\infty, X^\infty)$ is the desired unitary.
We only check $(2')$:
\begin{align*}
\widetilde{U}(1_A^{(1)}\oplus 0)
&=\widetilde{W}(T \oplus 1_{X^\infty} )(1_{H_A}\oplus \widetilde{W}^*)(1_A^{(1)}\oplus 0)
=\widetilde{W}(T \oplus 1_{X^\infty} )((1_A^{(1)}\oplus 0_{H_A}) \oplus 0_{X^\infty} ) \\
&=\widetilde{W}(1_A^{(1)}\oplus 0_{X^\infty}) 
=\widetilde{V}1_A^{(1)}
=\xi_0^{(1)}.
\end{align*}
\end{proof}

%
%
For a $\rC^*$-algebra $A$ we set $CA:=A\otimes C_0[0,1)$, where $C_0[0,1)=\{ f\in C[0,1] \mid f(1)=0 \}$. Note that $C_0[0,1)$ naturally forms a Hilbert $C[0,1]$-module.

\begin{theorem}\label{thm-K-nuclear}
Let $B \subset A$ be a unital inclusion of separable $\rC^*$-algebra,
$(X, \pi_X) \in \Corr(A)$ be countably generated, unital, and have the $B$-CCPAP,
$\xi \in B' \cap X$ be a fixed vector with $\i< \xi, \xi> =1_A$. 
Set $(IX, \pi_{IX}):=(X\otimes C[0,1], \pi_X \otimes 1_{C[0,1]}) \in \Corr (A, IA)$.
Then, there exists a unitary $U \in \lL_{IA} (CA \oplus  IX^\infty  , IX^\infty )$ such that
\begin{itemize}
\item[$(1)$] the triple $\cX= (IX^\infty  \hoplus IX^\infty, U ( \lambda_A \otimes 1_{C_0[0,1)} \oplus \pi_{IX}^\infty )U^* \hoplus \pi_{IX}^\infty, J_{IX^\infty })$ forms a Kasparov $A$-$IA$ bimodule.
\item[$(2)$] the unitary $U$ is in ${}_B\lL_{IA} (CA \oplus I X^\infty  , IX^\infty )$, i.e., $U_t ( \lambda_A(b) \oplus \pi^\infty_X(b))U_t^*=\pi_X^\infty(b)$ for all $b\in B$ and $t\in [0,1)$.
\item[$(3)$] the evaluation $U_1$ of $U$ at $1$ equals $1_{X^\infty}$, and hence the evaluation of $\cX$ at 1 is degenerate.
\item[$(4)$] the evaluations $\{ U_t \}_{0 \leq t \leq 1}$ satisfies that
$$
\begin{cases}
U_t (\cos (\pi t) 1_A \oplus \sin (\pi t) \xi^{(1)}) &= \xi^{(1)}  \quad\text{for} \quad 0 \leq t \leq 1/2 \\
U_t (0 \oplus \xi^{(1)})&= \xi^{(1)}   \quad\text{for}\quad 1/2 \leq t < 1
\end{cases}
$$
\end{itemize}
\end{theorem}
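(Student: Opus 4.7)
The plan is to construct the unitary $U$ as a strict-continuous path $\{U_t\}_{t\in[0,1]}$ of $B$-linear unitaries, assembled in two pieces over $[0,1/2]$ and $[1/2,1]$. First, I would produce a single $B$-linear unitary $U_0\in{}_B\lL_A(A\oplus X^\infty, X^\infty)$ with $U_0(1_A\oplus 0)=\xi^{(1)}$ and $\pi_X^\infty(a)-U_0(\lambda_A(a)\oplus\pi_X^\infty(a))U_0^*\in\lK_A(X^\infty)$ for $a\in A$. Concretely, let $e=\theta_{\xi^{(1)},\xi^{(1)}}\in\lL_A(X^\infty)$ be the projection onto $A\xi^{(1)}$ and set
\[
U_0(a\oplus y):=a\,\xi^{(1)}+Q(y),
\]
where $Q:X^\infty\to(1-e)X^\infty$ is a $B$-linear unitary satisfying $[\pi_X^\infty(a),Q]\in\lK_A$ for every $a\in A$. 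The existence of such $Q$ is the analogue of the second corollary above for the Hilbert submodule $(1-e)X^\infty$, and follows from the $B$-CCPAP of $X$ via Theorem \ref{thm-strongvoic} and the shift-absorption trick (using $\pi_X^\infty$-equivariance of the shift on $X^\infty\cong\ell^2(\lN)\otimes X$). Because $\xi^{(1)}$ is $B$-central and normal, the discrepancy $\pi_X^\infty(a)e-P\lambda_A(a)P^*$ with $P(a)=a\xi^{(1)}$ reduces to a single rank-one operator $\theta_{(\pi_X(a)\xi-\xi a)\otimes\delta_1,\,\xi^{(1)}}$, so $U_0$ has the required properties.

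For $t\in[0,1/2]$, introduce the $B$-linear unitary rotation $R_t\in{}_B\lL_A(A\oplus X^\infty)$ in the two-dimensional ``plane'' $A\oplus A\xi^{(1)}$:
\[
R_t(a\oplus y):=\bigl(\cos(\pi t)a-\sin(\pi t)\langle\xi^{(1)},y\rangle\bigr)\oplus\bigl(\sin(\pi t)\xi^{(1)}a+\cos(\pi t)e(y)+(1-e)(y)\bigr).
\]
Since $R_t$ acts as the identity outside $A\oplus A\xi^{(1)}$, it is a $B$-linear unitary with $R_0=1$, $R_t(1_A\oplus 0)=\cos(\pi t)1_A\oplus\sin(\pi t)\xi^{(1)}$. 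Setting $U_t:=U_0R_t^*$ gives $U_t(\cos(\pi t)1_A\oplus\sin(\pi t)\xi^{(1)})=U_0(1_A\oplus 0)=\xi^{(1)}$, matching the first case of (4), and $U_{1/2}(0\oplus\xi^{(1)})=\xi^{(1)}$ at the boundary. For $t\in[1/2,1]$ I would then construct a strict-continuous path of $B$-linear unitaries from $U_{1/2}$ to $U_1=1_{X^\infty}$ preserving $U_t(0\oplus\xi^{(1)})=\xi^{(1)}$. Writing $U_t(a\oplus y)=Y_t(a)+Z_t(y)$ with $Y_t:A\to X^\infty$ and $Z_t:X^\infty\to X^\infty$ isometries of orthogonal ranges summing to $1_{X^\infty}$, the idea is to use the infinite-dimensional structure $X^\infty\cong\ell^2(\lN)\otimes X$ to slide the rank-one range of $Y_t$ through the $B$-central normal vectors $\xi^{(n)}=\xi\otimes\delta_n$, $n\geq 2$, with a ``running'' coordinate $n(t)\to\infty$ as $t\to 1$. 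Then $Y_t(a)\to 0$ weakly while $Z_t\to 1_{X^\infty}$ strictly, yielding the required strict continuity at $t=1$ (where the $CA$-component already vanishes in norm).

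Finally, Proposition \ref{lem-path} turns $\{U_t\}$ into $U\in\lL_{IA}(CA\oplus IX^\infty,IX^\infty)$; properties (2), (3), (4) hold by construction. For (1), the commutator $\pi_X^\infty(a)-U_t(\lambda_A(a)\oplus\pi_X^\infty(a))U_t^*$ at each $t$ is compact — at $t=0$ by Step~1, and at later $t$ because $R_t$ and its analogues in the second piece differ from the identity only by rotations inside finite-rank subspaces of $X^\infty$, so their conjugation perturbs the left action by compact terms — and is norm-continuous in $t$, so a second application of Proposition \ref{lem-path} places it in $\lK_{IA}(IX^\infty)$. The main obstacle will be the second piece: bridging the ``corank gap'' between $U_{1/2}$ (a genuine unitary on $A\oplus X^\infty$) and $U_1=1_{X^\infty}$ (acting on the degenerate fiber $0\oplus X^\infty$) by a strict-continuous path of $B$-linear unitaries while simultaneously preserving $B$-linearity, the vector condition on $\xi^{(1)}$, and norm-continuity of the commutator with $\pi_X^\infty$. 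Avoiding Fredholm-type obstructions here requires genuine use of the infinite-dimensional $\ell^2(\lN)$-direction of $X^\infty$, and the explicit ``sliding-shift'' through the vectors $\xi^{(n)}$ is the key technical input.
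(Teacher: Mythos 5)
Your first half (the rotation on $[0,1/2]$ and the reduction of the $t=0$ fiber to a $B$-linear absorption unitary) is in the same spirit as the paper's argument, but the second half contains a genuine gap, and it is exactly at the point you flag as "the main obstacle." The sliding-shift homotopy through the vectors $\xi^{(n)}$ inside $X^\infty$ does not produce a Kasparov $A$-$IA$ bimodule. If at parameter $t$ the range of $Y_t$ sits on $\xi^{(n(t))}A$, then the corresponding piece of $\psi_t(a)-\pi_X^\infty(a)$ contains the rank-one operator $\theta_{\pi_X^\infty(a)\xi^{(n(t))}-\xi^{(n(t))}a,\,\xi^{(n(t))}}$, whose norm equals $\|\pi_X(a)\xi-\xi a\|$ independently of $n(t)$. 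Since $\xi$ is only $B$-central, this quantity is in general bounded away from $0$, while the value of the difference at $t=1$ is forced to be $0$ by condition (3). Hence $t\mapsto\psi_t(a)-\pi_X^\infty(a)$ is not norm-continuous at $t=1$, so by Proposition \ref{lem-path} it does not define an element of $\lK_{IA}(IX^\infty)\cong C([0,1],\lK_A(X^\infty))$, and condition (1) fails. Sending $Y_t\to 0$ weakly and $Z_t\to 1$ strictly handles adjointability of $U$ but not compactness of the commutator defect as an $IA$-module operator.

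The paper avoids this by never sliding inside $X^\infty$. It first uses the $B$-CCPAP absorption $V\in{}_B\lL_A(H_A\oplus X^\infty,X^\infty)$ (with the \emph{full} standard module $H_A$, not a single copy of $A$) to split off a summand $H_{IA}$ of $IX^\infty$ on which the left action is plain left multiplication $\lambda_A^\infty\otimes 1_{C[0,1]}$. The cone is then absorbed there by Kasparov's stabilization theorem applied to the Hilbert $C[0,1]$-module $C_0[0,1)$: a unitary $T\in\lL_{C[0,1]}(C_0[0,1)\oplus H^\circ_{C[0,1]},H^\circ_{C[0,1]})$, so that $T\otimes 1_A$ commutes \emph{exactly} with the left action and the entire commutator defect of $W=(V\otimes 1)(T\otimes 1_A\oplus\cdots)(1_{CA}\oplus V^*\otimes 1)$ is a single $t$-independent compact operator conjugated by adjointable unitaries, hence automatically lies in $\lK_{IA}(IX^\infty)$ and vanishes at $t=1$. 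Your instinct that the $\ell^2(\lN)$-direction must be used is correct, but the "disappearing summand" has to be moved into the $H_{IA}$-coordinates (where the action is $\lambda_A^\infty$) before being sent to infinity; done inside $X^\infty$ through the $\xi^{(n)}$ it collides with the failure of $A$-centrality of $\xi$. Separately, the existence of your $Q:X^\infty\to(1-e)X^\infty$ with compact commutators is not immediate from Theorem \ref{thm-strongvoic} as stated and would itself require the $W_n=V_n\oplus P_n$ construction of Corollary \ref{cor-cvoic}; this part is repairable, but the $[1/2,1]$ piece needs the stabilization argument.
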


\begin{proof}
By the $B$-CCPAP of $X$ there exists a unitary $V \in {}_B\lL_A (H_A \oplus X^\infty, X^\infty)$ such that $V (\lambda_A^\infty(a) \oplus \pi_X^\infty (a))V^*- \pi_X^\infty \in \lK_A (X^\infty)$ for $a\in A$ and $V (1_A^{(1)} \oplus 0)=\xi^{(1)}$.
We set $H^\circ :=\ell^2(\lN) \ominus \lC \delta_1$ and $H^\circ_{C[0,1]}:=H^\circ \otimes C[0,1]$.
By Kasparov's stabilization theorem, there exists a unitary $T \in \lL_{C[0,1]} (C_0[0,1) \oplus H^\circ_{C[0,1]}, H^\circ_{C[0,1]})$.
Set $W:=(V\otimes 1_{C[0,1]}) ( T \otimes 1_A \oplus  1_{IA} \oplus 1_{IX^\infty} ) (1_{CA} \oplus  V^* \otimes 1_{C[0,1]}):$
\begin{align*}
CA \oplus IX^\infty
& \longrightarrow CA \oplus H_{IA} \oplus IX^\infty
= (C[0,1 ) \oplus H^\circ_{C[0,1]} ) \otimes A \oplus IA \oplus IX^\infty \\
&\longrightarrow (H_{C[0,1]}^\circ) \otimes A \oplus IA \oplus IX^\infty =H_{IA} \oplus IX^\infty \longrightarrow IX^\infty.
\end{align*}
By construction, it follows that $W \in {}_B\lL_{IA} (CA \oplus IX^\infty  ,I X^\infty )$ and
\begin{equation}
W( \lambda_A(a) \otimes 1_{C_0[0,1)} \oplus \pi_{IX}^\infty(a) )W^* - \pi_{IX}^\infty (a) \in \lK_{IA}( I X^\infty ) \text{ for } a\in A, \label{eq-path}
\end{equation}
and $W(0 \oplus \xi^{(1)}\otimes 1_{C[0,1]}) =\xi^{(1)} \otimes 1_{C[0,1]}$.
Let $W_1 \in {}_B\lL_A (X^\infty)$ be the evaluation of $W$ at 1 and put $U:=(W_1^* \otimes 1_{C[0,1]}) W$.
Then $U$ also enjoys $U(0 \oplus \xi^{(1)}\otimes 1_{C[0,1]})=(W_1^* \xi^{(1)}) \otimes 1_{C[0,1]} = \xi^{(1)} \otimes 1_{C[0,1]}$ and (\ref{eq-path}) in which replaced $W$ by $U$.
Setting $\eta:=U_{1/2}(1_A \oplus 0)$ we have $\eta \in B' \cap X^\infty$, $\i< \eta, \eta > =1_A$, and $\i< \xi^{(1)} , \eta >=0$.
Hence we have $X^\infty = \xi^{(1)}  A \oplus \eta  A \oplus Y$ for a submodule $Y \subset X^\infty$.
Note that $\pi_X^\infty (B)$ commute with projections $\theta_{\xi^{(1)},\xi^{(1)}}$ and $\theta_{\eta, \eta}$.
Define $U_t' \in \lL_A (\eta  A \oplus \xi^{(1)} A)$ for $0 \leq t \leq 1/2$ by
$U_t' \eta = \sin (\pi t) \eta  \oplus  \cos (\pi t) \xi^{(1)}$ and $U_t' (\xi^{(1)})= ( -\cos (\pi t) \eta ) \oplus \sin (\pi t ) \xi^{(1)}$.
($U'_t$ is the unitary given by the matrix $\left[ \begin{smallmatrix} \sin(\pi t) & - \cos (\pi t) \\ \cos (\pi t) & \sin (\pi t) \end{smallmatrix} \right] \in \lM_2 (A) \cong \lL_A (\eta  A \oplus \xi^{(1)}  A )$.)
Now $U'_t  \oplus 1_Y$ is a norm continuous path of unitaties in $\lC 1 + {}_B\lK_A (X^\infty)$ and $U'_{1/2} \oplus 1_Y =1$.
Let $\widetilde{U} \in \lL_{IA}(CA \oplus IX^\infty, IX^\infty )$ be the unitary defined by the path $\{ (U'_t \oplus 1_Y) U_{1/2} \}_{ 0\leq t \leq 1/2} \cup \{ U_t \}_{1/2 \leq t \leq 1}$.
By Lemma \ref{lem-path}, $\widetilde{U}$ is well-defined.
Moreover, by construction we get $\widetilde{U} \in {}_B\lL_{IA}(CA \oplus IX^\infty, IX^\infty )$. 
To see (1), it suffices to show that the path $\{ \widetilde{U}_t ( \lambda_A(a) \oplus \pi^\infty_X (a) )\widetilde{U}_t^* -  \pi_X^\infty(a) \}_{ 0\leq t \leq 1}$ is a norm continuous path in $\lK_A (X^\infty)$.
This follows from Lemma \ref{lem-path} and definition of $\widetilde{U}$ again.
Thus, $\widetilde{U}$ is the desired unitary.
\end{proof}

As a corollary, we get a relative analogue of  `nuclearity $\Rightarrow$ $K$-nuclearity' for {\it strongly} nuclear inclusions.
\begin{corollary}\label{cor-K-nuclear}
Let $B \subset A$ be unital inclusions of separable $\rC^*$-algebras with a conditional expectation $E :A \to B$.
If $(A, B, E)$ is strongly nuclear, then $A$ is $K$-nuclear relative to $(B,E)$.
\end{corollary}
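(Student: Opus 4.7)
The strategy is to apply Theorem \ref{thm-K-nuclear} directly to the C$^*$-correspondence $(X, \pi_X) := (L^2(A,E) \otimes_B A, \pi_E \otimes 1_A) \in \Corr(A)$ and then identify the resulting unitary with the data required by the definition of relative $K$-nuclearity.

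First I would verify the hypotheses of Theorem \ref{thm-K-nuclear}: the correspondence $X$ is unital since $A$ is unital, countably generated since $A$ is separable (hence so are $A$ and $L^2(A,E)$ as Hilbert modules), and has the $B$-CCPAP by the very definition of strong nuclearity of $(A,B,E)$. For the distinguished $B$-central vector of norm one in $X$ I take $\xi_0 := \xi_E \otimes 1_A$; a short computation gives $\pi_X(b)\xi_0 = \xi_E b \otimes 1_A = \xi_E \otimes b = \xi_0 b$ for $b \in B$, and $\langle \xi_0, \xi_0 \rangle = E(1_A) = 1_A$.

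Theorem \ref{thm-K-nuclear} then supplies a unitary $U \in {}_B\lL_{IA}(CA \oplus IX^\infty, IX^\infty)$ satisfying its properties (1)--(4). I would set
\[
\pi^+ := U \bigl(\lambda_A \otimes 1_{C_0[0,1)} \oplus \pi_{IX}^\infty\bigr) U^*, \qquad \pi^- := \pi_{IX}^\infty,
\]
both viewed as $*$-homomorphisms $A \to \lL_{IA}(IX^\infty)$, and form $\cX := (IX^\infty \hoplus IX^\infty, \pi^+ \hoplus \pi^-, J_{IX^\infty})$. Condition (i) of the definition of relative $K$-nuclearity, namely $\pi^+(a) - \pi^-(a) \in \lK_{IA}(IX^\infty)$, is exactly property (1) of Theorem \ref{thm-K-nuclear}; condition (ii) is immediate from property (2) since $U$ intertwines the left $B$-actions.

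For condition (iii) I evaluate at $t=1$: because $C_0[0,1)$ vanishes at $1$ the pushout $(CA)_1$ is zero, and property (3) gives $U_1 = 1_{X^\infty}$, whence $\pi^+_1 = \pi_X^\infty = \pi^-_1$. For condition (iv) I evaluate at $t=0$: $(CA)_0 = A$ and $(\lambda_A \otimes 1_{C_0[0,1)})_0 = \lambda_A$, so $U_0 \in \lL_A(A \oplus X^\infty, X^\infty)$ conjugates $\lambda_A \oplus \pi_X^\infty$ to $\pi^+_0$ while $\pi^-_0 = \pi_X^\infty$; moreover property (4) at $t=0$ reads $U_0(1_A \oplus 0) = (\xi_E \otimes 1_A) \otimes \delta_1$, which under the canonical identification $X^\infty = L^2(A,E) \otimes_B A \otimes \ell^2(\lN) \cong L^2(A,E)^\infty \otimes_B A$ becomes $\eta \otimes 1_A$ with $\eta := \xi_E \otimes \delta_1 \in L^2(A,E)^\infty$. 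Since Theorem \ref{thm-K-nuclear} has done all the analytic work, no substantive obstacle remains; the corollary is an exercise in unwinding the two evaluations at $t = 0, 1$ and tracking the identification of $X^\infty$ with the pushout $L^2(A,E)^\infty \otimes_B A$.
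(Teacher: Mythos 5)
Your proposal is correct and follows exactly the paper's own route: apply Theorem \ref{thm-K-nuclear} to $(L^2(A,E)\otimes_B A,\ \pi_E\otimes 1_A)$ with the $B$-central normal vector $\xi_E\otimes 1_A$, and take $\pi^+=U(\lambda_A\otimes 1_{C_0[0,1)}\oplus\pi_{IX}^\infty)U^*$, $\pi^-=\pi_{IX}^\infty$. The paper leaves the verification of conditions (i)--(iv) implicit; your unwinding of the evaluations at $t=0,1$ and the identification $X^\infty\cong L^2(A,E)^\infty\otimes_B A$ is exactly the intended bookkeeping.
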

\begin{proof}
Applying the previous theorem to $(X, \pi_X):=(L^2(A,E) \otimes_B A, \pi_E \otimes 1_A)$ and $\xi_E \otimes 1_A$ we get a unitary $U : CA \oplus IX^\infty \to IX^\infty$ satisfying the conditions in the theorem.
The $*$-homomorphisms $\pi^+:=U ( \lambda_A \otimes 1_{C_0[0,1)} \oplus \pi_{IX}^\infty )U^*$ and $\pi^-:= \pi_{IX}^\infty$ are the desired ones.
\end{proof}

%
%
\section{$KK$-equivalences of amalgamated free products}\label{sec-KK}
In this section, we prove Theorem \ref{thm-C} and Theorem \ref{thm-D}.

\subsection{Amalgamated free products of $\rC^*$-algebras}\label{ss-KK-ama}
Let $\cI$ be a set and $B$ be a unital $\rC^*$-algebra.
Let $\{(X_i, \pi_{X_i}) \}_{i \in \cI} \subset \Corr (B)$ be a family of unital $\rC^*$-correspondences over $B$ with $B$-central normal vectors $\xi_i$, i.e., one has $\pi_{X_i} (b) \xi_i = \xi_i b$ for $b \in V$ and $\i< \xi_i, \xi_i > =1_B$.
We define the index set $\cI_p$ for $p \in \lN$ by
$
\cI_p= \{ i: \{1, \dots, p \} \to \cI \mid i(k) \neq i(k+1) \text{ for }  1\leq k \leq p-1 \}. 
$
The free product of $\{(X_i, \xi_i ) \}_{i \in \cI}$ is the Hilbert $B$-module $(X, \xi_0)$ given by
$$
X=B \oplus \bigoplus_{p \geq 1} \bigoplus_{i \in \cI_p} X_{i(1)}^\circ \otimes_B \dots  \otimes_B X_{i(p)}^\circ,
$$
where $X_i^\circ = X_i \ominus \xi_i  B$.
We will denote by $\xi_0$ the unit of $B$ in the first direct summand of $X$
and write $(X, \xi_0 )= \bigstar_{i \in \cI} (X_i, \xi_i)$.
We also define complemented submodules $X(\lambda, j)$ and $X(\rho, j)$ of $X$ for $j \in \cI$ by
\begin{align*}
X(\lambda, j) = \xi_0  B \oplus  \bigoplus_{p \geq 1} \underset{i(1) \neq j}{\bigoplus_{i \in \cI_p}} X_{i(1)}^\circ \otimes_B \dots  \otimes_B X_{i(p)}^\circ,\\
X(\rho, j) = \xi_0  B \oplus  \bigoplus_{p \geq 1} \underset{i(p) \neq j}{\bigoplus_{i \in \cI_p}} X_{i(1)}^\circ \otimes_B \dots  \otimes_B X_{i(p)}^\circ,
\end{align*}
and unitaries $v_i \in \lL_B (X, X_i \otimes_B X(\lambda, i))$ and $w_i \in \lL_B (X, X(\rho, i) \otimes_B X_i)$ by
\begin{align*}
v_i \xi_0 &= \xi_i \otimes \xi_0, \\
v_i (\eta_1 \otimes \cdots \otimes \eta_p)
&= \begin{cases}
\eta_1 \otimes \xi_0 & \text{ for } \eta_1 \in X_i^\circ, p=1\\
\eta_1 \otimes (\eta_2 \otimes \cdots \otimes \eta_p)& \text{ for } \eta_1 \in X_i^\circ, p \geq 2\\
\xi_i \otimes (\eta_1 \otimes \cdots \otimes \eta_p) & \text{ for } \eta_1 \notin X_i^\circ
\end{cases}\\
w_i \xi_0 &= \xi_0 \otimes \xi_i \\
w_i (\eta_1 \otimes \cdots  \otimes \eta_p)
&= \begin{cases}
\xi_0 \otimes \eta_1& \text{ for } \eta_p \in X_i^\circ, p=1\\
( \eta_1 \otimes \cdots \otimes \eta_{p-1}) \otimes \eta_p  & \text{ for } \eta_p \in X_i^\circ, p \geq 2\\
(\eta_1 \otimes \cdots \otimes \eta_p) \otimes  \xi_i & \text{ for } \eta_1 \notin X_i^\circ .
\end{cases}
\end{align*}
We can now define $*$-homomorphisms $\lambda_i : \lL_B (X_i ) \to \lL_B(X)$ and $\rho_i : {}_B\lL_B (X_i) \to \lL_B (X)$ by
\begin{equation}
\lambda_i(x)=v_i^*( x\otimes 1_{X(\lambda,i)} )v_i,
\quad \rho_i(y) =w_i^*  (1_{X(\rho,i)} \otimes y) w_i. \label{eq-left-right}
\end{equation}
Hence, $(X, \lambda_i \circ \pi_{X_i})$ forms a C$^*$-correspondence over $B$.
Further assume that we have a Hilbert $B$-module $Y$ and $Z \in \Corr (B, C)$ for a C$^*$-algebra $C$.
By using unitaries $1_Y \otimes v_i \in \lL_B (Y \otimes_B X, Y \otimes_B X_i \otimes_B X( \lambda, i ) )$ and $w_i \otimes 1_Z \in \lL_C (X \otimes_B Z, X (\rho, i) \otimes_B X_i \otimes_B Z) $ we can define $*$-homomorphisms $\lambda_i^Y : \lL_B (Y\otimes_B X_i) \to \lL_B (Y \otimes_B X); x \mapsto (1_Y \otimes v_i)^* (x \otimes 1_{X(\lambda, i)} ) (1_Y \otimes v_i)$ and $\rho_i^Z : {}_B \lL_C (X_i \otimes_B Z) \to \lL_C (X \otimes_B Z); y \mapsto (w_i \otimes 1_Z)^* ( 1_{X(\rho, i)} \otimes y ) (w_i \otimes 1_Z)$.

A direct computation will prove the following proposition.
\begin{proposition}\label{prop-left-right}
Under the notation above, it follows that
$$
\left[ \lambda_i (x) \otimes 1_Z, \rho_j^{Z} (y) \right] = \delta_{ij}\rho_i^Z ( [ x \otimes 1_Z, y] )(P_{X_i} \otimes 1_Z) 
$$
for $x \in \lL_B (X_i)$, $y \in \lL_C (X_j \otimes_B Z)$, and $i,j \in \cI$, where $P_{X_i } \in \lL_B (X)$ is the orthogonal projection onto $X_i = \xi_0 B \oplus X_i^\circ \subset X$. 
\end{proposition}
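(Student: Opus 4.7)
The plan is to verify the identity on vectors of the form $\eta \otimes \zeta \in X \otimes_B Z$, where $\eta$ ranges over reduced tensors (either $\xi_0$ or a word $\eta_1 \otimes \cdots \otimes \eta_p$ with $\eta_r \in X_{k(r)}^\circ$, $k \in \cI_p$), and then extend by linearity and continuity. I will split the analysis into the two cases $i \neq j$ and $i = j$.

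For the case $i \neq j$, the idea is to produce a common ``triple decomposition'' of $X$ that simultaneously diagonalizes both $\lambda_i$ and $\rho_j^Z$. Setting $M_{ij} := X(\lambda, i) \cap X(\rho, j)$, I would first check that $v_i$ restricts to an isomorphism $X(\rho, j) \xrightarrow{\sim} X_i \otimes_B M_{ij}$ (note $v_i$ carries the terminal $j$-letter structure through), and that $w_j$ restricts to an isomorphism $X(\lambda, i) \xrightarrow{\sim} M_{ij} \otimes_B X_j$. Composing gives a canonical unitary $X \xrightarrow{\sim} X_i \otimes_B M_{ij} \otimes_B X_j$ (the two orders of composition yield the same identification, as a direct check on a generic reduced tensor shows). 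Tensoring with $Z$ on the right, $\lambda_i(x) \otimes 1_Z$ becomes $x \otimes 1_{M_{ij} \otimes X_j \otimes Z}$ while $\rho_j^Z(y)$ becomes $1_{X_i \otimes M_{ij}} \otimes y$. These commute, so the commutator vanishes, and so does the right-hand side since $\delta_{ij} = 0$.

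For the case $i = j$, the two isomorphisms $v_i$ and $w_i$ cannot be reconciled on the ``trivial'' summand $X_i = \xi_0 B \oplus X_i^\circ \subset X$, since $v_i$ places $\xi_i$ on the left while $w_i$ places it on the right, and this mismatch is precisely what produces the defect measured by the right-hand side. I would show that both $\lambda_i(x) \otimes 1_Z$ and $\rho_i^Z(y)$ respect the orthogonal splitting $X \otimes_B Z = X_i \otimes_B Z \oplus (X \ominus X_i) \otimes_B Z$. On $(X \ominus X_i) \otimes_B Z$, every summand is a reduced word of length $\geq 2$ or a length-$1$ word with letter $\neq i$, and in each sub-case the same triple-decomposition argument as in the $i \neq j$ case (with a trivial middle factor in the length-$1$ cases) applies, yielding vanishing of the commutator. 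On $X_i \otimes_B Z$, both $v_i$ and $w_i$ restrict to the \emph{same} identification of $X_i \subset X$ with $X_i \in \Corr(B)$ (via $\xi_i \leftrightarrow \xi_0$, $\eta^\circ \leftrightarrow \eta^\circ$), and under this identification $\lambda_i(x) \otimes 1_Z$ restricts to $x \otimes 1_Z$ and $\rho_i^Z(y)$ restricts to $y$; hence the commutator on $X_i \otimes_B Z$ is exactly $[x \otimes 1_Z, y]$. This matches the right-hand side, where $\rho_i^Z([x \otimes 1_Z, y])(P_{X_i} \otimes 1_Z)$ is interpreted as the extension by zero of $[x \otimes 1_Z, y]$ on $X_i \otimes_B Z$ to $X \otimes_B Z$; concretely, using $P_{X_i} \otimes 1_Z = (w_i \otimes 1)^*(P_{\xi_0 B} \otimes 1_{X_i \otimes Z})(w_i \otimes 1)$, the composition $(w_i \otimes 1)^*(P_{\xi_0 B} \otimes [x \otimes 1_Z, y])(w_i \otimes 1)$ is well-defined without requiring $[x \otimes 1_Z, y]$ to be $B$-bimodular, because the projection forces everything onto $\xi_0 B \otimes_B (X_i \otimes_B Z) \cong X_i \otimes_B Z$.

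The main obstacle I expect is the careful bookkeeping on $(X \ominus X_i) \otimes_B Z$ when $i = j$: one must handle separately the subcases where the word starts with $X_i^\circ$, ends with $X_i^\circ$, both, or neither. The key observation that makes each subcase work is that $\lambda_i(x)$ acts only on the initial segment (either multiplying the first letter if it lies in $X_i^\circ$, or prepending an $X_i^\circ$-letter if it does not), while $\rho_i^Z(y)$ acts only on the terminal segment together with $Z$; as long as the word is not entirely contained in $X_i$, these two modifications occur at distinct tensor positions and therefore commute. Once this case analysis is complete, the identification on $X_i \otimes_B Z$ closes out the argument.
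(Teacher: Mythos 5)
Your proposal is correct; the paper offers no argument beyond the remark that ``a direct computation will prove the following proposition,'' and your plan --- the triple decomposition $X\cong X_i\otimes_B\bigl(X(\lambda,i)\cap X(\rho,j)\bigr)\otimes_B X_j$ for $i\neq j$, the splitting $X=X_i\oplus(X\ominus X_i)$ for $i=j$, and the extension-by-zero reading of $\rho_i^Z([x\otimes 1_Z,y])(P_{X_i}\otimes 1_Z)$ --- is a correct and well-organized way of carrying out exactly that computation. The one point your phrase ``distinct tensor positions'' glosses over is that on words of length one or two the $B$-coefficients produced by the front modification (when $x\eta_1$ acquires a $\xi_i B$-component) and by the back modification can land on the same letter from opposite sides; this is harmless because the left and right $B$-actions on each $X_k^\circ$ commute, but it deserves an explicit line in the write-up.
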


\begin{definition}[\cite{Voiculescu-free}]
Let $\{ (A_i, E_i ) \}_{i \in \cI}$ be a family of unital $\rC^*$-algebras with nondegenerate conditional expectations $E_i$ from $A_i$ onto a common $\rC^*$-subalgebra $B$ containing $1_{A_i}$.
Let $(L^2(A_i, E_i ), \pi_{i}, \xi_i ) \in \Corr (A_i, B)$ be the GNS representation for $(A_i, E_i)$.
Let $(X, \xi_0)$ be the free product Hilbert module $\bigstar_{i \in \cI} (L^2 (A_i, E_i ), \xi_i )$.
The reduced amalgamated free product of $\{ (A_i, E_i ) \}_{i \in \cI}$ over $B$ is given by the pair $( \bigstar_{B, i \in \cI} (A_i , E_i ), E)$, where $\bigstar_{B, i \in \cI} (A_i , E_i )$ is the $\rC^*$-subalgebra of $\lL_B (X)$ generated by $\lambda_i \circ \pi_{E_i} (A_i), i\in \cI$ and $E $ is the conditional expectation from $A$ onto $B$ given by $\Omega_{\xi_0} \in \cF_{X}$.
\end{definition}
We note that $L^2(A, E) =\xi_E  B \oplus L^2(A,E)^\circ$, where $L^2 (A,E)^\circ$ the orthogonal complement of $\xi_E  B$, and $\pi_E (B)$ reduces these subspaces.

\begin{definition}
Let $\{ A_i \}_{i \in \cI}$ be a family of $\rC^*$-algebras containing a common $\rC^*$-subalgebra $B$.
Then, the {\it full amalgamated free product} of $A_i , i \in \cI$ over $B$ is the $\rC^*$-algebra $\bigstar_{B, i \in \cI} A_i$, equipped with injective $*$-homomorphisms $\iota_i : A_i \to \bigstar_{B, i \in \cI} A_i$ such that $\iota_i (b) =\iota_j (b)$ for $b \in B$ and $i, j \in \cI$ and satisfying the following universal property{\rm :} for any $\rC^*$-algebra $C$ and $*$-homomorphisms $\pi_i : A_i \to C$ such that $\pi_i (b) = \pi_j(b)$ for $b \in B$ and $i,j \in \cI$, there exists a unique $*$-homomorphism $\bigstar_{ i \in \cI} \pi_i : \bigstar_{B, i \in \cI} A_i \to C$ such that $(\bigstar_{ i \in \cI} \pi_i  )\circ \iota_i = \pi_i$ for $i \in \cI$.
\end{definition}
%
%
\subsection{Theorem C and Theorem D}\label{ss-KK-CD}
In this subsection we prove Theorem \ref{thm-KK} below, which contains Theorem \ref{thm-C} and Theorem \ref{thm-D} (see Proposition \ref{prop-fin-dim}).
(We refer to \S\S \ref{ss-rel-rel} for the definitions of strong relative nuclearity via C$^*$-correspondences and central vectors.)
\begin{theorem}\label{thm-KK}
Let $\{(A_i, B, E_i) \}_{i \in \cI}$ be an at most countable family of unital inclusions of separable $\rC^*$-algebras $B \subset A_i$ with conditional expectations $E_i : A_i \to B$.
If each triple $(A_i, B, E_i)$ is strongly nuclear via $\rC^*$-correspondence $(Z_i, \pi_{Z_i})$ over $B$ such that $Z_i$ is countably generated and admits a $B$-central vector $\zeta_i \in Z_i$ with $\i< \zeta_i, \zeta_i> =1_B$,
then the canonical surjection from
the full amalgamated free product $\bigstar_{B, i \in \cI} A_i$ onto the reduced one $\bigstar_{B, i \in \cI} (A_i, E_i)$ is a $KK$-equivalence.
\end{theorem}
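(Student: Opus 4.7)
The strategy is to apply the relative $K$-nuclearity result (Theorem \ref{thm-K-nuclear}) to each triple $(A_i, B, E_i)$ and then assemble the resulting local Kasparov bimodules into a single global bimodule over the full amalgamated free product, using the free-product construction of \S 8.1. Endpoint analysis of this global bimodule then exhibits the canonical surjection $q : A := \bigstar_{B, i \in \cI} A_i \to A_r := \bigstar_{B, i \in \cI}(A_i, E_i)$ as a $KK$-equivalence.

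\emph{Local input.} For each $i \in \cI$, set $X_i := L^2(A_i, E_i) \otimes_B Z_i \otimes_B A_i$, a unital countably generated $\rC^*$-correspondence over $A_i$ which has the $B$-CCPAP by the hypothesis of strong nuclearity via $(Z_i, \pi_{Z_i})$. The vector $\xi_i := \xi_{E_i} \otimes \zeta_i \otimes 1_{A_i} \in X_i$ is $B$-central with $\i<\xi_i, \xi_i> = 1_{A_i}$. Applying Theorem \ref{thm-K-nuclear} to $(A_i, B, X_i, \xi_i)$ produces a unitary $U_i \in {}_B\lL_{IA_i}(CA_i \oplus IX_i^\infty, IX_i^\infty)$ satisfying conditions (1)--(4) of that theorem. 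This yields a Kasparov $A_i$-$IA_i$ bimodule $\cX_i$ that is degenerate at $t=1$ and whose $t=0$ fibre expresses $[\id_{A_i}]$ in terms of the $A_i$-$B$ correspondence $L^2(A_i, E_i) \otimes_B Z_i$.

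\emph{Global assembly via free products.} View each $Y_i := L^2(A_i, E_i) \otimes_B Z_i$ as an $A_i$-$B$ correspondence with $B$-central unit vector $\eta_i := \xi_{E_i} \otimes \zeta_i$, and form the Hilbert $B$-module free product $(Y, \eta_0) := \bigstar_{i \in \cI}(Y_i, \eta_i)$ as in \S 8.1 with its left actions $\lambda_i$ from (\ref{eq-left-right}). The universal property of $A$ assembles the $\lambda_i \circ \pi_{Y_i}$ into a $*$-representation $\pi : A \to \lL_B(Y)$; because $Y$ carries only the $B$-amalgamated structure, this representation descends through $q$ to a representation of $A_r$. Using the $B$-intertwining property of each $U_i$ (Theorem \ref{thm-K-nuclear}(2)) together with the $B$-centrality of $\xi_i$, one inserts the local unitaries $U_i$ along each free-product factor, inductively along reduced words, to produce a unitary $U$ and a Kasparov bimodule $\cX \in \lE(A, IA)$ whose restriction to each $A_i$ recovers the local $\cX_i$. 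The commutation relations of Proposition \ref{prop-left-right} govern how insertions along distinct factors interact.

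\emph{Endpoint analysis.} The evaluation maps $\ev_0, \ev_1 : IA \to A$ are homotopic as $*$-homomorphisms, so $\ev_{0*}[\cX] = \ev_{1*}[\cX]$ in $KK(A, A)$. The fibre $\cX_1$ is degenerate by construction (from condition (3) of Theorem \ref{thm-K-nuclear} applied to each $U_i$), whence $\ev_{1*}[\cX] = 0$. Unfolding $\cX_0$ using condition (4) of Theorem \ref{thm-K-nuclear}---which pins down $U_i(1_{A_i}^{(1)} \oplus 0) = \xi_i^{(1)}$ and thereby controls the $t=0$ behaviour at the ``base vector''---one writes $\ev_{0*}[\cX] = [\id_A] - [q] \otimes_{A_r} [\sigma]$ for a class $[\sigma] \in KK(A_r, A)$ built from the $A_r$-module $Y$ (the descent through $q$ being precisely what the $B$-amalgamation of $Y$ affords). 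Vanishing gives $[q] \otimes_{A_r} [\sigma] = [\id_A]$, so $[\sigma]$ is a right $KK$-inverse for $[q]$. A dual construction---swapping left and right free-product actions using $\rho_i$ in place of $\lambda_i$ from (\ref{eq-left-right})---produces $[\sigma] \otimes_A [q] = [\id_{A_r}]$, so $[\sigma]$ is a two-sided inverse and $[q]$ is a $KK$-equivalence.

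The chief technical difficulty is the insertion step in the global assembly: each $U_i$ must be spliced along its free-product factor while remaining compatible with the $B$-amalgamation on neighbouring factors. This compatibility is bought precisely by the $B$-intertwining of $U_i$ (Theorem \ref{thm-K-nuclear}(2)) together with the $B$-centrality of $\xi_i$, which ensure that ``doing nothing'' along a factor is indistinguishable from the $B$-amalgamating identity. The combinatorial bookkeeping required to manage these insertions coherently along all reduced words---organized by the commutation structure of Proposition \ref{prop-left-right}---is the technical heart of the proof.
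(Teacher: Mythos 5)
Your overall architecture for the right-inverse identity matches the paper's: apply the relative Weyl--von Neumann--Voiculescu theorem (Theorem \ref{thm-K-nuclear}) to each $(A_i,B,E_i)$ with the correspondence $L^2(A_i,E_i)\otimes_B Z_i\otimes_B A_i$, assemble the local homotopies into a Kasparov $A$-$IA$ bimodule over the free product module via the universal property of $\bigstar_{B,i}A_i$ (the agreement on $B$ from condition (2) of that theorem is exactly what makes the maps $\psi^{(i)}\oplus\tau_i$ splice without any word-by-word bookkeeping), and read off $\id_A+\pi_\re\otimes_{A_\re}[\cY]=0$ from degeneracy at $t=1$. Two remarks on this half: the descent of the free-product representation through $\pi_\re$ is not automatic from ``$Y$ carries only $B$-amalgamated structure''; it requires exhibiting a cyclic subspace $\Gamma$ on which reduced words in $\bigcup_i\ker E_i$ have vanishing diagonal coefficients (Lemma \ref{lem-free}), and the auxiliary factors $Z_i$ are inserted precisely so that $\xi_{E_i}\otimes Z_i^\circ$ can be put into $\Gamma$ (Lemma \ref{lem-throughAr}). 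This is a verification, not an observation, but it is a fillable one.

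The genuine gap is in the left-inverse direction. A ``dual construction swapping $\lambda_i$ for $\rho_i$'' does not produce $[\sigma]\otimes_A[\pi_\re]=\id_{A_\re}$: the right actions $\rho_i$ enter only in building the isometries $S_i$ and the unitary $U$ at $t=0$ (Lemma \ref{lem-unitary}), and there is no left/right symmetry that converts a right $KK$-inverse into a left one ($KK$-classes can very well be one-sided invertible). What is actually needed is to push the \emph{entire homotopy} $\cX$ forward along $\widetilde{\pi}_\re=\pi_\re\otimes\id_{C[0,1]}$ and prove that the deformed left action $\psi_{\widetilde{\pi}_\re}$ factors through $A_\re$ \emph{at every time} $t\in[0,1]$, so that the pushed-forward homotopy is the $\pi_\re$-pullback of a homotopy of Kasparov $A_\re$-$IA_\re$ bimodules; evaluating that at $t=0$ and $t=1$ gives $\id_{A_\re}+[\cY]\otimes_A\pi_\re=0$. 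This factorization is the technical heart of the argument (Lemma \ref{lem-throughAr2}): one must run the freeness criterion on the module $X\otimes_B X$ against the time-$t$ intertwiners $W^{(i)}_t$ coming from $U^{(i)}_t$, using conditions (i) and (ii) there (which in turn rely on condition (4) of Theorem \ref{thm-K-nuclear} pinning the image of the distinguished vector). Your proposal contains no substitute for this step, so as written it establishes only that $\pi_\re$ admits a right inverse in $KK(A_\re,A)$, not that it is a $KK$-equivalence.
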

We first establish several technical lemmas based on \cite{Germain-duke}\cite{Germain-fields}.
In what follows, $\cI$ denotes a countable set and $\{ (A_i, B, E_i) \}_{i \in \cI}$ is a family of unital inclusions of separable $\rC^*$-algebras with nondegenerate conditional expectations $E_i : A_i \to B$.
Let $A:= \bigstar_{i, \in \cI, B} A_i$ and $A_\re:= \bigstar_{i \in \cI, B} (A_i, E_i)$ be the full and the reduced amalgamated free products, respectively and let $\pi_\re : A \to A_\re$ be the canonical surjection.

The next lemma is probably well-known, but we give its proof for the reader's convenience.
\begin{lemma}\label{lem-free}
If $(Z,\pi_Z)$ is an $A$-$B$ $\rC^*$-correspondence such that there exists a subspace $\Gamma \subset Z$ such that $\pi_Z (A)\Gamma$ is norm dense in $Z$ and satisfying the freeness condition:
for any $p \in \lN$, $i \in \cI_p$, and $a_k \in \ker E_{i(k)} \subset A_{i(k)}, 1 \leq k \leq p$,
$$\i< \eta, \pi_Z ( a_1 \cdots a_p) \xi >=0 \quad \text{for } \xi, \eta \in \Gamma,$$
then the $\pi_Z$ factors through $A_\re$. 
\end{lemma}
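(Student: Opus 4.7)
The plan is to construct an isometry $V : Z \to L^2(A_\re, E_\re) \otimes_B Z$ (with $B$ acting on $Z$ via $\pi_Z|_B$) that intertwines $\pi_Z(a)$ with the operator $\pi_{E_\re}(\pi_\re(a)) \otimes 1_Z$ for each $a \in A$. From such a $V$ one immediately reads off $\|\pi_Z(a)\| \leq \|\pi_\re(a)\|$ for every $a \in A$, whence $\ker \pi_\re \subseteq \ker \pi_Z$ and $\pi_Z$ factors through $A_\re$.

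The whole construction rests on a single moment identity: for every $\eta_1, \eta_2 \in \Gamma$ and every $a \in A$,
\[
\i<\eta_1, \pi_Z(a) \eta_2 > = \i<\eta_1, \pi_Z(E(a)) \eta_2 >, \qquad E := E_\re \circ \pi_\re.
\]
To establish this, I would invoke the standard $B$-bimodule decomposition of the algebraic amalgamated free product, which writes $\bigstar^\alg_{B, i \in \cI} A_i = B \oplus \bigoplus_{p \geq 1} \bigoplus_{i \in \cI_p} A^\circ_{i(1)} \otimes_B \cdots \otimes_B A^\circ_{i(p)}$ with $A_i^\circ := \ker E_i$. Under this decomposition $E$ is exactly the projection onto the first $B$-summand, while the freeness hypothesis on $\Gamma$ kills the $\pi_Z$-matrix coefficients of each alternating summand between $\Gamma$-vectors. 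Hence the identity holds on this dense $*$-subalgebra, and since both sides are norm continuous in $a$ (with $E$ contractive), it passes to all of $A$.

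Given the moment identity, define
\[
V_0 : \lspan\{\pi_Z(a)\eta : a \in A,\ \eta \in \Gamma\} \to L^2(A_\re, E_\re) \otimes_B Z, \qquad V_0\Bigl(\sum_k \pi_Z(a_k) \eta_k\Bigr) := \sum_k \pi_\re(a_k) \xi_0 \otimes \eta_k.
\]
The computation
\[
\Bigl\| \sum_k \pi_\re(a_k) \xi_0 \otimes \eta_k \Bigr\|^2 = \sum_{k,l} \i< \eta_k, \pi_Z( E(a_k^* a_l) ) \eta_l > = \sum_{k,l} \i< \eta_k, \pi_Z( a_k^* a_l ) \eta_l > = \Bigl\| \sum_k \pi_Z(a_k) \eta_k \Bigr\|^2
\]
shows $V_0$ is well-defined (the null space of the domain maps to zero) and isometric. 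Since $\pi_Z(A)\Gamma$ is norm dense in $Z$ by hypothesis, $V_0$ extends to an isometry $V : Z \to L^2(A_\re, E_\re) \otimes_B Z$, and the intertwining relation $V \pi_Z(a) = (\pi_{E_\re}(\pi_\re(a)) \otimes 1_Z) V$ is evident from the definition on generators, completing the proof.

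The one subtlety worth flagging is that $\Gamma$ is \emph{not} assumed to be $B$-invariant, so the freeness hypothesis covers only bare alternating products $a_1 \cdots a_p$ with $a_k \in \ker E_{i(k)}$ and no intervening $B$-coefficients. This is precisely enough because the $B$-bimodule decomposition already presents each alternating summand in exactly that form; the $B$-bimodularity of each $E_i$ ensures that left/right multiplication by $B$ sends $A_{i(1)}^\circ \otimes_B \cdots \otimes_B A_{i(p)}^\circ$ into itself, so no external $B$-absorption step is needed before applying freeness.
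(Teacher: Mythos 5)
Your proof is correct, and it runs on exactly the same engine as the paper's: the normal-form decomposition of the algebraic amalgamated free product into $B$ plus alternating words in the $\ker E_{i(k)}$, combined with the observation that the freeness of $\Gamma$ annihilates the alternating part of every matrix coefficient over $\Gamma$ --- i.e.\ the moment identity $\i< \eta_1, \pi_Z (a) \eta_2 > = \i< \eta_1, \pi_Z ( E_\re (\pi_\re (a))) \eta_2 >$ on the dense algebraic part, extended by contractivity. The two arguments differ only in how the conclusion is extracted. The paper works coefficient-by-coefficient: for $a \in \ker \pi_\re$ it approximates $a$ in norm by algebraic elements $a_n$, reduces $\i< \eta, \pi_Z (b^* a_n c) \xi >$ to $\i< \eta, \pi_Z (b_n) \xi >$ with $b_n = E_\re (\pi_\re (b^* a_n c)) \to E_\re(\pi_\re(b^*ac)) = 0$, and finishes by cyclicity of $\Gamma$. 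You instead promote the moment identity to a global intertwining isometry $V : Z \to L^2(A_\re, E_\re) \otimes_B Z$, which yields the cleaner norm inequality $\| \pi_Z (a) \| \leq \| \pi_\re (a) \|$ and, as a small bonus, an explicit realization of $(Z, \pi_Z)$ inside the correspondence induced from $A_\re$; the price is the (routine) well-definedness check for $V_0$. Your closing remark on absorbing $B$-coefficients into the $\ker E_{i(k)}$ factors is precisely what makes the stated freeness hypothesis sufficient, and the paper relies on the same fact implicitly when it quotes the normal form. Both routes are sound.
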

\begin{proof}
It suffices to show that $\ker \pi_\re \subset \ker \pi_Z$.
Fix $a \in \ker \pi_\re$ arbitrarily and take a sequence $\{ a_n \}_n \in *\text{-Alg}(A_i, i\in \cI ) \subset A$ such that $\lim_{n \to \infty} \| a - a_n \| =0$.
Since $\Gamma$ is a cyclic subspace for $\pi_Z (A)$ we only have to prove that $\i<\eta, \pi_Z (b^* a c) \xi >=0 $ for all $b,c \in *\text{-Alg}(\iota_i (A_i), i\in \cI)$ and $\xi,\eta \in \Gamma$.
It is known that $\pi_\re (b^*a_n c)$ is a sum of an element $b_n \in B$ and finitely many elements having the form of $x_{i(1)} \cdots x_{i(p)}$ for some $p \in \lN$, $i \in \cI_p$, and $x_{i(k)}\in A_{i(k)} \cap \ker E \subset A_\re$.
Since $\xi, \eta \in \Gamma$ we now have
$\| \i< \eta, \pi_Z (b^* a c ) \xi > \|  \leftarrow \| \i< \eta, \pi_Z(b^*a_n c) \xi > \| = \|\i< \eta, \pi_Z (\iota  (b_n)) \xi > \| \leq \| \xi \| \| \eta \|  \| b_n \| = \|\xi \| \| \eta \| \| E (\pi_\re (b a_n c^*))\| \to 0$,
which implies that $\pi_Z (a) =0$.
\end{proof}

Let $Z_i \in \Corr (B)$ be unital faithful and $\zeta_i \in B' \cap Z_i$ be a normal vector, i.e., $\i< \zeta_i, \zeta_i>=1_B$.
Set $(X_i, \xi_i):= ( L^2(A_i, E_i) \otimes_B Z_i, \xi_{E_i} \otimes \eta_i )$ and $(X, \xi_0):=\bigstar_{i \in \cI} (X_i, \xi_i)$.
By the universality of $A$ we obtain the $*$-homomorphism $\pi_X:=\bigstar_{i \in \cI} \lambda_i \circ ( \pi_{E_i} \otimes 1_{Z_I}) : A \to \lL_B (X)$, where $\lambda_i : \lL_B (X_i) \to \lL_B (X)$ is as in (\ref{eq-left-right}).
We also set $(Y_i, \pi_{Y_i}):= (X_i \otimes_B A_i, \pi_{X_i} \otimes 1_{A_i} ) \in \Corr (A_i)$ and $(Y, \pi_Y):= (X \otimes_ B A, \pi_X \otimes 1_A) \in \Corr (A)$. 
\begin{lemma}\label{lem-throughAr}
Under the notation above, there exists a $*$-homomorphism $\bar{\pi}_X : A_\re \to \lL_B (X)$ such that $\pi_X = \bar{\pi}_X \circ \pi_\re$.
\end{lemma}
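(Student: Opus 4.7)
The plan is to invoke Lemma \ref{lem-free} applied to $(X,\pi_X)$. Concretely I take
\[
\Gamma \;:=\; \xi_0\cdot B \;+\; \bigoplus_{p\geq 1,\,i\in\cI_p}\Gamma_i \;\subset\; X,
\]
where $\Gamma_i$ is the closed $B$-linear span of the elementary tensors $(\xi_{E_{i(1)}}\otimes z_1)\otimes_B\cdots\otimes_B(\xi_{E_{i(p)}}\otimes z_p)$ inside the summand $X_{i(1)}^\circ\otimes_B\cdots\otimes_B X_{i(p)}^\circ$, with $z_k\in Z_{i(k)}^\circ := Z_{i(k)}\ominus \zeta_{i(k)} B$. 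The underlying picture is that $X$ is the ``free-product GNS module'' associated to the triples $(A_i,B,E_i)$ amplified by the correspondences $Z_i$: the pointed vector $\xi_i=\xi_{E_i}\otimes\zeta_i$ is $B$-central with $\langle \xi_i,(\pi_{E_i}(a)\otimes 1_{Z_i})\xi_i\rangle_B = E_i(a)$ (using $B$-centrality and normality of $\zeta_i$), so the vector-state structure at $\xi_0$ is compatible with the reduced free product conditional expectation.

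For the freeness condition, let $a_1\cdots a_p$ be a reduced word with $a_k\in\ker E_{i(k)}$ and $\xi,\eta\in\Gamma$. I compute $\langle\eta,\pi_X(a_1\cdots a_p)\xi\rangle$ by induction on $p$, unwinding the definitions of $\lambda_{i(k)}$ and $v_{i(k)}$. Each application of $\pi_X(a_k)$ to a vector in $\Gamma$ either prepends a new leftmost factor $(a_k\xi_{E_{i(k)}})\otimes \zeta_{i(k)}$, lying in $L^2(A_{i(k)},E_{i(k)})^\circ\otimes\zeta_{i(k)}$, or acts on the existing leftmost factor $\xi_{E_{i(k)}}\otimes z$ to produce $(a_k\xi_{E_{i(k)}})\otimes z\in L^2(A_{i(k)},E_{i(k)})^\circ\otimes_B Z_{i(k)}$. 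The assumption $a_k\in\ker E_{i(k)}$ prevents any collapse of the leftmost factor into the $B$-summand, so after exhausting the full reduced word the resulting vector has its leftmost tensor factor supported in $L^2(A_\cdot,E_\cdot)^\circ\otimes_B Z_\cdot$. This is orthogonal to $\eta$'s leftmost factor, which lies in $\xi_{E_\cdot}\otimes Z_\cdot^\circ$, via the identity $\langle \xi_{E_j}\otimes y,(a\xi_{E_j})\otimes z\rangle_B=\langle y,E_j(a)\cdot z\rangle_B=0$.

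For the density of $\pi_X(A)\Gamma$ in $X$, I decompose each higher summand $X_{k(1)}^\circ\otimes_B\cdots\otimes_B X_{k(r)}^\circ$ via the orthogonal splitting $X_i^\circ = (\xi_{E_i}\otimes Z_i^\circ)\oplus(L^2(A_i,E_i)^\circ\otimes\zeta_i)\oplus(L^2(A_i,E_i)^\circ\otimes_B Z_i^\circ)$ in each factor. The ``all-$\xi_E\otimes Z^\circ$'' components are captured by $\Gamma_i$; components whose factors are of the form $L^2(A,E)^\circ\otimes\zeta$ are produced by applying $\pi_X(a_1\cdots a_p)$ with $a_k\in A_{i(k)}\cap\ker E_{i(k)}$ to the vacuum $\xi_0\cdot B$ (and its right $B$-translates, to get $\zeta_{i(r)}b$ in the rightmost slot); components involving $L^2(A,E)^\circ\otimes_B Z^\circ$ come from applying $\pi_X(a_k)$ to suitable elements of $\Gamma_{i(k)}$ exactly as in the freeness computation. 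Mixed configurations in different positions are then reached by linearity and suitable composition. Once both hypotheses of Lemma \ref{lem-free} are in place, we obtain the desired $*$-homomorphism $\bar\pi_X\colon A_\re\to\lL_B(X)$ satisfying $\pi_X=\bar\pi_X\circ\pi_\re$. The main obstacle is the density step: since the left action $\pi_X(A)$ only prepends a new leftmost factor or modifies the existing one, reaching mixed-type tensors with non-trivial $L^2(A,E)^\circ$-content in interior positions requires composing several $\pi_X(a)$ actions from carefully chosen seeds in $\Gamma$ and combining them with right $B$-multiplication and linearity.
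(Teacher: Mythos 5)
Your overall strategy---checking the two hypotheses of Lemma \ref{lem-free} for a cyclic subspace $\Gamma$---is the paper's, and your verification of the freeness condition is essentially correct: since each $a_k\in\ker E_{i(k)}$, no collapse occurs and the leftmost factor of $\pi_X(a_1\cdots a_p)\xi$ ends up with its $L^2(A_{i(1)},E_{i(1)})$-component in $L^2(A_{i(1)},E_{i(1)})^\circ$, hence orthogonal to $\Gamma$. The gap is in the density step, and for your choice of $\Gamma$ it is fatal: restricting \emph{every} tensor factor to $\xi_{E_{i(k)}}\otimes_B Z_{i(k)}^\circ$ makes $\Gamma$ too small to be cyclic. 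To see this, split each $X_i^\circ$ into the three mutually orthogonal, $\pi_{X_i}(B)$-invariant pieces $P_i:=\xi_{E_i}\otimes_B Z_i^\circ$, $Q_i:=L^2(A_i,E_i)^\circ\otimes_B\zeta_iB$ and $R_i:=L^2(A_i,E_i)^\circ\otimes_B Z_i^\circ$; this induces an orthogonal decomposition of each summand $X_{i(1)}^\circ\otimes_B\cdots\otimes_B X_{i(p)}^\circ$ indexed by words in $\{P,Q,R\}$. Now $\pi_X(a)$ either modifies the leftmost factor---never touching its $Z$-component, so it sends type $P$ and type $R$ into $P\oplus R$, and type $Q$ into $Q$ plus a collapsed term---or prepends a factor of type $Q$; right multiplication by $B$ preserves all types. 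Consequently $\ospan\,\pi_X(A)\Gamma$ only meets the configurations $Q^k$ and $Q^k\sigma P^m$ with $\sigma\in\{P,R\}$. A vector such as $(\xi_{E_1}\otimes z_1)\otimes\bigl((a\xi_{E_2})\otimes\zeta_2\bigr)$ with $z_1\in Z_1^\circ$ and $a\in\ker E_2$ has configuration $PQ$ and is therefore orthogonal to $\ospan\,\pi_X(A)\Gamma$. Your closing appeal to ``linearity and suitable composition'' cannot repair this: no composition of left actions ever alters the $Z$-component of a factor other than the leftmost one, so the pattern ``$Z^\circ$ followed by $\zeta$'' is simply unreachable from your seeds.

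The fix is the paper's asymmetric choice: take $\Gamma=\xi_0B\oplus\bigoplus_{i\in\cI}(\xi_{E_i}\otimes_B Z_i^\circ)\oplus\Lambda$ with $\Lambda=\bigoplus_{p\geq2}\bigoplus_{j\in\cI_p}(\xi_{E_{j(1)}}\otimes_B Z_{j(1)}^\circ)\otimes_B X_{j(2)}^\circ\otimes_B\cdots\otimes_B X_{j(p)}^\circ$, i.e.\ restrict only the \emph{first} factor and let the tail range over the full $X^\circ$'s. The freeness argument goes through verbatim (only the leftmost factor matters for the orthogonality), and density follows by induction on the tensor length $p$: writing $\fX_p$ for the length-$p$ part and $\fX_p^\bullet$ for its sub-summand whose head lies in $L^2(A_{i(1)},E_{i(1)})^\circ\otimes_B\zeta_{i(1)}B$, one gets $\fX_{p+1}^\bullet\subset\overline{\pi_X(A)\fX_p}$ by prepending, while $\fX_{p+1}\ominus\fX_{p+1}^\bullet$ is reached from $\Lambda$ by letting $A_{i(1)}$ act on the head alone---and this last step is exactly where the arbitrariness of the tail factors is needed.
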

\begin{proof}
Set $Z_i^\circ:= Z_i \ominus \zeta_i B$ and
define the closed submodule $\Lambda \subset X$ by
$$
\Lambda := \bigoplus_{ p \geq 2} \bigoplus_{j \in \cI_p} ( \xi_{E_{j(1)}} \otimes_B Z_{j(1)}^\circ) \otimes_B X_{j(2)}^\circ \otimes_B \cdots \otimes_B X_{j(p)}^\circ .
$$
We will show that the closed submodule $\Gamma := \xi_0 B \oplus \bigoplus_{i \in \cI}
( \xi_{E_i} \otimes_B Z_i^\circ)
\oplus \Lambda $ satisfies the condition in Lemma \ref{lem-free}.
Indeed, it is not hard to see that $\pi_X ( a_1 \cdots a_p ) \Gamma \perp \Gamma$ for all $p \geq 1 $, $i \in \cI_p$, and $a_k \in A^\circ_{i (k)}$.
Set $\fX_0:=\xi_0B$ and $\fX_p:=\bigoplus_{i \in \cI_p} X_{i(1)}^\circ \otimes_B \cdots \otimes_B X_{i(p)}^\circ$ for $p \geq 1$.
We also set $X_i^\bullet:=L^2(A_i, E_i)^\circ \otimes_B \zeta_i B =\overline{\pi_X (A_i^\circ) \xi_0}$ and $\fX_p^\bullet:=\bigoplus_{i \in \cI} X_{i(1)}^\bullet \otimes_B X_{i(2)}^\circ \otimes_B \cdots \otimes_B X_{i(p)}^\circ$ for $p \geq 1$.
Since $X=\bigoplus_{p \geq 0} \fX_p$, it suffices to show that $\ospan \pi_X(A)\Gamma$ contains $\fX_p$ for all $p \geq 0$.
The case that $p=0$ is trivial.
Suppose that $p \geq 1$ and $\fX_p \subset \ospan \pi_X(A) \Gamma$.
We observe that $\fX_{p+1}^\bullet \subset \overline{\pi_X(A) \fX_p}$ and $\fX_{p+1}\ominus \fX_{p+1}^\bullet = \bigoplus_{i \in \cI_{p+1}} ( L^2(A_{i(1)}, E_{i(1)})\otimes_B Z_{i(1)}^\circ)  \otimes_B  X_{i(2)}^\circ \otimes_B \cdots \otimes_B X_{i(p+1)}^\circ \subset \overline{ \pi_X(A)\Gamma}$, 
which implies that $\fX_{p+1} \subset \ospan {\pi_(X) \Gamma}$.
By induction and Lemma \ref{lem-free}, we are done.
 \end{proof}
 
\begin{lemma}[{cf.\ \cite[Proposition 4.2]{Germain-duke}}] \label{lem-throughAr2}
Under the notation above,
suppose that there exist $Z'_i \in \Corr (B)$ and unitary $W_i : X_i \otimes_B Z'_i \to X_i \otimes_B X_i$ satisfying that
\begin{itemize}
\item[(i)] $W_i ( \pi_{X_i} (b) \otimes 1_{Z'_i} ) W_i^* = \pi_{X_i} (b) \otimes 1_{X_i}$ for all $b \in B$;
\item[(ii)] $\xi_i \otimes \xi_i \in W_i (\xi_i B \otimes_B Z'_i)$.
\end{itemize}
Define $\phi_i : A_i \to \lL_B (X \otimes_B X)$ by $\phi_i (a) = \lambda_i^{X_i} (W_i (\pi_{X_i} (a) \otimes 1_{Z'_i})W_i^* ) \oplus \tau_i (a) \otimes 1_X$.
Then, the $*$-homomorphism $\phi:=\bigstar_{i \in \cI} \phi_i : A \to \lL_B(X \otimes_B X)$ factors through $A_\re$.
\end{lemma}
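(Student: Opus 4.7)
The plan is to mimic the proof of Lemma \ref{lem-throughAr} and reduce the statement to Lemma \ref{lem-free}: I will exhibit a subspace $\Gamma \subset X \otimes_B X$ with the property that $\phi(A)\Gamma$ is norm dense in $X \otimes_B X$ and that, for every alternating word $a_1 \cdots a_p$ with $i \in \cI_p$ and $a_k \in \ker E_{i(k)}$, one has $\i<\eta, \phi(a_1 \cdots a_p)\xi>=0$ whenever $\xi,\eta \in \Gamma$. Once this is done, Lemma \ref{lem-free} yields a $*$-homomorphism $\bar\phi : A_\re \to \lL_B(X \otimes_B X)$ with $\phi=\bar\phi\circ \pi_\re$.

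The natural candidate for $\Gamma$ is the direct analogue of the subspace built in the previous lemma, placed in the first tensor factor: with $\Lambda\subset X$ as in the proof of Lemma \ref{lem-throughAr}, take
\[
\Gamma := (\xi_0 B \otimes_B X) \;\oplus\; \bigoplus_{i \in \cI}\bigl( (\xi_{E_i} \otimes_B Z_i^\circ) \otimes_B X\bigr) \;\oplus\; (\Lambda \otimes_B X).
\]
Cyclicity of $\Gamma$ under $\phi(A)$ follows by the same induction on word length as in Lemma \ref{lem-throughAr}. Hypotheses (i) and (ii) enter here in an essential way: (i) says $W_i$ is $B$-bimodular, so that $\lambda_i^{X_i}\circ \Ad(W_i)$ respects the internal $B$-action connecting the two slots of $X \otimes_B X$, while (ii) controls the image of the ``vacuum'' $\xi_0 \otimes \xi_0$ under $W_i$, guaranteeing that the leftmost slot can be populated inductively with alternating tensors while the $\tau_i \otimes 1_X$ summand simultaneously fills in the right factor.

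The main obstacle is the freeness check. Expanding $\phi(a_1 \cdots a_p)=\prod_k\phi_{i(k)}(a_k)$ by the two-term decomposition of each $\phi_{i(k)}$ produces $2^p$ summands. The pure $\lambda_i^{X_i}(W_i(\pi_{X_i}(\cdot)\otimes 1)W_i^*)$-diagonal reduces, after conjugating by the $W_{i(k)}$'s and invoking (ii), to the freeness computation already performed for $\pi_X$ in Lemma \ref{lem-throughAr}, hence annihilates $\Gamma$. The pure $\tau_i(\cdot)\otimes 1_X$-diagonal reduces to the same argument applied to the right factor. The remaining $2^p-2$ mixed terms are the delicate case; the key tool is Proposition \ref{prop-left-right}: each $\rho$-type factor sitting to the left of a $\lambda$-type factor with a different index commutes freely with it by the $\delta_{ij}$ in that proposition, whereas same-index crossings produce a correction proportional to the projection $P_{X_{i(k)}}$, which pushes the resulting vector into the $\xi_0$-free part of both tensor slots and is therefore orthogonal to $\Gamma$. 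After all shuffling, each surviving mixed term factorises as a tensor product of one alternating word in left actions and one in right actions, and each of these kills $\Gamma$ by the freeness already verified on the respective factor.

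This organisation of the $2^p$ expansion, and the systematic use of Proposition \ref{prop-left-right} to dispose of mixed terms, is where the bulk of the work lies; the rest of the argument parallels the proof of Lemma \ref{lem-throughAr} and concludes via Lemma \ref{lem-free}.
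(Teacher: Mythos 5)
There is a genuine gap, and it is in the choice of $\Gamma$. You take the cyclic subspace from Lemma \ref{lem-throughAr} and tensor it with $X$ on the right, but this ignores the twisting by $W_i$. On the block $X_i \otimes_B X$ the operator $\phi(a)$, $a \in A_i^\circ$, acts (after conjugating by $1_{X_i}\otimes v_i$) as $W_i(\pi_{X_i}(a)\otimes 1_{Z_i'})W_i^* \otimes 1_{X(\lambda,i)}$, and hypothesis (ii) only controls $W_i^*$ on the single vector $\xi_i \otimes \xi_i$ (and its $B$-multiples). For a vector such as $\xi \otimes \zeta$ with $\xi \in \xi_{E_i}\otimes_B Z_i^\circ$ (which lies in your $\Gamma$), the freeness condition would require
\[
\bigl\langle W_i^*(\xi'\otimes \xi_i),\, (\pi_{X_i}(a)\otimes 1_{Z_i'})\,W_i^*(\xi\otimes \xi_i)\bigr\rangle = 0
\]
for all $a \in \ker E_i$, and nothing in (i)--(ii) forces this: $W_i^*(\xi\otimes\xi_i)$ is an uncontrolled vector of $X_i\otimes_B Z_i'$. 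The same problem already occurs on the summand $\xi_0 B\otimes_B X$ of your $\Gamma$ when the second tensor leg lies in $X_i^\circ$. The paper's proof avoids this by taking as building blocks the \emph{pullbacks} through the twisting unitaries, namely $\Gamma_i := V_i^*(W_i\otimes 1_{X(\lambda,i)})\bigl(\xi_{E_i}\otimes_B Z_i\otimes_B Z_i'\otimes_B X(\lambda,i)\bigr)$ with $V_i = 1_{X_i}\otimes v_i$, keeps only $\xi_0\otimes_B\xi_0 B$ rather than all of $\xi_0 B\otimes_B X$, and adds $\Lambda\otimes_B X$; on $\Gamma_i$ the twisted action of $\ker E_i$ is genuinely orthogonalizing because $\pi_{E_i}(a)\xi_{E_i}\perp \xi_{E_i}B$. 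Your $\Gamma$ would only work if $W_i$ intertwined $\pi_{X_i}(a)\otimes 1$ with $\pi_{X_i}(a)\otimes 1$ for all $a\in A_i$, which is exactly the situation the lemma is designed to go beyond.

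A secondary issue is the proposed mechanism for the freeness check. The decomposition $\phi_i(a)=\lambda_i^{X_i}(\cdots)\oplus \tau_i(a)\otimes 1_X$ is block diagonal with respect to $X\otimes_B X=(X_i\otimes_B X)\oplus((X\ominus X_i)\otimes_B X)$, so each $\phi_{i(k)}(a_k)$ is a single operator; the $2^p$-term expansion into "pure" and "mixed" words, with mixed terms factoring as tensor products of alternating left and right words, does not reflect the actual structure, and Proposition \ref{prop-left-right} (which concerns commutation of the left free-product action with right actions, and is used in Lemma \ref{lem-unitary}) is not the relevant tool here. The paper instead observes that $\Gamma$ is contained in $[\fX_0\oplus\fX_1\oplus\bigoplus_{p\ge2}(\fX_p\ominus\fX_p^\bullet)]\otimes_B X$ while $\phi(a_1\cdots a_q)\Gamma\subset\bigoplus_{p\ge2}\fX_p^\bullet\otimes_B X$ for alternating words of length $q\ge 2$, which gives the orthogonality directly.
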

\begin{proof}
We set $V_i:=1_{X_i} \otimes v_i \in \lL_B (X_i \otimes_B X,  X_i \otimes_B X_i \otimes_B X(\lambda, i ) )$.
Let $\fX_p, X_i^\bullet, \fX_p^\bullet$ and $\Lambda$ be as in the proof of Lemma \ref{lem-throughAr}.
Define the subspace $\Gamma_i \subset X_i\otimes_B X$ by
$$
\Gamma_i:=
V_i^* ( W_i \otimes 1_{X (\lambda, i)} )  \left(  \xi_{E_i} \otimes_B Z_i  \otimes_B Z'_i \otimes_B X (\lambda, i) \right).
$$
By (ii) the $\xi_0 \otimes \xi_0$ belongs  to $\Gamma_i$.
Set $\Gamma_i^\circ := \Gamma_i \ominus \xi_0 \otimes_B \xi_0 B$.
We will show that the subspace
$
\Gamma:=
\xi_0 \otimes_B \xi_0 B \oplus \bigoplus_{i \in \cI} \Gamma_i^\circ
\oplus  \Lambda \otimes_B X
$
satisfies the condition in Lemma \ref{lem-free}.
We note that 
\begin{equation}
\Gamma \subset \left[ \fX_0 \oplus \fX_1 \oplus  \bigoplus_{p=2}^\infty ( \fX_p \ominus \fX_p^\bullet )  \right] \otimes_B X.\label{eq-Gamma}
\end{equation}
Fix $i \in \cI$ and $a \in A_i^\circ$ arbitrarily and show that $\phi (a) \Gamma \perp \Gamma$.
By definition, we have
$$
\phi(a) = V_i^* (W_i \otimes 1_{X(\lambda, i)} ) ( \pi_{E_i} (a) \otimes 1_{Z_i} \otimes 1_{Z'_i} \otimes 1_{X(\lambda, i)} )(W_i \otimes 1_{X(\lambda, i)} )^* V_i \oplus \tau_i (a) \otimes 1_X,
$$
which implies that $\phi (a) \Gamma_i \perp \Gamma$.
For any $j \in \cI$ with $i \neq j$
the fact that $\phi (a) \Gamma_j^\circ \subset \fX_2^\bullet$ and the (\ref{eq-Gamma}) implies $\phi (a) \Gamma_j \perp \Gamma$.
We also have $\phi (\Lambda \otimes_B X) \perp \Gamma$ by (\ref{eq-Gamma}).
Next, let $q \geq 2$ and $a_{k} \in A_{i(k)}^\circ, 1 \leq k \leq q$ be arbitrary.
Because $\phi (a_1 \cdots a_q) \Gamma \subset \bigoplus_{p \geq 2} \fX_p^\bullet$ we get $\phi (a) \Gamma \perp \Gamma$ by (\ref{eq-Gamma}).

Finally, we prove that $ \Gamma_0:=\ospan \phi (A) \Gamma$ contains $\fX_p \otimes_B X$ for all $p \geq 0$.
The case of $p=0$ is trivial.
When $p=1$,
it follows that $X_i \otimes_B X \subset \Gamma_0$ since 
\begin{align*}
 \phi (A_i) \Gamma_i
=V_i  \left [ W_i  \left (  \pi_{X_i}(A_i)  \xi_{E_i} \otimes Z_i \otimes_B Z'_i \right)  \otimes_B X( \lambda, i) \right],
\end{align*}
which is dense in $V_i^* [ W_i (X_i \otimes_B Z'_i) \otimes_B X( \lambda, i) ] = X_i \otimes_B X.$
Since $\ospan ( \phi (A) \Lambda \otimes_B X )$ contains $\bigoplus_{p \geq 2} ( \fX_p \ominus \fX_p^\bullet) \otimes_B X$,
we only have to show that $\fX_p^\bullet \otimes_B X \subset \Gamma_0$ for $p \geq 2$.
This is done by induction.
Suppose that $\fX_p^\bullet \otimes_B X \subset \Gamma_0$ with $p \geq 2$.
We then have $\fX_p \otimes_B X \subset \Gamma_0$.
Since the restriction of $\phi (a)$ on $\fX_p \otimes_B X$ equals $\pi_X (a) \otimes 1_X$ for $a\in A$.
Thus, $\fX_{p+1}^\bullet \otimes_B X \subset  ( \ospan \pi_X (A) \fX_p )\otimes_B X \subset \Gamma_0$.
Therefore, by induction we get $\Gamma_0= X\otimes_B X$.
\end{proof}

Suppose that for each $i \in \cI$ there exists a unitary $U_i : A_i \oplus Y_i \to Y_i$ satisfies that $U_i ( 1_{A_i} \oplus 0 ) = \xi_i \otimes 1_{A_i}$ and $U_i ( b \oplus \pi_{Y_i} (b) )U_i^* = \pi_{Y_i} (b)$ for all $b \in B$.
Note that $Y \cong Y_i \otimes_{A_i} A \oplus (X \ominus X_i) \otimes_B A$ for all $i \in \cI$.
We define $\psi_i : A_i \to \lL_{A_i} (Y_i)$ by $\psi_i (a) = U_i ( a \oplus \pi_{Y_i} (a) )U_i^*$ and set $\psi:=\bigstar_{i \in \cI} ( \psi_i \otimes 1_A \oplus \tau_i \otimes 1_A)$, where $\tau_i : A_i \to \lL_B (X \ominus X_i)$ is defined by $\tau (a):=\pi_X(a)|_{X\ominus X_i}$.

\begin{lemma}\label{lem-unitary}
Under the notation above, let $S_i \in \pi_{Y_i} (B)' \cap \lL_{A_i} (Y_i)$ be the isometry defined by $U_i |_{ 0 \oplus Y_i}$.
Then, the isometries $\rho_i^{A} (S_i  \otimes 1_A) \in \lL_A (Y)$ satisfy that $P_A +\sum_{i \in \cI} \rho_i^A (S_i \otimes 1_A)\rho_i^A (S_i \otimes 1_A)^* =1_Y$, where $P_A \in \lL_A (Y)$ is the projection onto $\xi_0B \otimes_B A$.
Moreover, the unitary $U := P_A \oplus \bigoplus_{i \in \cI} \rho_i^A (S_i \otimes 1_A) : A \oplus Y\otimes \ell^2 (\cI) \to Y$ satisfies that $\psi (a) = U (a \oplus \pi_Y (a) \otimes 1_{\ell^2(\cI)} ) U^*$ for all $a \in A$.
\end{lemma}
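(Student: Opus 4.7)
The plan is to exploit the decomposition $U_j = T_j \oplus S_j$, where $T_j : A_j \to Y_j$ is the isometry $a \mapsto \xi_j \otimes a$ forced by $U_j(1_{A_j} \oplus 0) = \xi_j \otimes 1_{A_j}$ together with right $A_j$-linearity of $U_j$. Unitarity of $U_j$ then yields the relations $T_j T_j^* + S_j S_j^* = 1_{Y_j}$, $T_j^* T_j = 1_{A_j}$, $S_j^* S_j = 1_{Y_j}$, $T_j^* S_j = 0$, and the decomposition $\psi_j(a) = T_j a T_j^* + S_j \pi_{Y_j}(a) S_j^*$. From these, two key identities drive the rest of the proof:
\[
\psi_j(a) T_j = T_j a, \qquad \psi_j(a) S_j = S_j \pi_{Y_j}(a).
\]
For the first assertion, the assumption $S_i \in \pi_{Y_i}(B)'$ makes $\rho_i^A$ multiplicative on the algebra generated by $S_i \otimes 1_A$, so $\rho_i^A(S_i \otimes 1_A)\rho_i^A(S_i \otimes 1_A)^* = \rho_i^A(S_i S_i^* \otimes 1_A) = 1_Y - \rho_i^A(T_i T_i^* \otimes 1_A)$. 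A direct reading of the definition of $w_i$ (using $w_i^*(\eta \otimes \xi_i) = \eta$ for $\eta \in X(\rho,i)$) identifies $\rho_i^A(T_i T_i^* \otimes 1_A)$ with the orthogonal projection of $Y$ onto $X(\rho,i) \otimes_B A$, so each summand $\rho_i^A(S_i \otimes 1_A)\rho_i^A(S_i \otimes 1_A)^*$ is the projection onto $(X \ominus X(\rho,i)) \otimes_B A$. The subspaces $X \ominus X(\rho,i) = \bigoplus_{p \geq 1,\,i(p)=i} X_{i(1)}^\circ \otimes_B \cdots \otimes_B X_{i(p)}^\circ$ are pairwise orthogonal in $X$ and together span $X \ominus \xi_0 B$, whence $P_A + \sum_i \rho_i^A(S_i \otimes 1_A)\rho_i^A(S_i \otimes 1_A)^* = 1_Y$ and $U$ is unitary.

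For the intertwining identity $\psi(a) = U(a \oplus \pi_Y(a) \otimes 1_{\ell^2(\cI)})U^*$, both sides are $*$-homomorphisms in $a$ that coincide on $B$ with $\pi_Y|_B$: the $B$-centrality of $\xi_0$ handles the $A$-summand, and $\rho_i^A(S_i \otimes 1_A)$ commutes with $\pi_Y|_B$ by Proposition \ref{prop-left-right} since $S_i \otimes 1_A$ is $B$-linear. By universality of $A = \bigstar_{B, i \in \cI} A_i$ it suffices to check the equality on each $A_j$; equivalently, to verify $\psi(a) U = U(a \oplus \pi_Y(a) \otimes 1)$ on each summand. The $A$-summand reduces to $\psi(a)(\xi_0 \otimes c) = \xi_0 \otimes ac$, which follows from $\psi_j(a) T_j = T_j a$ after the identification $\xi_0 \otimes c = T_j(1_{A_j}) \otimes_{A_j} c$. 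For the $i$-th copy of $Y$ with $i \neq j$, Proposition \ref{prop-left-right} gives $[\pi_Y(a), \rho_i^A(S_i \otimes 1_A)] = 0$; combined with the observation that the image of $\rho_i^A(S_i \otimes 1_A)$ lies in $(X \ominus X_j) \otimes_B A$, on which $\psi(a)$ agrees with $\pi_Y(a)$ by the definition of $\tau_j$, this yields $\psi(a)\rho_i^A(S_i \otimes 1_A) = \rho_i^A(S_i \otimes 1_A) \pi_Y(a)$.

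The main obstacle is the case $i = j$, where one must show $\psi(a)\rho_j^A(S_j \otimes 1_A) = \rho_j^A(S_j \otimes 1_A)\pi_Y(a)$. Proposition \ref{prop-left-right} supplies the commutator formula $[\pi_Y(a), \rho_j^A(S_j \otimes 1_A)] = \rho_j^A([\pi_{X_j}(a) \otimes 1_A, S_j \otimes 1_A])(P_{X_j} \otimes 1_A)$, and the second key identity, tensored with $1_A$ over $A_j$, rewrites the inner commutator as $((\pi_{Y_j}(a) - \psi_j(a))S_j) \otimes_{A_j} 1_A$. On the other hand $\psi(a) - \pi_Y(a)$ vanishes on $(X \ominus X_j) \otimes_B A$ and equals $(\psi_j(a) - \pi_{Y_j}(a)) \otimes_{A_j} 1_A$ on $X_j \otimes_B A$; exploiting multiplicativity of $\rho_j^A$ and careful bookkeeping, the commutator contribution is matched with $-(\psi(a) - \pi_Y(a))\rho_j^A(S_j \otimes 1_A)$, closing the argument. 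The technical delicacy lies in tracking the interplay of the two tensor products ($\otimes_B$ versus $\otimes_{A_j}$), the $B$-linearity conditions implicit in the formula for $\rho_j^A$, and the cut-off projection $P_{X_j} \otimes 1_A$ that mediates between $X_j \otimes_B A$ and $Y$.
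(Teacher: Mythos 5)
Your proof is correct and follows essentially the same route as the paper's: both reduce to checking the intertwining on generators $a\in A_j$ summand by summand, using the relations forced by unitarity of $U_j$ (equivalently $\psi_j(a)U_j=U_j(\lambda_{A_j}(a)\oplus\pi_{Y_j}(a))$), the identification of $\rho_i^A(S_i\otimes 1_A)\rho_i^A(S_i\otimes 1_A)^*$ with the projection onto the words ending in $X_i^\circ$, and Proposition \ref{prop-left-right}. The one organizational difference is the $i=j$ summand: the paper splits it as $Y_j\otimes_{A_j}A\oplus(X\ominus X_j)\otimes_B A$, absorbs the first piece together with the $A$-summand into the identity $U|_{A\oplus Y_j\otimes_{A_j}A\otimes\lC\delta_j}=U_j\otimes 1_A$ and lets the cut-off $P_{X_j}\otimes 1_A$ annihilate the commutator on the second piece, whereas you evaluate the commutator correction $\rho_j^A\bigl(((\psi_j(a)-\pi_{Y_j}(a))S_j)\otimes 1_A\bigr)(P_{X_j}\otimes 1_A)$ explicitly and cancel it against $(\pi_Y(a)-\psi(a))\rho_j^A(S_j\otimes 1_A)$ --- a valid, if slightly heavier, bookkeeping.
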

 \begin{proof}
We observe that the image of $\rho_i^A (S_i \otimes 1_A)$ is
$
\bigoplus_{p \geq 1} \bigoplus_{j \in \cI_p, j(p)=i} X_{j(1)}^\circ \otimes_B \cdots \otimes_B X_{j(p)}^\circ \otimes_B A.
$
Since these subspaces are mutually orthogonal and span $Y \ominus P_A Y$, we get the first and the second assertions.

To see the second assertion, it suffices to see that $\psi (a) = U (\lambda_A (a) \oplus \pi_Y (a)  \otimes 1_{\ell^2 (\cI )}) U^*$ for all $i \in \cI$ and $a \in A_i$.
Thus, we fix $i \in \cI$ and $a\in A_i$.
Recall that $\psi (a)=[ ( U_i(\lambda_{A_i} (a) \oplus \pi_{Y_i} (a) ) U_i^* )\otimes 1_A ] \oplus \tau_i (a) \otimes 1_A = [ ( U_i \otimes 1_A)(\lambda_A(a) \oplus \pi_{Y_i} (a) \otimes 1_A )( U_i \otimes 1_A)^*] \oplus \tau_i (a) \otimes 1_A.$
Identifying $Y$ with $Y \otimes \lC \delta_i$ we have
\begin{align*}
\psi (a) U|_{A\oplus Y_i \otimes_{A_i} A \otimes \lC \delta_i }
&=\psi (a) (U_i \otimes 1_A)|_{A\oplus Y_i \otimes_{A_i} A } \\
&= ( U_i \otimes 1_A)(\lambda_A(a) \oplus \pi_{Y_i} (a) \otimes 1_A)|_{A\oplus Y_i \otimes_{A_i} A } \\
&=U (\lambda_A (a) \oplus \pi_Y (a) \otimes  1_{\ell^2(\cI)} )|_{A\oplus Y_i \otimes_{A_i} A \otimes \lC \delta_i }.
\end{align*}
By Proposition \ref{prop-left-right}, we also have
\begin{align*}
\psi (a) U |_{( X\ominus X_i ) \otimes_ B A \otimes \lC \delta_i }
&=( \tau_i (a) \otimes 1_A ) \rho_i^A (S_i \otimes 1_A) |_{( X\ominus X_i ) \otimes_ B A} \\
&=( \lambda_i (\pi_{X_i} (a)) \otimes 1_A ) \rho_i^A (S_i \otimes 1_A) |_{( X\ominus X_i ) \otimes_ B A} \\
&=\rho_i^A(S_i \otimes 1_A)( \lambda_i (\pi_{X_i} (a)) \otimes 1_A ) |_{( X\ominus X_i ) \otimes_ B A} \\
&=U ( \lambda_A (a) \oplus \pi_Y (a) \otimes 1_{\ell^2(\cI)} )|_{( X\ominus X_i ) \otimes_ B A \otimes \lC \delta_i }.
\end{align*}
Similarly, for any $j \in \cI$ with $j \neq i$, identifying $Y \otimes \lC \delta_j$ with $Y$ we have
\begin{align*}
\psi (a) U |_{Y \otimes \lC \delta_j }
&=( \tau_i (a) \otimes 1_A ) \rho_j^A (S_j \otimes 1_A) \\
&=( \lambda_i (\pi_{X_i} (a)) \otimes 1_A ) \rho_j^A (S_j \otimes 1_A) \\
&=\rho_j^A(S_j \otimes 1_A)( \lambda_i (\pi_{X_i} (a)) \otimes 1_A )\\
&=U ( \lambda_A (a) \oplus \pi_Y (a) \otimes 1_{\ell^2(\cI)} )|_{Y\otimes \lC \delta_j }.
\end{align*}
Hence, we get $\psi (a) U  = U ( L(a)  \oplus \pi_Y (a) \otimes 1_{\ell^2(\cI)} )$ for $a \in A_i$.
\end{proof}

%
%
We are now ready to prove Theorem \ref{thm-KK}.
\begin{proof}[Proof of Theorem \ref{thm-KK}]
Let $A:=\bigstar_{B, i \in \cI} A_i$ and $A_\re:=\bigstar_{B, i \in \cI} (A_i, E_i)$ and $\pi_\re : A \to A_\re$ be the canonical surjection.
We show that there exists $\cY \in \lE (A_\re, A)$ such that $\pi_\re \otimes_{A_\re} [ \cY] + \id_A = 0$ and $[ \cY ] \otimes_A \pi_\re + \id_{A_\re}=0$.
To simplify the notation we omit the canonical inclusion maps $\iota_i : A_i \to A$ and $\iota : B \to A$.
Put $(X_i, \pi_{X_i}, \xi_i):=(L^2(A_i, E_i) \otimes_B Z_i^\infty, \pi_{E_i} \otimes 1_{Z_i^\infty}, \xi_{E_i} \otimes \eta_i  \otimes \delta_1)$ and
$(Y_i ,\pi_{Y_i} ):= (X_i \otimes_B A_i, \pi_{X_i} \otimes 1_{A_i}) \in \Corr (A_i)$.
Define $(X,\pi_X) \in \Corr (A, B)$ by $(X, \xi_0):=\bigstar_{i \in \cI} (X_i, \xi_i)$ and $\pi_X:=\bigstar_{ i\in \cI} (\lambda_i \circ\pi_{X_i})$, where $\lambda_i : \lL_B (X_i) \to \lL_B (X)$ is the canonical left action and set $(Y, \pi_Y):=(X \otimes_B A, \pi_X \otimes 1_A)$.
We identify $X_i$ with the canonical copy of $X_i$ in $X$ and $\xi_i$ with $\xi_0$.
We also use the notation that $A_i^\circ = \ker E_i$ and $X_i^\circ = X_i \ominus \xi_0 B$.
Since $Y_i =(L^2(A_i, E_i)\otimes_B \otimes Z_i \otimes_B A_i )^\infty$, 
by Theorem \ref{thm-K-nuclear}, there exist unitaries $U^{(i)} \in \lL_{CA_i} (CA_i \oplus IY_i, IY_i)$ satisfying the following:
\begin{itemize}
\item[(1)] the triple $\cX_i=(IY_i \hoplus IY_i, U^{(i)} ( \lambda_{A_i} \otimes 1_{C_0[0,1)} \oplus \pi_{IY_i} )U^{(i)*} \hoplus \pi_{IY_i}, J_{IY_i})$ forms a Kasparov $A_i$-$IA_i$ bimodule;
\item[(2)] $\{ U^{(i)}_t \}_{0 \leq t <1}$ satisfies that $U_t ( \lambda_A(b) \oplus \pi^\infty_X(b))U_t^*=\pi_X^\infty(b)$ for all $b\in B$ and $t\in [0,1)$;
\item[(3)] the evaluation $U^{(i)}_1$ of $U$ at $1$ equals $1_{Y_i}$;
\item[(4)] the evaluations $\{U_t^{(i)} \}_{0\leq t \leq 1}$ enjoy that
$U^{(i)}_t ( \cos (\pi t) 1_{A_i} \oplus \sin (\pi t) \xi_i \otimes 1_{A_i} ) = \xi_i \otimes 1_{A_i}$ for $0 \leq t \leq 1/2$ and $U_t (0 \oplus \xi_i \otimes 1_{A_i})= \xi_i\otimes 1_{A_i}$ for $1/2 \leq t < 1$.
\end{itemize}
Let $\tau_i : A_i \to \lL_B (X\ominus X_i)$ be the compression of $\pi_{X} \circ \iota_i$ on $X\ominus X_i$.
We note that the isomorphism
$IY=IY_i \otimes_{IA_i} IA \oplus  (X\ominus X_i) \otimes_B IA$
induces
$$
(IY, \pi_{IY})
\cong
\left( IY,
\us \bigstar_{i \in \cI} \left( \pi_{I Y_i} \otimes 1_{IA}  \oplus \tau_i \otimes 1_{IA} \right) \right).
$$
Set $\psi^{(i)}:=U^{(i)} ( L_{A_i} \otimes 1_{C_0[0,1)} \oplus \pi_{IY_i})U^{(i)*} : A_i \to \lL_{IA_i} (IY_i)$.
By (2) it follows that $\psi^{(i)}|_B = \pi_{Y_i} |_B$ for all $i \in \cI$.
Hence, by the universality of $A$ we have the $*$-homomorphism
$$
\psi=\us\bigstar_{i \in \cI} \left( \psi^{(i)} \otimes 1_{IA} \oplus \tau_i \otimes 1_{IA} \right) : A \to \lL_{IA} (IY).
$$

We claim that the triple $\cX:=(IY \hoplus IY, \psi \hoplus \pi_{IY}, J_{IY} )$ defines a Kasparov $A$-$IA$ bimodule.
It suffices to show that for any $a \in A$, we have $\psi (a) - \pi_{IY} (a) \in \lK_{IA} (IY)$.
We may assume that $a$ has the form of $a_1 \cdots a_p$,
where  $p \in \lN$ and $i \in \cI_p$ and $a_k \in A_{i(k)}$.
We then have
\begin{align*}
\psi (\prod_{k=1}^p a_k ) - \pi_{IY} (\prod_{k=l}^p a_l) 
=\sum_{k=1}^p ( \prod_{n=1}^{k-1} \psi (a_n) ) ( \psi (a_k) - \pi_{IY} (a_k) ) ( \prod_{m=k+1}^p \pi_{IY} (a_m) ).
\end{align*}
This is compact since $\psi (a_k) - \pi_{IY} (a_k)$ equals 0 on $(X\ominus X_k) \otimes_B IA$ and equals $( \psi^{(i(k))} (a_k) - \pi_{IY_k} (a_k) ) \otimes 1_{IA} \in \lK_{IA_k} (IY_k) \otimes \lK_{IA} (IA)$ on $IY_k \otimes_{IA_k} IA$.

By (4) each unitary $U_0^{(i)}: A \oplus Y_i \to Y_i$ satisfies that $U_0^{(i)} (1_A \oplus 0 ) = \xi_i \ \otimes 1_{A_i}$.
Set $S_i := U^{(i)}_0|_{0\oplus Y_i}$, which is an isometry in ${}_B\lL_{A_i} (Y_i)$.
Let $\psi_t$ be the evaluation of $\psi$ at $t \in [0,1]$.
We observe that $\psi_0 (a) = \bigstar_{i \in \cI} ( U^{(i)}_0 ( L_{A_i} \oplus \pi_{Y_i}  )U_0^{(i)*} \oplus \tau_i \otimes 1_A )$. 
By Lemma \ref{lem-unitary} the operator $U := P_A \oplus \bigoplus_{i \in \cI} \rho_i^A (S_i \otimes 1_A) : A \oplus Y\otimes \ell^2 (\cI) \to Y$ is a unitary satisfying that $\psi_0 (a) = U (\lambda_A (a) \oplus \pi_Y (a) \otimes 1_{\ell^2(\cI)} ) U^*$ for all $a \in A$.
Thus, we have
\begin{align*}
\cX_0 &= ( Y \hoplus Y, \psi_0 \hoplus \pi_Y , J_{Y} ) = ( Y \hoplus Y,U (\lambda_A \oplus \pi_Y \otimes 1_{\ell^2 (\cI )}) U^* \hoplus \pi_Y , J_{Y} ) \\
& \cong \left(A \oplus Y \otimes \ell^2 (\cI)  \hoplus Y, \lambda_A \oplus \pi_Y \otimes 1_{\ell^2 (\cI)} \hoplus \pi_Y, \begin{bmatrix} 0& U^* \\ U & 0 \end{bmatrix} \right).
\end{align*}
Since $U$ is a compact perturbation of $S:=\bigoplus_{i \in \cI} \rho_i^A (S_i \otimes 1_A) \lL_A (Y \otimes \ell^2 (\cI), Y)$,
this Kasparov bimodule is homotopic to
$$
(A \hoplus 0, \lambda_A \hoplus 0, 0 ) \oplus \left( Y \otimes \ell^2( \cI)  \hoplus Y, \pi_Y  \otimes 1_{\ell^2(\cI )} \hoplus \pi_Y, \begin{bmatrix} 0& S^* \\ S & 0 \end{bmatrix} \right).
$$
Since evaluations of $\cX_i$'s at 1 are degenerate, so is the one of $\cX$.
By Lemma \ref{lem-throughAr} there exists a $*$-homomorphism $\bar{\pi}_\re : A_\re \to \lL_A (Y)$ such that $\pi_X = \bar{\pi}_X \circ \pi_\re$.
Thus, letting
\[
\cY:= \left( Y \otimes \ell^2 (\cI)  \hoplus Y, ( \bar{\pi}_X \otimes 1_A) \otimes 1_{\ell^2( \cI)} \hoplus \bar{\pi}_X \otimes 1_A , \begin{bmatrix} 0& S^* \\ S & 0 \end{bmatrix} \right)
\]
we have $0 = [ \cX_1] = [ \cX_0 ] = \id_A + \pi_\re \otimes_{A_\re} [\cY]$.

\medskip
We next prove that $[\cY ] \otimes_A \pi_\re  + \id_{A_\re}=0$ in $KK (A_\re, A_\re)$.
Our proof depends on the argument in \cite[Section 4]{Germain-duke}.
We note that $[\cY ] \otimes_A \pi_\re$ is represented by
$$
\cY_{\pi_\re} = \left( X \otimes_B A_\re \otimes \ell^2 (\cI)  \hoplus X \otimes_B A_\re, ( \bar{\pi}_X \otimes 1_{A_\re}) \otimes 1_{\ell^2( \cI)} \hoplus \bar{\pi}_X \otimes 1_{A_\re} , \begin{bmatrix} 0& S_{\pi_\re}^* \\ S_{\pi_\re} & 0 \end{bmatrix} \right).
$$
Let $\widetilde{\pi}_\re :=\pi_\re  \otimes \id_{C[0,1]} :IA \to IA_\re$ and consider the pushout of $\cX$ by $\widetilde{\pi}_\re$,
which is the Kasparov $A$-$A_\re$ bimodule
$(I(X\otimes_B A_\re) \hoplus I(X\otimes_B A_\re), \psi_{\widetilde{\pi}_\re } \hoplus (\pi_X \otimes 1_{A_\re}) \otimes 1_{C[0,1]}, J_{I(X\otimes_B A_\re)}).$

We claim that the $*$-homomorphism $\psi_{\widetilde{\pi}_\re }$ factors through $A_\re$.
For any $ a \in A$ we have $ \|\psi_{\widetilde{\pi}_\re } (a) \| = \sup_{0 \leq t \leq 1} \|( \psi_{\widetilde{\pi}_\re })_t (a) \|$.
Since $ (\psi_{\widetilde{\pi}_\re })_t = (\psi_t)_{\pi_\re } : A \to \lL_{A_\re} (X\otimes_B A_\re )$,
it is sufficient to see that  the $(\psi_t)_{\pi_\re }$ factors through $A_\re$ for every $t \in [0,1]$.
Furthermore, since $\bar{\pi}_X : A_\re \to \lL_B (X)$ is injective, it enoughs to show that $\psi_t \otimes 1_X: A \to \lL_B (Y \otimes_A X)= \lL_B (X\otimes_B X)$ factors through $A_\re$.
Recall that $\lambda_i^{X_i}: \lL_B (X_i \otimes_B X_i ) \to \lL_B (X_i \otimes_B X)$ is the $*$-homomorphism given by $x \mapsto (1_{X_i} \otimes v_i^*)(x \otimes 1_{X(\lambda, i)} ) (1_{X_i} \otimes v_i).$
Now we have
$$
\psi_t \otimes 1_X = \us\bigstar_{i \in \cI} \left[ \psi_t^{(i)} \otimes 1_X \oplus \tau_i \otimes 1_X \right]
=\us\bigstar_{i \in \cI} \left[  \lambda_{i}^{X_i}( \psi_t^{(i)} \otimes 1_{X_i}) \oplus \tau_i \otimes 1_X \right].
$$
Consider the natural isomorphism $T_i : (A_i \oplus Y_i ) \otimes_{A_i} X_i \cong X_i \oplus X_i \otimes_B X_i \cong X_i \otimes_B (B \oplus X_i)$ and set $W_t^{(t)}:=(U_t^{(i)} \otimes 1_{X_i} )\circ T_i^* : X_i \otimes_B (B \oplus X_i) \to X_i \otimes_B X_i$. 
We then have
$$
\psi_t^{(i)} \otimes 1_{X_i} = W_{t}^{(i)} (\pi_{X_i} \otimes 1_{B \oplus X_i}) W_t^{(t)*}.
$$
We note that $\xi_i \otimes \xi_i=W_t^{(i)} (\xi_i \otimes (\cos (\pi t) 1_B \oplus \sin (\pi t ) \xi_i) ) $ for $0 \leq t \leq 1/2 $ and $\xi_i \otimes \xi_i =W_t^{(i)} ( \xi_i \otimes (0 \oplus \xi_i ) $ for $1/2 \leq t \leq 1$.
Hence, Lemme \ref{lem-throughAr2} says that $\psi_{\widetilde{\pi}_\re}$ factors through $A_\re$.

Thus, there exists a $*$-homomorphism $\phi : A_\re \to \lL_{IA_\re} (I(X\otimes_B A_\re))$ satisfies that $\psi_{\widetilde{\pi}_\re } = \phi \circ \pi_\re$.
Then $\cZ=(
I(X \otimes_B A_\re) \hoplus I(X\otimes_B A_\re),
\phi \hoplus (\bar{\pi}_X \otimes 1_{A_\re}) \otimes 1_{C[0,1]},
J_{I(X\otimes_B A_\re)}
)$ forms a Kasparov $A_\re$-$IA_\re$ bimodule and satisfies that $\pi_\re^* ( \cZ ) = \cX_{\widetilde{\pi}_\re}$.
We note that $\pi_\re^* ( \cZ_0)
=(\pi_\re^* (\cZ))_0
= ( \cX_{\widetilde{\pi}_\re} )_0
=(\cX_{\widetilde{\pi}_\re })_0$,
which consists of the Hilbert $\rC^*$-module $ A_\re \oplus (X \otimes_B A_\re) \otimes \ell^2 (\cI)  \hoplus X \otimes_B A_\re$,
the left action $ \pi_\re \oplus ( \pi_X \otimes 1_{A_\re}) \otimes 1_{\ell^2 (\cI)} \hoplus ( \pi_X \otimes 1_{A_\re})$,
and the degree 1 operator $\left[ \begin{smallmatrix} 0& U_{\pi_\re}^* \\ U_{\pi_\re} & 0 \end{smallmatrix} \right]$.
Hence, the evaluation $\cZ_0$ is just
$$
\left( A_\re \oplus (X \otimes_B A_\re) \otimes \ell^2 (\cI)  \hoplus X \otimes_B A_\re, \lambda_{A_\re }\oplus ( \bar{\pi}_X \otimes 1_{A_\re}) \otimes 1_{\ell^2 (\cI)} \hoplus ( \bar{\pi}_X \otimes 1_{A_\re}), \begin{bmatrix} 0& U_{\pi_\re}^* \\ U_{\pi_\re} & 0 \end{bmatrix} \right).
$$
Since $[ \cZ_1] =0$ and $U_{\pi_\re}$ is a compact perturbation of $S_{\pi_\re}$,
it follows that $0 = [\cZ_0]= \id_{A_\re} + [\cY_{\pi_\re}] = \id_{A_\re} + [\cY] \otimes_{A} \pi_\re$.
\end{proof}

\setcounter{theorem}{0}
\renewcommand{\thetheorem}{\arabic{section}.\arabic{theorem}}

\section{Applications}\label{sec-app}
In this section, we give some applications in $KK$-theory.
The next theorem follows from Thomsen's result on full amalgamated free products (\cite[Theorem 2.7]{Thomsen}) and Corollary D.
\begin{theorem}
Let $A_1, A_2$, and $B$ be unital separable $\rC^*$-algebras, and $i_k :B \to A_k, k=1,2$ be unital embedding with nondegenerate conditional expectations $E_k : A_k \to i_k(B)$.
Set $A:=(A_1,E_1) \bigstar_B (A_2,E_2)$.
Let $j_k : A_k \to A, k=1,2$ be the canonical embeddings.
If each $A_i$ is nuclear and $B$ is finite dimensional, then for any separable $\rC^*$-algebra $D$ there are two cyclic six terms exact sequences:
\[
\begin{CD}
KK(D,B) @>(i_{1*},i_{2*}) >> KK(D,A_1) \oplus KK (D, A_2) @>j_{1*}- j_{2*}>>KK( D, A )  \\
@AAA                                    @.                                       @VVV \\
KK(SD,A  ) @<j_{1*}- j_{2*}<< KK(SD,A_1 )\oplus KK (SD, A_2) @<(i_{1*},i_{2*}) <<KK( SD, B)\\
\end{CD}\]
\[ \begin{CD}
KK(B,D) @<i_1^*- i_2^*<< KK(A_1,D) \oplus KK (A_2,D) @<j_1^* + j_2^*<< KK( A, D)  \\
@VVV                                    @.                                       @AAA \\
KK(A , SD ) @>j_1^* + j_2^*>> KK(A_1, SD )\oplus KK (A_2, SD) @>i_1^*- i_2^*>> KK( B,SD)
\end{CD}
\]
In particular, we have
$$
\begin{CD}
K_0(B) @>(i_{1*},i_{2*}) >> K_0(A_1) \oplus K_0 (A_2) @>j_{1*}- j_{2*}>>K_0(  A )  \\
@AAA                                    @.                                       @VVV \\
K_1 (A  ) @<j_{1*}- j_{2*}<< K_1 (A_1 )\oplus K_1 (A_2) @< << 0
\end{CD}
$$
\end{theorem}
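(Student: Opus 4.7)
The plan is to combine Thomsen's Mayer--Vietoris-type six-term exact sequences for \emph{full} amalgamated free products with the $KK$-equivalence supplied by Theorem D. Let $A_{\mathrm{f}}:= A_1 \bigstar_B A_2$ denote the full amalgamated free product with its canonical unital embeddings $\tilde{j}_k : A_k \to A_{\mathrm{f}}$, and let $\pi_\re : A_{\mathrm{f}} \to A$ be the canonical surjection. By \cite[Theorem 2.7]{Thomsen}, for any separable C$^*$-algebra $D$ there are two cyclic six-term exact sequences obtained from the pushout square
\[
\xymatrix{ B \ar[r]^{i_1}\ar[d]_{i_2} & A_1 \ar[d]^{\tilde{j}_1} \\ A_2 \ar[r]_{\tilde{j}_2} & A_{\mathrm{f}} }
\]
in the variables $KK(D, -)$ and $KK(-, D)$, with connecting maps induced by $(i_{1*}, i_{2*})$ and $\tilde{j}_{1*} - \tilde{j}_{2*}$ (respectively their contravariant analogues).

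Since each $A_i$ is nuclear and $B$ is finite dimensional, Theorem D asserts that $\pi_\re$ is a $KK$-equivalence. Consequently the induced maps
\[
(\pi_\re)_* : KK(D, A_{\mathrm{f}}) \longrightarrow KK(D, A), \qquad \pi_\re^* : KK(A, D) \longrightarrow KK(A_{\mathrm{f}}, D)
\]
are isomorphisms for every separable $D$, and likewise for $SD$ in place of $D$. The next step is to transport Thomsen's sequences along these isomorphisms. Since $\pi_\re \circ \tilde{j}_k = j_k$ by the universal property of $A_{\mathrm{f}}$, naturality of Kasparov product gives $(\pi_\re)_* \circ \tilde{j}_{k*} = j_{k*}$ and $\tilde{j}_k^* \circ \pi_\re^* = j_k^*$, so replacing every occurrence of $A_{\mathrm{f}}$ by $A$ in Thomsen's diagrams and substituting $j_{k*}, j_k^*$ for $\tilde{j}_{k*}, \tilde{j}_k^*$ preserves commutativity and exactness. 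This yields the two desired six-term exact sequences in $KK$-theory.

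For the final diagram in ordinary $K$-theory, specialize to $D = \lC$ in the first $KK$-sequence and use the canonical identifications $KK(\lC, X) \cong K_0(X)$ and $KK(S\lC, X) \cong K_1(X)$ (or, equivalently, $KK(\lC, SX) \cong K_1(X)$). Every nontrivial arrow in the displayed $K$-theoretic hexagon is then the specialization of an arrow already identified above. The only remaining observation is that $B$ is finite dimensional, hence $K_1(B) = 0$, which places the zero in the upper-right corner of the diagram.

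There is no genuine obstacle beyond carefully matching the connecting homomorphisms: the substantive work has already been carried out in Theorem D (which produces the $KK$-equivalence) and in Thomsen's theorem (which produces the six-term sequences for the full free product). The only thing to check is the diagrammatic compatibility $\pi_\re \circ \tilde{j}_k = j_k$, which is immediate from the universal property of the full amalgamated free product. Thus the proof reduces to citation and naturality of the Kasparov product.
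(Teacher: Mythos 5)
Your proposal is correct and follows essentially the same route as the paper, which simply notes that the theorem "follows from Thomsen's result on full amalgamated free products and Theorem D": one applies Thomsen's six-term sequences to the full amalgamated free product and transports them along the $KK$-equivalence $\pi_\re$ supplied by Theorem D, using $\pi_\re \circ \tilde{j}_k = j_k$. Your additional remarks on naturality and on $K_1(B)=0$ for finite-dimensional $B$ correctly fill in the details the paper leaves implicit.
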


We next discuss $KK$-equivalence of full and reduced HNN-extensions of $\rC^*$-algebras.
We refer to \cite{Ueda1}\cite{Ueda2} for the definition of HNN-extensions.

\begin{theorem}
Let $B \subset A$ be a unital inclusion of separable $\rC^*$-algebras and $\theta : B \to A$ be an injective $*$-homomorphism.
Assume that there exist conditional expectations $E :A \to B$ and $E_\theta : A \to \theta (B)$ such that the triple $(\lM_2(A), B \oplus \theta (B), E \oplus E_\theta)$ is strongly nuclear,
where
$$
B \oplus \theta (B) =\left \{ \begin{bmatrix} b_1 & 0 \\ 0& \theta (b_2) \end{bmatrix} \middle| b_1, b_2 \in B \right\}, \quad  E \oplus E_\theta : \begin{bmatrix} a_1 & a_2 \\ a_3& a_4 \end{bmatrix} \mapsto \begin{bmatrix} E (a_1) & 0 \\ 0& E_\theta (a_4) \end{bmatrix}.
$$
Then the canonical surjection from the full HNN-extension $A \bigstar_B^\univ \theta$ onto the reduced one $(A,E) \bigstar_B ( \theta, E_\theta)$ is a $KK$-equivalence.
\end{theorem}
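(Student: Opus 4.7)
The plan is to reduce to Theorem \ref{thm-KK} by realizing both HNN-extensions as full corners of amalgamated free products, following the construction of Ueda \cite{Ueda1,Ueda2}. Set $D := B \oplus \theta(B) \subset \lM_2(A)$ and let $\sigma \in \Aut(D)$ be the order-two automorphism that swaps the two summands via $\theta$, namely $\sigma(b_1 \oplus \theta(b_2)) = b_2 \oplus \theta(b_1)$. Let $C := D \rtimes_\sigma \lZ/2\lZ$ with canonical conditional expectation $E_C : C \to D$, and let $v \in C$ denote the flip unitary. Form the full amalgamated free product $P := \lM_2(A) \bigstar_D C$ and the reduced one $P_\re := (\lM_2(A), E \oplus E_\theta) \bigstar_D (C, E_C)$. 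Set $q := 1_B \oplus 0 \in D$. A direct computation shows that $u := e_{12} v \in P$ is a unitary in $qPq$ satisfying $u (a e_{11}) u^* = \theta(a) e_{11}$ for $a \in B$. The first task in the proof will be to verify, following \cite{Ueda1,Ueda2}, that $qPq$ and $qP_\re q$ are canonically isomorphic to the full and reduced HNN-extensions respectively, and that these isomorphisms intertwine the canonical quotient maps.

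Granted this corner realization, the $KK$-equivalence will follow by combining Theorem \ref{thm-KK} with Morita equivalence. The hypothesis supplies the strong nuclearity of $(\lM_2(A), D, E \oplus E_\theta)$, while the triple $(C, D, E_C)$ is strongly nuclear by the crossed-product example in \S\S \ref{ss-exam-cro} (since $\lZ/2\lZ$ is finite, hence amenable; the $D$-central normal vector is $2^{-1/2}(1 \otimes 1 + v \otimes v^*)$). Theorem \ref{thm-KK} then implies that the canonical surjection $\pi : P \to P_\re$ is a $KK$-equivalence. Fullness of $q$ in both $P$ and $P_\re$ follows from $q + v q v^* = q + \sigma(q) = 1_D$, so the two-sided ideal generated by $q$ contains the unit. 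Consequently the corner inclusions $qPq \hookrightarrow P$ and $qP_\re q \hookrightarrow P_\re$ are $KK$-equivalences by Brown's theorem on full hereditary subalgebras, and since they intertwine $\pi$ with the canonical HNN-quotient, the latter is a $KK$-equivalence as well.

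The main obstacle will be the corner realization itself, especially on the reduced side. The full case reduces to a universality argument: the pair $(A \hookrightarrow qPq, \, u = e_{12} v)$ induces a $*$-homomorphism $A \bigstar_B^\univ \theta \to qPq$, and one must show this is a surjection, with the reverse arrow coming from the universality of $P$. The reduced case is more delicate: one needs to match the Fock-space description of $(A, E) \bigstar_B (\theta, E_\theta)$ against the compression of the free-product Fock space of $P_\re$ to the $q$-corner. The key observation is that the flip unitary $v$ converts between the $B$-bimodule structure of $L^2(A, E)$ and the $\theta(B)$-bimodule structure of $L^2(A, E_\theta)$, thereby reproducing the alternating tensor structure characterizing the HNN Fock module. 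Once these identifications are in place, the remainder of the argument proceeds as sketched.
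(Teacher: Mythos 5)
Your proposal is correct and follows essentially the same route as the paper: both realize the HNN-extensions inside the amalgamated free product $\lM_2(A)\bigstar_{B\oplus\theta(B)}\lM_2(B)$ (your $C=D\rtimes_\sigma\lZ/2$ is exactly $\lM_2(B)$ containing the diagonal copy of $B\oplus B$, and your corner identification $qPq\cong A\bigstar_B^\univ\theta$ is the corner form of Ueda's isomorphism that the paper cites), then apply Theorem \ref{thm-KK} to that free product and descend by a Morita-type argument. The only cosmetic difference is in the descent step: the paper invokes Ueda's isomorphism $\cA\cong\lM_2(\cB)$ and composes the explicit $KK$-elements coming from the corner maps $\phi_\epsilon,\psi_\epsilon$, whereas you use fullness of $q$ (via $q+vqv^*=1$) and Brown's theorem on full hereditary subalgebras.
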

\begin{proof}
Let $\cA_{\rm f}$ and $\cA_\re$ be the full and reduced amalgamated free product $\lM_2 (A) \bigstar_{B \oplus B } \lM_2 (B)$ and $(\lM_2 (A), E \oplus E_\theta) \bigstar_{B \oplus B } (\lM_2 (B), E_1)$, where the inclusion $B \oplus B \to B \oplus \theta (B) \subset \lM_2(A)$ and the conditional expectation $E \oplus E_\theta$ are as above,
and the inclusion $\iota_1 : B \oplus B \to \lM_2 (B)$ and the conditional expectation $E_1 : \lM_2 (B) \to B$ are defined by
\begin{align*}
\iota_1 :(b_1 \oplus b_2 ) \mapsto \begin{bmatrix} b_1 & 0 \\ 0 & b_2 \end{bmatrix},
\quad E_1: \begin{bmatrix} b_1 & b_2 \\ b_3& b_4 \end{bmatrix} \mapsto \begin{bmatrix} b_1 & 0 \\ 0& b_4 \end{bmatrix}.
\end{align*}
We also set $\cB_{\rm f}:=A \bigstar_B^\univ \theta$ and $\cB_\re:=(A,E) \bigstar_B ( \theta, E_\theta)$.
By remarks following to \cite[Proposition 3.1]{Ueda2} and \cite[Proposition 3.3]{Ueda2},
there exist two $*$-isomorphism $\Phi_{\rm f} : \cA_{\rm f} \to \lM_2(\cB_{\rm f})$ and $\Phi_\re : \cA_\re \to \lM_2 (\cB_\re )$ such that $\sigma_\re ^{(2)} \circ \Phi_{\rm f} = \Phi_\re \circ \pi_\re$, where $\pi_\re : \cA_{\rm f} \to \cA_\re$ and $\sigma_\re: \cB_{\rm f} \to \cB_\re$ are canonical surjections.
Since the triple $(\lM_2 (B), B \oplus B , E_1)$ is also strongly nuclear by Proposition \ref{prop-sum-tensor},
$\pi_\re$ is a $KK$-equivalence.
Consider $*$-homomorphisms $\phi_\epsilon: \cB_\epsilon \to \lM_2 (\cB_\epsilon)$ and $\psi_ \epsilon : \lM_2 (\cB_\epsilon ) \to \cB_\epsilon$ for $\epsilon \in \{\rm f, r \}$ defined by 
$$
\phi_\epsilon : b \mapsto \begin{bmatrix} b & 0 \\ 0 & 0 \end{bmatrix}, \quad \psi_\epsilon : \begin{bmatrix} b_1 & b_2 \\ b_3 & b_4 \end{bmatrix} \mapsto b_1,
$$
for $b, b_i \in \cB_\epsilon, i=1,2,3,4$.
Then we have the following commuting diagrams:
$$
\begin{CD}
\cA_{\rm f} @>\Phi_{\rm f} >> \lM_2 (\cB_{\rm f}) @<\phi_{\rm f}<< \cB_{\rm f}  \\
@V\pi_\re VV		@V\sigma_\re^{(2)} VV        @VV\sigma_\re V \\
\cA_\re @>\Phi_\re >> \lM_2 (\cB_\re) @<\phi_\re<< \cB_\re  \end{CD}
\qquad \quad
\begin{CD}
\cA_{\rm f} @>\Phi_{\rm f} >> \lM_2 (\cB_{\rm f}) @>\psi_{\rm f}>> \cB_{\rm f}  \\
@V\pi_\re VV		@V\sigma_\re^{(2)} VV        @VV\sigma_\re V \\
\cA_\re @>\Phi_\re >> \lM_2 (\cB_\re) @>\psi_\re>> \cB_\re  \end{CD}
$$
Let $\alpha \in KK(\cA_\re, \cA_{\rm f})$ be an element satisfying $\pi_\re \otimes_{\cA_\re} \alpha =\id_{\cA_{\rm f}}$ and $\alpha \otimes_{\cA_{\rm f}} \pi_\re = \id_{\cA_\re}$.
Set $\beta:=\Phi_\re^{-1} \otimes_{\cA_\re} \alpha \otimes_{\cA_{\rm f}} \Phi_{\rm f} \in KK (\lM_2 (\cB_\re), \lM_2 (\cB_{\rm f}) )$.
Then we have $\id_{\cB_{\rm f}} = \phi_{\rm f} \otimes_{\lM_2 (\cB_{\rm f} )} \sigma_\re^{(2)} \otimes_{\lM_2 (\cB_\re )} \beta \otimes_{\lM_2 (\cB_{\rm f})}  \psi_{\rm f} 
=\sigma_\re \otimes_{\cB_\re } \phi_\re \otimes_{\lM_2 ( \cB_\re )} \beta \otimes_{\lM_2(\cB_{\rm f})} \psi_{\rm f}$ and
$\id_{\cB_\re} =\phi_\re \otimes_{\lM_2 (\cB_\re)} \beta \otimes_{\lM_2 (\cB_{\rm f})} \sigma_\re^{(2)} \otimes_{\lM_2 (\cB_\re)} \psi_\re 
=\psi_\re \otimes_{\lM_2 (\cB_\re)} \beta \otimes_{\lM_2 (\cB_{\rm f})} \psi_{\rm f} \otimes_{\lM_2 (\cB_{\rm f})} \sigma_\re$,
hence the $\phi_\re \otimes_{\lM_2 (\cB_\re)} \beta \otimes_{\lM_2 (\cB_{\rm f})} \psi_{\rm f} \in KK (\cB_\re, \cB_{\rm f})$ is the desired inverse element of $\sigma_\re$.
\end{proof}
We remark that when $B=\theta (B)$, the strong nuclearity of $(\lM_2 (A), B \oplus B, E \oplus E)$ follows from the one of $(A, B, E)$.
Combining the theorem above with \cite[Proposition 4.12]{Ueda2} we obtain the following six term exact sequence for reduced HNN-extensions.
\begin{corollary}
Let $B \subset A$ be a unital inclusion of separable $\rC^*$-algebras and $\theta : B \to A$ be an injective $*$-homomorphism.
Assume that there exist conditional expectations $E :A \to B$ and $E_\theta : A \to \theta (B)$ such that the triple $(\lM_2(A), B \oplus \theta (B), E \oplus E_\theta)$ is strongly nuclear, then we have the following six term exact sequence:
$$
\begin{CD}
K_0(B) @>(\theta_{1*}- \iota_{B*}) >> K_0(A) @>\iota_{A*}>>K_0(  (A,E) \bigstar_B ( \theta, E_\theta))  \\
@AAA                                    @.                                       @VVV \\
K_1 ((A,E) \bigstar_B ( \theta, E_\theta) ) @<\iota_{A*}<< K_1 (A ) @<(\theta_{1*}- \iota_{B*}) <<K_1( B)
\end{CD}
$$
Here $\iota_B : B \to A$ and $\iota_A: A \to (A,E) \bigstar_B ( \theta, E_\theta) $ are inclusion maps.
\end{corollary}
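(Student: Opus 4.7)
The plan is to deduce this corollary directly from the preceding theorem together with Ueda's computation of $K$-theory for full HNN-extensions. The preceding theorem asserts that, under the strong nuclearity hypothesis on $(\lM_2(A), B\oplus\theta(B), E\oplus E_\theta)$, the canonical surjection
\[
\sigma_\re : A \us\bigstar_B^\univ \theta \longrightarrow (A,E) \us\bigstar_B (\theta, E_\theta)
\]
is a $KK$-equivalence. In particular, the induced map
$(\sigma_\re)_* : K_*(A \bigstar_B^\univ \theta) \to K_*((A,E) \bigstar_B (\theta, E_\theta))$
is an isomorphism on $K$-theory.

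The second ingredient, \cite[Proposition 4.12]{Ueda2}, provides the same style of six term exact sequence but for the full HNN-extension $A \bigstar_B^\univ \theta$: namely the sequence
\[
\begin{CD}
K_0(B) @>\theta_*-\iota_{B*}>> K_0(A) @>\tilde\iota_{A*}>> K_0(A \bigstar_B^\univ \theta) \\
@AAA @. @VVV \\
K_1(A \bigstar_B^\univ \theta) @<\tilde\iota_{A*}<< K_1(A) @<\theta_*-\iota_{B*}<< K_1(B)
\end{CD}
\]
is exact, where $\tilde\iota_A$ denotes the canonical embedding of $A$ into the full HNN-extension. Note that the horizontal maps into and out of $K_*(B)$ and $K_*(A)$ depend only on $B\hookrightarrow A$ and $\theta : B\to A$, not on whether we pass to the reduced HNN-extension.

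Now I simply splice the two results. The canonical embedding $\iota_A : A \to (A,E)\bigstar_B(\theta,E_\theta)$ factors as $\iota_A = \sigma_\re \circ \tilde\iota_A$, so $(\iota_A)_* = (\sigma_\re)_* \circ (\tilde\iota_A)_*$. Since $(\sigma_\re)_*$ is an isomorphism, replacing $(\tilde\iota_A)_*$ by $(\iota_A)_*$ and $K_*(A\bigstar_B^\univ\theta)$ by $K_*((A,E)\bigstar_B(\theta,E_\theta))$ in Ueda's sequence preserves exactness (this is just naturality of $K$-theory together with a chase along the commuting vertical isomorphism). This yields exactly the displayed six term exact sequence for the reduced HNN-extension.

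There is essentially no obstacle once the preceding theorem is in place; the only minor point to verify is that the connecting and horizontal maps in Ueda's sequence are natural enough to transport along the $KK$-equivalence $\sigma_\re$, which is immediate from the functoriality of the Kasparov product and the identity $\iota_A = \sigma_\re \circ \tilde\iota_A$.
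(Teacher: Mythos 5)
Your proposal is correct and follows essentially the same route as the paper, which simply combines the preceding $KK$-equivalence theorem for HNN-extensions with Ueda's six term exact sequence for the full HNN-extension \cite[Proposition 4.12]{Ueda2}; your elaboration of the splicing step via $\iota_A = \sigma_\re \circ \tilde\iota_A$ and naturality is exactly the implicit content of that one-line argument.
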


\end{document}